\documentclass[12pt, a4paper]{amsart}

\usepackage[hmargin=30mm, vmargin=25mm, includefoot, twoside]{geometry}
\usepackage[bookmarksopen=true]{hyperref}

\usepackage{amscd}
\usepackage[all,pdf]{xy}

\usepackage{amsfonts,amssymb,verbatim}
\usepackage{latexsym}
\usepackage{mathrsfs}
\usepackage{stmaryrd}
\usepackage{xspace}
\usepackage{enumerate, paralist}
\usepackage{graphicx}
\usepackage[all]{xy}
\usepackage{extarrows}
\usepackage[usenames,dvipsnames]{color}
\usepackage{txfonts, pxfonts}
\usepackage{amsthm}
\usepackage{amsmath}

\newtheorem{thm}{Theorem}[section]
\newtheorem{cor}[thm]{Corollary}
\newtheorem{lem}[thm]{Lemma}
\newtheorem{prop}[thm]{Proposition}

\newtheorem{introthm}{Theorem}

\theoremstyle{definition}
 \newtheorem{introdefn}[introthm]{Definition}
  \newtheorem{introques}[introthm]{Question}

\numberwithin{equation}{section}

\theoremstyle{definition}
\newtheorem{defn}[thm]{Definition}
\theoremstyle{remark}
\newtheorem{rem}[thm]{Remark}

\def\NN{\mathbb{N}}
\def\Nd{\mathcal{N}}
\def\CC{\mathbb{C}}
\def\RR{\mathbb{R}}
\def\AA{\mathbb{A}}
\def\H{\mathcal{H}}
\def\HH{\mathscr{H}}

\def\L{\mathcal{L}}
\def\S{\mathcal{S}}

\def\B{\mathfrak{B}}
\def\K{\mathfrak{K}}
\def\supp{\mathrm{supp}}
\def\diam{\mathrm{diam}}
\def\ppg{\mathrm{prop}}
\def\Ad{\mathrm{Ad}}
\def\Cliff{\mathrm{Cliff}}
\def\Ind{\mathrm{Ind}}
\def\Id{\mathrm{Id}}
\def\d{\mathrm{d}}
\def\Csq{C^*_{sq}}
\def\AAsq{\AA_{sq}}
\def\Asq{A_{sq}}

\bibliographystyle{plain}

\begin{document}

\title{Strongly Quasi-local algebras and their $K$-theories}

\author{Hengda Bao}
\author{Xiaoman Chen}
\author{Jiawen Zhang}

\address{School of Mathematical Sciences, Fudan University, 220 Handan Road, Shanghai, 200433, China.}
\email{baohengda@fudan.edu.cn, xchen@fudan.edu.cn, jiawenzhang@fudan.edu.cn}

\thanks{}

\date{}

\keywords{Roe algebras, Quasi-local algebras, Strong quasi-locality, Coarse embeddability}

\baselineskip=16pt

\begin{abstract}
In this paper, we introduce a notion of strongly quasi-local algebras. They are defined for each discrete metric space with bounded geometry, and sit between the Roe algebra and the quasi-local algebra. We show that strongly quasi-local algebras are coarse invariants, hence encoding coarse geometric information of the underlying spaces. We prove that for a discrete metric space with bounded geometry which admits a coarse embedding into a Hilbert space, the inclusion of the Roe algebra into the strongly quasi-local algebra induces an isomorphism in $K$-theory.
\end{abstract}
\date{\today}
\maketitle

\parskip 4pt


\section{Introduction}

Roe algebras are $C^*$-algebras associated to metric spaces which encode coarse geometric information of the underlying spaces. They were introduced by J. Roe in his pioneering work of higher index theory on open manifolds \cite{roe1988:index-thm-on-open-mfds,Roe96}, in which he showed that the $K$-theory of Roe algebras serves as receptacles for indices of elliptic differential operators. Hence the computation of their K-theories becomes a central problem in higher index theory.



An efficient and practical approach is to employ the coarse Baum-Connes conjecture, which asserts that the coarse assembly map from the coarse $K$-homology of the space to the $K$-theory of the Roe algebra is an isomorphism \cite{roe1993coarse,Yu95}. The coarse Baum-Connes conjecture has fruitful and significant applications in geometry and topology, for instance on the Novikov conjecture and the bounded Borel rigidity conjecture (see \emph{e.g.} \cite{guent-tess-yu2012:Borel-stuff,yu1998:Novikov-for-FAD,Yu00}), and on the non-existence of metrics of positive scalar curvature on open Riemannian manifolds (see \emph{e.g.} \cite{schick2014:ICM,yu1997:0-in-the-spec-PSC}).

By definition, the Roe algebra $C^*(X)$ of a discrete metric space $(X,d)$ with bounded geometry is defined to be the norm closure of all locally compact operators $T \in \B(\ell^2(X; \HH))$ (where $\HH$ is an infinite-dimensional separable Hilbert space) with finite propagation in following sense: there exists $R>0$ such that for any $f,g \in \ell^\infty(X)$ acting on $\ell^2(X; \HH)$ by amplified pointwise multiplication, we have $fTg=0$ when their supports are $R$-disjoint (\emph{i.e.}, $d(\supp f, \supp g)>R$). Since general elements in Roe algebras may not have finite propagation, it is usually hard to detect whether a given operator belongs to them or not.

To overcome this issue, J. Roe suggested an asymptotic version of finite propagation called quasi-locality in \cite{roe1988:index-thm-on-open-mfds, Roe96}. More precisely, an operator $T \in \B(\ell^2(X; \HH))$ is quasi-local if for any $\varepsilon>0$ there exists $R>0$ such that for any $f,g \in \ell^\infty(X)$ with $R$-disjoint supports, we have $\|fTg\|<\varepsilon$. We form the \emph{quasi-local algebra}\footnote{Note that a uniform version was already introduced in \cite{intro}.} $C^*_q(X)$ of $X$ as the $C^*$-algebra consisting of all locally compact and quasi-local operators in $\B(\ell^2(X; \HH))$, and show that they are coarse invariants. It is clear that operators with finite propagation are quasi-local, and hence the quasi-local algebra $C^*_q(X)$ contains the Roe algebra $C^*(X)$. 

A natural question is to ask whether these two algebras coincide, which has been extensively studied over the last few decades \cite{engel2015rough,lange1985noethericity,LWZ2018,Roe96,ST19,SZ20}. Currently the most general result is due to \v{S}pakula and the third author \cite{SZ20}, which states that $C^*(X) = C^*_q(X)$ for any discrete metric space with bounded geometry and having Yu's property A. Here property A is a coarse geometric property introduced by Yu \cite{Yu00} in his study on the coarse Baum-Connes conjecture. However, the question remains widely open outside the world of property A \cite{structure,intro}.

On the other hand, the property of quasi-locality is also crucial in the work of Engel \cite{engel2015rough} on index theory of pseudo-differential operators. He discovered that while indices of genuine differential operators on Riemannian manifolds live in the $K$-theory of (appropriate) Roe algebras, the indices of uniform pseudo-differential operators are only known to be in the $K$-theory of quasi-local algebras. 
Hence it is important to study whether the Roe algebra and the quasi-local algebra have the same $K$-theory.

In this paper, we introduce a notion of strong quasi-locality and study associated strongly quasi-local algebras. Our main focus is to study their $K$-theories, which might be a potential approach to attack the higher indices problem above. To illustrate the idea, let us explain when $X$ is uniformly discrete (\emph{i.e.}, there exists $C>0$ such that $d(x,y)>C$ for $x\neq y$). For the general case, see Section \ref{ssec:strong quasi-locality}. Fix an infinite-dimensional separable Hilbert space $\HH$ and denote by $\K(\HH)_1$ the unit ball of the compact operators on $\HH$. We introduce the following:

\begin{introdefn}\label{introdefn:strong quasi-locality}
Let $X$ be a uniformly discrete metric space with bounded geometry and $T \in \B(\ell^2(X; \HH))$. We say that $T$ is \emph{strongly quasi-local} if for any $\varepsilon > 0$ there exists $L >0$ such that for any $L$-Lipschitz map $g:X \rightarrow \K(\HH)_1$, we have
\begin{equation*}
\big\| [T\otimes \Id_{\HH} , \Lambda(g) ] \big\|< \varepsilon
\end{equation*}
where $\Lambda(g) \in \B(\ell^2(X; \HH \otimes \HH))$ is defined by $\Lambda(g)(\delta_x \otimes \xi \otimes \eta):=\delta_x \otimes \xi \otimes g(x)\eta$ for $\delta_x \otimes \xi \otimes \eta \in \ell^2(X; \HH \otimes \HH) \cong \ell^2(X) \otimes \HH \otimes \HH$.
\end{introdefn}

Definition \ref{introdefn:strong quasi-locality} is inspired by a characterisation for quasi-locality provided in \cite{ST19} which states that an operator $T  \in \B(\ell^2(X; \HH))$ is quasi-local \emph{if and only if} for any $\varepsilon > 0$ there exists $L >0$ such that for any $L$-Lipschitz map $g:X \rightarrow \CC$ with $\|g\|_\infty \leq 1$, we have $\|[T,g]\|<\varepsilon$. Hence the notion of strong quasi-locality can be regarded as a compact operator valued version of quasi-locality, and undoubtedly strengthens the original notion (as literally suggested).

Analogous to the case of quasi-locality, we form the \emph{strongly quasi-local algebra} $C^*_{sq}(X)$ as the $C^*$-algebra consisting of all locally compact and strongly quasi-local operators in $\B(\ell^2(X; \HH))$. We show that the strongly quasi-local algebra $C^*_{sq}(X)$ contains the Roe algebra $C^*(X)$, and is contained in the quasi-local algebra $C^*_q(X)$ (see Proposition \ref{prop:relations between Roe, QL and SQL}). We also study coarse geometric features of strongly quasi-local algebras, and show that they are coarse invariants as in the case of Roe algebras and quasi-local algebras (see Corollary \ref{cor:strong coarse invariance}).

Our motivation of introducing strongly quasi-local algebras is that their $K$-theory is relatively easy to handle when the underlying space is coarsely embeddable. More precisely, we prove the following:

\begin{introthm}\label{thm:main result}
	Let $X$ be a discrete metric space of bounded geometry. If $X$ admits a coarse embedding into a Hilbert space, then the inclusion of the associated Roe algebra $C^*(X)$ into the strongly quasi-local algebra $C^*_{sq}(X)$ induces an isomorphism in $K$-theory.
\end{introthm}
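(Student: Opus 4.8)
The plan is to follow the now-standard strategy for proving that an inclusion of "coarse-geometric" $C^*$-algebras is a $K$-theory isomorphism: a Mayer–Vietoris / controlled-cutting-and-pasting argument combined with a coarse embedding into Hilbert space to reduce to the case of spaces of finite asymptotic dimension, where the statement can be checked directly. Concretely, for each $r>0$ one builds the Rips complex $P_r(X)$ and considers, instead of $C^*(X)$ and $C^*_{sq}(X)$, the associated \emph{localised} or \emph{Rips} versions, organised into a directed system over $r$; since $C^*(X) = \varinjlim_r C^*(P_r(X))$ and likewise for the strongly quasi-local algebras (after checking that strong quasi-locality passes to Rips complexes, which should be routine from the definition), it suffices to prove the isomorphism at each finite stage and pass to the limit. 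The coarse embedding hypothesis enters exactly as in Yu's and in the Špakula–Zhang argument: it lets one construct, for every $\varepsilon$ and every finite scale, a decomposition of $X$ into pieces that are "$\varepsilon$-excisive" and each of which coarsely looks like a subset of finite asymptotic dimension, on which the two algebras agree.

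The key steps, in order, would be: (1) Set up the ideals/subalgebras of $C^*(X)$ and $C^*_{sq}(X)$ supported near a subspace $Z\subseteq X$, and prove a Mayer–Vietoris sequence in $K$-theory for both, compatible with the inclusion — this requires showing the relevant sums of ideals are again the "supported near $Z_1\cup Z_2$" algebras and the intersections are the "supported near $Z_1\cap Z_2$" algebras, which is where controlled propagation estimates and quasi-locality estimates must be combined carefully. (2) Reduce, using the coarse embedding, to spaces with finite asymptotic dimension by the usual limiting/telescoping argument over an exhaustion of $X$ by "coherent" families of subspaces (this is the step where the Hilbert-space geometry is converted into a combinatorial decomposition, following Yu's property and its descendants). (3) Prove the base case: if $X$ has finite asymptotic dimension, then $C^*(X) = C^*_{sq}(X)$ outright — indeed such $X$ has property A, and by the embedded characterisation of strong quasi-locality one should be able to push the Špakula–Zhang argument (that property A implies $C^*(X)=C^*_q(X)$) through the compact-operator-valued Lipschitz test, so there is nothing to prove about $K$-theory at this stage. (4) Assemble: the five lemma applied to the compatible Mayer–Vietoris sequences, together with the base case and the passage to the limit, yields the isomorphism.

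I expect the main obstacle to be step (1), specifically establishing the Mayer–Vietoris sequence for the strongly quasi-local algebra with enough functoriality to compare it with the Roe-algebra sequence. For the Roe algebra this is classical (the ideals cut out by subspaces behave well because finite propagation is an exact, controllable condition), but strong quasi-locality is an asymptotic condition stated via commutators with $\Lambda(g)$ for Lipschitz $\K(\HH)_1$-valued functions, and it is not a priori obvious that the "supported near $Z$" strongly quasi-local operators form an ideal whose sums and intersections behave as needed — one has to show that cutting a strongly quasi-local operator by a slowly-varying partition of unity produces again a strongly quasi-local operator with controlled error, uniformly, and that these errors can be absorbed in the limit over Rips scales. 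A secondary difficulty is checking that the directed limit $\varinjlim_r C^*_{sq}(P_r(X))$ really recovers $C^*_{sq}(X)$ (one must handle the amplification/$\Lambda$ construction compatibly with the maps in the Rips system). Once those structural facts are in place, steps (2)–(4) should run parallel to the existing literature on the coarse Baum–Connes conjecture for coarsely embeddable spaces, with the embedded characterisation of strong quasi-locality from the cited work of Špakula–Tikuisis playing the role that the scalar-valued characterisation of quasi-locality plays there.
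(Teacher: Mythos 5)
Your step (1) does correspond to a real (and necessary) part of the argument: the paper proves a coarse Mayer--Vietoris sequence for $C^*_{sq}$, and the difficulty you flag about cutting strongly quasi-local operators by subspaces is genuine -- it is handled not by extending Lipschitz $\K(\HH)_1$-valued functions (which is obstructed) but by re-characterising strong quasi-locality via compact-operator-valued Higson functions, which do extend. The Mayer--Vietoris sequence is then used, with the even/odd annular decomposition around a basepoint, to reduce to a coarse disjoint union of finite metric spaces with block-diagonal operators. Up to that point your outline is compatible with the paper.

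The fatal gap is your step (2). A coarse embedding into Hilbert space cannot be converted, by any telescoping or excision argument, into a decomposition of $X$ into pieces of finite asymptotic dimension: coarse embeddability is strictly weaker than property A, which in turn is strictly weaker than finite asymptotic dimension, and no reduction in that direction exists. If it did, Yu's theorem on the coarse Baum--Connes conjecture for coarsely embeddable spaces would collapse to the finite-asymptotic-dimension case, which is not how that theorem is proved. Consequently your base case (3), while true as a statement (property A gives $C^*(X)=C^*_{sq}(X)=C^*_q(X)$), is the base of a reduction that never happens, and the assembly in (4) has nothing to assemble. The actual use of the Hilbert-space embedding is analytic, not combinatorial: following Yu and Willett--Yu, one builds twisted Roe and twisted strongly quasi-local algebras over the Euclidean spaces $E_n$, constructs an index map out of the Bott--Dirac operators $B_{s,x}$, shows the composition with evaluation maps is an isomorphism, proves the inclusion of twisted algebras is a $K$-theory isomorphism by a Mayer--Vietoris argument over subsets of $E_n$ reduced to balls, and finishes by a diagram chase. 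This is the heart of the proof and is entirely absent from your proposal; it is also precisely where strong quasi-locality (rather than plain quasi-locality) is needed, since the commutator estimates defining the index map fail for merely quasi-local operators. (Secondarily, the Rips-complex direct limit you invoke is vacuous on the Roe-algebra side, since $P_r(X)$ is coarsely equivalent to $X$ for every $r$; the Rips filtration lives on the $K$-homology side of the assembly map and plays no role here.)
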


Theorem \ref{thm:main result} is the main result of this paper, which is inspired by the well-known theorem of Yu \cite{Yu00} that the coarse Baum-Connes conjecture holds for discrete bounded geometry spaces admitting a coarse embedding into Hilbert space. The proof of Theorem \ref{thm:main result} follows the outline of \cite[Section 12]{willett2020higher} (which originates in \cite{Yu00}), but is more involved and requires new techniques. We divide the proof into several steps, and here let us explain several key ingredients in the proof.

First we prove a coarse Mayer-Vietoris argument for strongly quasi-local algebras (Proposition \ref{prop:M-V sequence for strong quasi-local algebras}), which allows us to cut the space and decompose the associated algebras. Recall that an analogous result for Roe algebras was already established in \cite{HRY93}. This leads to the reduction of the proof for Theorem \ref{thm:main result} to the case of sequences of finite metric spaces with block-diagonal operators thereon (Lemma \ref{lem:reduce to block-diagonal}). 

We would like to highlight a technical lemma used to achieve the coarse Mayer-Vietoris result. Recall that for a quasi-local operator $T \in C^*_q(X)$, it is clear from definition that the restriction $\chi_A T \chi_A$ belongs to $C^*_q(A)$ for any subspace $A$. However, this is not obvious in the case of strongly quasi-local algebras due to certain obstructions on Lipschitz extension (see Remark \ref{rem:obstructions on Lipschitz extension}). To overcome the issue, we provide a characterisation for strong quasi-locality in terms of compact operator valued Higson functions (Proposition \ref{prop:strong quasi-locality iff Higson}). Note that these functions appeared in \cite[Section 4.2]{Wil09} to study the stable Higson corona and the Baum-Connes conjecture. Thanks to the extendability of Higson functions, we obtain a restriction result (Lemma \ref{lem:subspace-wise version}) as required. Moreover by some delicate analysis, we obtain a ``uniform'' version (Proposition \ref{prop:subspace strong quasi-locality}) which plays a key role in following steps.

Then we construct a twisted version of strongly quasi-local algebras (Definition \ref{defn:twisted quasi-local}) for sequences of finite metric spaces, and show that the identity map on the $K$-theory of the strongly quasi-local algebra factors through the $K$-theory of its twisted counterpart (Proposition \ref{prop:strong quasi-local index map isom.}). To achieve, we replace several propagation requirements for twisted Roe algebras by different versions of (strong) quasi-locality, and construct an index map in terms of the Bott-Dirac operators. We would like to point out that for the original quasi-local algebras, there is a technical issue to define the index map (Lemma \ref{lem:commutator}) following the methods either in \cite[Lemma 7.6]{Yu00} or in \cite[Lemma 12.3.9]{willett2020higher}. Hence we have to move to the world of strong quasi-locality.

Finally we prove that the inclusion map from the twisted Roe algebra into the twisted strongly quasi-local algebra induces an isomorphism in $K$-theory (Proposition \ref{prop:iso. of twisted algebras in $K$-theory}). Combining with a diagram-chasing argument, we conclude the proof for Theorem \ref{thm:main result}.

Theorem \ref{thm:main result} should be regarded as a first step to attack the problem whether quasi-local algebras have the same $K$-theory as Roe algebras. More precisely, we pose the following open question:

\begin{introques}
Let $X$ be a metric space with bounded geometry which admits a coarse embedding into Hilbert space. Then do we have $K_\ast(C^*_{sq}(X)) = K_\ast(C^*_q(X))$? 
\end{introques}

The paper is organised as follows: In Section \ref{sec:pre}, we collect notions from coarse geometry and recall the definition of Roe algebras. We also define quasi-local algebras and show that they are coarse invariants. In Section \ref{sec:sq}, we introduce the main concept of this paper---strong quasi-locality and study their permanence property and coarse geometric features. Section \ref{sec:cMV} is devoted to the coarse Mayer-Vietoris sequence for (strongly) quasi-local algebras, based on which we reduce the proof for Theorem \ref{thm:main result} to the case of sequences of finite metric spaces. We introduce twisted strongly quasi-local algebras in Section \ref{sec:twisted}, and construct the index map in Section \ref{sec:index map}. In Section \ref{sec:loc isom}, we show that twisted Roe algebras and twisted strongly quasi-local algebras have the same $K$-theory, and hence conclude the proof in Section \ref{sec:proof of main thm}. The appendix provides a proof for Proposition \ref{prop:Psi function} which slightly strengthens \cite[Proposition 12.1.10]{willett2020higher} and is necessary to achieve the main theorem, hence we give detailed proofs for convenience to readers.

\subsection*{Acknowledgments}
We wish to thank Jinmin Wang and Rufus Willett for several helpful discussions, and Yijun Yao for useful comments after  reading an early version of this paper.

%

\section{Preliminaries}\label{sec:pre}
We start with some notions and definitions. 

\subsection{Notions in coarse geometry}
Here we collect several basic notions.

\begin{defn}\label{defn:basic notions spaces}
	Let $(X,d_X)$ be a metric space, $A\subseteq X$ and $R\geq 0$.
	\begin{enumerate}
		\item $A$ is \emph{bounded} if its diameter $\diam(A):=\sup\{d_X(x,y): x,y\in A\}$ is finite. 
		\item The \emph{$R$-neighbourhood of $A$ in $X$} is $\Nd_R(A):=\{x\in X: d_X(x,A) \leq R\}$. 
       \item $A$ is a \emph{net} in $X$ if there exists some $C>0$ such that $\Nd_C(A)=X$. 
		\item For $x_0\in X$, the open \emph{$R$-ball of $x_0$ in $X$} is $B(x_0;R):=\{x\in X: d_X(x_0,x)<R \}$.
		\item $(X,d_X)$ is said to be \emph{proper} if every closed bounded subset is compact.
		\item If $(X,d_X)$ is discrete, we say that $X$ has \emph{bounded geometry} if for any $r>0$ there exists an $N\in \NN$ such that $|B(x;r)| \leq N$ for any $x\in X$, where $|B(x;r)|$ denotes the cardinality of the set $B(x;r)$. 
	\end{enumerate}
\end{defn}

\begin{defn}\label{defn:basic notions maps}
	Let $f: (X,d_X) \to (Y,d_Y)$ be a map between metric spaces. 
	\begin{enumerate}
		\item $f$ is \emph{uniformly expansive} if there exists a non-decreasing function $\rho_+: [0,\infty) \to [0,\infty)$ such that for any $x,y\in X$, we have:
		\[
		d_Y(f(x),f(y)) \leq \rho_+(d_X(x,y)).
		\]
		\item $f$ is \emph{proper} if for any bounded $B\subseteq Y$, the pre-image $f^{-1}(B)$ is bounded in $X$. 
		\item $f$ is \emph{coarse} if it is uniformly expansive and proper.
		\item $f$ is \emph{effectively proper} if there exists a proper non-decreasing function $\rho_-: [0,\infty) \to [0,\infty)$ such that for any $x,y\in X$, we have:
		\[
		\rho_-(d_X(x,y)) \leq d_Y(f(x),f(y)).
		\]
		\item $f$ is a \emph{coarse embedding} if it is uniformly expansive and effectively proper.
	\end{enumerate}
\end{defn}

Note that $f$ is uniformly expansive is equivalent to that the \emph{expansion function} $\rho_f :[0,\infty)\rightarrow [0,\infty]$ of $f$, defined as
\begin{equation}\label{EQ:expansion function}
\rho_f(s):=\sup\{d_Y(f(x),f(y)) : x,y\in X \mbox{~with~} d_X(x,y)\le s\},
\end{equation}
is finite-valued. 

\begin{defn}\label{defn:basic notions coarse equivalence}
	Let $(X,d_X)$ and $(Y,d_Y)$ be metric spaces. 
	\begin{enumerate}
		\item Two maps $f,g: (X,d_X) \to (Y,d_Y)$ are \emph{close} if the exists $R\ge 0$ such that for all $x\in X$, we have $d_Y(f(x),g(x))\le R$. 
		\item A coarse map $f: (X,d_X) \to (Y,d_Y)$ is called a \emph{coarse equivalence} if there exists another coarse map $g: (Y,d_Y) \to (X,d_X)$ such that $f\circ g$ and $g\circ f$ are close to identities, where $g$ is called a \emph{coarse inverse} to $f$. It is clear that $f$ is a coarse equivalence \emph{if and only if} it is a coarse embedding and $f(X)$ is a net in $Y$. 
		\item $(X,d_X)$ and $(Y,d_Y)$ are said to be \emph{coarsely equivalent} if there exists a coarse equivalence from $X$ to $Y$. 
	\end{enumerate}
\end{defn}

For families of metric spaces and maps, we also need the following notions. 

\begin{defn}\label{defn:basic notions coarse disjoint union}
	Let $\{(X_n,d_{X_n})\}_{n\in \NN}$ be a sequence of finite metric spaces. A \emph{coarse disjoint union} of $\{(X_n,d_{X_n})\}$ is a metric space $(X,d_X)$ where $X$ is the disjoint union of $\{X_n\}$ as a set, and $d_X$ is a metric on $X$ satisfying:
	\begin{itemize}
		\item the restriction of $d_X$ on $X_n$ coincides with $d_{X_n}$;
		\item $d_X(X_n,X\setminus X_n)\rightarrow \infty $ as $n\rightarrow \infty$.
	\end{itemize}
	Note that any two such metric $d_X$ are coarsely equivalent. We say that a sequence $\{(X_n,d_{X_n})\}_{n\in \NN}$ has \emph{uniformly bounded geometry} if its coarse disjoint union has bounded geometry.
\end{defn}

\begin{defn}\label{defn:basic notions uniformly coarse embedding}
	A family of maps $\{f_i: X_i \to Y_i\}_{i\in I}$ between metric spaces is called a \emph{uniformly coarse embedding} if there are non-decreasing proper functions $\rho_\pm: [0,\infty) \to [0,\infty)$ such that 
	\[
	\rho_-(d_{X_i}(x,y)) \leq d_{Y_i}(f_i(x),f_i(y)) \leq \rho_+(d_{X_i}(x,y))
	\]
	for all $i\in I$ and $x,y\in X_i$. We say that \emph{$\{X_i\}_{i\in I}$ uniformly coarsely embeds into Hilbert spaces} if there exists a uniformly coarse embedding $\{f_i: X_i \to E_i\}_{i\in I}$ where each $E_i$ is a Hilbert space.
\end{defn}

It is clear that a sequence of finite metric spaces $\{X_n\}_{n\in \NN}$ uniformly coarsely embeds into Hilbert spaces \emph{if and only if} its coarse disjoint union $\bigsqcup_{n} X_n$ coarsely embeds into some Hilbert space.

\subsection{Roe algebras and quasi-local algebras}
For a proper metric space $(X,d_X)$, recall that an \emph{$X$-module} is a non-degenerate $\ast$-representation $C_0(X) \to \B(\H_X)$ for some infinite-dimensional separable Hilbert space $\H_X$. We also say that $\H_X$ is an $X$-module if the representation is clear from the context. An $X$-module is called \emph{ample} if no non-zero element of $C_0(X)$ acts as a compact operator on $\H_X$. Note that every proper metric space $X$ admits an ample $X$-module.

Let $\H_X$ and $\H_Y$ be ample modules of proper metric spaces $X$ and $Y$, respectively. Given an operator $T\in \B(\H_X,\H_Y)$, the \emph{support} of $T$ is defined to be
\[
\supp(T):=\big\{(y,x)\in Y\times X: \chi_V T\chi_U \neq 0 \mbox{~for~all~neighbourhoods~}U\mbox{~of~}x\mbox{~and~}V\mbox{~of~}y\big\}.
\]
When $X=Y$, the \emph{propagation} of $T \in \B(\H_X)$ is defined to be
\[
\ppg(T):=\sup\{d_X(x,y): (x,y)\in \supp(T)\}.
\]
We say that an operator $T\in \B(\H_X)$ has \emph{finite propagation} if $\ppg(T)$ is finite, and $T$ is \emph{locally compact} if $fT$ and $Tf$ are compact for all $f\in C_0(X)$ (which is equivalent to that both $\chi_K T$ and $T\chi_K$ are compact for all compact subset $K \subseteq X$).

\begin{defn}\label{defn:Roe alg}
	For a proper metric space $X$ and an ample $X$-module $\H_X$, the \emph{algebraic Roe algebra} $\CC[\H_X]$ of $\H_X$ is defined to be the $*$-algebra of locally compact finite propagation operators on $\H_X$, and the \emph{Roe algebra $C^*(\H_X)$} of $\H_X$ is defined to be the norm-closure of $\CC[\H_X]$ in $\B(\H_X)$. 
\end{defn}

It is a standard result that the Roe algebra $C^*(\H_X)$ does not depend on the chosen ample module $\H_X$ up to $*$-isomorphisms, hence denoted by $C^*(X)$ and called the \emph{Roe algebra of $X$}. Furthermore, $C^*(X)$ is a coarse invariant of the metric space $X$ (up to non-canonical $*$-isomorphisms), and their K-theories are coarse invariants up to canonical isomorphisms (see, \emph{e.g.}, \cite{roe1993coarse}).

Now we move on to the case of quasi-locality. 

\begin{defn}\label{defn:quasi-locality}
Given a proper metric space $(X,d_X)$ and an ample $X$-module $\H_X$, an operator $T\in \B(\H_X)$ is said to be \emph{quasi-local} if for any $\varepsilon>0$, there exists $R>0$ such that $T$ has \emph{ $(\varepsilon,R)$-propagation}, \emph{i.e.}, for any Borel sets $A,B\subseteq X$ with $d_X(A,B)\geq R$, we have $\| \chi_A T\chi_B\| < \varepsilon$. 
\end{defn}

It is clear that the set of all locally compact quasi-local operators on $\H_X$ forms a $C^*$-subalgebra of $\B(\H_X)$, which leads to the following:

\begin{defn}\label{defn:quasi-local algebra}
	For a proper metric space $X$ and an ample $X$-module $\H_X$, the set of all locally compact quasi-local operators on $\H_X$ is called the \emph{quasi-local algebra of $\H_X$}, denoted by $C^*_q(\H_X)$. 
\end{defn}

As in the case of Roe algebras, we now show that quasi-local algebras do not depend on the chosen ample modules either.

Let $X$ and $Y$ be proper metric spaces and $\H_X, \H_Y$ be ample modules, respectively. Let $f:X \to Y$ be a coarse map. Recall that a \emph{covering isometry} for $f$ is an isometry $V:\H_X \rightarrow \H_Y$ such that $\supp(V)\subseteq \{(y,x):d_Y(y,f(x))\le C\}$ for some $C\ge 0$. In this case, we also say that $V$ \emph{covers $f$}. It is shown in \cite[Proposition 4.3.4]{willett2020higher} that covering isometries always exist. Following the case of Roe algebras, we have:

\begin{prop}\label{prop:coarse invariance}
	Let $\H_X$ and $\H_Y$ be ample modules for proper metric spaces $X$ and $Y$, respectively. Let $f: X \to Y$ be a coarse map with a covering isometry $V: \H_X\rightarrow \H_Y$. Then $V$ induces the following $\ast$-homomorphism
	\[
	\Ad_V: C^*_q(\H_X)\longrightarrow C^*_q(\H_Y), \quad T\mapsto VTV^*.
	\]
	Furthermore, the induced K-thoeretic map $(\Ad_V)_*: K_*(C^*_q(\H_X)) \to K_*(C^*_q(\H_Y))$ does not depend on the choice of the covering isometry $V$, hence denoted by $f_*$.
\end{prop}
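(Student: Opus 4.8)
The plan is to follow the classical template for coarse invariance of Roe algebras (as in Higson--Roe or Willett--Yu), making the necessary adaptations for quasi-locality. There are three things to verify: (1) that $\Ad_V$ maps $C^*_q(\H_X)$ into $C^*_q(\H_Y)$, (2) that it is a well-defined $\ast$-homomorphism (which is automatic once (1) holds, since $V$ is an isometry so $T \mapsto VTV^*$ is linear, $\ast$-preserving and contractive, though not multiplicative unless $VV^*$ commutes with the image — so really one checks it lands in the right algebra and is a $\ast$-homomorphism onto its image, or more precisely uses that $V^*V = \Id$), and (3) the $K$-theoretic independence from $V$.

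For step (1), let $T \in C^*_q(\H_X)$ and fix $\varepsilon > 0$. Since $V$ covers $f$, there is $C \ge 0$ with $\supp(V) \subseteq \{(y,x) : d_Y(y,f(x)) \le C\}$, hence also $\supp(V^*) \subseteq \{(x,y) : d_Y(y,f(x)) \le C\}$. Choose $R > 0$ so that $T$ has $(\varepsilon, R)$-propagation. Because $f$ is uniformly expansive with expansion function $\rho_f$, pick $R' > 0$ large enough that $d_Y(A,B) \ge R'$ forces $d_X(f^{-1}(\Nd_C(A)), f^{-1}(\Nd_C(B))) \ge R$; concretely it suffices that $\rho_f$ separates the relevant preimages, and since $\rho_f$ is finite-valued and non-decreasing one can take $R' = 2C + \rho_f^{-1}$-type bound — I will spell out the inequality $d_X(x,x') \ge R$ whenever $f(x) \in \Nd_C(A)$, $f(x') \in \Nd_C(B)$ and $d_Y(A,B)$ is large, using that $d_Y(f(x),f(x')) \ge d_Y(A,B) - 2C$ and $d_Y(f(x),f(x')) \le \rho_f(d_X(x,x'))$. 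Then for Borel sets $A, B \subseteq Y$ with $d_Y(A,B) \ge R'$, write $\chi_A (VTV^*) \chi_B$. The key computation is that $\chi_A V = \chi_A V \chi_{f^{-1}(\Nd_C(A))}$ and similarly $V^* \chi_B = \chi_{f^{-1}(\Nd_C(B))} V^* \chi_B$, by the support condition on $V$; hence $\chi_A V T V^* \chi_B = \chi_A V \chi_{f^{-1}(\Nd_C(A))} T \chi_{f^{-1}(\Nd_C(B))} V^* \chi_B$, and $\|\chi_A V T V^* \chi_B\| \le \|\chi_{f^{-1}(\Nd_C(A))} T \chi_{f^{-1}(\Nd_C(B))}\| < \varepsilon$ since these two subsets are $R$-disjoint. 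This shows $VTV^*$ is quasi-local. Local compactness of $VTV^*$ follows from local compactness of $T$ together with the facts that $V^* \chi_K$ is supported near $f^{-1}(\Nd_C(K))$ (a bounded, hence relatively compact, set) and $V$ is bounded — so $\chi_K V T V^*$ and $V T V^* \chi_K$ are compact. That $\Ad_V$ is a $\ast$-homomorphism uses $V^*V = \Id_{\H_X}$: $(VTV^*)(VSV^*) = VT(V^*V)SV^* = V(TS)V^*$.

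For step (3), the standard argument: given two covering isometries $V_1, V_2$ for $f$, form the $2\times 2$ operator matrix trick. The operator $W = \begin{pmatrix} V_1 & 0 \\ V_2 & 0\end{pmatrix}$ (suitably interpreted as a map $\H_X \oplus \H_X \to \H_Y \oplus \H_Y$ after identifying a second copy, or rather one uses that $\begin{pmatrix} V_1^* V_1 & V_1^* V_2 \\ V_2^* V_1 & V_2^* V_2 \end{pmatrix}$-type considerations) — more cleanly: $V_1 \oplus V_2$ and the ``swap'' show that $\Ad_{V_1}$ and $\Ad_{V_2}$ are conjugate by a unitary in the multiplier algebra of $M_2(C^*_q(\H_Y))$, or that $\Ad_{V_1} \oplus 0$ and $\Ad_{V_2} \oplus 0$ are unitarily equivalent via a finite-propagation unitary, hence induce the same map on $K$-theory; one also invokes that adding a degenerate $0$ summand does not change the $K$-theory map. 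Here one needs the technical input that $V_1 V_2^*$ has ``finite propagation'' in the appropriate sense (its support lies within bounded distance of the diagonal of $Y \times Y$, because both $V_i$ cover the same $f$), so that the relevant unitary implementing the equivalence is a multiplier of the quasi-local algebra. I expect step (3) to be the main obstacle in terms of bookkeeping: one must be careful that all intertwining operators genuinely multiply $C^*_q(\H_Y)$ into itself (this is where quasi-locality rather than finite propagation requires a small argument, analogous to step (1)), and that the comparison does not secretly use ampleness in a way that fails — but ampleness is available by hypothesis, and it is exactly what guarantees enough covering isometries and the absorption of degenerate representations. I would structure the write-up by first doing step (1) carefully, then remarking that $\ast$-homomorphism property is immediate, then citing the Roe-algebra version of the $K$-theory independence argument (e.g.\ \cite[Theorem 4.3.x]{willett2020higher}) and pointing out the single place where quasi-locality must replace finite propagation, using the same $\Nd_C$-neighbourhood estimate as in step (1).
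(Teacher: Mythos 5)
Your proposal is correct and follows essentially the same route as the paper: the same neighbourhood/expansion-function estimate ($d_Y(A,B)\ge 2C+\rho_f(R)+1$ forces the preimages of the $C$-neighbourhoods to be $R$-separated, so $\chi_A V T V^*\chi_B = \chi_A V \chi_{f^{-1}(\Nd_C(A))} T \chi_{f^{-1}(\Nd_C(B))} V^*\chi_B$ is small) gives quasi-locality of $VTV^*$, local compactness is handled as in the Roe case, and the $K$-theoretic independence is the standard Roe-algebra argument, with the only new point being exactly the one you identify (the intertwiners built from two covering isometries have support near the diagonal, hence multiply $C^*_q(\H_Y)$ into itself). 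The single technicality you gloss over, which the paper dispatches in one line, is replacing $f$ by a Borel coarse map close to it so that sets such as $f^{-1}(\Nd_C(A))$ are Borel and their characteristic functions act on the module.
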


\begin{proof}
Note that there exists a Borel coarse map close to $f$ by \cite[Lemma A.3.12]{willett2020higher}, hence without loss of generality, we can assume that $f$ is Borel and $V$ covers $f$.

Following the same argument as in the Roe case (see, \emph{e.g.}, \cite[Lemma 5.1.12]{willett2020higher}), $VTV^*$ is locally compact. Fix a $t_0>0$ such that $\supp(V)\subseteq\{(y,x) :d_Y(y,f(x))< t_0\}$. For any $\varepsilon>0$, the quasi-locality of $T$ implies that there exists a $R_0>0$ such that $T$ has $(\varepsilon,R_0)$-propagation. We set $R=2t_0 +\rho_f(R_0)+1$ where $\rho_f$ is defined in Equation (\ref{EQ:expansion function}). For any Borel sets $C,D\subseteq Y$ with $d_Y(C,D)\ge R$, it is clear that $d_Y(\Nd_{t_0}(C),\Nd_{t_0}(D))\ge \rho_f(R_0)+1> \rho_f(R_0)$ and hence $d_X(f^{-1}(\Nd_{t_0}(C)),f^{-1}(\Nd_{t_0}(D)))\ge R_0$. Since $V$ covers $f$, we obtain:
	\[
	\chi_CV=\chi_CV\chi_{f^{-1}(\Nd_{t_0}(C))} \quad \mathrm{and} \quad V^*\chi_D=\chi_{f^{-1}(\Nd_{t_0}(D))}V^*\chi_D.
	\]
	Hence
	\[
	\| \chi_C VTV^*\chi_D\| = \| \chi_CV\chi_{f^{-1}(\Nd_{t_0}(C))}T\chi_{f^{-1}(\Nd_{t_0}(D))}V^*\chi_D\| \leq \|\chi_{f^{-1}(\Nd_{t_0}(C))}T\chi_{f^{-1}(\Nd_{t_0}(D))}\| < \varepsilon,
	\]
	which implies that $VTV^*$ is quasi-local.
	
	The second statement follows almost the same argument as in the case of Roe algebra (see, \emph{e.g.}, \cite[Lemma 5.1.12]{willett2020higher}), hence omitted. 
\end{proof}

It is shown in \cite[Proposition 4.3.5]{willett2020higher} that for a coarse equivalence $f: X \to Y$, we can always choose an isometry $V: \H_X \to \H_Y$ covering $f$ such that $V$ is a unitary. Consequently, we obtain the following:

\begin{cor}\label{cor:coarse invariance}
Let $\H_X$ and $\H_Y$ be ample modules for proper metric spaces $X$ and $Y$, respectively. If $X$ and $Y$ are coarsely equivalent, then the quasi-local algebra $C_q^*(\H_X)$ is $\ast$-isomorphic to $C_q^*(\H_Y)$. In particular, for a proper metric space $X$ the quasi-local algebra $C_q^*(\H_X)$ does not depend on the chosen ample $X$-module $\H_X$ up to $\ast$-isomorphisms, hence called \emph{the quasi-local algebra of $X$} and denoted by $C^*_q(X)$.
\end{cor}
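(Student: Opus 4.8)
The goal is to establish Corollary~\ref{cor:coarse invariance}: a coarse equivalence induces a $\ast$-isomorphism of quasi-local algebras, and consequently $C^*_q(\H_X)$ is independent of the ample module. The plan is to bootstrap directly from Proposition~\ref{prop:coarse invariance}, exactly mirroring the argument used for Roe algebras in \cite{willett2020higher}.

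\begin{proof}
Suppose $f:X\to Y$ is a coarse equivalence. By \cite[Proposition 4.3.5]{willett2020higher}, we may choose a \emph{unitary} covering isometry $U:\H_X\to\H_Y$ for $f$. By Proposition~\ref{prop:coarse invariance}, conjugation by $U$ gives a $\ast$-homomorphism $\Ad_U:C^*_q(\H_X)\to C^*_q(\H_Y)$, $T\mapsto UTU^*$. Since $U$ is unitary, the map $S\mapsto U^*SU$ is its inverse; one only needs to check that $U^*SU\in C^*_q(\H_X)$ for $S\in C^*_q(\H_Y)$, which follows by applying Proposition~\ref{prop:coarse invariance} to the covering isometry $U^*$ (this covers a coarse inverse $g$ of $f$, since $\supp(U^*)$ is the ``transpose'' of $\supp(U)$, hence is contained in a bounded neighbourhood of the graph of $g$; alternatively, one checks the quasi-locality estimate for $U^*SU$ directly, verbatim as in the proof of Proposition~\ref{prop:coarse invariance} with the roles of $C,D$ and the neighbourhoods interchanged). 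Thus $\Ad_U$ is a $\ast$-isomorphism $C^*_q(\H_X)\cong C^*_q(\H_Y)$.

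For the ``in particular'' statement, let $\H_X$ and $\H_X'$ be two ample $X$-modules. Apply the first part to the coarse equivalence $\mathrm{id}_X:(X,d_X)\to(X,d_X)$, using the ample module $\H_X$ on the source and $\H_X'$ on the target: \cite[Proposition 4.3.5]{willett2020higher} supplies a unitary $U:\H_X\to\H_X'$ covering $\mathrm{id}_X$ (its support lies within finite distance of the diagonal), and $\Ad_U$ is then the desired $\ast$-isomorphism $C^*_q(\H_X)\cong C^*_q(\H_X')$. Hence $C^*_q(\H_X)$ is well-defined up to $\ast$-isomorphism independently of the ample module, and we write $C^*_q(X)$ for it.
\end{proof}

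The only genuine point requiring care—the ``main obstacle,'' such as it is—is verifying that the inverse map $\Ad_{U^*}$ lands back in the quasi-local algebra, i.e.\ that the estimate in Proposition~\ref{prop:coarse invariance} is symmetric in $X$ and $Y$. This is immediate once one observes $\supp(U^*)=\{(x,y):(y,x)\in\supp(U)\}$, so $U^*$ is a covering isometry for a Borel coarse inverse of $f$; there is no new analytic content beyond what Proposition~\ref{prop:coarse invariance} already provides. Everything else is the standard ``change of module'' dictionary, identical to the Roe-algebra case, so no new techniques are needed here.
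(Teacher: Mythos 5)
Your proposal is correct and follows essentially the same route as the paper: choose a unitary covering isometry via \cite[Proposition 4.3.5]{willett2020higher} and apply Proposition~\ref{prop:coarse invariance} in both directions, noting that $U^*$ covers a coarse inverse of $f$. The paper leaves the invertibility check implicit, and your observation about $\supp(U^*)$ being the transpose of $\supp(U)$ correctly fills in that routine detail.
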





\section{Strongly quasi-local algebras}\label{sec:sq}

In this section, we introduce a new class of operator algebras which are called the strongly quasi-local algebras. They sit between Roe algebras and quasi-local algebras and their K-theories will be the main focus of the paper. Here we study their basic properties and coarse geometric features.

Let us begin with some more notions:

\begin{defn}\label{defn:variation}
	Let $(X,d_X), (Y,d_Y)$ be metric spaces and $g:X\rightarrow Y$ be a map.
	\begin{enumerate}
	   \item Given $L>0$, we say that $f$ is \emph{$L$-Lipschitz} if $d_Y(g(x),g(y)) \le Ld_X(x,y)$ for any $x,y\in X$.
		\item Given $A\subseteq X$, $\varepsilon>0$ and $R>0$, we say that $g$ has \emph{$(\varepsilon,R)$-variation on $A$} if for any $x,y\in A$ with $d_X(x,y) < R$, we have $d_Y(g(x),g(y))< \varepsilon$. When $A=X$, we also say that $g$ has \emph{$(\varepsilon,R)$-variation}.
	\end{enumerate}	
\end{defn}

\begin{defn}\label{defn:higson function}
Let $g: X \to \CC$ be a continuous function on a metric space $(X,d_X)$.
\begin{enumerate}
	   \item We say that $g$ is \emph{bounded} if its norm $\|g\|_\infty:=\sup_{x\in X}|g(x)| < \infty$. Denote the set of all bounded continuous functions on $X$ by $C_b(X)$, and by $C_b(X)_1$ the subset consisting of functions with norm at most $1$.
       \item We say that $g$ is a \emph{Higson function} if $g\in C_b(X)$ and for any $\varepsilon>0$ and $R>0$, there exists a compact subset $K\subset X$ such that $g$ has $(\varepsilon,R)$-variation on $X\setminus K$. Denote $C_h(X)$ the set of all Higson functions on $X$.
\end{enumerate}	
\end{defn}

Our notion of strong quasi-locality is inspired by the following result partially from \cite[Theorem 2.8]{ST19}. Recall that for operators $T,S\in \B(\H)$ on some Hilbert space $\H$, their \emph{commutator} is defined to be $[T,S]:=TS-ST$.

\begin{prop}\label{prop:pictures of quasi-locality}
	Let $X$ be a proper metric space, $\H_X$ an ample $X$-module and $T\in\B(\H_X)$ be a locally compact operator. Then the following are equivalent:
	\begin{enumerate}
		\item $T$ is quasi-local in the sense of Definition \ref{defn:quasi-locality};
		\item For any $\varepsilon>0$, there exists $L>0$ such that for any $L$-Lipschitz function $g\in C_b(X)_1$ we have $\|[T,g]\|<\varepsilon$; 
		\item For any $\varepsilon>0$, there exist $\delta,R>0$ such that for any function $g\in C_b(X)_1$ with $(\delta,R)$-variation we have $\|[T,g]\|<\varepsilon$;
		\item $[T,h]$ is a compact operator for any $h\in C_h(X)$.
	\end{enumerate}
\end{prop}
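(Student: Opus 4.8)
The plan is to prove the cyclic chain of implications $(1)\Rightarrow(3)\Rightarrow(2)\Rightarrow(4)\Rightarrow(1)$, since $(3)\Rightarrow(2)$ is essentially immediate (an $L$-Lipschitz function in $C_b(X)_1$ has $(\delta,R)$-variation whenever $LR\le\delta$) and the substance lies in the other three steps.

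For $(1)\Rightarrow(3)$: given $\varepsilon>0$, use quasi-locality to pick $R>0$ so that $T$ has $(\varepsilon/(2N),R)$-propagation for a suitable integer $N$, and set $\delta=\varepsilon/(2N)$ or similar. The idea is to take a function $g\in C_b(X)_1$ with $(\delta,R)$-variation and, using a partition of $[-1,1]$ (or rather the range, viewed in $\CC$, into finitely many pieces of diameter $<\delta$), write $g$ as (approximately) a combination of characteristic functions of the preimages $A_k=g^{-1}(Q_k)$ of these pieces. These sets are ``$R$-separated up to the variation'': if $d_X(x,y)<R$ with $x\in A_k$, $y\in A_j$, then $|g(x)-g(y)|<\delta$, forcing $k,j$ to be close in the partition, so the cross terms $\chi_{A_k}T\chi_{A_j}$ for $|k-j|$ large are small by $(\varepsilon,R)$-propagation. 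A telescoping/summation-by-parts estimate on $[T,g]=\sum$ then gives $\|[T,g]\|<\varepsilon$. The bookkeeping here — getting the constants to line up and handling that finitely many ``nearby'' pieces overlap in a controlled way — is the one genuinely fiddly computation, but it is the standard device from \cite{ST19} and \cite{SZ20}.

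For $(2)\Rightarrow(4)$: let $h\in C_h(X)$; normalize so $h\in C_b(X)_1$. Given $\varepsilon>0$, apply $(2)$ to get $L>0$. The Higson condition produces, for each relevant scale, a compact $K$ outside which $h$ has small variation; one approximates $h$ off a compact set by an $L$-Lipschitz function $g$ with $\|h-g\|$ small on $X\setminus K$ (a coarsening/smoothing of $h$ at scale $1/L$), while on the compact part $h-g$ is supported near $K$ so $[T,h-g]$ is compact because $T$ is locally compact (products of $T$ with $C_0$ functions are compact, and $\chi_K$-type cutoffs of $[T,h-g]$ are too). Thus $[T,h]$ is within $\varepsilon$ (in norm, modulo compacts) of $[T,g]$, whose norm is $<\varepsilon$; letting $\varepsilon\to0$ shows $[T,h]$ lies in the norm closure of the compacts, i.e.\ is compact. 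The main obstacle in this step is the approximation of a Higson function by a Lipschitz function with controlled error; one should be careful that ``Lipschitz'' is at a scale depending on $\varepsilon$, and that the error near the compact core is absorbed by local compactness of $T$ rather than by smallness.

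For $(4)\Rightarrow(1)$: suppose $T$ is not quasi-local, so there is $\varepsilon_0>0$ and, for every $n$, Borel sets $A_n,B_n$ with $d_X(A_n,B_n)\ge n$ but $\|\chi_{A_n}T\chi_{B_n}\|\ge\varepsilon_0$. Passing to a subsequence and using properness/bounded geometry one arranges the pairs $(A_n,B_n)$ to ``march off to infinity'' and be mutually far apart, then builds a single Higson function $h$ taking value (roughly) $1$ on $A_n$ and $0$ on $B_n$ for each $n$ (possible since the sets are far apart and escape every compact set, so the prescribed jumps happen only outside larger and larger compacta — exactly the Higson condition); interpolate continuously in between. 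Then $\|[T,h]\chi_{B_n}\|\ge\|\chi_{A_n}(Th-hT)\chi_{B_n}\|=\|\chi_{A_n}T\chi_{B_n}\|\ge\varepsilon_0$ for all $n$ with the $\chi_{B_n}$ supported off any fixed compact set, contradicting compactness of $[T,h]$. The delicate point is choosing the subsequence so the sets are simultaneously escaping to infinity and pairwise separated — this uses that $X$ is proper, and possibly that bounded sets can be exhausted by compacts — so that the interpolating $h$ genuinely satisfies the Higson variation condition; this is the step I expect to require the most care.

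Overall, I expect step $(1)\Rightarrow(3)$ to carry the heaviest computational burden and step $(4)\Rightarrow(1)$ to be the most delicate conceptually (constructing the right Higson function via a diagonal argument), while $(2)\Rightarrow(4)$ is the place where local compactness of $T$ — as opposed to mere boundedness — is essential.
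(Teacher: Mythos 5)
Your cyclic chain $(1)\Rightarrow(3)\Rightarrow(2)\Rightarrow(4)\Rightarrow(1)$ is a reasonable architecture; note that the paper itself offers no argument here (it cites the ``easier part'' of [ST19, Theorem 2.8] and observes that $(1)\Leftrightarrow(3)$ follows by the same method as $(1)\Leftrightarrow(2)$), so you are in effect reconstructing those arguments. Your $(1)\Rightarrow(3)$ and $(3)\Rightarrow(2)$ are fine. In $(2)\Rightarrow(4)$ the approximation you invoke is true but should be made precise: if $h\in C_b(X)_1$ has $(\delta,R)$-variation on $X\setminus K$, then $|h(x)-h(y)|\le \tfrac{2}{R}\,d_X(x,y)+\delta$ for $x,y\in X\setminus K$, so the infimal convolution $g(x)=\inf_{y\in X\setminus K}\{\mathrm{Re}\,h(y)+\tfrac{2}{R}d_X(x,y)\}$ (and its imaginary counterpart) is $\tfrac{2}{R}$-Lipschitz, lies within $\delta$ of $h$ on $X\setminus K$, and extends to all of $X$ with the same Lipschitz constant by McShane extension; taking $R$ comparable to $1/L$ and $\delta$ small, and absorbing $[T,(h-g)\chi_{K}]$ by local compactness, your argument closes. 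It is exactly the availability of \emph{scalar} Lipschitz extension that makes this route legitimate, in contrast with the $\K(\HH)$-valued situation of Remark \ref{rem:obstructions on Lipschitz extension}.

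The genuine gap is in $(4)\Rightarrow(1)$, at the sentence claiming that ``passing to a subsequence and using properness/bounded geometry'' one arranges the pairs $(A_n,B_n)$ to march off to infinity and be mutually far apart. (Bounded geometry is not a hypothesis of the proposition, but that is the lesser issue.) No subsequence trick achieves this: the negation of quasi-locality does not prevent, say, every $A_n$ from containing a fixed point or hugging a fixed compact set, and then the prescribed values of $h$ conflict near that set and the ``jumps'' of $h$ do not occur at infinity, so no Higson function with the required properties exists without first modifying the sets. The escape to infinity must be manufactured from the \emph{local compactness of $T$}, not from properness of $X$: first replace $A_n,B_n$ by bounded subsets retaining $\|\chi_{A_n}T\chi_{B_n}\|\ge\varepsilon_0/2$ (approximate the norming vectors by compactly supported ones); then, for a given compact $K$, note that since $d_X(A_n,B_n)\ge n$ at most one of the two sets meets $K$ once $n>\diam(K)$, and if $A_n$ meets $K$ then $B_n\subseteq X\setminus B(x_0;n-C_K)$, whence $\|\chi_{A_n\cap K}T\chi_{B_n}\|\le\|\chi_K T\chi_{X\setminus B(x_0;n-C_K)}\|\to 0$ because $\chi_K T$ is compact and $\chi_{X\setminus B(x_0;r)}\to 0$ strongly (symmetrically one uses $T\chi_K$ when $B_n$ meets $K$); so the witnessing pair can be trimmed off any prescribed compact set at the cost of, say, $\varepsilon_0/4$. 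Only after iterating this over an exhaustion of $X$ by compacta do you obtain pairs that escape and can be made mutually far apart, so that your sum-of-bumps function $h=\sum_k\max\big(0,1-d_X(\cdot,A_{n_k})/R_k\big)$ is genuinely a Higson function and $\|\chi_{A_{n_k}}[T,h]\chi_{B_{n_k}}\|\ge\varepsilon_0/4$ with $\chi_{B_{n_k}}\to 0$ strongly contradicts compactness of $[T,h]$. In particular, contrary to your closing remark, local compactness of $T$ is essential in this step as well; as written, the construction of $h$ does not go through.
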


Note that the equivalence among (1), (2) and (4) are the ``easier'' part of \cite[Theorem 2.8]{ST19}. And also note that the equivalence between (1) and (3) can be proved using the same argument therein to show ``(1) $\Leftrightarrow$ (2)'', hence omitted.

\subsection{Strong quasi-locality}\label{ssec:strong quasi-locality}

Now we introduce the notion of strong quasi-locality, where we consider compact operator valued functions instead of complex valued ones in Proposition \ref{prop:pictures of quasi-locality}(3). 

\emph{Throughout the rest of the paper, we only consider proper discrete metric spaces to simplify the notation. We also fix an infinite-dimensional separable Hilbert space $\HH$.}

Let $X$ be a proper discrete metric space and $\H_X$ be an ample $X$-module. For each $x\in X$, denote $\H_x:= \chi_{ \{x\} } \H_X$. An operator $S\in \B(\H_X \otimes \HH)$ can be regarded as an $X$-by-$X$ matrix $(S_{x,y})_{x,y\in X}$, where $S_{x,y} \in \B(\H_{y} \otimes \HH , \H_{x} \otimes \HH)$. Denote $\K(\HH)$ the $C^*$-algebra of compact operators on $\HH$, and $\K(\HH)_1$ its closed unit ball (with respect to the operator norm).

Recall that a map $g:X \rightarrow \K(\HH)$ is \emph{bounded} if $\|g\|_\infty:=\sup_{x\in X}\|g(x)\|<\infty$. Given a bounded map $g:X \rightarrow \K(\HH)$, we define an operator $\Lambda(g)\in \B(\H_X \otimes \HH)$ by setting its matrix entry as follows:
\begin{eqnarray}\label{EQ:Lambda}
\Lambda(g)_{x,y}:=
\begin{cases}
~\Id_{\H_x} \otimes g(x), & y=x; \\ 
~0, & \mbox{otherwise}.
\end{cases}
\end{eqnarray}
Note that this is a block-diagonal operator with respect to the decomposition $\H_X\otimes \HH=\bigoplus_{x\in X} (\H_x\otimes \HH)$. We also write $\Lambda_{\H_X}(g)$ instead of $\Lambda(g)$ when we want to emphasise the module $\H_X$ involved.

The following is the main concept of this paper:

\begin{defn}\label{defn:strongly quasi-local algebra}
Let $X$ be a proper discrete metric space and $\H_X$ be an ample $X$-module. An operator $T\in \B(\H_X)$ is called \emph{strongly quasi-local} if for any $\varepsilon > 0$ there exist $\delta, R >0$ such that for any map $g:X \rightarrow \K(\HH)_1$ with $(\delta, R)$-variation, we have
\begin{equation}\label{EQ:commut strong quasi-local}
\big\| [T\otimes \Id_{\HH} , \Lambda(g) ] \big\|_{ \B (\H_X \otimes \HH)  }< \varepsilon.
\end{equation}
It is easy to see that the set of all locally compact strongly quasi-local operators on $\H_X$ forms a $C^*$-algebra, hence called the \emph{strongly quasi-local algebra of $\H_X$} and denoted by $\Csq(\H_X)$.
\end{defn}

\begin{rem}\label{rem:entry for strong quasi-local}
A direct calculation shows that for $x,y\in X$, the $xy$-matrix entry of the commutator $[T\otimes Id_{\HH} , \Lambda(g) ]$ in Inequality (\ref{EQ:commut strong quasi-local}) is given by:
\begin{equation}\label{EQ:entry for strong quasi-local}
[T\otimes \Id_{\HH} , \Lambda(g) ]_{x,y}=T_{x,y} \otimes \big( g(y)-g(x) \big).
\end{equation}
\end{rem}

The following result records the relation amongst Roe algebras, quasi-local algebras and strongly quasi-local algebras.

\begin{prop}\label{prop:relations between Roe, QL and SQL}
Let $X$ be a proper discrete metric space and $\H_X$ be an ample $X$-module. Then we have:
\begin{enumerate}
  \item $\Csq(\H_X) \subseteq C^*_q(\H_X)$;
  \item If $X$ has bounded geometry, then $C^*(\H_X) \subseteq \Csq(\H_X)$;
  \item If $X$ has bounded geometry and Property A, then $C^*(\H_X) = \Csq(\H_X) = C^*_q(\H_X)$.
\end{enumerate}
\end{prop}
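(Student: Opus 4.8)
The plan is to prove the three statements in increasing order of difficulty, using the matrix-entry formula from Remark~\ref{rem:entry for strong quasi-local} throughout, namely $[T\otimes\Id_{\HH},\Lambda(g)]_{x,y}=T_{x,y}\otimes(g(y)-g(x))$. For (1), I would show that strong quasi-locality implies quasi-locality by specialising the test functions $g:X\to\K(\HH)_1$ to scalar multiples of a fixed rank-one projection. Concretely, given $\varepsilon>0$, pick $\delta,R>0$ witnessing strong quasi-locality; for Borel sets $A,B$ with $d_X(A,B)\ge R$, consider $g(x)=\chi_B(x)\,p$ for a rank-one projection $p\in\K(\HH)_1$. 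This $g$ has $(\delta,R)$-variation (it only jumps between points at distance $\ge R$, where the required $\delta$-bound is vacuous), so $\|[T\otimes\Id_{\HH},\Lambda(g)]\|<\varepsilon$; restricting to the appropriate corner recovers $\|\chi_A T\chi_B\otimes p\|=\|\chi_A T\chi_B\|<\varepsilon$. Hence $T$ is quasi-local, and since elements of $\Csq(\H_X)$ are locally compact by definition, $\Csq(\H_X)\subseteq C_q^*(\H_X)$.

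For (2), since both sides are $C^*$-algebras it suffices to show that every locally compact finite-propagation operator $T$ is strongly quasi-local, i.e. that $\CC[\H_X]\subseteq\Csq(\H_X)$. Suppose $\ppg(T)=S<\infty$. Given $\varepsilon>0$, set $R=S$ and choose $\delta=\varepsilon/(N+1)$ where $N$ is the bounded-geometry constant controlling $|B(x;S)|$ — here bounded geometry is exactly what is used. For any $g:X\to\K(\HH)_1$ with $(\delta,R)$-variation, the entry formula gives $[T\otimes\Id_{\HH},\Lambda(g)]_{x,y}=T_{x,y}\otimes(g(y)-g(x))$, which vanishes unless $d_X(x,y)\le S$, in which case $\|g(y)-g(x)\|<\delta$. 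A standard estimate for the norm of an operator in terms of its matrix entries (the number of nonzero entries in each row/column is at most $N$, and $\|T_{x,y}\|\le\|T\|$) then yields $\|[T\otimes\Id_{\HH},\Lambda(g)]\|\le N\delta\|T\|$, which can be made $<\varepsilon$ after also incorporating $\|T\|$ into the choice of $\delta$. Thus $T$ is strongly quasi-local, and taking closures gives $C^*(\H_X)\subseteq\Csq(\H_X)$.

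Statement (3) follows by combining (1), (2) with the known equality $C^*(\H_X)=C_q^*(\H_X)$ for bounded-geometry spaces with Property A, which is the main theorem of \cite{SZ20}: the chain of inclusions $C^*(\H_X)\subseteq\Csq(\H_X)\subseteq C_q^*(\H_X)=C^*(\H_X)$ collapses. I expect the only genuine obstacle to be the bookkeeping in (2): one must be slightly careful that the row-and-column sparsity estimate for the operator norm is applied to $T\otimes\Id_{\HH}$ rather than $T$, and that the constant $N$ is chosen for radius $S$ (not some larger radius), but this is routine once the entry formula of Remark~\ref{rem:entry for strong quasi-local} is in hand. A minor point worth flagging in (1) is ensuring the scalar-valued test function genuinely has $(\delta,R)$-variation for \emph{every} $\delta$; since its only values are $0$ and $p$ and it is locally constant on each of $A$ and its complement at scale $R$ — more carefully, one should take $B$ itself to be $R$-separated from its complement within $\Nd_{R}(B)$, or simply note that the characteristic function of a set is $(\delta,R)$-variation-free precisely when that set is $R$-separated from its complement, so one may first enlarge $B$ harmlessly — this is the kind of detail to handle cleanly but presents no real difficulty.
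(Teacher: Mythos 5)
Parts (2) and (3) of your proposal are correct and essentially identical to the paper's own argument: for (2) the paper also takes $T$ of propagation at most $R$, uses the entry formula $[T\otimes\Id_{\HH},\Lambda(g)]_{x,y}=T_{x,y}\otimes(g(y)-g(x))$ together with the bounded-geometry bound $\big\|[T\otimes\Id_{\HH},\Lambda(g)]\big\|\le N\cdot\sup_{d(x,y)\le R}\|T_{x,y}\otimes(g(y)-g(x))\|$, and then passes to closures; for (3) it likewise quotes \cite{SZ20}. The genuine gap is in (1). Your test function $g(x)=\chi_B(x)\,p$ does \emph{not} have $(\delta,R)$-variation in general: the variation condition constrains \emph{all} pairs $x,y$ with $d_X(x,y)<R$, and $g$ jumps by norm $1$ between $B$ and every nearby point of $X\setminus B$, whereas the hypothesis $d_X(A,B)\ge R$ only separates $B$ from $A$, not from the rest of its complement. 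The repair you sketch (enlarge $B$ so that the enlarged set is $R$-separated from its complement) cannot work: take $X=\mathbb{Z}$, $B=\{0\}$, $A=\{x:|x|\ge R\}$. Any $B'\supseteq B$ for which your corner computation still yields $\chi_AT\chi_B$ must have $g=0$ on $A$, hence $B'\subseteq(-R,R)$; but then the point $\max B'+1$ lies outside $B'$ at distance $1$, so $\chi_{B'}$ fails to have $(\delta,R_0)$-variation for every $\delta<1$ and $R_0\ge 1$. Thus the characteristic-function shortcut breaks down no matter how $B$ is enlarged.

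The correct repair is to abandon indicator functions and interpolate slowly: given $\varepsilon>0$, let $\delta,R_0>0$ witness strong quasi-locality, choose $R>R_0/\delta$, and for Borel sets $A,B$ with $d_X(A,B)\ge R$ set $g(x):=\max\{0,\,1-d_X(x,B)/R\}\cdot p$. Then $g=p$ on $B$, $g=0$ on $A$, and $\|g(x)-g(y)\|\le d_X(x,y)/R<\delta$ whenever $d_X(x,y)<R_0$, so $g$ has $(\delta,R_0)$-variation and your corner argument gives $\|\chi_AT\chi_B\|<\varepsilon$. This is exactly the content you are implicitly re-deriving: the paper instead takes $\tilde g(x)=g(x)p$ for a scalar $g\in C_b(X)_1$ with $(\delta,R)$-variation, notes $[T\otimes\Id_{\HH},\Lambda(\tilde g)]=[T,g]\otimes p$, and invokes the characterisation of quasi-locality in Proposition \ref{prop:pictures of quasi-locality}(3), whose proof in \cite{ST19} contains precisely this Lipschitz-interpolation step. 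Either route works; yours just needs the interpolating function made explicit rather than the (false) claim that $\chi_B\,p$ has small variation.
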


\begin{proof}
 (1). Fix a rank-one projection $p \in \B(\HH)$. For $g\in C_b(X)_1$, we construct $\tilde{g}: X \rightarrow \K(\HH)_1$ by $\tilde{g}(x):=g(x)p$. Since $[T\otimes \Id_{\HH} , \Lambda(\tilde{g})] = [T,g] \otimes p$, the conclusion follows from the definition of strong quasi-locality and Proposition \ref{prop:pictures of quasi-locality}(3).
 
(2). Assume that $T \in \B(\H_X)$ has propagation at most $R$. Then for any $g:X\rightarrow \K(\HH)_1$, the commutator $[T\otimes \Id_{\HH} , \Lambda(g)]$ has propagation at most $R$ from (\ref{EQ:entry for strong quasi-local}). Since $X$ has bounded geometry, it is well-known (see, \emph{e.g.}, \cite[Lemma 12.2.4]{willett2020higher}) that there exists an $N$ depending on $R$ such that for any $g:X\rightarrow \K(\HH)_1$ we have:
\[
\big\|[T\otimes \Id_{\HH} , \Lambda(g)]\big\| \leq N \cdot \sup_{x,y\in X\atop d(x,y)\leq R} \big\|T_{x,y} \otimes \big( g(y)-g(x) \big)\big\|.
\]
This concludes the proof.

(3). It follows from \cite[Theorem 3.3]{SZ20} that $C^*(\H_X) = C^*_q(\H_X)$ under the given assumption, which (together with (1) and (2)) concludes the proof.
\end{proof}

Our next aim is to explore characterisations for strong quasi-locality as in Proposition \ref{prop:pictures of quasi-locality}. First  note that Definition \ref{defn:strongly quasi-local algebra} is a compact operator valued version of condition (3) therein. Unfortunately, we cannot find an appropriate substitute for condition (1) in Proposition \ref{prop:pictures of quasi-locality}. As for condition (2) therein, it is clear that the compact operator valued version is equivalent to strong quasi-locality provided the underlying space is uniformly discrete (\emph{i.e.}, there exists $C>0$ such that $d(x,y)>C$ for $x\neq y$ in $X$). However, it is unclear whether this holds in general. 

As for condition (4) in Proposition \ref{prop:pictures of quasi-locality}, we have the following result concerning compact operator valued Higson functions. 
Recall that a compact operator valued function $h: X \to \K(\HH)$ on a metric space $X$ is a \emph{Higson function} if $h$ is bounded and for any $\varepsilon>0$ and $R>0$, there exists a compact subset $K\subset X$ such that $h$ has $(\varepsilon,R)$-variation on $X\setminus K$.

\begin{prop}\label{prop:strong quasi-locality iff Higson}
Let $X$ be a discrete metric space of bounded geometry and $\H_X$ an ample $X$-module. Then for a locally compact operator $T\in\B(\H_X)$, the following are equivalent:
\begin{enumerate}
 \item $T$ is strongly quasi-local;
 \item $[T\otimes \Id_{\HH} , \Lambda(h)]\in \K(\H_X\otimes\HH)$ for any Higson function $h: X \to \K(\HH)$.
\end{enumerate}
\end{prop}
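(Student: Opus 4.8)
The plan is to prove the two implications separately, modelling the argument on the proof of the equivalence ``(3) $\Leftrightarrow$ (4)'' in Proposition \ref{prop:pictures of quasi-locality} (i.e.\ on the scalar case from \cite{ST19}), but carrying along the compact-operator coefficients.

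\emph{Proof that (1) $\Rightarrow$ (2).} Suppose $T$ is strongly quasi-local and let $h\colon X\to\K(\HH)$ be a Higson function; after rescaling we may assume $h$ takes values in $\K(\HH)_1$. Fix $\varepsilon>0$ and choose $\delta,R>0$ as in Definition \ref{defn:strongly quasi-local algebra} for $\varepsilon$. Since $h$ is Higson, there is a compact (hence, $X$ being discrete, finite) set $K\subseteq X$ such that $h$ has $(\delta,R)$-variation on $X\setminus K$. The idea is to split $h = h_1 + h_2$ where $h_1 = h\cdot\chi_{\Nd_R(K)}$ is supported near $K$ and $h_2 = h\cdot\chi_{X\setminus\Nd_R(K)}$; one has to be slightly careful since a sharp cut-off need not have small variation, so instead I would take $h_2$ to be $h$ modified to be constant (say $0$) on a neighbourhood of $K$ in such a way that $h_2$ still has $(\delta', R)$-variation globally for a suitable $\delta'$, and $h - h_2$ is supported in a finite set $\Nd_{R}(K)$. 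Then $\Lambda(h-h_2)$ is a finite-rank-times-compact, i.e.\ compact, operator on $\H_X\otimes\HH$ because $T\otimes\Id_\HH$ restricted to the finitely many columns indexed by $\Nd_R(K)$ involves only locally compact data and $h$ is $\K(\HH)$-valued; consequently $[T\otimes\Id_\HH,\Lambda(h-h_2)]$ is compact. On the other hand $[T\otimes\Id_\HH,\Lambda(h_2)]$ has norm $<\varepsilon$ by strong quasi-locality. Since $\varepsilon$ is arbitrary, $[T\otimes\Id_\HH,\Lambda(h)]$ is a norm-limit of compact operators, hence compact.

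\emph{Proof that (2) $\Rightarrow$ (1).} Here I argue by contradiction following the scalar template. Suppose $T$ is not strongly quasi-local: there is $\varepsilon_0>0$ such that for every $n$ there is a map $g_n\colon X\to\K(\HH)_1$ with $(1/n, n)$-variation and $\|[T\otimes\Id_\HH,\Lambda(g_n)]\|\ge\varepsilon_0$. Pick unit vectors $\xi_n\in\H_X\otimes\HH$ with $\|[T\otimes\Id_\HH,\Lambda(g_n)]\xi_n\|\ge\varepsilon_0$. Using bounded geometry and the fact that $T$ is locally compact (so that the ``mass'' of $T$ between far-apart regions is controlled via quasi-locality, which holds by Proposition \ref{prop:relations between Roe, QL and SQL}(1) since every strongly quasi-local-failing operator is still just a fixed locally compact operator — more precisely, we use only that $T$ is locally compact and we do not yet know it is quasi-local, so instead I would first observe that (2) already forces $T$ to be quasi-local via Proposition \ref{prop:pictures of quasi-locality}(4) applied to rank-one-valued Higson functions), one can assume after passing to a subsequence that each $\xi_n$ is (approximately) supported in a ball $B(x_n;r_n)$ with the $x_n$ going to infinity and the radii $r_n$ growing slowly, and that the relevant ``windows'' of the $g_n$ are supported in disjoint annular regions $\Nd_{s_n}(x_n)\setminus\Nd_{t_n}(x_n)$ with $t_n\to\infty$ and $s_n - t_n$ large. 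Then one assembles a single function $h\colon X\to\K(\HH)_1$ by patching together (rescaled, translated copies of) the local data of the $g_n$ on these disjoint regions, arranging that $h$ is a Higson function: the $(\delta,R)$-variation outside a finite set holds because on each annulus $h$ inherits the $(1/n,n)$-variation of $g_n$, and between annuli $h$ is locally constant. By construction $[T\otimes\Id_\HH,\Lambda(h)]$ fails to be compact — it has norm at least $\varepsilon_0$ on the sequence $\xi_n$ escaping to infinity — contradicting (2).

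\emph{Expected main obstacle.} The routine direction is (1) $\Rightarrow$ (2); the delicate point there is handling the cut-off of a Higson function without destroying small-variation, which is a standard but fiddly manoeuvre. The genuinely hard part is (2) $\Rightarrow$ (1): localising the vectors $\xi_n$ and the ``effective support'' of the commutators into disjoint regions marching off to infinity, and then gluing the witnessing functions $g_n$ into one globally-defined Higson function, requires the bounded-geometry hypothesis and careful bookkeeping with the parameters $(\delta_n,R_n)$, together with the observation (from (2) via Proposition \ref{prop:pictures of quasi-locality}) that $T$ is automatically quasi-local so that off-diagonal mass can be discarded. This is precisely the step where the argument is ``more involved'' than the scalar case of \cite{ST19}, since one must track the $\K(\HH)$-valued data through the patching. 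I would present (2) $\Rightarrow$ (1) in detail and treat (1) $\Rightarrow$ (2) more briefly.
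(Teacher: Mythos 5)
Your overall architecture is the one the paper intends: the paper omits the argument, asserting it is essentially the scalar proof of \cite[Theorem 2.8]{ST19} with compact coefficients, and both of your implications follow that template. The direction (1)$\Rightarrow$(2) is fine as sketched: the scalar taper (multiply $h$ by a $[0,1]$-valued function vanishing on a neighbourhood of $K$ and rising to $1$ with small slope) produces the decomposition you describe, and your reason for compactness of $[T\otimes \Id_{\HH},\Lambda(h-h_2)]$ is the right one, since that operator is a finite sum of terms such as $T\chi_{\{x\}}\otimes (h-h_2)(x)$ with $T\chi_{\{x\}}$ compact by local compactness and $(h-h_2)(x)\in\K(\HH)$.

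In (2)$\Rightarrow$(1), however, two steps are asserted rather than argued, and as written they do not follow. First, the reduction to witnesses $\xi_n$ supported near points $x_n\to\infty$: quasi-locality of $T$ (which you correctly extract from (2) via rank-one-valued Higson functions) together with bounded geometry does not by itself exclude that $\|[T\otimes\Id_{\HH},\Lambda(g_n)]\|\geq\varepsilon_0$ is witnessed, for every $n$, by vectors concentrated near one fixed ball. What forces the mass to escape is the observation that for a fixed ball $B$ and $n>\diam(B)$ the map $g_n$ is within $1/n$ of a constant $c$ on $B$, and $\Lambda(c)$ commutes exactly with $(\chi_B T\chi_B)\otimes\Id_{\HH}$ because the latter is an elementary tensor; combined with quasi-locality to control the cross terms $\chi_B T\chi_{X\setminus \Nd_R(B)}$, this shows $\|[(\chi_B T\chi_B)\otimes\Id_{\HH},\Lambda(g_n)]\|\to 0$ and hence that the commutator stays large outside any fixed ball for large $n$. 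This is precisely the Claim (and the subsequent recursive extraction) in the paper's proof of Proposition~\ref{prop:subspace strong quasi-locality}, and you need that argument here. Second, the glued function as you describe it (``inherits the $(1/n,n)$-variation on each region, locally constant in between'') need not be Higson: at the interface between a region and the constant zone the jump is $\|g_n(x)-\mathrm{const}\|$, which can be of size $1$ no matter how large $n$ is. The repair is standard but must be made explicit: on the $n$-th region set $h=\varphi_n\cdot\tfrac12\bigl(g_n-g_n(x_n)\bigr)$, where $\varphi_n$ is a scalar bump equal to $1$ on an $R$-neighbourhood of the support of $\xi_n$ (with $R$ a quasi-locality radius for $T$) and tapering to $0$ with slopes tending to $0$; set $h=0$ off the regions and let the separation between distinct regions tend to infinity. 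Recentring costs nothing because $[T\otimes\Id_{\HH},\Lambda(\mathrm{const})]=0$, and then $\|[T\otimes\Id_{\HH},\Lambda(h)]\xi_n\|\geq \varepsilon_0/4$ for large $n$ while $\xi_n\to 0$ weakly, contradicting (2). With these two ingredients supplied, your plan goes through and coincides with the proof the paper has in mind.
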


The proof of Proposition~\ref{prop:strong quasi-locality iff Higson} is almost identical to that of \cite[Theorem 2.8 ``(1) $\Leftrightarrow$ (3)'']{ST19} with minor changes, hence omitted.

\subsection{Strong quasi-locality on subspaces}\label{ssec:Strong quasi-locality on subspaces}

In this subsection, we study the behaviour of strong quasi-locality under taking subspace. First note that in the case of quasi-locality, we have the following observation (which follows directly from Definition \ref{defn:quasi-locality}): given a proper discrete metric space $X$ and an ample module $\H_X$, for any quasi-local operator $T \in \B(\H_X)$ and any $\varepsilon>0$ there exists $R>0$ such that for any $A\subseteq X$ the operator $\chi_A T \chi_A$ has $(\varepsilon,R)$-propagation. In other words, quasi-locality is preserved ``uniformly'' under taking subspaces.

Now we focus on the case of strongly quasi-local operators, and show that they have similar behaviour when taking subspaces. However, the proof is more involved due to the lack of characterisation in terms of $(\varepsilon,R)$-propagation.


\begin{prop}\label{prop:subspace strong quasi-locality}
Let $X$ be a discrete metric space with bounded geometry and $\H_X$ an ample $X$-module. Assume $T\in\B(\H_X)$ is locally compact and strongly quasi-local. Then for any $\varepsilon >0$, there exist $\delta, R>0$ such that for any $A\subseteq X$ and $g: A\rightarrow \K(\HH)_1$ with $(\delta, R)$-variation, we have $\big\| [(\chi_A T\chi_A) \otimes Id_{\HH} , \Lambda(g) ] \big\|_{ \B (\H_A \otimes \HH)  }< \varepsilon$, where $\chi_A T \chi_A$ is naturally regarded as an operator on $\H_A:=\chi_A\H_X$.
\end{prop}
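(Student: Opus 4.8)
The plan is to use the characterisation of strong quasi-locality in terms of compact-operator-valued Higson functions (Proposition \ref{prop:strong quasi-locality iff Higson}) together with the extendability of Higson functions, and then upgrade a ``subspace-wise'' statement to a uniform one by a compactness/diagonal argument. First I would record the elementary but crucial extension fact: if $A \subseteq X$ and $g : A \to \K(\HH)_1$ has $(\delta, R)$-variation on $A$, then $g$ extends to a map $\tilde g : X \to \K(\HH)_1$ with $(\delta', R')$-variation on all of $X$, where $\delta', R'$ depend only on $\delta, R$ and the bounded geometry constants of $X$ (not on $A$); for instance one can push forward $g$ along a nearest-point retraction $X \to A$, which distorts variation in a controlled way because $X$ has bounded geometry. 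For the matrix-entry formula (\ref{EQ:entry for strong quasi-local}), only the values $g(y) - g(x)$ for $x, y$ close together matter, and on the block $\chi_A T \chi_A$ only entries indexed by $A$ appear, so $[(\chi_A T \chi_A) \otimes \Id_\HH, \Lambda_{\H_A}(\tilde g|_A)] = \chi_A \otimes \Id_\HH \cdot [T \otimes \Id_\HH, \Lambda_{\H_X}(\tilde g)] \cdot \chi_A \otimes \Id_\HH$ restricted appropriately, which is dominated in norm by $\|[T \otimes \Id_\HH, \Lambda_{\H_X}(\tilde g)]\|$.

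Granting this, the subspace-wise version (what the paper calls Lemma \ref{lem:subspace-wise version}) is immediate: for fixed $\varepsilon$, pick $\delta, R$ from strong quasi-locality of $T$ applied with the extension constants in mind, so that every $g$ on every $A$ with $(\delta, R)$-variation extends to $\tilde g$ on $X$ with small enough variation to force $\|[T \otimes \Id_\HH, \Lambda(\tilde g)]\| < \varepsilon$, hence $\|[(\chi_A T \chi_A) \otimes \Id_\HH, \Lambda(g)]\| < \varepsilon$ as well. The point that needs care is that the $\delta, R$ obtained this way genuinely work for \emph{all} $A$ simultaneously, which is exactly what the formulation of the proposition demands; since the extension constants are uniform in $A$, this is fine. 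I would organise the write-up so that this uniformity is visibly inherited from the uniformity in the extension lemma and the bounded geometry bound used in the proof of Proposition \ref{prop:relations between Roe, QL and SQL}(2).

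The main obstacle is the extension step and the ``obstructions on Lipschitz extension'' flagged in Remark \ref{rem:obstructions on Lipschitz extension}: one cannot in general extend a Lipschitz map into $\K(\HH)_1$ keeping the same Lipschitz constant, and even $(\delta, R)$-variation is not obviously preserved by naive extension formulas. The resolution is precisely why the paper routes through Higson functions rather than Lipschitz functions — Higson functions on $A$ \emph{do} extend to Higson functions on $X$ (this is the content invoked via \cite[Section 4.2]{Wil09}), and $(\delta, R)$-variation, being a ``local'' condition, behaves well under composition with a bounded-geometry retraction even though the global Lipschitz constant blows up. So in the final argument I would (i) reduce to controlling $g(y) - g(x)$ for $d(x,y) \le R$, (ii) define $\tilde g$ via a retraction $r : X \to A$ with $d(x, r(x)) = d(x,A)$, (iii) bound the variation of $\tilde g$ using $d(r(x), r(y)) \le d(x,y) + d(x,A) + d(y,A)$ — which is \emph{not} itself small, so one instead has to be more careful and only claim that $\tilde g$ still has Higson-type decay and apply Proposition \ref{prop:strong quasi-locality iff Higson}, or else restrict attention to points deep inside $A$ and handle the boundary via bounded geometry. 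I expect the cleanest route, and the one the authors take, is to prove the Higson-function restriction result Lemma \ref{lem:subspace-wise version} first and then extract the quantitative uniform statement by a contradiction-and-diagonal argument over a hypothetical sequence $A_n, g_n$ violating the conclusion, which converts the non-uniform qualitative input into the uniform quantitative output required here.
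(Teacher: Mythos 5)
You correctly guess the architecture the paper uses (subspace-wise statement via compact-operator-valued Higson functions and their extendability, then an upgrade to uniformity by contradiction), but there are two genuine problems. First, the ``elementary but crucial extension fact'' that opens your proposal --- that a $g: A \to \K(\HH)_1$ with $(\delta,R)$-variation extends to $\tilde g: X \to \K(\HH)_1$ with $(\delta',R')$-variation, uniformly in $A$, e.g.\ via a nearest-point retraction --- is exactly what is \emph{not} available: this is the content of Remark \ref{rem:obstructions on Lipschitz extension}, and your own later computation $d(r(x),r(y)) \le d(x,y)+d(x,A)+d(y,A)$ shows the retraction destroys variation for points far from $A$. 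Since your ``immediate'' derivation of the uniform statement rests on this fact, that derivation does not stand; the Higson-function route only yields a qualitative, subspace-wise conclusion (Lemma \ref{lem:subspace-wise version}), with $\delta, R$ depending on $A$.

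Second, the fallback you sketch --- ``contradiction-and-diagonal argument over a hypothetical sequence $A_n, g_n$'' --- is indeed the paper's route, but the mechanism is missing, and that is where the real work lies. Given violating pairs $(A_n, g_n)$ with $(\tfrac1n,n)$-variation, you cannot apply Lemma \ref{lem:subspace-wise version} to each $A_n$ (its constants depend on $A_n$), nor naively glue the $A_n$ into one subspace: the operators $\chi_{A_n}T\chi_{A_n}$ change with $n$, and extending $g_n$ by zero to a union destroys the variation bound near other pieces. The paper's proof first shows, using the quasi-locality of $T$ (an $(\tfrac{\varepsilon_0}{8},R_0)$-propagation bound, via Proposition \ref{prop:relations between Roe, QL and SQL}(1)) and a five-term cut-up of $\chi_{A_n}T\chi_{A_n}$, that the violation persists on $A_n\setminus B_R$ for all large $n$ --- here one also needs that on the fixed finite ball $B_{2R}$ the commutator with $\Lambda(g_{n})$ becomes small as the variation improves. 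One then recursively selects tails $\tilde A_i = A_{n_i}\setminus B_{R_{i-1}}$ lying in annuli with $d(\tilde A_i, A\setminus\tilde A_i)\ge 2^{i-1}$, sets $A=\bigsqcup_i \tilde A_i$, extends $\hat g_i := g_{n_i}|_{\tilde A_i}$ by zero (the separation is what keeps $(\tfrac1i,i)$-variation on $A$), and uses
\[
\big\|[\chi_A T\chi_A\otimes \Id_{\HH},\Lambda(\hat g_i)]\big\| \;\ge\; \big\|[\chi_{\tilde A_i} T\chi_{\tilde A_i}\otimes \Id_{\HH},\Lambda(\hat g_i)]\big\| \;>\; \tfrac{\varepsilon_0}{8}
\]
to contradict Lemma \ref{lem:subspace-wise version} for the single subspace $A$. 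Without this ``push the violation to infinity and separate the pieces'' construction, the diagonal argument you invoke does not go through, so the uniformization step in your proposal is a gap rather than a proof.
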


\begin{rem}\label{rem:obstructions on Lipschitz extension}
A natural thought for the proof is to extend a function $g: A\rightarrow \K(\HH)_1$ to $X$ and preserve the variation (or at least with controlled variations). However (as pointed out by Rufus Willett \cite{Wil21}), this is at least as hard as finding extensions with values in a Hilbert space. The problem of extending Hilbert space valued functions is fairly well-studied \cite{Kir34}, and there are known obstructions. In the following, we will bypass the problem using Proposition \ref{prop:strong quasi-locality iff Higson}.
\end{rem}

First we prove a ``subspace-wise'' version of Proposition \ref{prop:subspace strong quasi-locality} (note the difference on orders of quantifiers). To simplify the notation, for $A \subseteq X$ we will regard the characteristic function $\chi_A$ either as the multiplication operator on $\H_X$ or the amplified multiplication operator $\chi_A \otimes \Id_{\HH}$ on $\H_X \otimes \HH$ according to the context. 

\begin{lem}\label{lem:subspace-wise version}
Let $X$ be a discrete metric space with bounded geometry and $\H_X$ an ample $X$-module. Assume that $T\in\B(\H_X)$ is locally compact and strongly quasi-local. Then for any $A\subseteq X$ and $\varepsilon >0$, there exist $\delta, R>0$ such that for any $g: A\rightarrow \K(\HH)_1$ with $(\delta, R)$-variation, we have $\big\| [(\chi_A T\chi_A) \otimes Id_{\HH} , \Lambda(g) ] \big\|_{ \B (\H_A \otimes \HH)  }< \varepsilon$.
\end{lem}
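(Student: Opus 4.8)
\emph{Proof strategy.} The plan is to deduce the lemma from Proposition~\ref{prop:strong quasi-locality iff Higson}. First I would note that the conclusion of the lemma is, verbatim, the statement that the compression $\chi_A T\chi_A$, regarded as an operator on the module $\H_A:=\chi_A\H_X$ over the space $A$, is strongly quasi-local in the sense of Definition~\ref{defn:strongly quasi-local algebra}. Here $A$ inherits bounded geometry from $X$; $\H_A$ is an ample $A$-module, because ampleness of $\H_X$ forces each $\H_x$ ($x\in A$) to be infinite-dimensional, so that no non-zero element of $C_0(A)$ acts compactly on $\H_A$; and $\chi_A T\chi_A$ is locally compact since $T$ is. Applying Proposition~\ref{prop:strong quasi-locality iff Higson} to the pair $(A,\H_A)$, it therefore suffices to prove that $[(\chi_A T\chi_A)\otimes\Id_{\HH},\Lambda(h')]\in\K(\H_A\otimes\HH)$ for every compact operator valued Higson function $h'\colon A\to\K(\HH)$.

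The key step --- and what I expect to be the main obstacle --- is the \emph{extendability of Higson functions}: every such $h'$ on the closed (proper, discrete) subspace $A$ extends to a Higson function $h\colon X\to\K(\HH)$. This is precisely where the naive route of extending a Lipschitz (or bounded-variation) map from $A$ to $X$ while controlling its variation has to be abandoned, due to the genuine obstructions recorded in Remark~\ref{rem:obstructions on Lipschitz extension}; for instance, the crude nearest-point extension $x\mapsto h'(\pi_A(x))$ fails to be Higson, since two points of $X$ lying far from $A$ but close to each other may be carried to points of $A$ that are arbitrarily far apart. Instead one uses that Higson functions, being ``uniformly continuous at infinity'', extend along the closed isometric inclusion $A\hookrightarrow X$; I would establish this as a separate lemma, either by identifying the Higson corona of $A$ with a closed subspace of that of $X$ and invoking Tietze's theorem in its Banach-space-valued form, or by a direct coarse-geometric construction. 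I would also observe that it is enough, for each $\eta>0$, to produce an extension $h$ with $\|h|_A-h'\|_\infty<\eta$, since then $\|[(\chi_A T\chi_A)\otimes\Id_{\HH},\Lambda(h|_A-h')]\|\le 2\|T\|\,\eta$ and $\K(\H_A\otimes\HH)$ is norm-closed, so one may let $\eta\to 0$.

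Granting such an $h$, the rest is routine. Proposition~\ref{prop:strong quasi-locality iff Higson} applied to $(X,\H_X)$ gives $[T\otimes\Id_{\HH},\Lambda(h)]\in\K(\H_X\otimes\HH)$, hence the compression $(\chi_A\otimes\Id_{\HH})\,[T\otimes\Id_{\HH},\Lambda(h)]\,(\chi_A\otimes\Id_{\HH})$ is compact on $\H_A\otimes\HH$. Since $\Lambda(h)$ is block-diagonal with respect to $\H_X\otimes\HH=\bigoplus_{x\in X}(\H_x\otimes\HH)$, it commutes with $\chi_A\otimes\Id_{\HH}$ and its compression to $\H_A\otimes\HH$ equals $\Lambda(h|_A)=\Lambda(h')$; comparing matrix entries via Remark~\ref{rem:entry for strong quasi-local} (and using that $(\chi_A T\chi_A)_{x,y}=T_{x,y}$ for $x,y\in A$) then identifies this compression with $[(\chi_A T\chi_A)\otimes\Id_{\HH},\Lambda(h')]$, which is therefore compact. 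This completes the argument.

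To summarise, the only substantial difficulty is the extendability of $\K(\HH)$-valued Higson functions from closed subspaces; everything else is bookkeeping with block-diagonal operators and compressions. (The subsequent ``uniform in $A$'' strengthening in Proposition~\ref{prop:subspace strong quasi-locality} will need a further argument handling all subspaces $A$ simultaneously, but that does not enter here.)
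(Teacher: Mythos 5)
Your proposal is correct and takes essentially the same route as the paper: reduce via Proposition~\ref{prop:strong quasi-locality iff Higson} to compactness of commutators with $\Lambda$ of $\K(\HH)$-valued Higson functions, extend such a function from $A$ to all of $X$, and then compress by $\chi_A$ exactly as you describe. The only difference is that the paper handles the key extension step by citing \cite[Lemma 4.3.4]{Wil09} rather than proving it, so your approximate-extension fallback is not needed there since the cited extension is exact.
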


\begin{proof}
By Proposition \ref{prop:strong quasi-locality iff Higson}, we know that $[T\otimes \Id_{\HH} , \Lambda(h)]\in \K(\H_X\otimes\HH)$ for any Higson function $h: X \to \K(\HH)$. Now fix a subspace $A \subseteq X$. For any Higson function $g: A \to \K(\HH)$, it follows from \cite[Lemma 4.3.4]{Wil09} that $g$ can be extended to a Higson function $\tilde{g}: X \to \K(\HH)$. Hence we obtain:
\[
[(\chi_A T\chi_A) \otimes Id_{\HH} , \Lambda(g) ]= \chi_A [T \otimes Id_{\HH} , \Lambda(\tilde{g}) ]\chi_A \in \chi_A  \K(\H_X\otimes\HH) \chi_A \subseteq  \K(\H_A\otimes\HH).
\]
Using Proposition \ref{prop:strong quasi-locality iff Higson} again, we obtain that $\chi_A T\chi_A$ is strongly quasi-local on $\H_A$. This concludes the proof.
\end{proof}

%

\begin{proof}[Proof of Proposition~\ref{prop:subspace strong quasi-locality}]
Fix a base point $x_0 \in X$, and write $B_S$ for $B(x_0;S)$ where $S>0$. Assume the contrary, then there exists some $\varepsilon_0>0$ such that for each $n\in\NN$, there exist $A_n\subseteq X$ and $g_n:A_n\rightarrow \K(\HH)_1$ with $(\frac{1}{n},n)$-variation on $A_n$ such that
\begin{equation}\label{EQ:subspace1}
\big\|[\chi_{A_n}T\chi_{A_n}\otimes \Id_{\HH}, \Lambda(g_n)] \big\|>\varepsilon_0.
\end{equation}
Without loss of generality, we can assume that each $A_n$ is finite.
For the above $\varepsilon_0$, there exists $R_0>0$ such that $T$ has $(\frac{\varepsilon_0}{8},R_0)$-propagation.

\emph{Claim.} For any $R > R_0$, there exists $N\in\NN$ such that for any $n\ge N$ we have:
\[
\big\|[\chi_{A_n\setminus B_R}T\chi_{A_n\setminus B_R}\otimes \Id_{\HH} , \Lambda(g_n)]\big\| > \frac{\varepsilon_0}{8}.
\]

We assume the contrary, \emph{i.e.}, assume that there exist some $R > R_0$ and an increasing sequence $(n_k)_{k=1}^\infty \subseteq \NN$ tending to infinity such that
\[
\big\|[\chi_{A_{n_k}\setminus B_{R}} T\chi_{A_{n_k}\setminus B_{R}}\otimes \Id_{\HH} ,\Lambda(g_{n_k})]\big\| \le \frac{\varepsilon_0}{8}.
\]
Since $d_X(B_{R}, X\setminus B_{2R}) \ge R  >R_0$ we obtain:
\[
\big\|\chi_{A_{n_k}\cap B_{R}}T\chi_{A_{n_k}\setminus B_{2R}}\big\| \le \frac{\varepsilon_0}{8} \quad \mbox{and} \quad \big\|\chi_{A_{n_k}\setminus B_{2R}}T \chi_{A_{n_k}\cap B_{R}}\big\| \le \frac{\varepsilon_0}{8}.
\]
Now we cut up the operator $\chi_{A_{n_k}}T\chi_{A_{n_k}}$ as follows:
\begin{eqnarray*}
		\chi_{A_{n_k}}T\chi_{A_{n_k}} &=&
		\chi_{A_{n_k}\cap B_{2R}}T\chi_{A_{n_k}\cap B_{2R}}
		+ \chi_{A_{n_k}\setminus B_{R}} T\chi_{A_{n_k}\setminus B_{2R}} \\
		& & + \chi_{A_{n_k}\setminus B_{2R}} T\chi_{A_{n_k}\cap (B_{2R}\setminus B_{R})}
		+\chi_{A_{n_k}\cap B_{R}}T\chi_{A_{n_k}\setminus B_{2R}}
		+\chi_{A_{n_k}\setminus B_{2R}}T \chi_{A_{n_k}\cap B_{R}}.
\end{eqnarray*}
Combining the above inequalities with (\ref{EQ:subspace1}), we obtain:
\[
\big\|[\chi_{A_{n_k}\cap B_{2R}}T\chi_{A_{n_k}\cap B_{2R}} \otimes \Id_{\HH} , \Lambda(g_{n_k})]\big\| > \varepsilon_0 - 2 \cdot \frac{\varepsilon_0}{8}- 2 \cdot \frac{\varepsilon_0}{4} =\frac{\varepsilon_0}{4},
\]
which is a contradiction since $A_{n_k}\cap B_{2R}$ is contained in a fixed finite subset $B_{2R}$ and $g_{n_k}$ has $(\frac{1}{n_k},n_k)$-variation on $A_{n_k}\cap B_{2R}$. Hence we prove the Claim.

Now we continue the proof of Proposition~\ref{prop:subspace strong quasi-locality}. Set $\tilde{A}_1:=A_1, n_1:=1$ and choose $R_1>R_0$ such that $\tilde{A}_1 \subseteq B_{R_1-2}$. We recursively choose subsets $\tilde{A}_1, \tilde{A}_2, \ldots$, positive numbers $R_1<R_2<\cdots$ and natural numbers $n_1<n_2<\cdots$ as follows. Suppose that $\tilde{A}_1, \ldots, \tilde{A}_{i-1}$, $R_1< \cdots<R_{i-1}$ and $n_1<\cdots<n_{i-1}$ are chosen for $i\ge 2$. The Claim implies that there exists a natural number $n_i>n_{i-1}$ such that 
\[
\big\|[\chi_{A_{n_i}\setminus B_{R_{i-1}}}T\chi_{A_{n_i}\setminus B_{R_{i-1}}}\otimes \Id_{\HH} , \Lambda(g_{n_i})]\big\| > \frac{\varepsilon_0}{8}.
\]
We take 	$\tilde{A}_i:=A_{n_i}\setminus B_{R_{i-1}}$ (which is non-empty by the above estimate) and choose $R_i>R_{i-1}$ such that $\tilde{A}_1\sqcup\cdots\sqcup\tilde{A}_i \subseteq B_{R_i-2^i}$. In summary, we obtain non-empty subsets $\{\tilde{A}_i\}_{i\in \NN}$ and functions $\hat{g_i}:=g_{n_i}|_{\tilde{A}_i}: \tilde{A}_i \to \K(\HH)_1$ with $(\frac{1}{n_i},n_i)$-variation such that 
\[
\big\|[\chi_{\tilde{A}_i}T\chi_{\tilde{A}_i}\otimes \Id_{\HH} , \Lambda(\hat{g_i})]\big\| > \frac{\varepsilon_0}{8}.
\]

Define $A:=\bigsqcup_{i\in \NN} \tilde{A}_i$ and extend each $\hat{g_i}$ to $A$ by zero on the complement (still denoted by $\hat{g_i}$). It is clear from the above construction that $d_X(\tilde{A}_i , A\setminus \tilde{A}_i)\ge 2^{i-1}$, and hence $\hat{g_i}$ has $(\frac{1}{i},i)$-variation on $A$. Moreover, we have:
\[
\big\|[\chi_AT\chi_A\otimes \Id_{\HH} , \Lambda(\hat{g_i})]\big\| > \frac{\varepsilon_0}{8}.
\]
This is a contradiction to Lemma \ref{lem:subspace-wise version}. Hence we conclude the proof.
\end{proof}

\subsection{Coarse invariance of strongly quasi-local algebras}

In this subsection, we show that strongly quasi-local algebras are coarse invariants provided the underlying spaces have bounded geometry. In particular, this implies that strongly quasi-local algebras are independent of ample modules. The proof follows the outline of that for Proposition \ref{prop:coarse invariance} but is more involved.

\begin{prop}\label{prop:strong coarse invariance}
Let $X,Y$ be discrete metric spaces with bounded geometry and $\H_X, \H_Y$ be ample modules for $X$ and $Y$, respectively.  Let $f: X \to Y$ be a coarse map with a covering isometry $V: \H_X\rightarrow \H_Y$. Then $V$ induces the following $\ast$-homomorphism
	\[
	\Ad_V: \Csq(\H_X)\longrightarrow \Csq(\H_Y), T\mapsto VTV^*.
	\]
Furthermore, the induced K-thoeretic map $(\Ad_V)_*: K_*(\Csq(\H_X)) \to K_*(\Csq(\H_Y))$ does not depend on the choice of the covering isometry $V$, hence denoted by $f_*$.
\end{prop}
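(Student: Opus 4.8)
The plan is to follow the template of Proposition~\ref{prop:coarse invariance} (the quasi-local case), modifying the quasi-locality estimate into a strong quasi-locality estimate by amplifying everything by $\HH$ and tracking how a covering isometry interacts with the block-diagonal operators $\Lambda(g)$. As in that proof, I would first invoke \cite[Lemma A.3.12]{willett2020higher} to assume $f$ is Borel and $V$ covers $f$, fix $t_0>0$ with $\supp(V)\subseteq\{(y,x):d_Y(y,f(x))<t_0\}$, and note that local compactness of $VTV^*$ is handled exactly as in the Roe case (\cite[Lemma 5.1.12]{willett2020higher}). The substantive point is to show $VTV^*$ is strongly quasi-local on $\H_Y$.

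For the strong quasi-locality estimate, fix $\varepsilon>0$ and take $\delta,R>0$ from Definition~\ref{defn:strongly quasi-local algebra} applied to $T$ (with $\varepsilon$ there). Given $g:Y\to\K(\HH)_1$ with $(\delta,R')$-variation for $R'$ to be chosen, I would consider the pulled-back map $g\circ f: X\to\K(\HH)_1$. Since $f$ is uniformly expansive with expansion function $\rho_f$, the map $g\circ f$ has $(\delta, R'')$-variation on $X$ whenever $\rho_f(R'')\le R'$; choosing $R''=R$ and $R'=\rho_f(R)$ makes $g\circ f$ have $(\delta,R)$-variation, so $\|[T\otimes\Id_\HH,\Lambda_{\H_X}(g\circ f)]\|<\varepsilon$. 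The key identity to establish is a comparison of $\Lambda_{\H_Y}(g)$ with $V$ versus $\Lambda_{\H_X}(g\circ f)$: because $V$ covers $f$ with displacement $<t_0$, the operator $V^*\Lambda_{\H_Y}(g)V$ is \emph{not} exactly $\Lambda_{\H_X}(g\circ f)$ (the matrix entry $V_{y,x}$ couples $x$ to points $y$ within $t_0$ of $f(x)$, not to $f(x)$ itself), so there will be an error term governed by the variation of $g$ on $t_0$-balls. Here I would use that $g$ has $(\delta,R')$-variation, and enlarge $R'$ to also dominate $2t_0$, so that all relevant values of $g$ near $f(x)$ differ by at most $\delta$ in norm; then conjugating $[T\otimes\Id_\HH,\Lambda_{\H_X}(g\circ f)]$ by $V\otimes\Id_\HH$ and absorbing the $O(\delta\cdot\|T\|)$ error terms, one gets $\|[(VTV^*)\otimes\Id_\HH,\Lambda_{\H_Y}(g)]\|<\varepsilon + C\delta\|T\|$ for a universal constant $C$. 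Rescaling $\delta$ at the outset (replace $\varepsilon$ by $\varepsilon/2$ and choose $\delta$ small relative to $\varepsilon/(2C\|T\|)$ as well) closes the estimate. One should also double-check that $\Ad_V$ is a $*$-homomorphism (clear, since $V$ is an isometry so $V^*V=\Id$) and that it lands in $\Csq(\H_Y)$ rather than merely $C^*_q(\H_Y)$ --- which is precisely the content of the estimate above.

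For the second statement, I would argue that the $K$-theory map is independent of the covering isometry by the standard trick: given two covering isometries $V_0,V_1$ for $f$, the isometry $V_0\oplus V_1$ (after identifying $\H_Y\oplus\H_Y$ with an ample $Y$-module) covers $f$, and one shows $\Ad_{V_0\oplus V_1}$ is homotopic through $*$-homomorphisms into $\Csq$ of a suitable space to both $\Ad_{V_0}\oplus 0$ and $\Ad_{V_1}\oplus 0$ via a rotation homotopy $V_t = V_0\cos\frac{\pi t}{2}\oplus V_1\sin\frac{\pi t}{2}$ paired with its companion; this is verbatim the argument in \cite[Lemma 5.1.12]{willett2020higher} for the Roe algebra and in Proposition~\ref{prop:coarse invariance} for the quasi-local algebra, and the only thing to verify is that the intermediate operators $V_t T V_t^*$ remain strongly quasi-local, which follows from the first part applied to the block isometries together with stability of $\Csq$ under the obvious inclusions. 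Hence I would simply say this follows by the same argument as in the Roe/quasi-local case and omit the details.

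\textbf{Main obstacle.} The delicate point, compared to the quasi-local case, is the error-term analysis when passing from $\Lambda_{\H_X}(g\circ f)$ to $V^*\Lambda_{\H_Y}(g)V$: in the quasi-local proof one only needs $\chi_C V = \chi_C V\chi_{f^{-1}(\Nd_{t_0}(C))}$ to localize supports, but here $\Lambda(g)$ is a \emph{diagonal} operator whose entries are genuine compact operators, so support-localization alone does not suffice --- one genuinely needs that $g$ varies little over $t_0$-balls, which is exactly where the hypothesis that $g$ has controlled $(\delta,R)$-variation (with $R$ enlarged past $2t_0$) gets used. Making the constant $C$ in the error bound explicit and confirming it is independent of $g$, $T$, and the module is the one place requiring genuine care.
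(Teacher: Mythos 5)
Your overall route coincides with the paper's: compare $\Lambda_{\H_Y}(g)$ conjugated through $V$ with $\Lambda_{\H_X}(g\circ f)$, apply strong quasi-locality of $T$ to the pulled-back function, control the discrepancy by the variation of $g$ at the scale of the covering isometry, and dispose of the independence-of-$V$ statement by the standard rotation argument (the paper also omits that part). The gap is precisely the step you flag but do not prove: the estimate $\big\|\Lambda_{\H_Y}(g)(V\otimes\Id_{\HH})-(V\otimes\Id_{\HH})\Lambda_{\H_X}(g\circ f)\big\|\le C\delta$. The difference has block entries $V_{y,x}\otimes\big(g(y)-g(f(x))\big)$, each of norm $\le\delta$ and supported where $d_Y(y,f(x))<t_0$, but small entries plus support near the graph of $f$ do not bound the operator norm: a coarse map is only proper, not effectively proper, so the number of $x$ coupled to a fixed $y$ (the nonzero blocks in a row of $V$) can be unbounded, and the naive row/column Cauchy--Schwarz estimate breaks down on that side. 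In particular there is no \emph{universal} constant $C$; any bound must invoke the bounded geometry of $Y$ at scale $t_0$, which your sketch never uses.

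The paper's proof supplies exactly this missing device. With $N$ a uniform bound on $\big|\{y\in Y: d_Y(y,f(x))<R_0\}\big|$ (bounded geometry of $Y$), it writes $V=W_1+\cdots+W_N$ where each $W_i$ has support contained in $\supp(V)$ and equal to the graph of a map $t_i:D_i\to Y$ (at most one $y$ per $x$). For such a partial-translation-like piece the intertwining is \emph{exact}: $\Lambda(g)(W_i\otimes\Id_{\HH})=(W_i\otimes\Id_{\HH})\Lambda(g\circ t_i)$, so the only error in replacing $g\circ t_i$ by $g\circ f$ is the diagonal operator $\Lambda(g\circ t_i-g\circ f)$, whose norm is $\le\delta$ because $d_Y(t_i(x),f(x))<R_0$ and $g$ has $(\delta,R)$-variation with $R\ge R_0$. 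One then estimates each $W_iTW_j^\ast$ separately and obtains your error bound with constant of the form $NM$ ($M=\max_i\|W_i\|$), which depends on $Y$ and on $t_0$ but is fixed before $\delta$ is chosen, so the $\varepsilon$-management you describe does close. Without this decomposition (or an equivalent argument exploiting bounded geometry of $Y$), the phrase ``absorbing the $O(\delta\|T\|)$ error terms'' is unjustified; since this is the only non-routine point of the whole proof, it is the part that must actually be written out.
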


\begin{proof}
We only show that $VTV^* \in \Csq(\H_Y)$ if $T\in \Csq(\H_X)$. The ``Furthermore'' part follows almost the same argument as in the case of Roe algebra, hence omitted. 

First note that $VTV^*$ is locally compact as in Proposition~\ref{prop:coarse invariance}. To see that $VTV^*$ is strongly quasi-local, we assume that $\supp (V) \subseteq \{(y,x):d_Y(f(x),y)<R_0\}$ for some $R_0>0$. Since $Y$ has bounded geometry, there exists $N\in \NN$ such that $\big|\{y\in Y:d_Y(f(x),y)<R_0\}\big|\le N$ for any $x\in X$. Hence we can write:
\[
V=W_1 + W_2 + \cdots +W_N
\]
where each $W_i\in \B(\H_X,\H_Y)$ satisfies $\supp(W_i) \subseteq \supp (V)$, $\supp(W_i) \cap \supp(W_j)$ is empty for any $j\neq i$, and for any pair $(y_1,x_1)\neq (y_2,x_2) \in \supp(W_i)$ we have $x_1 \ne x_2$. Set $M:=\max\big\{\|W_i\|:i=1,\ldots,N\big\}$. For later use, we denote $D_i:=\{x\in X: \exists y\in Y \mathrm{\ such\ that\ } (y,x)\in \supp(W_i)\} \subseteq X$. It follows that for each $i$ there exists a map $t_i:D_i \to Y$ such that $(y,x)\in \supp(W_i)$ if and only if $x\in D_i$ and $y=t_i(x)$.

It suffices to show that each $W_iTW_j^*$ is strongly quasi-local. Given an $\varepsilon > 0$, there exist $\delta', R'>0$ such that for any $\varphi : X\rightarrow \K(\HH)_1$ with $(\delta', R')$-variation, we have $\|[T\otimes \Id_{\HH},\Lambda(\varphi)]\|<\frac{\varepsilon}{2M^2}$. Set 
\[
\delta=\min\big\{\frac{\varepsilon}{4M^2\|T\|},\delta'\big\} \quad \mbox{and} \quad R=R_0+\rho_f(R'),
\]
where $\rho_f$ is defined in (\ref{EQ:expansion function}). For any $g:Y\rightarrow \K(\HH)_1$ with $(\delta,R)$-variation and each $i$, we construct $\varphi_i: X\rightarrow \K(\HH)_1$ as follows:
	\[
	\varphi_i(x):=
	\begin{cases}
	~(g\circ t_i)(x), & \mbox{if~} x\in D_i; \\
	~0, & \mbox{otherwise}.
	\end{cases}
	\]
It is clear that $(t_i(x),x) \in \supp(W_i) \subseteq \supp(V)\subseteq \{(y,x):d_Y(f(x),y)<R_0\}$ for each $i$ and $x\in D_i$, which implies that $d_Y\big( t_i(x),f(x) \big)<R_0 \leq R$. Hence we obtain
\[
\sup_{x\in D_i} \big\|\varphi_i(x) - (g\circ f)(x)\big\| \leq \delta,
\]
which implies that for each $i$ we have:
\begin{equation}\label{EQ:coarse equivalence}
\|\Lambda(\varphi_i - g\circ f) (W_i^*\otimes \Id_{\HH})\| \leq \delta M \quad \mbox{and} \quad \|(W_i\otimes \Id_{\HH}) \Lambda(\varphi_i - g\circ f)\| \leq \delta M.
\end{equation}

On the other hand, direct calculations show that for each $i$ we have:
\[
\Lambda(g)(W_i \otimes \Id_{\HH})=(W_i\otimes \Id_{\HH})\Lambda(\varphi_i)\quad \mbox{and}\quad (W_i^* \otimes \Id_{\HH})\Lambda(g) = \Lambda(\varphi_i)(W_i^*\otimes \Id_{\HH}).
\]
Hence we obtain:
\begin{eqnarray*}
&&\big\|[(W_iTW_j^*)\otimes \Id_{\HH}, \Lambda(g)] \big\| \\
&=& \big\|\big((W_iT)\otimes \Id_{\HH}\big) \Lambda(\varphi_j) (W_j^*\otimes \Id_{\HH}) - (W_i\otimes \Id_{\HH}) \Lambda(\varphi_i) \big((TW_j^*)\otimes \Id_{\HH}\big)\big\| \\
&\leq & \big\|\big((W_iT)\otimes \Id_{\HH}\big) \Lambda(g\circ f) (W_j^*\otimes \Id_{\HH}) - (W_i\otimes \Id_{\HH}) \Lambda(g\circ f) \big((TW_j^*)\otimes \Id_{\HH}\big)\big\| + 2M^2\|T\|\delta\\
&\leq & \big\|(W_i\otimes \Id_{\HH}) [T\otimes \Id_{\HH}, \Lambda(g\circ f)](W_j^*\otimes \Id_{\HH})\big\| + \frac{\varepsilon}{2},
\end{eqnarray*}
where we use (\ref{EQ:coarse equivalence}) in the second inequality. Note that $g\circ f : X\rightarrow \K(\HH)_1$ has $(\delta',R')$-variation. Hence $\|[T\otimes \Id_{\HH}, \Lambda(g\circ f)]\| < \frac{\varepsilon}{2M^2}$, which implies:
\[
\big\|[(W_iTW_j^*)\otimes \Id_{\HH}, \Lambda(g)] \big\| < M^2 \cdot \frac{\varepsilon}{2M^2} + \frac{\varepsilon}{2} = \varepsilon.
\]
Hence each $W_iTW_j^*$ is strongly quasi-local.
\end{proof}

As a direct corollary, we obtain:

\begin{cor}\label{cor:strong coarse invariance}
Let $\H_X$ and $\H_Y$ be ample modules for discrete metric spaces $X$ and $Y$ of bounded geometry, respectively. If $X$ and $Y$ are coarsely equivalent, then the strongly quasi-local algebra $\Csq(\H_X)$ is $\ast$-isomorphic to $\Csq(\H_Y)$. In particular, for a discrete metric space $X$ of bounded geometry the strongly quasi-local algebra $\Csq(\H_X)$ does not depend on the chosen ample $X$-module $\H_X$ up to $\ast$-isomorphisms, hence called \emph{the strongly quasi-local algebra of $X$} and denoted by $\Csq(X)$.
\end{cor}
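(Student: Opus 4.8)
The plan is to derive Corollary~\ref{cor:strong coarse invariance} from Proposition~\ref{prop:strong coarse invariance} by upgrading the covering isometry to a unitary, exactly as Corollary~\ref{cor:coarse invariance} was deduced from Proposition~\ref{prop:coarse invariance} in the quasi-local case.

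First I would invoke \cite[Proposition 4.3.5]{willett2020higher}: if $f\colon X\to Y$ is a coarse equivalence and $\H_X$, $\H_Y$ are ample modules, then there exists a covering isometry $V\colon \H_X\to\H_Y$ for $f$ which is in fact a unitary. Fix a coarse inverse $g\colon Y\to X$ for $f$. The next step is to observe that $V^*\colon\H_Y\to\H_X$ is a covering isometry for $g$: indeed $\supp(V^*)=\{(x,y):(y,x)\in\supp(V)\}$, so $(x,y)\in\supp(V^*)$ forces $d_Y(y,f(x))\le C$ for some fixed $C$, whence $d_X(g(y),x)\le d_X(g(y),g(f(x)))+d_X(g(f(x)),x)\le\rho_g(C)+S$, where $\rho_g$ is the expansion function of $g$ and $S$ bounds the distance between $g\circ f$ and $\Id_X$. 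Thus $\supp(V^*)\subseteq\{(x,y):d_X(g(y),x)\le\rho_g(C)+S\}$, as required.

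Applying Proposition~\ref{prop:strong coarse invariance} to the pairs $(f,V)$ and $(g,V^*)$ then yields $\ast$-homomorphisms $\Ad_V\colon\Csq(\H_X)\to\Csq(\H_Y)$ and $\Ad_{V^*}\colon\Csq(\H_Y)\to\Csq(\H_X)$. Since $V$ is a unitary we have $VV^*=\Id_{\H_Y}$ and $V^*V=\Id_{\H_X}$, so $\Ad_{V^*}\circ\Ad_V$ and $\Ad_V\circ\Ad_{V^*}$ are the respective identity maps; hence $\Ad_V$ is a $\ast$-isomorphism $\Csq(\H_X)\cong\Csq(\H_Y)$. For the final assertion, given two ample $X$-modules $\H_X$ and $\H_X'$ on a bounded geometry space $X$, the identity map $\Id_X$ is a coarse equivalence, so the construction above produces a $\ast$-isomorphism $\Csq(\H_X)\cong\Csq(\H_X')$, which justifies writing $\Csq(X)$ without reference to the module.

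As for the main obstacle: essentially all of the analytic content sits in Proposition~\ref{prop:strong coarse invariance}, and once that is available the corollary is formal. The only point requiring a small argument is the verification that $V^*$ covers a coarse inverse of $f$, which is the elementary distance estimate above and is identical to the corresponding step in the Roe-algebra and quasi-local-algebra cases; I do not expect any genuine difficulty here.
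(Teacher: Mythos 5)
Your proposal is correct and follows essentially the same route the paper intends: the paper presents the corollary as an immediate consequence of Proposition \ref{prop:strong coarse invariance} together with the fact (from \cite[Proposition 4.3.5]{willett2020higher}, already invoked for Corollary \ref{cor:coarse invariance}) that a coarse equivalence admits a unitary covering isometry, with $\Ad_V$ and $\Ad_{V^*}$ giving mutually inverse $\ast$-homomorphisms. Your verification that $V^*$ covers a coarse inverse is the only nontrivial step and is carried out correctly.
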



\subsection{The case for sequences of metric spaces}

Here we study the strongly quasi-local algebra for a sequence of metric spaces. This is crucial to analyse the ``building blocks'' when we prove our main theorem.

Let $\{X_n\}_{n\in \NN}$ be a sequence of finite metric spaces and $\rho_n: C_0(X_n) \to \B(\H_n)$ an ample module for $X_n$. Let $X$ be a coarse disjoint union of $\{X_n\}$ and $\H_X:=\bigoplus_n \H_n$. Since $C_0(X) = \bigoplus_n C_0(X_n)$, we can compose $\rho_n$ into a single representation:
\[
\rho=\bigoplus_n \rho_n: C_0(X) \to \prod_n \B(\H_n) \subseteq \B(\H_X).
\]
It is clear that $\rho$ is an ample module for $X$. In the following, we also regard a sequence $(T_n)_{n\in \NN} \in \prod_n \B(\H_n)$ as a single operator in $\B(\H_X)$.

For a locally compact operator $T\in \B(\H_X)$ with finite propagation, it follows directly from definition that $T$ is block-diagonal upto compact operators. Hence we have the following decomposition for Roe algebras:
\begin{lem}\label{lem:cdu to separated case I}
Using the same notation as above, we have:
	\begin{enumerate}
		\item $\big(C^*(\H_X) \cap \prod_n \B(\H_n)\big) + \K(\H_X) = C^*(\H_X)$;
		\item $\big(C^*(\H_X) \cap \prod_n \B(\H_n)\big) \cap \K(\H_X) = \bigoplus_n C^*(\H_n)$.
	\end{enumerate}
\end{lem}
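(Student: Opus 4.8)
The plan is to reduce the statement to one elementary observation: on a coarse disjoint union, an operator of finite propagation is block-diagonal modulo the compacts. Concretely, if $T \in \CC[\H_X]$ has $\ppg(T) \le R$, then since $d_X(X_n, X \setminus X_n) \to \infty$ there is $N \in \NN$ with $d_X(X_n, X\setminus X_n) > R$ whenever $n \ge N$, and for such $n$ and any $m \ne n$ one has $\chi_{X_n} T \chi_{X_m} = 0$ because $X_m \subseteq X \setminus X_n$. Writing $B := \bigsqcup_{n < N} X_n$ (a finite set) and $D := \sum_n \chi_{X_n} T \chi_{X_n}$, the difference $T - D = \chi_B T \chi_B - \sum_{n < N} \chi_{X_n} T \chi_{X_n}$ is compact because $T$ is locally compact and $B$ is finite, while $D$ is itself a locally compact operator of propagation $\le R$ lying in $\prod_n \B(\H_n)$, so $D \in \CC[\H_X] \cap \prod_n \B(\H_n)$.

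With this in hand, for (1) I would argue that the above gives $\CC[\H_X] \subseteq \big(C^*(\H_X) \cap \prod_n \B(\H_n)\big) + \K(\H_X)$, and that the right-hand side is closed: $C^*(\H_X) \cap \prod_n \B(\H_n)$ is a $C^*$-subalgebra of $C^*(\H_X)$, $\K(\H_X)$ is a closed ideal of $C^*(\H_X)$ (recall $\K(\H_X) \subseteq C^*(\H_X)$ since $X$ is proper and $\H_X$ ample), and the sum of a $C^*$-subalgebra and a closed ideal inside a $C^*$-algebra is again a closed $C^*$-subalgebra, being the preimage of a $C^*$-subalgebra of the quotient. Hence the right-hand side contains $\overline{\CC[\H_X]} = C^*(\H_X)$, and the reverse inclusion is immediate since both summands sit inside $C^*(\H_X)$.

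For (2), I would first note that when $X_n$ is finite every bounded operator on $\H_n$ automatically has bounded propagation, and local compactness then forces it to be compact, so $C^*(\H_n) = \K(\H_n)$; hence $\bigoplus_n C^*(\H_n) = \bigoplus_n \K(\H_n)$, which is exactly the algebra of block-diagonal compact operators, i.e. $\K(\H_X) \cap \prod_n \B(\H_n)$, since a block-diagonal operator is compact precisely when its blocks are compact with norms tending to $0$. Then $\big(C^*(\H_X) \cap \prod_n \B(\H_n)\big) \cap \K(\H_X) \subseteq \prod_n \B(\H_n) \cap \K(\H_X) = \bigoplus_n C^*(\H_n)$, and the reverse inclusion follows once more from $\K(\H_X) \subseteq C^*(\H_X)$. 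The one place that needs care is the block-diagonal-modulo-compacts decomposition, together with the fact that the sum of a $C^*$-subalgebra and a closed ideal is closed; beyond that I do not expect any real obstacle.
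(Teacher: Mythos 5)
Your proposal is correct and follows exactly the route the paper indicates: the paper gives no detailed proof, only the remark that a locally compact finite-propagation operator on a coarse disjoint union is block-diagonal modulo compacts, which is precisely your key observation. Your filling-in of the remaining details (closedness of the sum of a $C^*$-subalgebra and a closed ideal, $C^*(\H_n)=\K(\H_n)$ for finite $X_n$, and the identification of block-diagonal compacts with $\bigoplus_n \K(\H_n)$) is accurate.
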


In the case of (strong) quasi-locality, we have similar results as follows. We only need those concerning strong quasi-locality for later use, while we collect them here for completion.

\begin{lem}\label{lem:cdu to separated case II}
	Using the same notation as above, we have:
	\begin{enumerate}
		\item $\big(C^*_q(\H_X) \cap \prod_n \B(\H_n)\big) + \K(\H_X) = C^*_q(\H_X)$;
		\item $\big(C^*_q(\H_X) \cap \prod_n \B(\H_n)\big) \cap \K(\H_X) = \bigoplus_n C^*_q(\H_n)$;
		\item $\big(\Csq(\H_X) \cap \prod_n \B(\H_n)\big) + \K(\H_X) = \Csq(\H_X)$;
		\item $\big(\Csq(\H_X) \cap \prod_n \B(\H_n)\big) \cap \K(\H_X) = \bigoplus_n \Csq(\H_n)$.
	\end{enumerate}
\end{lem}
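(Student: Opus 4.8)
The plan is to treat all four items together, since (1)-(2) (quasi-local case) and (3)-(4) (strongly quasi-local case) follow the same pattern, which is in turn modelled on the Roe-algebra decomposition recorded in Lemma \ref{lem:cdu to separated case I}. Throughout, write $\H_X = \bigoplus_n \H_n$ and let $P_n \in \B(\H_X)$ denote the projection onto $\H_n$; an operator lies in $\prod_n \B(\H_n)$ precisely when it commutes with every $P_n$, equivalently when it is ``block-diagonal'' with respect to this decomposition. The key geometric input is that in a coarse disjoint union the sets $X_n$ become $R$-disjoint for $n$ large (depending on $R$): for every $R>0$ there is $N(R)$ so that $d_X(X_n, X_m) > R$ whenever $n \neq m$ and $\min(n,m) \geq N(R)$.

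First I would prove the inclusion ``$\supseteq$'' in the ``$+\,\K$'' statements (1) and (3) and the inclusion ``$\subseteq$'' everywhere, which are the easy directions. For (3): given $T \in \Csq(\H_X)$, I want to show $T = T' + k$ with $T'$ block-diagonal and strongly quasi-local and $k$ compact. Since $T$ is in particular quasi-local, for each $\varepsilon$ there is $R$ with $\|\chi_A T \chi_B\| < \varepsilon$ whenever $d_X(A,B) \geq R$; applying this with $A = X_n$, $B = X \setminus X_n$ for $n \geq N(R)$ shows the off-diagonal ``tail'' of $T$ is small in norm. A standard argument (as in the Roe case, cf. \cite[proof of analogous statements]{willett2020higher}) then writes $T$ as a block-diagonal operator $T'$ plus an operator that is norm-approximable by finite-rank-block operators supported near the diagonal over finitely many $X_n$'s — but since each $X_n$ is finite and $T$ is locally compact, the finitely-many-blocks part is literally compact, and the tail correction is small; letting $\varepsilon \to 0$ and using that $\Csq(\H_X) + \K(\H_X)$ is closed gives $T \in (\Csq(\H_X) \cap \prod_n \B(\H_n)) + \K(\H_X)$. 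The reverse inclusion needs $\K(\H_X) \subseteq \Csq(\H_X)$, which holds because $\K(\H_X) \subseteq C^*(\H_X) \subseteq \Csq(\H_X)$ by Proposition \ref{prop:relations between Roe, QL and SQL}(2) (using bounded geometry of $X$, which follows from uniformly bounded geometry of $\{X_n\}$). For (4): the inclusion $\bigoplus_n \Csq(\H_n) \subseteq (\Csq(\H_X)\cap \prod_n \B(\H_n)) \cap \K(\H_X)$ requires checking that a block-diagonal operator $(T_n)$ with each $T_n \in \Csq(\H_n)$ and $(T_n)$ compact (i.e. $\|T_n\| \to 0$) is strongly quasi-local on $\H_X$; this uses Proposition \ref{prop:subspace strong quasi-locality} (the ``uniform'' subspace statement) applied to... — see the next paragraph for the genuine difficulty. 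The inclusion $\subseteq$ in (4): if $T = (T_n)$ is block-diagonal and compact on $\H_X$ then each $T_n$ is compact; and if $T$ is strongly quasi-local on $\H_X$, then restricting to $\H_n = \chi_{X_n} \H_X$ and invoking Proposition \ref{prop:subspace strong quasi-locality} (or even just Lemma \ref{lem:subspace-wise version}) shows each $T_n = \chi_{X_n} T \chi_{X_n}$ is strongly quasi-local on $\H_n$.

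The main obstacle — and the place where bounded geometry and Proposition \ref{prop:subspace strong quasi-locality} are essential — is the converse direction in (4), namely showing that a block-diagonal $T = (T_n)$ with each $T_n$ strongly quasi-local and $\|T_n\|\to 0$ is strongly quasi-local \emph{on the whole of $\H_X$}. The subtlety is that a map $g \colon X \to \K(\HH)_1$ with small $(\delta,R)$-variation restricts to each $X_n$ with small variation, so one controls each commutator $\|[T_n \otimes \Id_\HH, \Lambda_{\H_n}(g|_{X_n})]\|$ — but one must do so \emph{uniformly in $n$}, and then assemble. Uniformity in $n$ is exactly what Proposition \ref{prop:subspace strong quasi-locality} provides (there the $\delta, R$ depend only on $\varepsilon$ and not on the subspace $A$), applied to each $T_n$ on $X_n$; here one should apply it after first realizing all the $T_n$ as compressions of a single strongly quasi-local operator, e.g. $\bigoplus_n T_n$ already \emph{is} that operator once we know it is strongly quasi-local — so to avoid circularity I would instead argue directly: fix $\varepsilon>0$, choose $N$ with $\|T_n\| < \varepsilon/2$ for $n > N$ (so those blocks contribute commutators of norm $< \varepsilon$ regardless of $g$), and for the finitely many $n \leq N$ apply strong quasi-locality of each individual $T_n$ to get $\delta_n, R_n$; then $\delta := \min_n \delta_n$ and $R := \max_n R_n$ work, because $g$ with $(\delta,R)$-variation has $(\delta_n, R_n)$-variation on each $X_n$, the commutator $[T\otimes \Id_\HH, \Lambda(g)]$ is block-diagonal with blocks $[T_n \otimes \Id_\HH, \Lambda_{\H_n}(g|_{X_n})]$, and the norm of a block-diagonal operator is the supremum of the norms of its blocks. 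Finally, $T$ is locally compact since each $T_n$ is and the $X_n$ are finite. The analogous (and simpler) arguments give (1) and (2) for quasi-locality, using $(\varepsilon, R)$-propagation in place of the commutator estimates and the elementary fact that $\chi_A T \chi_B = 0$ across distinct blocks for well-separated $A, B$.
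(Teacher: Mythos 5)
Your treatment of items (2) and (4) is fine: the block-diagonal commutator computation (sup over blocks, splitting off finitely many blocks and using $\|T_n\|\to 0$ for the tail) is correct, though the paper gets (2) and (4) more cheaply by observing that each $X_n$ is finite, so $C^*(\H_n)=C^*_q(\H_n)=\Csq(\H_n)=\K(\H_n)$ and $\prod_n\B(\H_n)\cap\K(\H_X)=\bigoplus_n\K(\H_n)\subseteq\K(\H_X)\subseteq\Csq(\H_X)$, which settles (4) (and (2)) by a one-line lattice computation.

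The genuine gap is in the hard direction of (1) and (3). You claim that since $\|\chi_{X_n}T\chi_{X\setminus X_n}\|<\varepsilon$ for all $n\geq N(R)$, ``the off-diagonal tail of $T$ is small in norm,'' and that ``a standard argument as in the Roe case'' then yields the decomposition $T\approx(\text{compact corner})+(\text{block-diagonal tail})$. This is precisely the non-trivial point, and the individual corner estimates do not give it: the off-diagonal tail is the infinite sum $\sum_{n>N}\chi_{X_n}T\chi_{X\setminus X_n}$ (equivalently $T-\chi_{Y_N}T\chi_{Y_N}-\sum_{n>N}\chi_{X_n}T\chi_{X_n}$ with $Y_N=\bigcup_{i\leq N}X_i$), and the norm of an infinite sum of corners, each of norm $<\varepsilon$, is not controlled by $\varepsilon$; neither orthogonality of ranges nor of sources alone rescues the estimate, and quasi-locality provides no summable decay rate in $n$. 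The analogy with the Roe case (Lemma \ref{lem:cdu to separated case I}) is misleading exactly here: for a finite-propagation operator the off-diagonal tail beyond $N(R)$ vanishes identically, so there is no infinite sum to control, whereas for (strongly) quasi-local operators this approximation is a real theorem --- it is the content of \cite[Corollary 4.3]{ST19}, which the paper invokes at this step. A correct write-up must either cite that result (as the paper does) or supply an actual proof of the approximation $\big\|T-\big(\sum_{i\leq N}\chi_{X_i}\big)T\big(\sum_{i\leq N}\chi_{X_i}\big)-\sum_{i>N}\chi_{X_i}T\chi_{X_i}\big\|\to 0$. A secondary point, easily repaired: once this approximation is granted, you still need the block-diagonal tail $\sum_{i>N}\chi_{X_i}T\chi_{X_i}$ to be strongly quasi-local; your finitely-many-blocks-plus-small-tail trick from (4) does not apply (the blocks' norms need not tend to $0$), but the paper's estimate $\big\|\big[\big(\sum_{i>N}\chi_{X_i}T\chi_{X_i}\big)\otimes\Id_{\HH},\Lambda(g)\big]\big\|=\sup_{i>N}\big\|\chi_{X_i}[T\otimes\Id_{\HH},\Lambda(g)]\chi_{X_i}\big\|$ settles it directly from the strong quasi-locality of $T$ itself.
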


\begin{proof}

The proof is different from that for Roe algebras, and we only prove (3) and (4) since the other two are similar and easier.


For (3): note that $\K(\H_X) \subseteq \Csq(\H_X)$, hence the left hand side is contained in the right one. For the converse, it follows from \cite[Corollary 4.3]{ST19} that for any $T\in\Csq(\H_X) \subseteq C^*_q(\H_X)$ and $\varepsilon >0$, there exists some $N\in \NN$ such that 
\[
\big\| T - \big(\sum_{i=1}^N \chi_{X_i}\big) T \big(\sum_{i=1}^N \chi_{X_i}\big) - \sum_{i>N} \chi_{X_i} T \chi_{X_i} \big\| < \varepsilon.
\]
Since $T$ is locally compact, then $(\sum_{i=1}^N \chi_{X_i}) T (\sum_{i=1}^N \chi_{X_i})$ is compact. It suffice to show that $\sum_{i>N} \chi_{X_i} T \chi_{X_i} \in \Csq(\H_X)$. Given $\varepsilon>0$, the strong quasi-locality of $T$ implies that there exist $\delta,R>0$ such that for any $g: X \to \K(\HH)_1$ with $(\delta, R)$-variation, we have $\big\| [T \otimes \Id_{\HH} , \Lambda(g) ] \big\|< \varepsilon$. Now for any such $g$, we have:
\begin{eqnarray*}
\big\| \big[\big(\sum_{i>N} \chi_{X_i} T \chi_{X_i}\big)\otimes \Id_{\HH}, \Lambda(g) \big] \big\| &=& \sup_{i>N} \big\| [(\chi_{X_i} T\chi_{X_i}) \otimes \Id_{\HH} , \Lambda(g) ] \big\| \\
&=& \sup_{i>N} \big\|\chi_{X_i} [ T \otimes \Id_{\HH} , \Lambda(g) ] \chi_{X_i}\big\| < \varepsilon.
\end{eqnarray*}
Hence we obtain that $\sum_{i>N} \chi_{X_i} T \chi_{X_i}$ is strongly quasi-local, which concludes (3).

For (4): note that $C^*(\H_n)= C^*_q(\H_n)= \Csq(\H_n)= \K(\H_n)$ for each $n$ and hence:
\begin{align*}
\big(\Csq & (\H_X) \cap \prod_n \B(\H_n)\big) \cap \K(\H_X) = \Csq(\H_X) \cap \big( \prod_n \B(\H_n) \cap \K(\H_X) \big) \\
&= \Csq(\H_X) \cap \bigoplus_n \K(\H_n) = \bigoplus_n \K(\H_n) = \bigoplus_n \Csq(\H_n).
\end{align*}
Hence we conclude the proof.
\end{proof}

For later use, we introduce the following notion of (strong) quasi-locality for a sequence of operators. Note that the definition is nothing but uniform versions of (strong) quasi-locality.
\begin{defn}\label{defn:sql for sequence}
Let $\{X_n\}_{n\in \NN}$ be a sequence of finite metric spaces and $\rho_n: C_0(X_n) \to \B(\H_n)$ be ample modules. For a sequence $(T_n)_{n\in \NN}$ where $T_n \in \B(\H_n)$, we say that:
\begin{enumerate}
 \item $(T_n)_{n\in \NN}$ is \emph{uniformly quasi-local} if for any $\varepsilon>0$ there exists $R>0$ such that for any $n\in \NN$ and $C_n, D_n \subseteq X_n$ with $d(C_n, D_n)\le R$, we have $\|\chi_{C_n} T_n \chi_{D_n}\|<\varepsilon$.
 \item $(T_n)_{n\in \NN}$ is \emph{uniformly strongly quasi-local} if for any $\varepsilon>0$ there exist $\delta,R >0$ such that for any $n \in \NN$ and $g_n:X_n\rightarrow \K(\HH)_1$ with $(\delta, R)$-variation, we have $\| [T_n \otimes Id_{\HH} , \Lambda(g_n) ] \| < \varepsilon$.
\end{enumerate}
\end{defn}


\begin{lem}\label{lem:char for sequence of strong quasi-locality}
Let $\{X_n\}_{n\in \NN}$ be a sequence of finite metric spaces, $\rho_n: C_0(X_n) \to \B(\H_n)$ be ample modules and $\H_X:=\bigoplus_n \H_n$. For a sequence $(T_n)_{n\in \NN} \in \prod_n \K(\H_n)$, we have:
\begin{enumerate}
 \item $(T_n)\in C^*_q(\H_X)$ \emph{if and only if} $(T_n)$ is uniformly quasi-local.
 \item $(T_n)\in \Csq(\H_X)$ \emph{if and only if} $(T_n)$ is uniformly strongly quasi-local.
\end{enumerate}
Hence if $(T_n)$ is uniformly strongly quasi-local then it is uniformly quasi-local.
\end{lem}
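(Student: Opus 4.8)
The plan is to exploit that an operator $T=(T_n)_{n\in\NN}\in\prod_n\K(\H_n)$ is block-diagonal with respect to the decomposition $\H_X=\bigoplus_n\H_n$, together with the fact that both $\Lambda(g)$ (for any bounded $g:X\to\K(\HH)$) and the characteristic projections $\chi_A$ respect the coarser block decomposition indexed by the pieces $X_n$. Concretely, $\Lambda(g)$ is ``diagonal'' in the $X$-by-$X$ matrix picture of Equation~(\ref{EQ:Lambda}), so it preserves each summand $\H_n\otimes\HH$ and restricts there to $\Lambda_{\H_n}(g|_{X_n})$; hence $[T\otimes\Id_{\HH},\Lambda(g)]$ is block-diagonal with $n$-th block $[T_n\otimes\Id_{\HH},\Lambda_{\H_n}(g|_{X_n})]$ and norm $\sup_n\|[T_n\otimes\Id_{\HH},\Lambda_{\H_n}(g|_{X_n})]\|$, and likewise $\chi_A T\chi_B=\bigoplus_n\chi_{A\cap X_n}T_n\chi_{B\cap X_n}$. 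I would first also record that $T$ is automatically locally compact on $\H_X$: for $f\in C_0(X)$ one has $\|f|_{X_n}\|_\infty\to 0$ because $X$ is a coarse disjoint union, so $fT$ is a norm limit of finite direct sums of compact operators. With these reductions in place it suffices to prove (2), since (1) is entirely analogous (and easier, involving only propagation in place of the $(\delta,R)$-variation machinery), and the final assertion will follow by combining (1), (2) and Proposition~\ref{prop:relations between Roe, QL and SQL}(1).

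For the ``if'' direction of (2) I would argue directly: given $(T_n)$ uniformly strongly quasi-local and $\varepsilon>0$, take $\delta,R$ as in Definition~\ref{defn:sql for sequence}(2) (applied to $\varepsilon/2$); any $g:X\to\K(\HH)_1$ with $(\delta,R)$-variation restricts to functions $g|_{X_n}$ with $(\delta,R)$-variation on each $X_n$, so the block-norm formula above bounds $\|[T\otimes\Id_{\HH},\Lambda(g)]\|$ by $\varepsilon/2<\varepsilon$, and together with local compactness this places $T$ in $\Csq(\H_X)$. The analogous ``if'' direction of (1) runs the same way with Borel sets and propagation.

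The ``only if'' direction of (2) is the technical heart, and the plan here is a tail/head split dictated by the structure of the coarse disjoint union. Given $T=(T_n)\in\Csq(\H_X)$ and $\varepsilon>0$, strong quasi-locality of $T$ on $\H_X$ supplies $\delta_0,R_0>0$ controlling global functions. Since $d_X(X_n,X\setminus X_n)\to\infty$, I would choose $N$ with $d_X(X_n,X\setminus X_n)>R_0$ for all $n>N$, so the tail pieces are mutually $R_0$-separated; then, given $g_n:X_n\to\K(\HH)_1$ with sufficiently small variation for such an $n$, I extend $g_n$ by zero to a global $g:X\to\K(\HH)_1$, which still has $(\delta_0,R_0)$-variation (within $X_n$ because the variation of $g_n$ is small, between $X_n$ and the rest because of the separation, trivially elsewhere), and read off $\|[T_n\otimes\Id_{\HH},\Lambda(g_n)]\|=\|[T\otimes\Id_{\HH},\Lambda(g)]\|<\varepsilon$ from the block-diagonal identity. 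The finitely many ``head'' indices $n\le N$ are handled separately: each $X_n$ is a fixed finite space and $T_n$ a fixed compact operator, hence $T_n$ is strongly quasi-local on $\H_n$ by Proposition~\ref{prop:relations between Roe, QL and SQL}(2) — indeed, with $R_n:=\diam(X_n)+1$ a $\K(\HH)_1$-valued function with $(\delta,R_n)$-variation on $X_n$ differs from a constant by at most $\delta$ in norm, so $\|[T_n\otimes\Id_{\HH},\Lambda(g_n)]\|\le 2\|T_n\|\delta$, which is $<\varepsilon$ once $\delta$ is small. Taking $\delta:=\min\{\delta_0,\delta_1,\ldots,\delta_N\}$ and $R:=\max\{R_0,R_1,\ldots,R_N\}$, with $(\delta_n,R_n)$ the parameters just chosen for $X_n$, then works uniformly over all $n\in\NN$.

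Finally, the last assertion — uniform strong quasi-locality implies uniform quasi-locality — I would simply deduce from (1), (2) and the inclusion $\Csq(\H_X)\subseteq C^*_q(\H_X)$ of Proposition~\ref{prop:relations between Roe, QL and SQL}(1). The main obstacle throughout is exactly the asymmetry of the coarse disjoint union: only the tail pieces are automatically far apart, so a single global function cannot be assembled from the $g_n$ for small $n$, and those finitely many fixed finite spaces (with fixed compact operators) must be absorbed into the constants $\delta,R$ by hand; everything else is a routine passage between block-diagonal operators and the suprema of the norms of their blocks.
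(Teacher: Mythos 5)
Your proof is correct, and since the paper omits the argument as ``straightforward'', what you have written is precisely the routine argument that was intended: the block-diagonal norm identity $\|[T\otimes \Id_{\HH},\Lambda(g)]\|=\sup_n\|[T_n\otimes \Id_{\HH},\Lambda_{\H_n}(g|_{X_n})]\|$ gives the ``if'' directions by restriction, and the ``only if'' direction follows by extending $g_n$ by zero on the tail components (which are eventually $R$-separated from the rest) while absorbing the finitely many remaining blocks into the constants via the near-constancy estimate $\|[T_n\otimes\Id_{\HH},\Lambda(g_n)]\|\le 2\|T_n\|\delta$. The local compactness check and the final deduction from Proposition \ref{prop:relations between Roe, QL and SQL}(1) are also handled correctly.
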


The proof is straightforward
, hence omitted. 

Analogous to the coarse invariance of Roe algebras, we have the following result concerning sequences of spaces. The proof is similar, hence omitted.

\begin{prop}\label{prop:independent of modules}
Let $\{X_n\}_{n\in \NN}$ be a sequence of finite metric spaces with uniformly bounded geometry, and $\rho_n: C_0(X_n) \to \B(\H_n)$ be an ample module for $X_n$. Let $X$ be a coarse disjoint union of $\{X_n\}$ and $\H_X:=\bigoplus_n \H_n$. Then the K-theories $K_\ast\big( C^*(\H_X) \cap \prod_n \B(\H_n) \big), K_\ast\big( C^*_q(\H_X) \cap \prod_n \B(\H_n) \big)$ and $K_\ast\big( \Csq(\H_X) \cap \prod_n \B(\H_n) \big)$ are independent of $\rho_n$ up to canonical isomorphisms. 
\end{prop}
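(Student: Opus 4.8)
\textbf{Proof proposal for Proposition \ref{prop:independent of modules}.}

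The plan is to follow the standard pattern used for coarse invariance of Roe algebras, but applied in the setting of sequences of spaces and adapted to the (strongly) quasi-local algebras. Fix a sequence $\{X_n\}_{n \in \NN}$ of finite metric spaces with uniformly bounded geometry, a coarse disjoint union $X$, and two sequences of ample modules $\rho_n : C_0(X_n) \to \B(\H_n)$ and $\rho_n' : C_0(X_n) \to \B(\H_n')$. The main point is that the identity map $\mathrm{id}_{X_n} : X_n \to X_n$ is a coarse equivalence (with uniform control constants, namely the identity expansion function), so by \cite[Proposition 4.3.5]{willett2020higher} applied to each $n$, there exists for each $n$ a unitary $U_n : \H_n \to \H_n'$ whose support is contained in $\{(x,x) : x \in X_n\}$, i.e. $U_n$ is ``diagonal'' (it covers $\mathrm{id}_{X_n}$ with constant $C = 0$). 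Set $U := \bigoplus_n U_n : \H_X \to \H_X'$, a unitary. Since each $U_n$ is diagonal with respect to the point decomposition of $\H_n$, conjugation by $U$ preserves propagation exactly, preserves block-diagonality (i.e. carries $\prod_n \B(\H_n)$ onto $\prod_n \B(\H_n')$), carries $\Lambda_{\H_X}(g)$ to $\Lambda_{\H_X'}(g)$ for every bounded $g : X \to \K(\HH)_1$ (because $U \otimes \Id_{\HH}$ commutes with the block-diagonal amplified multiplication up to the identification, since $U_n$ acts only on $\H_n$ and $g(x)$ acts only on the auxiliary $\HH$), and preserves local compactness. Consequently $\Ad_U$ maps $C^*(\H_X) \cap \prod_n \B(\H_n)$, $C^*_q(\H_X) \cap \prod_n \B(\H_n)$, and $\Csq(\H_X) \cap \prod_n \B(\H_n)$ onto their primed counterparts, giving $\ast$-isomorphisms and hence isomorphisms in $K$-theory.

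The remaining issue is \emph{canonicity}: the unitary $U$ is not unique, and one must check that $(\Ad_U)_*$ on $K$-theory does not depend on the choice of $U$. For this, suppose $U, U'$ are two such block-diagonal covering unitaries; then $U'U^* = \bigoplus_n U_n'U_n^*$ is a block-diagonal unitary in $\prod_n \B(\H_n')$ which is supported near the diagonal (in fact on the diagonal), hence lies in the multiplier algebra of each of the three algebras in question, and $\Ad_{U'} = \Ad_{U'U^*} \circ \Ad_U$. So it suffices to show that for a block-diagonal unitary $W = \bigoplus_n W_n$ with each $W_n$ diagonal (supported on $\{(x,x)\}$), the automorphism $\Ad_W$ of $\mathcal{A} := C^*_?(\H_X') \cap \prod_n \B(\H_n')$ induces the identity on $K$-theory. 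This is handled exactly as in the Roe-algebra case \cite[Lemma 5.1.12]{willett2020higher}: one produces a path of block-diagonal unitaries connecting $W \oplus 1$ (or $W \oplus W^*$) to $1$ in the multiplier algebra of $\mathcal{A} \otimes \K$, using an Eilenberg-swindle-type rotation argument in $M_2$, with all propagation $0$ throughout; since all these unitaries are diagonal they automatically commute with every $\Lambda(g)$ and preserve quasi-locality trivially, so they remain in the appropriate multiplier algebra. Hence $(\Ad_W)_* = \mathrm{id}$, and the three isomorphisms are canonical.

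The step I expect to require the most care is verifying that conjugation by the block-diagonal unitary genuinely preserves the strongly quasi-local condition with \emph{no} loss in the constants, and more precisely that $\Ad_U$ carries $\Csq(\H_X) \cap \prod_n\B(\H_n)$ \emph{onto} (not merely into) the primed algebra — but this is immediate once one observes $U_n$ is supported on the diagonal, because then $(U \otimes \Id_{\HH})\Lambda_{\H_X}(g)(U^* \otimes \Id_{\HH}) = \Lambda_{\H_X'}(g)$ identically, so $[\,(UTU^*) \otimes \Id_{\HH}, \Lambda_{\H_X'}(g)\,] = (U \otimes \Id_{\HH})[\,T \otimes \Id_{\HH}, \Lambda_{\H_X}(g)\,](U^* \otimes \Id_{\HH})$, whence the $(\delta, R)$-variation estimates transfer verbatim and likewise in the reverse direction using $U^*$. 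The block-diagonality is what makes everything work uniformly in $n$; for a general (non-diagonal) covering isometry one would pick up the constant $C$ from the support condition, which is exactly the phenomenon already dealt with in the proof of Proposition \ref{prop:strong coarse invariance}, but here it is avoided. Since all of this parallels the Roe-algebra argument with only cosmetic changes, and since the $K$-theoretic independence argument is the usual swindle, we omit the details.
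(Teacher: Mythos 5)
Your proof is correct and is essentially the argument the paper has in mind (it omits the proof as being "analogous to the coarse invariance of Roe algebras", i.e.\ a covering unitary plus the standard fact that conjugation by a multiplier unitary acts trivially on $K$-theory). Your refinement of choosing the covering unitary $U=\bigoplus_n U_n$ to be genuinely diagonal is exactly the right move: it is what makes conjugation preserve $\prod_n\B(\H_n)$, propagation, and the commutators with $\Lambda(g)$ on the nose, uniformly in $n$ --- a point a bare citation of Proposition \ref{prop:strong coarse invariance} would not immediately give, since a covering isometry with a nonzero control constant could mix finitely many blocks. One cosmetic fix: \cite[Proposition 4.3.5]{willett2020higher} does not by itself produce a unitary supported on the diagonal; instead, note that ampleness of $\rho_n$ and $\rho_n'$ over the finite (hence discrete) space $X_n$ makes each point fibre $\chi_{\{x\}}\H_n$ and $\chi_{\{x\}}\H_n'$ infinite-dimensional and separable, so one builds $U_n=\bigoplus_{x\in X_n}U_{n,x}$ fibrewise by hand; also, in the swindle it is $W\oplus W^*$ (not $W\oplus 1$) that is connected to the identity.
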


\section{The coarse Mayer-Vietoris sequence}\label{sec:cMV}

The tool of Mayer-Vietories sequences is widely used within different area of mathematics, especially in algebraic topology. It provides a ``cutting and pasting'' procedure, which allows us to obtain global information from local pieces.

In coarse geometry, Higson, Roe and Yu introduced a coarse Mayer-Vietoris sequence for K-theories of Roe algebras associated to a suitable decomposition of the underlying metric space in \cite{HRY93}. More precisely, recall that a closed cover $(A,B)$ of a metric space $X$ is said to be \emph{$\omega$-excisive} if for each $r>0$ there is some $s>0$ such that $\Nd_r(A)\cap \Nd_r(B) \subseteq \Nd_s(A\cap B)$. Associated to an $\omega$-excisive closed cover $(A,B)$ of a metric space $X$, we have the following short exact sequence (called the \emph{coarse Mayer-Vietoris sequence}):
\[
\begin{CD}
K_0(C^*(A\cap B)) @>>> K_0(C^*(A))\oplus K_0(C^*(B)) @>>> K_0(C^*(X)) \\
@AAA                            & &                    @VVV \\
K_1(C^*(X)) @<<< K_1(C^*(A))\oplus K_1(C^*(B)) @<<< K_1(C^*(A\cap B)).
\end{CD}
\]

In this section, we explore a coarse Mayer-Vietoris sequence for strongly quasi-local algebras and use it to reduce the proof of Theorem \ref{thm:main result} to the case of ``sparse'' spaces. Let $X$ be a discrete metric space with bounded geometry and $\H_X$ be an ample $X$-module.

\begin{defn}\label{defn:strong M-V ideals}
	Let $A$ be a (closed) subset of $X$. Denote by $\Csq(A,X)$ the norm-closure of the set of all operators $T\in \Csq(\H_X)$ with support contained in $\Nd_R(A) \times \Nd_R(A)$ for some $R\geq 0$. 
\end{defn}



\begin{lem}\label{lem:strong M-V ideals}
$\Csq(A,X)$ is a closed two-sided $\ast$-ideal in $\Csq(\H_X)$.
\end{lem}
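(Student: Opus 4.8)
The strategy is to verify the two defining conditions for a closed two-sided $\ast$-ideal: (i) $\Csq(A,X)$ is a $\ast$-subalgebra that is closed in norm, and (ii) it absorbs multiplication on both sides by arbitrary elements of $\Csq(\H_X)$. Norm-closedness is built into Definition \ref{defn:strong M-V ideals}, and the $\ast$-invariance is immediate since $\supp(T^\ast)$ is the ``flip'' of $\supp(T)$, so $\supp(T) \subseteq \Nd_R(A)\times\Nd_R(A)$ forces $\supp(T^\ast)\subseteq \Nd_R(A)\times\Nd_R(A)$ as well. Thus the heart of the matter is the two-sided ideal property, and by taking adjoints it suffices to show: if $S\in\Csq(\H_X)$ and $T\in\Csq(\H_X)$ has $\supp(T)\subseteq\Nd_R(A)\times\Nd_R(A)$, then $ST\in\Csq(A,X)$ (left absorption; right absorption follows by applying $\ast$). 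By density and continuity of multiplication, it is enough to treat the generating operators $T$, i.e. those with support in some $\Nd_R(A)\times\Nd_R(A)$.

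\textbf{Key steps.} First I would record that $\Csq(\H_X)$ is a $C^\ast$-algebra (Definition \ref{defn:strongly quasi-local algebra}), so $ST\in\Csq(\H_X)$; the only issue is the support condition. Write $P:=\chi_{\Nd_R(A)}$, so $T=PTP$, hence $ST = (SP)TP$. The support of $SP$ lies in $X\times\Nd_R(A)$, but that alone does not bound the first coordinate near $A$. The fix is to use that $S$ is quasi-local (Proposition \ref{prop:relations between Roe, QL and SQL}(1) gives $\Csq(\H_X)\subseteq C^*_q(\H_X)$): for any $\varepsilon>0$ choose $R_\varepsilon$ so that $S$ has $(\varepsilon,R_\varepsilon)$-propagation, and set $Q:=\chi_{\Nd_{R+R_\varepsilon}(A)}$. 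Then $\|(1-Q)S P\| = \|(1-Q)SP\| <\varepsilon$ because $d_X\big(X\setminus\Nd_{R+R_\varepsilon}(A),\,\Nd_R(A)\big)\geq R_\varepsilon$, so $QSP$ approximates $SP$ within $\varepsilon\|T\|$. Consequently $QSTQ$ approximates $ST$ within $2\varepsilon\|S\|\|T\|$ (inserting $Q$ on both sides, using $T=PTP$ and $P\le Q$), and $QSTQ$ has support in $\Nd_{R+R_\varepsilon}(A)\times\Nd_{R+R_\varepsilon}(A)$. Since $QSTQ = Q(ST)Q\in\Csq(\H_X)$ (as $\Csq(\H_X)$ is closed under compressions by $\chi_B$'s, again because $Q\in C_b(X)$ acts as a multiplier preserving strong quasi-locality — concretely $[QSTQ\otimes\Id,\Lambda(g)] = Q[ST\otimes\Id,\Lambda(g)]Q$ since $Q$ commutes with $\Lambda(g)$), it is one of the generating operators of $\Csq(A,X)$. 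Letting $\varepsilon\to 0$ shows $ST$ is a norm limit of such operators, hence $ST\in\Csq(A,X)$.

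\textbf{Main obstacle.} The one point requiring a little care is the assertion that a compression $\chi_B S\chi_B$ of a strongly quasi-local operator remains strongly quasi-local for an arbitrary subset $B\subseteq X$ — this is exactly the subtlety flagged in Remark \ref{rem:obstructions on Lipschitz extension}, and it is \emph{not} immediate from the definition. However, here $B=\Nd_{R+R_\varepsilon}(A)$ and we only need that $Q(ST)Q\in\Csq(\H_X)$; this follows cleanly from Proposition \ref{prop:subspace strong quasi-locality} (or directly from the identity $[Q(ST)Q\otimes\Id_\HH,\Lambda(g)] = Q\,[ST\otimes\Id_\HH,\Lambda(g)]\,Q$, valid because $Q = \chi_B\otimes\Id$ commutes with every $\Lambda(g)$, so the commutator norm for $Q(ST)Q$ is dominated by that for $ST$). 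So no genuine extension problem arises. The rest is routine estimates with characteristic functions and quasi-locality, exactly paralleling the classical argument for the Roe-algebra ideals $C^*(A,X)$ in \cite{HRY93}.
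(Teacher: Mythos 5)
Your proposal is correct and follows essentially the same route as the paper: both use $\Csq(\H_X)\subseteq C^*_q(\H_X)$ (Proposition \ref{prop:relations between Roe, QL and SQL}(1)) to exploit quasi-locality of the unrestricted factor and approximate the product by a compression $\chi_{\Nd_{R'}(A)}(\cdot)\chi_{\Nd_{R'}(A)}$ supported near $A$; the paper treats $TS$ and remarks $ST$ is similar, while you treat $ST$ and take adjoints. Your explicit observation that such compressions stay in $\Csq(\H_X)$ because $\chi_B\otimes\Id_\HH$ commutes with every $\Lambda(g)$ (so no Lipschitz-extension issue arises) is a correct elaboration of a point the paper leaves implicit.
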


\begin{proof}
	It suffices to show that for $T,S \in \Csq(\H_X)$ with $\supp(T) \subseteq \Nd_R(A) \times \Nd_R(A)$ for some $R\geq 0$, then $TS$ and $ST$ belong to $\Csq(A,X)$. By Proposition \ref{prop:relations between Roe, QL and SQL}(1), we know that $S \in C^*_q(\H_X)$. Hence for any $\varepsilon>0$, there exists $R_0>0$ such that $S$ has $(\frac{\varepsilon}{\|T\|},R_0)$-propagation. It follows that
\begin{equation*}
	\|TS - \chi_{\Nd_R(A)}TS\chi_{\Nd_{R+R_0}(A)}\| = \|\chi_{\Nd_R(A)}T(\chi_{\Nd_R(A)}S - \chi_{\Nd_R(A)}S\chi_{\Nd_{R+R_0}(A)})\|  < \varepsilon.
\end{equation*}
Hence by definition, we obtain that $TS \in \Csq(A,X)$. A similar argument shows that $ST \in \Csq(A,X)$ as well, which concludes the proof.
\end{proof}

Based on a similar argument as in the proof of \cite[Section 5/Lemma 1]{HRY93} together with Corollary \ref{cor:strong coarse invariance}, we have the following:


\begin{lem}\label{lem:strong M-V ideals have same K-theory}
For a (closed) subset $A\subseteq X$, take an isometry $V$ covering the inclusion $i: A \hookrightarrow X$. Then the range of $\Ad_V:\Csq(A) \to \Csq(X)$ is contained in $\Csq(A,X)$. Furthermore, the map $i_*:K_*(\Csq(A)) \to K_*(\Csq(A,X))$ is an isomorphism.
\end{lem}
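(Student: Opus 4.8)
The statement has two parts, and I would treat them in turn. For the first, since $A$ inherits bounded geometry from $X$ and the inclusion $i\colon A\hookrightarrow X$ is a coarse map, Proposition \ref{prop:strong coarse invariance} shows that $\Ad_V$ is a well-defined $\ast$-homomorphism from $\Csq(A)$ into $\Csq(\H_X)$. If $V$ covers $i$ with constant $C\geq 0$, then, using the standard containment of the support of a product in the composition of supports together with $\supp(T)\subseteq A\times A$ for $T\in\B(\H_A)$, one gets $\supp(VTV^\ast)\subseteq\Nd_C(A)\times\Nd_C(A)$; hence $VTV^\ast$ lies in $\Csq(A,X)$ by Definition \ref{defn:strong M-V ideals}.

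For the second part I would set up an inductive-limit decomposition of $\Csq(A,X)$. For $R\geq 0$ let $I_R\subseteq\Csq(\H_X)$ denote the set of operators whose support is contained in $\Nd_R(A)\times\Nd_R(A)$; each $I_R$ is a closed $\ast$-subalgebra, the family $\{I_R\}$ is increasing, and $\Csq(A,X)=\overline{\bigcup_{R\in\NN}I_R}$, so that $K_\ast(\Csq(A,X))=\varinjlim_R K_\ast(I_R)$ by continuity of $K$-theory. The heart of the argument is to identify $I_R$ with the strongly quasi-local algebra of the subspace $\Nd_R(A)$: an operator supported in $\Nd_R(A)\times\Nd_R(A)$ compresses to an operator on the module $\chi_{\Nd_R(A)}\H_X$, which is ample for $\Nd_R(A)$ (extend any $f\in C_0(\Nd_R(A))$ by zero to $X$ and use ampleness of $\H_X$), and both local compactness and strong quasi-locality transfer in both directions across this compression. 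The transfer of strong quasi-locality \emph{from} $\H_X$ \emph{to} the submodule is precisely Lemma \ref{lem:subspace-wise version} applied to the single subspace $\Nd_R(A)$; the reverse implication is elementary, because by Remark \ref{rem:entry for strong quasi-local} the commutator $[T\otimes\Id_\HH,\Lambda(g)]$ attached to a function $g\colon X\to\K(\HH)_1$ depends only on $g|_{\Nd_R(A)}$, which has the same variation as $g$. This is the step I expect to be the main obstacle, since it is exactly the extension problem flagged in Remark \ref{rem:obstructions on Lipschitz extension} that forces us to route through the Higson-function picture rather than argue directly.

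Granting the identification $I_R\cong\Csq(\chi_{\Nd_R(A)}\H_X)$, the inclusion $I_R\hookrightarrow I_{R'}$ corresponds under it to $\Ad_W$, where $W\colon\chi_{\Nd_R(A)}\H_X\hookrightarrow\chi_{\Nd_{R'}(A)}\H_X$ is the inclusion isometry, which covers the inclusion $\Nd_R(A)\hookrightarrow\Nd_{R'}(A)$ --- a coarse equivalence of bounded geometry spaces. Hence every structure map of the system $K_\ast(I_0)\to K_\ast(I_1)\to\cdots$ is an isomorphism by Proposition \ref{prop:strong coarse invariance}, so $K_\ast(I_R)\to K_\ast(\Csq(A,X))$ is an isomorphism for each $R$. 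Finally I would factor $i_\ast$ through $I_C$: viewing $V$ as an isometry $\H_A\to\chi_{\Nd_C(A)}\H_X$ it covers the coarse equivalence $A\hookrightarrow\Nd_C(A)$, so $(\Ad_V)_\ast\colon K_\ast(\Csq(A))\to K_\ast(I_C)$ is an isomorphism by Proposition \ref{prop:strong coarse invariance}, and composing with the isomorphism $K_\ast(I_C)\to K_\ast(\Csq(A,X))$ just obtained shows that $i_\ast$ is an isomorphism. The rest --- that $I_R$ is norm-closed, that local compactness transfers across the compression, that the covering constants and coarse equivalences behave as claimed, and that the relevant square of $\ast$-homomorphisms commutes --- is routine bookkeeping.
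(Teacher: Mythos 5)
Your proposal is correct and follows essentially the same route as the paper, which simply cites the neighbourhood-filtration argument of Higson--Roe--Yu together with Corollary \ref{cor:strong coarse invariance}: write $\Csq(A,X)$ as the closure of the increasing union of the algebras of operators supported in $\Nd_R(A)\times\Nd_R(A)$, identify each with $\Csq(\Nd_R(A))$, and use coarse invariance of $\Csq$ under the coarse equivalences $A\hookrightarrow\Nd_R(A)\hookrightarrow\Nd_{R'}(A)$ plus continuity of $K$-theory. You also correctly identify that the only non-routine point compared with the Roe-algebra case is the two-way transfer of strong quasi-locality under compression to $\chi_{\Nd_R(A)}\H_X$, which is exactly what Lemma \ref{lem:subspace-wise version} (via the Higson-function characterisation) was designed to provide.
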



We also have the following result analogous to \cite[Section 5/Lemma 2]{HRY93}:

\begin{lem}\label{lem:strong M-V ideal calculations}
Let $(A,B)$ be an $\omega$-excisive (closed) cover of $X$, then we have
	\[
	\Csq(A,X)+\Csq(B,X)=\Csq(X)
	\]
	and
	\[
	\Csq(A,X)\cap \Csq(B,X)= \Csq(A\cap B,X).
	\]
\end{lem}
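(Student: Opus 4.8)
The plan is to prove the two identities separately, following the template of \cite[Section 5/Lemma 2]{HRY93} but inserting quasi-locality arguments where propagation control was used for Roe algebras. For the first identity $\Csq(A,X)+\Csq(B,X)=\Csq(X)$, the inclusion $\subseteq$ is immediate since both summands are ideals (Lemma \ref{lem:strong M-V ideals}) inside $\Csq(X)$. For the reverse inclusion, I would take $T\in\Csq(\H_X)$ and show it can be approximated by a sum of operators supported near $A$ and near $B$. Since $(A,B)$ covers $X$, we have $X=A\cup B$; the key point is to cut $T$ using characteristic functions of $A$ and $B$ (or of suitable neighbourhoods), which only produces \emph{finite-propagation} perturbations up to controlled error. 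Concretely, writing $\chi_A$ for the multiplication operator, $T=\chi_A T\chi_A+\chi_A T\chi_{B\setminus A}+\chi_{B\setminus A}T\chi_A+\chi_{B\setminus A}T\chi_{B\setminus A}$; the three terms involving $\chi_{B\setminus A}$ are supported in $\Nd_0(B)\times\Nd_0(B)$ hence lie in $\Csq(B,X)$ once we know they are strongly quasi-local, and the first term lies in $\Csq(A,X)$. That each of these cut-down pieces $\chi_C T\chi_D$ (for $C,D\in\{A,B\}$) is again strongly quasi-local is exactly the content of Proposition \ref{prop:subspace strong quasi-locality} combined with Remark \ref{rem:entry for strong quasi-local} (compressions by characteristic functions of subsets preserve strong quasi-locality, uniformly).

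For the second identity $\Csq(A,X)\cap\Csq(B,X)=\Csq(A\cap B,X)$, the inclusion $\supseteq$ is clear since an operator supported in $\Nd_R(A\cap B)\times\Nd_R(A\cap B)$ is supported in both $\Nd_R(A)\times\Nd_R(A)$ and $\Nd_R(B)\times\Nd_R(B)$. For $\subseteq$, this is where $\omega$-excisiveness is used. I would take $T$ in the intersection; by definition $T$ is a norm-limit of operators $T_k\in\Csq(\H_X)$ with $\supp(T_k)\subseteq\Nd_{R_k}(A)\times\Nd_{R_k}(A)$, and simultaneously (after passing to the ideal $\Csq(B,X)$ and using that $T$ lies in it) it suffices to show that compressing $T$ to $\Nd_s(A\cap B)$ for an appropriate $s$ changes it by an arbitrarily small amount. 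The mechanism: given $\varepsilon>0$, by Proposition \ref{prop:relations between Roe, QL and SQL}(1) $T\in C^*_q(\H_X)$, so $T$ has $(\varepsilon,R_0)$-propagation for some $R_0$; pick $T'\in\Csq(B,X)$ within $\varepsilon$ of $T$ with $\supp(T')\subseteq\Nd_R(B)\times\Nd_R(B)$ and also (using that $T\in\Csq(A,X)$) arrange $\supp$ near $A$; then $\chi_{\Nd_R(A)}T'\chi_{\Nd_R(A)}$ is supported in $(\Nd_R(A)\cap\Nd_R(B))\times(\Nd_R(A)\cap\Nd_R(B))$, and $\omega$-excisiveness gives $s$ with $\Nd_R(A)\cap\Nd_R(B)\subseteq\Nd_s(A\cap B)$, placing the approximant in $\Csq(A\cap B,X)$. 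Quasi-locality of $T$ controls the error in replacing $T$ by these compressions, and Proposition \ref{prop:subspace strong quasi-locality} again guarantees the compressions remain strongly quasi-local.

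The main obstacle is bookkeeping the two approximation processes simultaneously: an element of $\Csq(A,X)\cap\Csq(B,X)$ is a priori only a norm-limit of operators supported near $A$ \emph{and separately} a norm-limit of operators supported near $B$, and these two approximating sequences need not be compatible. The resolution is to use quasi-locality (Proposition \ref{prop:relations between Roe, QL and SQL}(1)) to replace ``supported in a neighbourhood of $A$'' by ``essentially supported there up to small norm'', which is an intrinsic property of $T$ itself rather than of an approximant; then one genuinely compresses the single operator $T$ by $\chi_{\Nd_R(A)}$ and $\chi_{\Nd_R(B)}$, invokes $\omega$-excisiveness on the overlap of neighbourhoods, and uses Proposition \ref{prop:subspace strong quasi-locality} to stay inside the strongly quasi-local algebra. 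I expect the argument to mirror \cite{HRY93} closely once this substitution (finite propagation $\rightsquigarrow$ quasi-locality) is made systematically, with Proposition \ref{prop:subspace strong quasi-locality} doing the extra work that is trivial in the Roe-algebra setting.
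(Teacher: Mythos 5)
Your treatment of the sum is essentially the paper's argument: the paper uses Proposition \ref{prop:relations between Roe, QL and SQL}(1) to give $T$ an $(\varepsilon,R)$-propagation and then replaces $T$, up to $2\varepsilon$, by $\chi_A T\chi_{\Nd_R(A)}+\chi_{B\setminus A}T\chi_{\Nd_R(B\setminus A)}$, concluding by density and the standard fact that the sum of two closed ideals is closed. One concrete slip in your version: the cross terms $\chi_A T\chi_{B\setminus A}$ and $\chi_{B\setminus A}T\chi_A$ are supported in $A\times(B\setminus A)$ and $(B\setminus A)\times A$, \emph{not} in $\Nd_0(B)\times\Nd_0(B)$, since the $A$-side coordinate may be far from $B$; they belong to $\Csq(B,X)$ (or equally to $\Csq(A,X)$) only after the quasi-locality cut you allude to, e.g. $\|\chi_{A\setminus\Nd_R(B\setminus A)}T\chi_{B\setminus A}\|<\varepsilon$, which is exactly the approximation the paper performs. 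Also, Proposition \ref{prop:subspace strong quasi-locality} is not the relevant tool here (it concerns the subspace module $\H_A$ and uniformity over subsets): what is needed is that compressions $\chi_C T\chi_D$ remain strongly quasi-local \emph{as operators on $\H_X$}, which is immediate from $[\chi_C T\chi_D\otimes\Id_{\HH},\Lambda(g)]=\chi_C\,[T\otimes\Id_{\HH},\Lambda(g)]\,\chi_D$, since $\Lambda(g)$ is block diagonal and commutes with the characteristic functions.

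For the intersection you end up on a genuinely different route. The paper invokes the $C^*$-algebraic fact that the intersection of two closed ideals equals their product, so it suffices to show $TS\in\Csq(A\cap B,X)$ when $\supp(T)\subseteq\Nd_R(A)\times\Nd_R(A)$ and $\supp(S)\subseteq\Nd_R(B)\times\Nd_R(B)$; then $TS=T\chi_{\Nd_L(A\cap B)}S$ by $\omega$-excisiveness, and quasi-locality of $T$ and $S$ compresses $TS$ to $\Nd_{L+L'}(A\cap B)$ up to $2\varepsilon$. Your intermediate step (``pick $T'$ close to $T$ supported near $B$ and also arrange its support near $A$'') is unjustified as written — as you yourself observe, the two approximations need not be compatible — but your final-paragraph resolution is correct and does work: membership of $T$ in the closed ideal $\Csq(A,X)$ already gives $\|T-\chi_{\Nd_R(A)}T\chi_{\Nd_R(A)}\|<\varepsilon$ for suitable $R$ (this follows from the definition of the ideal as a closure of exactly supported operators; quasi-locality is not needed for this point), similarly for $B$, and then $\chi_{\Nd_R(A)\cap\Nd_R(B)}\,T\,\chi_{\Nd_R(A)\cap\Nd_R(B)}$ is close to $T$ (up to a small multiple of $\varepsilon$), strongly quasi-local by the compression identity above, and supported in $\Nd_L(A\cap B)\times\Nd_L(A\cap B)$ by $\omega$-excisiveness, whence $T\in\Csq(A\cap B,X)$. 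This direct compression argument avoids the ideal-product identity at the cost of a short bookkeeping estimate; both proofs are comparably elementary, and the inclusion $\supseteq$ is immediate in either.
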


\begin{proof}
Given $T\in \Csq(X)$ and $\varepsilon>0$, it follows from Proposition \ref{prop:relations between Roe, QL and SQL}(1) that there exists $R>0$ such that $T$ has $(\varepsilon,R)$-propagation. Note that $T=\chi_A T + \chi_{B \setminus A} T$ since $A \cup B=X$, then $T$ is $2\varepsilon$-close to $\chi_A T \chi_{\Nd_R(A)} + \chi_{B \setminus A} T \chi_{\Nd_R(B \setminus A)}$. Hence we obtain that $\Csq(A,X)+\Csq(B,X)$ is dense in $\Csq(X)$. It follows from a standard argument in $C^*$-algebras (\emph{e.g.}, \cite[Section 3/Lemma 1]{HRY93}) that $\Csq(A,X)+\Csq(B,X)=\Csq(X)$.

Concerning the second equation, we only need to show that $\Csq(A,X) \Csq(B,X) \subseteq \Csq(A\cap B,X)$. Fix $T, S \in \Csq(X)$ with $\supp(T) \subseteq \Nd_R(A) \times \Nd_R(A)$ and $\supp(S) \subseteq \Nd_{R}(B) \times \Nd_{R}(B)$ for some $R>0$. 
The assumption of $\omega$-excision implies that there exists an $L>0$ such that $\Nd_R(A) \cap \Nd_{R}(B) \subseteq \Nd_L (A \cap B)$. Hence we have $TS = T \chi_{\Nd_L (A \cap B)} S$. For any $\varepsilon>0$ there exists an $L'>0$ such that $T$ has $(\frac{\varepsilon}{\|S\|},L')$-propagation and $S$ has $(\frac{\varepsilon}{\|T\|},L')$-propagation. Hence we have:
\[
	\|TS - \chi_{\Nd_{L+L'} (A \cap B)} TS \chi_{\Nd_{L+L'} (A \cap B)}\|  \leq 2\varepsilon.
\]
Therefore we obtain that $TS \in \Csq(A\cap B,X)$, which concludes the proof.
\end{proof}

Applying the Mayer-Vietoris sequence in $K$-theory for $C^*$-algebras (see \cite[Section 3]{HRY93}) to the ideals $\Csq(A,X), \Csq(B,X)$ in $\Csq(X)$ and combining with Lemma \ref{lem:strong M-V ideals have same K-theory} and Lemma \ref{lem:strong M-V ideal calculations}, we obtain the following coarse Mayer-Vietoris principle for strongly quasi-local algebras:

\begin{prop}\label{prop:M-V sequence for strong quasi-local algebras}
	Let $(A,B)$ be a (closed) $\omega$-excisive cover of $X$. Then there is a six-term exact sequence
	\[
	\begin{CD}
	K_0(\Csq(A\cap B)) @>>> K_0(\Csq(A))\oplus K_0(\Csq(B)) @>>> K_0(\Csq(X)) \\
	@AAA                               & &                    @VVV \\
	K_1(\Csq(X)) @<<< K_1(\Csq(A))\oplus K_1(\Csq(B)) @<<< K_1(\Csq(A\cap B)).
	\end{CD}
	\]
\end{prop}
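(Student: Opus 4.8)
The plan is to assemble the six-term exact sequence from the standard Mayer--Vietoris machinery for $C^*$-algebras (as recalled in \cite[Section 3]{HRY93}), applied to the pair of closed two-sided ideals $\Csq(A,X)$ and $\Csq(B,X)$ inside the ambient $C^*$-algebra $\Csq(X)$. The algebraic input needed for that machinery has essentially all been prepared: Lemma \ref{lem:strong M-V ideals} tells us $\Csq(A,X)$ and $\Csq(B,X)$ are genuine closed two-sided $\ast$-ideals, and Lemma \ref{lem:strong M-V ideal calculations} supplies the two identities
\[
\Csq(A,X)+\Csq(B,X)=\Csq(X), \qquad \Csq(A,X)\cap \Csq(B,X)= \Csq(A\cap B,X).
\]

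The first step is to invoke the general principle that if $I, J$ are closed two-sided ideals in a $C^*$-algebra $D$ with $I+J=D$, then there is a six-term exact Mayer--Vietoris sequence relating $K_*(I\cap J)$, $K_*(I)\oplus K_*(J)$, and $K_*(D)$; this follows from excision applied to the two split-by-nothing short exact sequences $0\to I\cap J\to I\to D/J\to 0$ and $0\to J\to D\to D/J\to 0$ together with the observation $I/(I\cap J)\cong (I+J)/J=D/J$, exactly as carried out in \cite[Section 3]{HRY93}. Applying this with $D=\Csq(X)$, $I=\Csq(A,X)$, $J=\Csq(B,X)$, and using $I\cap J=\Csq(A\cap B,X)$ from Lemma \ref{lem:strong M-V ideal calculations}, produces the six-term sequence with $\Csq(A\cap B,X)$, $\Csq(A,X)\oplus \Csq(B,X)$, and $\Csq(X)$ at the three corners.

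The second step is to replace $\Csq(A,X)$, $\Csq(B,X)$, and $\Csq(A\cap B,X)$ by $\Csq(A)$, $\Csq(B)$, and $\Csq(A\cap B)$ respectively. This is precisely the content of Lemma \ref{lem:strong M-V ideals have same K-theory}: for any closed subset $Z\subseteq X$, a covering isometry $V$ for the inclusion $Z\hookrightarrow X$ induces $\Ad_V:\Csq(Z)\to\Csq(Z,X)$ and the resulting $K$-theory map $i_*:K_*(\Csq(Z))\to K_*(\Csq(Z,X))$ is an isomorphism. Applying this to $Z=A$, $Z=B$, and $Z=A\cap B$, and checking (by the usual functoriality/naturality of the covering-isometry construction and of the Mayer--Vietoris boundary map, exactly as in the Roe-algebra case in \cite{HRY93}) that these isomorphisms are compatible with the maps in the exact sequence, we may transport the sequence of the previous step to the one in the statement. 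The boundary maps $\partial:K_i(\Csq(X))\to K_{i-1}(\Csq(A\cap B))$ are defined as the composition of the Mayer--Vietoris boundary map into $K_{i-1}(\Csq(A\cap B,X))$ with the inverse of $i_*$.

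I expect the main obstacle to be bookkeeping rather than conceptual: one must confirm that the isomorphisms of Lemma \ref{lem:strong M-V ideals have same K-theory} fit into a commuting ladder with the ideal-inclusion-induced maps $K_*(\Csq(A\cap B,X))\to K_*(\Csq(A,X))\oplus K_*(\Csq(B,X))$ etc., so that the Five Lemma (or just direct diagram-chasing) lets us carry the exactness across. This compatibility is the same verification as in the Roe-algebra setting of \cite{HRY93} and relies only on the fact that covering isometries for compatible inclusions can be chosen compatibly (or that different choices induce the same $K$-theory map, which is the ``furthermore'' clause already established in Proposition \ref{prop:strong coarse invariance} and Lemma \ref{lem:strong M-V ideals have same K-theory}). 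Since all the genuinely new analytic content --- that restrictions $\chi_A T\chi_A$ stay strongly quasi-local, and the ideal identities --- has already been established in Lemmas \ref{lem:strong M-V ideals}--\ref{lem:strong M-V ideal calculations}, the proof of Proposition \ref{prop:M-V sequence for strong quasi-local algebras} itself is a formal consequence and can be stated briefly.
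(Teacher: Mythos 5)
Your proposal is correct and follows exactly the paper's own route: the paper's proof consists of applying the $C^*$-algebraic Mayer--Vietoris sequence of \cite[Section 3]{HRY93} to the ideals $\Csq(A,X)$ and $\Csq(B,X)$ in $\Csq(X)$, then invoking Lemma \ref{lem:strong M-V ideal calculations} for the sum/intersection identities and Lemma \ref{lem:strong M-V ideals have same K-theory} to replace the ideals by $\Csq(A)$, $\Csq(B)$, $\Csq(A\cap B)$. Your write-up merely spells out the bookkeeping (naturality of the covering-isometry maps) that the paper leaves implicit.
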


For future use, we record that the same argument can be applied to obtain the Mayer-Vietoris principle for quasi-local algebras as follows. However, this will not be used in this paper.

\begin{prop}\label{prop:M-V sequence for quasi-local algebras}
	Let $(A,B)$ be a (closed) $\omega$-excisive cover of $X$. Then there is a six-term exact sequence
	\[
	\begin{CD}
	K_0(C^*_q(A\cap B)) @>>> K_0(C^*_q(A))\oplus K_0(C^*_q(B)) @>>> K_0(C^*_q(X)) \\
	@AAA                               & &                    @VVV \\
	K_1(C^*_q(X)) @<<< K_1(C^*_q(A))\oplus K_1(C^*_q(B)) @<<< K_1(C^*_q(A\cap B)).
	\end{CD}
	\]
\end{prop}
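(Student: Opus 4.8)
The plan is to mimic the proof of Proposition~\ref{prop:M-V sequence for strong quasi-local algebras} almost verbatim, replacing the strongly quasi-local algebra $\Csq$ by the quasi-local algebra $C^*_q$ throughout. The argument has three ingredients, each of which has a quasi-local analogue that is in fact \emph{easier} to establish than its strongly quasi-local counterpart, since quasi-locality is formulated directly in terms of $(\varepsilon,R)$-propagation and no characterisation via Higson functions is needed.

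First, for a closed subset $A\subseteq X$ one defines $C^*_q(A,X)$ to be the norm-closure of the set of operators $T\in C^*_q(\H_X)$ whose support is contained in $\Nd_R(A)\times\Nd_R(A)$ for some $R\geq 0$. One then checks, exactly as in Lemma~\ref{lem:strong M-V ideals}, that $C^*_q(A,X)$ is a closed two-sided $\ast$-ideal in $C^*_q(\H_X)$: given $T$ with $\supp(T)\subseteq \Nd_R(A)\times\Nd_R(A)$ and $S\in C^*_q(\H_X)$, one uses the $(\frac{\varepsilon}{\|T\|},R_0)$-propagation of $S$ to approximate $TS$ by $\chi_{\Nd_R(A)}TS\chi_{\Nd_{R+R_0}(A)}$ and similarly for $ST$. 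Second, one shows that the inclusion $i\colon A\hookrightarrow X$ with covering isometry $V$ gives $\Ad_V\colon C^*_q(A)\to C^*_q(A,X)$ with range in $C^*_q(A,X)$, and that $i_\ast\colon K_\ast(C^*_q(A))\to K_\ast(C^*_q(A,X))$ is an isomorphism; this follows from the coarse invariance established in Proposition~\ref{prop:coarse invariance} together with the same argument as in \cite[Section 5/Lemma 1]{HRY93}, noting that $C^*_q(A)$ is coarsely equivalent to the ideal it maps into and that the complementary directions can be collapsed. Third, for an $\omega$-excisive cover $(A,B)$ one proves
\[
C^*_q(A,X)+C^*_q(B,X)=C^*_q(X), \qquad C^*_q(A,X)\cap C^*_q(B,X)=C^*_q(A\cap B,X),
\]
again by copying the proof of Lemma~\ref{lem:strong M-V ideal calculations}: for the sum, write $T=\chi_A T+\chi_{B\setminus A}T$ and use $(\varepsilon,R)$-propagation to bring the right factors into $\Nd_R(A)$ and $\Nd_R(B\setminus A)$, then apply the standard $C^*$-algebraic density argument of \cite[Section 3/Lemma 1]{HRY93}; for the intersection, use $\omega$-excision to get $\Nd_R(A)\cap\Nd_R(B)\subseteq\Nd_L(A\cap B)$ and hence $TS=T\chi_{\Nd_L(A\cap B)}S$, then apply propagation estimates on both $T$ and $S$.

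With these three facts in hand, the six-term exact sequence follows immediately from the Mayer--Vietoris sequence in $K$-theory for a pair of ideals in a $C^*$-algebra, as recalled in \cite[Section 3]{HRY93}, applied to $C^*_q(A,X)$ and $C^*_q(B,X)$ inside $C^*_q(X)$, using the identification of $K_\ast(C^*_q(A,X))$ with $K_\ast(C^*_q(A))$ etc.

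I expect no genuine obstacle here: every step is strictly simpler than the already-completed strongly quasi-local case, because the defining condition of quasi-locality is phrased directly in terms of $(\varepsilon,R)$-propagation, so the delicate detour through compact-operator-valued Higson functions (Proposition~\ref{prop:strong quasi-locality iff Higson} and Lemma~\ref{lem:subspace-wise version}) is entirely unnecessary. The only point requiring minor care is the isomorphism $K_\ast(C^*_q(A))\cong K_\ast(C^*_q(A,X))$, where one must confirm that the argument of \cite[Section 5/Lemma 1]{HRY93} transfers --- but this is routine given Proposition~\ref{prop:coarse invariance}. Since the statement is explicitly flagged as ``not used in this paper'', a one-line proof referring back to the proofs of Lemmas~\ref{lem:strong M-V ideals}, \ref{lem:strong M-V ideals have same K-theory}, \ref{lem:strong M-V ideal calculations} and Proposition~\ref{prop:M-V sequence for strong quasi-local algebras} --- with $\Csq$ replaced by $C^*_q$ --- is entirely adequate.
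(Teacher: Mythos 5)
Your proposal is correct and coincides with the paper's intended argument: the paper proves this proposition only by remarking that ``the same argument'' as for the strongly quasi-local case (Lemmas \ref{lem:strong M-V ideals}, \ref{lem:strong M-V ideals have same K-theory}, \ref{lem:strong M-V ideal calculations} and the $C^*$-algebraic Mayer--Vietoris sequence of \cite[Section 3]{HRY93}) applies with $\Csq$ replaced by $C^*_q$, using Proposition \ref{prop:coarse invariance} in place of the strongly quasi-local coarse invariance. Your observation that the quasi-local case is in fact simpler, since $(\varepsilon,R)$-propagation is available directly without the Higson-function detour, is accurate.
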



%

%
%

Now we use Proposition \ref{prop:M-V sequence for strong quasi-local algebras} to reduce the proof of Thereom~\ref{thm:main result} to the case of block-diagonal operators:

\begin{lem}\label{lem:reduce to block-diagonal}
	To prove Theorem~\ref{thm:main result} for all bounded geometry metric spaces that coarsely embed into Hilbert space, it suffices to prove that for any sequence of finite metric spaces $\{Y_n\}_{n=1}^\infty$ which has uniformly bounded geometry and uniformly coarsely embeds into Hilbert space, the inclusion $C^*(\H_Y) \cap \prod_n \B(\H_n) \hookrightarrow \Csq(\H_Y) \cap \prod_n \B(\H_n)$ induces isomorphisms in $K$-theory where $\H_n$ is an ample $Y_n$-module, $\H_Y$ is their direct sum and $Y$ is a coarse disjoint union of $\{Y_n\}$.
\end{lem}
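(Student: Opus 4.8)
The plan is to use the coarse Mayer-Vietoris sequence for strongly quasi-local algebras (Proposition \ref{prop:M-V sequence for strong quasi-local algebras}), the already-known coarse Mayer-Vietoris sequence for Roe algebras from \cite{HRY93}, and a five-lemma argument, following the blueprint of \cite[Section 12]{willett2020higher} (which in turn follows \cite{Yu00}). Since $X$ coarsely embeds into Hilbert space, one uses the standard decomposition of $X$ into a ``sparse'' part and a bounded-geometry part along the lines of the coarse anti-\v{C}ech / Yu's covering argument: after passing to a net one decomposes $X$ as an $\omega$-excisive union of pieces, and iterating this produces subspaces that are coarse disjoint unions of uniformly bounded families of finite metric spaces (all of which still uniformly coarsely embed into Hilbert space, since subspaces and disjoint families inherit the embedding). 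The Mayer-Vietoris sequences for $C^*$ and $C^*_{sq}$ are compatible via the inclusion maps (this compatibility is what Lemmas \ref{lem:strong M-V ideals}--\ref{lem:strong M-V ideal calculations} and their Roe-algebra analogues in \cite{HRY93} guarantee, since the ideals $\Csq(A,X)$ restrict/intersect the same way the Roe ones do). So the inclusion $C^*(-)\hookrightarrow\Csq(-)$ gives a morphism of six-term exact sequences, and the five-lemma propagates the isomorphism from the pieces to the whole.

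First I would reduce to the coarse disjoint union case. Precisely: one shows that if the statement of the Lemma holds for all such sequences $\{Y_n\}$, then $K_*(C^*(X))\to K_*(\Csq(X))$ is an isomorphism for every bounded-geometry $X$ coarsely embedding into Hilbert space. Here one invokes the decomposition machinery of \cite[Section 12.4--12.5]{willett2020higher}: write $X$ (up to coarse equivalence, using Corollary \ref{cor:coarse invariance} and Corollary \ref{cor:strong coarse invariance}) as built from finitely many coarse-disjoint-union-type subspaces via repeated $\omega$-excisive decompositions. Concretely, for a controlled Borel decomposition parameter one splits $X=X^{(0)}\sqcup X^{(1)}\sqcup\cdots$ with each $X^{(i)}$ a disjoint union of uniformly bounded sets sufficiently far apart, and the ``interface'' of an $\omega$-excisive cover $(A,B)$ of $X$ is again of this sparse type with $A\cap B$ a coarse disjoint union of finite spaces. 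Then one induces on the number of pieces: at each stage the Mayer-Vietoris squares for $C^*$ and $\Csq$ fit into a commuting ladder, and the five-lemma upgrades ``$C^*\to\Csq$ is a $K$-iso on $A$, $B$, and $A\cap B$'' to ``$C^*\to\Csq$ is a $K$-iso on $X$''. The base case of the induction is exactly the sequence-of-finite-metric-spaces statement that the Lemma isolates.

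A couple of technical points need care. One must check that the $\Csq$ version of \cite[Section 5/Lemma 1]{HRY93}, namely Lemma \ref{lem:strong M-V ideals have same K-theory}, is genuinely available — it is invoked as a ``based on a similar argument'' claim, and behind it lies Proposition \ref{prop:subspace strong quasi-locality} (the uniform subspace result), which is what makes $\chi_A T\chi_A$ land in $\Csq(A)$ in a controlled way. One must also verify that the Roe-algebra and $\Csq$-algebra Mayer-Vietoris ladders actually commute at the level of boundary maps; this is routine once one notes that the boundary maps in both cases come from the same abstract $C^*$-algebraic construction applied to the pair of ideals, and the inclusion $C^*\hookrightarrow\Csq$ is a $*$-homomorphism compatible with forming sums and intersections of the relevant ideals (Lemma \ref{lem:strong M-V ideal calculations} and its Roe counterpart).

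The main obstacle is the decomposition/bookkeeping step: organising $X$ into the finitely-many-pieces form and checking that every subspace and every interface $A\cap B$ arising in the induction is again of the ``uniformly bounded geometry, uniformly coarsely embeddable, coarse disjoint union of finite spaces'' type, with all the uniformity constants controlled. This is precisely where the coarse embeddability hypothesis is used and where the argument mirrors the most delicate part of \cite[Section 12]{willett2020higher}; the $K$-theoretic five-lemma part is then formal. I expect the write-up to quote the combinatorial decomposition verbatim from \cite{willett2020higher} and focus the new work on confirming that the $\Csq$-side ideals behave as required, i.e.\ that Lemmas \ref{lem:strong M-V ideals}--\ref{lem:strong M-V ideal calculations} and Proposition \ref{prop:M-V sequence for strong quasi-local algebras} slot into that framework exactly as the Roe-algebra facts do.
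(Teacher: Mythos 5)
There is a genuine gap, and it sits exactly where your proposal is silent: the mismatch between what the Lemma's hypothesis gives you and what your Mayer--Vietoris ladder needs. The hypothesis concerns the \emph{block-diagonal} inclusions $C^*(\H_Y)\cap\prod_n\B(\H_n)\hookrightarrow \Csq(\H_Y)\cap\prod_n\B(\H_n)$, whereas the six-term ladder you want to run (Proposition \ref{prop:M-V sequence for strong quasi-local algebras} against the Roe-algebra sequence of \cite{HRY93}) requires that $K_*(C^*(A))\to K_*(\Csq(A))$ be an isomorphism for the \emph{full} algebras of each piece $A$, $B$, $A\cap B$. These are not the same statement, and bridging them is half of the paper's proof: one uses Lemmas \ref{lem:cdu to separated case I} and \ref{lem:cdu to separated case II} to identify $C^*(\H_Y)/\K(\H_Y)$ and $\Csq(\H_Y)/\K(\H_Y)$ with the quotients of the block-diagonal subalgebras by $\bigoplus_n C^*(\H_n)$ and $\bigoplus_n \Csq(\H_n)$ respectively, notes that $C^*(\H_n)=\Csq(\H_n)=\K(\H_n)$ because each $Y_n$ is finite, and then applies the Five Lemma twice (once for these quotients, once for the extensions by $\K(\H_Y)$) to upgrade the block-diagonal isomorphism to $K_*(C^*(\H_Y))\cong K_*(\Csq(\H_Y))$. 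Your write-up treats the hypothesis as directly usable as the ``base case'' of the ladder; without this bridging step the argument does not close.

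The decomposition step is also off as stated. You propose splitting $X$ into \emph{finitely many} pieces, each a disjoint union of \emph{uniformly bounded} sets that are far apart, and inducting on the number of pieces; for a general bounded-geometry space that coarsely embeds into Hilbert space such a decomposition need not exist (its existence is essentially finite asymptotic dimension), and with infinitely many pieces the induction has no terminus. The paper avoids this entirely by the two-piece annular decomposition of \cite[Lemma 12.5.3]{willett2020higher}: with $X_n:=\{x: n^3-n\le d_X(x,x_0)\le (n+1)^3+(n+1)\}$, take $A=\bigsqcup_{n\ \mathrm{even}}X_n$ and $B=\bigsqcup_{n\ \mathrm{odd}}X_n$; then $(A,B)$ is $\omega$-excisive and $A$, $B$, $A\cap B$ are coarse disjoint unions of \emph{finite} (not uniformly bounded) subspaces which automatically have uniformly bounded geometry and uniformly coarsely embed, so a single application of the two Mayer--Vietoris sequences plus the Five Lemma finishes the proof. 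Your compatibility remarks about the ideals $\Csq(A,X)$ and the boundary maps are fine, but the proof needs the correct decomposition and, above all, the block-diagonal-to-full-algebra step described above.
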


\begin{proof}
Lemma~\ref{lem:cdu to separated case I} and \ref{lem:cdu to separated case II} imply that
	\[
	\frac{C^*(\H_Y)}{\K(\H_Y)} \cong \frac{C^*(\H_Y) \cap \prod_n \B(\H_n)}{\bigoplus_n C^*(\H_n)}
	\quad \mathrm{and} \quad
	\frac{\Csq(\H_Y)}{\K(\H_Y)} \cong \frac{\Csq(\H_Y) \cap \prod_n \B(\H_n)}{\bigoplus_n \Csq(\H_n)}.
	\]
Since $C^*(\H_n)=\Csq(\H_n)$ for each $n$, we obtain the following commutative diagram:\\
	\begin{small}
		\centerline{
			\xymatrix{
				\cdots \ar[r] & K_*\big(\bigoplus_n C^*(\H_n)\big) \ar[r]  \ar@{=}[d] & K_*(C^*(\H_Y) \cap \prod_n \B(\H_n)) \ar[d]\ar[r] & K_*(C^*(\H_Y)/\K(\H_Y)) \ar[d]\ar[r]  & \cdots \\
				\cdots \ar[r] & K_*\big(\bigoplus_n \Csq(\H_n)\big) \ar[r]  & K_*(\Csq(\H_Y) \cap \prod_n \B(\H_n)) \ar[r] & K_*(\Csq(\H_Y)/\K(\H_Y)) \ar[r] & \cdots.
		}}
	\end{small}	
Hence the right vertical map is an isomorphism from the assumption and the Five Lemma. Now consider the following commutative diagram:\\
	\centerline{
		\xymatrix{
			\cdots \ar[r] & K_*(\K(\H_Y)) \ar[r]  \ar@{=}[d] & K_*(C^*(\H_Y)) \ar[d]\ar[r] & K_*(C^*(\H_Y) / \K(\H_Y))  \ar[d]\ar[r]  & \cdots \\
			\cdots \ar[r] & K_*(\K(\H_Y)) \ar[r]  & K_*(\Csq(\H_Y)) \ar[r] & K_*(\Csq(\H_Y) / \K(\H_Y)) \ar[r] & \cdots,
	}}
	we obtain that $K_*(C^*(\H_Y)) \to K_*(\Csq(\H_Y) )$ is an isomorphism by the Five Lemma.
	
	Now for a metric space $X$ satisfying the assumption, we follow the argument in \cite[Lemma 12.5.3]{willett2020higher}. Fix a basepoint $x_0 \in X$ and for each $n \in \NN\cup\{0\}$, we set
	\[
	X_n:=\{x\in X: n^3-n\le d_X(x,x_0)\le (n+1)^3+(n+1)\}.
	\]
	Let $A:=\bigsqcup_{n: even} X_n$ and $B:=\bigsqcup_{n: odd}X_n$. It is obvious that $(A,B)$ is an $\omega$-exicisive cover of $X$. Applying the coarse Mayer-Vietoris sequences for the associated Roe algebras (\cite{HRY93}) and strongly quasi-local algebras (Proposition \ref{prop:M-V sequence for strong quasi-local algebras}), we obtain the following commutative diagram
	\begin{small}
		\[
		\begin{CD}
		\cdots @>>>K_*(C^*(A\cap B)) @>>> K_*(C^*(A))\oplus K_*(C^*(B)) @>>> K_*(C^*(X))@>>> \cdots\\
		& &   @VVV            @VVV            @VVV \\
		\cdots @>>>K_*(\Csq(A\cap B)) @>>> K_*(\Csq(A))\oplus K_*(\Csq(B)) @>>> K_*(\Csq(X))@>>> \cdots.
		\end{CD}
		\]
	\end{small}
The left and middle vertical maps are isomorphisms according to the previous paragraph, hence we conclude the proof by the Five Lemma again.
\end{proof}

\section{Twisted strongly quasi-local algebras}\label{sec:twisted}
In this section, we recall the Bott-Dirac operators which will be used in the next section to construct index maps. We also recall the notion of twisted Roe algebras from \cite[Section 12.3]{willett2020higher} (originally in \cite[Section 5]{Yu00}) and introduce their strongly quasi-local analogue. 

\subsection{The Bott-Dirac operators on Euclidean spaces}\label{ssec:The Bott-Dirac operators on Euclidean spaces}
Let us start by recalling some elementary properties of the Bott-Dirac operators. Here we only list necessary notions and facts, while guide readers to \cite[Section 12.1]{willett2020higher} for details.

Let $E$ be a real Hilbert space (also called a \emph{Euclidean space}) with even dimension $d \in \NN$. The \emph{Clifford algebra} of $E$, denoted by $\Cliff_\CC(E)$, is the universal unital complex algebra containing $E$ as a real subspace and subject to the multiplicative relations $x \cdot x = \|x\|_E^2$ for all $x\in E$. It is natural to treat $\Cliff_\CC(E)$ as a graded Hilbert space (see for example \cite[Example E.2.12]{willett2020higher}), and in this case we denote it by $\H_E$.

Denote $\L _E^2$ the graded Hilbert space of square integrable functions from $E$ to $\H_E$ where the grading is inherited from $\H_E$, and $\S_E$ the dense subspace consisting of Schwartz class functions from $E$ to $\H_E$. Fix an orthonormal basis $\{e_1,\ldots,e_d\}$ of $E$ and let $x_1,\ldots,x_d:E \to \RR$ be the corresponding coordinates. Recall that the \emph{Bott operator} $C$ and the \emph{Dirac operator} $D$ are unbounded operators on $\L _E^2$ with domain $\S_E$ defined as:
\[
(Cu)(x) = x \cdot u(x), \quad \mbox{and} \quad (Du)(x)=\sum_{i=1}^d \hat{e_i} \cdot \frac{\partial u}{\partial x_i}(x)
\]
for $u \in \S_E$ and $x\in E$, where $\hat{e_i}: \Cliff_\CC(E) \to \Cliff_\CC(E)$ is the operator determined by $\hat{e_i} (w)=(-1)^{\partial w} w \cdot e_i$ for any homogeneous element $w\in \Cliff_\CC(E)$.

\begin{defn}\label{defn:Bott-Dirac}
	The \emph{Bott-Dirac operator} is the unbounded operator $B:=D+C$ on $\L _E^2$ with domain $\S_E$.
\end{defn}

Given $x\in E$, recall that the \emph{left Clifford multiplication operator associated to $x$} is the bounded operator $c_x$ on $\L _E^2$ defined as the left Clifford multiplication by the fixed vector $x$, and the \emph{translation operator associated to $x$} is the unitary operator $V_x$ on $\L _E^2$ defined by $(V_xu)(y):=u(y-x)$. Given $s\in [1,\infty)$, recall that the \emph{shrinking operator associated to $s$} is the unitary operator $S_s$ on $\L _E^2$ defined by $(S_s u)(y):=s^{{-d}/{2}}u(sy)$. 

\begin{defn}\label{defn:Bott-Dirac modified}
	For $s\in [1,\infty)$ and $x\in E$, the \emph{Bott-Dirac operator associated to $(x,s)$} is the unbounded operator $B_{s,x}:=s^{-1}D+C-c_x$ on $\L _E^2$ with domain $\S_E$.
\end{defn}

Note that $B_{1,0}=B$ and $B_{s,x}=s^{{-1}/{2}}~V_xS_{\sqrt{s}}BS^*_{\sqrt{s}}V_x^*$. It is also known that for each $s\in [1,\infty)$ and $x\in E$, the operator $B_{s,x}$ is unbounded, odd, essentially self-adjoint and maps $\S_E$ to itself (see, \emph{e.g.}, \cite[Corollary 12.1.4]{willett2020higher}). 

\begin{defn}\label{defn:F_{s,x}}
	Let $s\in [1,\infty)$, $x\in E$ and $B_{s,x}$ be the Bott-Dirac operator associated to $(x,s)$. Define a bounded operator on $\L_E^2$ by:
	\[
	F_{s,x}:= B_{s,x}(1+B^2_{s,x})^{-1/2}.
	\]
\end{defn}


We list several important properties of the operator $F_{s,x}$. For simplicity, denote $\chi_{x,R}:=\chi_{B(x;R)}$ for $x\in E$ and $R \geq 0$.
\begin{prop}[{\cite[Proposition 12.1.10]{willett2020higher}}]\label{prop:Psi function}
	For each $\varepsilon >0$ there exists an odd function $\Psi :\RR \rightarrow [-1,1]$ with $\Psi (t)\rightarrow 1$ as $t\rightarrow +\infty$, satisfying the following:
	\begin{enumerate}
		\item For all $s\in [1,\infty)$ and $x\in E$, we have $\|F_{s,x}-\Psi(B_{s,x})\|< \varepsilon$.
		\item There exists $R_0>0$ such that for all $s\in [1,\infty)$ and $x\in E$, we have $\ppg(\Psi(B_{s,x})) \le s^{-1}R_0$.
		\item For all $s\in [1,\infty)$ and $x\in E$, $\Psi(B_{s,x})^2 - 1$ is compact.
		\item For all $s\in [1,\infty)$ and $x,y\in E$, $\Psi(B_{s,x}) - \Psi(B_{s,y})$ is compact.
		\item For all $s\in [1,\infty)$ and $x,y\in E$, $\|F_{s,x}-F_{s,y}\|\le 3\|x-y\|_E$. And there exists $c>0$ such that for all $s\in [1,\infty)$ and $x,y\in E$, we have
		\[
		\|\Psi(B_{s,x}) - \Psi(B_{s,y})\| \leq c \|x-y\|_E.
		\]
		\item For all $x\in E$, the function 
		$$[1,\infty)\rightarrow \B(\L_E^2),\ s \mapsto \Psi(B_{s,x})$$
		is strong-$*$ continuous.
		\item The family of functions
		\[
		[1,\infty) \to \B(\L_E^2), \ s \mapsto \Psi(B_{s,x})^2-1
		\]
		is norm equi-continuous as $x$ varies over $E$ and $s$ varies over any fixed compact subset of $[1,\infty)$.
		\item For any $r \geq 0$, the family of functions
		\[
		[1,\infty) \to \B(\L_E^2), \ s \mapsto \Psi(B_{s,x}) - \Psi(B_{s,y})
		\] 
		is norm equi-continuous as $(x,y)$ varies over the elements of $E \times E$ with $|x-y| \leq r$, and $s$ varies over any fixed compact subset of $[1,\infty)$.
		\item For any $\varepsilon_1>0$, there exists $R_1 > 0$ such that for all $R\ge R_1$, $s\ge 2d$ and $x\in E$, we have
		$$\| ( \Psi(B_{s,x})^2-1 )( 1-\chi_{x,R} ) \|< \varepsilon_1.$$
		\item For any $\varepsilon_2>0, r>0$ there exists $R_2 >0$ such that for all $R\ge R_2$, $s\ge 2d$ and $x,y \in E$ with $\|x-y\|_E\le r$, we have
		$$\|(\Psi(B_{s,x})-\Psi(B_{s,y}))(1-\chi_{x,R})\|< \varepsilon_2.$$
	\end{enumerate}
	Moreover, we can require the function $\Psi$, constants $R_0$ in (2), $c$ in (5), $R_1$ in (9) and $R_2$ in (10) are independent of the dimension $d$ of the Euclidean space $E$.
\end{prop}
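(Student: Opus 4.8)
The plan is to keep the construction of \cite[Section~12.1]{willett2020higher} for items (1)--(8) and to prove the two new tail estimates (9), (10) and the uniformity in $d$ directly from the structure of $B_{s,x}$, using throughout $B_{1,0}=B$ and $B_{s,x}=s^{-1/2}V_xS_{\sqrt s}BS^*_{\sqrt s}V_x^*$. Write $\beta(t):=t(1+t^2)^{-1/2}$, so that $F_{s,x}=\beta(B_{s,x})$; the function $\beta$ is odd, takes values in $(-1,1)$ and satisfies $\beta(t)\to 1$ as $t\to +\infty$. I would fix once and for all a smooth even Fej\'er-type kernel $\phi$ with $\phi(0)=1$, $\supp\phi\subseteq[-1,1]$ and $\check\phi\ge 0$ (for instance a normalised $\eta\ast\eta$ with $\eta\in C_c^\infty$ even, real, supported in $[-\tfrac12,\tfrac12]$), and, for a large parameter $R_0$, set
\[
\Psi\ :=\ \beta\ast(\check\phi)_{R_0},\qquad (\check\phi)_{R_0}(t):=R_0\,\check\phi(R_0t).
\]
Then $\Psi$ is odd (convolution of odd and even), $\|\Psi\|_\infty\le\|\beta\|_\infty\,\|(\check\phi)_{R_0}\|_1=1$, and $\Psi(t)\to 1$ by dominated convergence; moreover $\widehat\Psi=\widehat\beta\cdot\phi(\cdot/R_0)$ is a compactly supported distribution with $\supp\widehat\Psi\subseteq[-R_0,R_0]$. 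Choosing $R_0$ large, uniform continuity of $\beta$ gives $\|\Psi-\beta\|_\infty<\varepsilon$, hence $\|F_{s,x}-\Psi(B_{s,x})\|\le\|\beta-\Psi\|_\infty<\varepsilon$, which is (1). For (2) I would invoke the standard propagation bound for band-limited functional calculus: since $B_{s,x}=s^{-1}D+(C-c_x)$ with $C-c_x$ a Clifford multiplication operator (it commutes with all multiplications by functions on $E$, hence has zero propagation) and $\|[D,f]\|\le\mathrm{Lip}(f)$, the wave operator $e^{i\xi B_{s,x}}$ has propagation at most $s^{-1}|\xi|$; together with $\supp\widehat\Psi\subseteq[-R_0,R_0]$ this gives $\ppg(\Psi(B_{s,x}))\le s^{-1}R_0$. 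Everything here is manifestly independent of $\dim E$.

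Properties (3)--(8) go through as in \cite[Proposition~12.1.10]{willett2020higher}. The structural facts one uses are that $(C-c_x)^2$ is multiplication by $\|y-x\|_E^2$, that $D^2=-\Delta$, that $Dc_x+c_xD=0$, and that $DC+CD=:N$ is a bounded fibrewise operator with $\|N\|\le d$, so that
\[
B_{s,x}^2\ =\ s^{-2}(-\Delta)\ +\ \|y-x\|_E^2\ +\ s^{-1}N
\]
is a harmonic oscillator up to a bounded perturbation; in particular $B_{s,x}$ has compact resolvent. Then (3) follows because $\Psi^2-1\in C_0(\RR)$ (as $\Psi(t)\to\pm1$), (4) likewise from compact resolvent, and (5)--(8) from $\|B_{s,x}-B_{s,y}\|=\|c_{x-y}\|=\|x-y\|_E$, the covariance above, and Duhamel's formula --- and one checks that the moduli of (equi)continuity and the Lipschitz constant $c$ in (5) depend only on $\beta$, not on $d$.

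The genuinely new input for (9), (10), and the one place where $s\ge 2d$ is needed, is that $\|N\|\le d$ gives the dimension-free operator inequality $B_{s,x}^2\ge\|y-x\|_E^2-\tfrac12$ for $s\ge 2d$, from which a short quadratic-form argument yields
\[
\big\|(1+B_{s,x}^2)^{-1}(1-\chi_{x,R})\big\|\le\frac{C}{R}\qquad\text{and}\qquad\big\|\chi_{[-\mu,\mu]}(B_{s,x})(1-\chi_{x,R})\big\|\le\frac{C(1+\mu^2)}{R},
\]
with $C$ independent of $d$, $x$ and $s\ge 2d$. For (9) I would split $\Psi^2-1=h_1+h_2\in C_0(\RR)$ with $\|h_2\|_\infty$ small and $h_1\in C_c(\RR)$, and bound $\|h_2(B_{s,x})(1-\chi_{x,R})\|\le\|h_2\|_\infty$, $\|h_1(B_{s,x})(1-\chi_{x,R})\|\le\|(1+t^2)h_1\|_\infty\cdot C/R$, then choose $R_1$ large. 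For (10), write
\[
\Psi(B_{s,x})-\Psi(B_{s,y})=\big[(\Psi-\beta)(B_{s,x})-(\Psi-\beta)(B_{s,y})\big]+\big[\beta(B_{s,x})-\beta(B_{s,y})\big];
\]
the first bracket, multiplied by $1-\chi_{x,R}$, is small by the (9)-argument since $\Psi-\beta\in C_0(\RR)$ (using $1-\chi_{x,R}\le 1-\chi_{y,R-r}$ to pass the cut-off between $x$ and $y$), while for the second bracket one uses the localization estimates to see that only the parts of $\sigma(B_{s,x})$, $\sigma(B_{s,y})$ lying roughly in $\{|t|\gtrsim R\}$ survive multiplication by $1-\chi_{x,R}$, up to arbitrarily small error, and there $\beta$ is $O(R^{-2})$-close to $\mathrm{sgn}$, reducing matters to $\mathrm{sgn}_0(B_{s,x})-\mathrm{sgn}_0(B_{s,y})$ localized to that region (which is meaningful since $0$ is isolated in $\sigma(B_{s,x})$), and this is $\lesssim\|c_{x-y}\|/R\le r/R$ by a resolvent --- equivalently divided-difference --- estimate (the divided difference of $\mathrm{sgn}$ on $\{|t|\ge R\}\times\{|t|\ge R\}$ vanishes on the two sign blocks and is $O(1/R)$ off them). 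Choosing $R_2$ accordingly finishes (10). Since $\beta$ and $\phi$ were fixed with no reference to $\dim E$, all of $\Psi$, $R_0$, $c$, $R_1$, $R_2$ are dimension-free --- the ``Moreover''.

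The hard part is precisely this uniformity in the dimension: all the harmonic-oscillator estimates --- above all the lower bound $B_{s,x}^2\ge\|y-x\|_E^2-\tfrac12$ and the localization bounds it produces --- must hold with constants that do not blow up as $\dim E\to\infty$, and it is the restriction $s\ge 2d$, neutralizing the sole $d$-dependent term $s^{-1}N$, that makes this work; this is exactly why a self-contained proof rather than a citation of \cite[Proposition~12.1.10]{willett2020higher} is needed. A secondary subtlety is (10): since $\Psi$ and $\beta$ do not decay but only tend to $\pm1$, the difference $\Psi(B_{s,x})-\Psi(B_{s,y})$ cannot be dispatched by a single $C_0$-functional-calculus estimate, and the ``sign part'' must be isolated and controlled on the far spectral region separately.
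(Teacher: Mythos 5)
Your items (1)--(8) follow the same route as the paper (convolve $f(t)=t(1+t^2)^{-1/2}$ with a band-limited approximate identity and quote \cite[Proposition 12.1.10]{willett2020higher}), and your (9) is essentially sound, though by a different mechanism: the paper approximates the kernel by a compactly supported $h$, writes $(f\ast h)^2-1=\frac{(f\ast h)^2-1}{\omega}\cdot\omega$ with $\omega(t)=(1+t^2)^{-1}$, and feeds in the Willett--Yu localization bound $\|(1+\lambda^2+B_{s,x}^2)^{-1/2}(1-\chi_{x,R})\|\le(\tfrac12+\lambda^2+R^2)^{-1/4}$ (valid for $s\ge 2d$ exactly because $\|s^{-1}N\|\le\tfrac12$), whereas you prove a direct $O(1/R)$ bound for $(1+B_{s,x}^2)^{-1}(1-\chi_{x,R})$ by a quadratic-form argument; both are dimension-free and either suffices for (9).

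The genuine gap is in (10), in your treatment of the bracket $\beta(B_{s,x})-\beta(B_{s,y})$. Cutting at a spectral level $\mu$ produces projections $Q_x=\chi_{\{|t|\ge\mu\}}(B_{s,x})$ and $Q_y=\chi_{\{|t|\ge\mu\}}(B_{s,y})$ of two \emph{different} operators, and these are not norm-close (eigenvalues cross the level $\mu$ under arbitrarily small perturbations), so $\mathrm{sgn}(B_{s,x})Q_x-\mathrm{sgn}(B_{s,y})Q_y$ is not of the form $Q_x(\cdots)Q_y$ to which your divided-difference/Schur-multiplier bound $O(r/\mu)$ applies. Expanding, one cross term is $(1-Q_x)\,\mathrm{sgn}(B_{s,y})Q_y\,(1-\chi_{x,R})$: the two factors you know how to pair, namely $1-Q_x$ and $1-\chi_{x,R}$, are separated by $\mathrm{sgn}(B_{s,y})Q_y$, which commutes with neither, and your localization estimates say nothing about this term. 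The parenthetical appeal to ``$0$ is isolated in $\sigma(B_{s,x})$'' also cannot be used quantitatively: the gap at $0$ is of order $s^{-1/2}$ and degenerates as $s\to\infty$, while (10) must be uniform over $s\ge 2d$; and the obvious repair of replacing $\mathrm{sgn}$ by the clipped-linear function equal to $\mathrm{sgn}$ outside $[-\mu,\mu]$ fails verbatim because that function (like $|t|$) is not operator Lipschitz. The step can be fixed --- for instance compare $\beta$ with the operator-Lipschitz surrogate $\beta(\cdot/\mu')$ and balance $\mu,\mu',R$, or do what the paper does: replace $\Psi$ by $f\ast h$ with $h$ compactly supported and use the explicit identity of \cite[Lemma 12.1.6]{willett2020higher} for $f(B_{s,x}-t)-f(B_{s,y}-t)$, which puts a resolvent factor directly against $1-\chi_{x,R}$ so that the localization lemma applies inside the $\lambda$-integral --- but as written the ``sign part'' of (10) is not proved.
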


\begin{rem}\label{rem:Psi function revised}
Note that statements (9) and (10) above are slightly stronger than those in \cite[Proposition 12.1.10]{willett2020higher}. For completeness, we give the proofs in Appendix \ref{app:proof2}.
\end{rem}

\subsection{Twisted Roe and strongly quasi-local algebras}\label{ssec:Twisted Roe and strong quasi-local algebras}
Thanks to Lemma \ref{lem:reduce to block-diagonal}, we only focus on sequences of finite metric spaces with uniformly bounded geometry.

We fix some notation first. 
Let $\{X_n\}_{n\in \NN}$ be a sequence of finite metric spaces with uniformly bounded geometry which admits a uniformly coarse embedding into Euclidean spaces $\{f_n:X_n\rightarrow E_n\}$ where each $E_n$ is a Euclidean space of even dimension $d_n$. 
Let $X$ be a coarse disjoint union of $\{X_n\}$ and denote $E:=\{E_n\}_{n\in \NN}$.

Recall that $\HH$ is a fixed infinite-dimensional separable Hilbert space. Denote $\H_n:=\ell^2(X_n)\otimes \HH$, which is an ample $X_n$-module under the amplified multiplication representation. Denote $\H_{n,E_n}:=\H_n\otimes \L^2_{E_n}$, which is both an ample $X_n$-module and an ample $E_n$-module similarly. Also define $\H_X:=\bigoplus_n\H_n$ and $\H_{X,E}:=\bigoplus_n\H_{n,E_n}$, both of which are ample $X$-modules. For $T_n\in \B(\H_{n,E_n})$, write $\ppg_{X_n}(T_n)$ and $\ppg_{E_n}(T_n)$ for the propagation of $T_n$ with respect to the $X_n$-module structure and the $E_n$-module structure, respectively. From Definition \ref{defn:Roe alg} and Definition \ref{defn:strongly quasi-local algebra}, we form the Roe algebras $C^*(\H_{n,E_n})$ of $X_n$ and $C^*(\H_{X,E})$ of $X$, and the strongly quasi-local algebras $\Csq(\H_{n,E_n})$ of $X_n$ and $\Csq(\H_{X,E})$ of $X$.

To introduce the twisted Roe and strongly quasi-local algebras, we need an extra construction from \cite[Definition 12.3.1]{willett2020higher} which involves the information of uniformly coarse embedding as follows:

\begin{defn}\label{defn:V-construction}
	Given $n\in\NN$ and $T \in \B(\L_{E_n}^2)$, we define a bounded operator $T^V$ on $\H_{n,E_n}=\ell^2(X_n)\otimes \HH\otimes \L^2_{E_n}$ by the formula
	\[
	T^V: \delta_x\otimes \xi\otimes u\mapsto \delta_x\otimes \xi\otimes V_{f_n(x)}TV_{f_n(x)}^*u,
	\]
	for $x\in X_n$, $\xi\in \HH$ and $u\in \L^2_{E_n}$, where $f_n$ is the uniformly coarse embedding and $V_{f_n(x)}$ is the translation operator defined in Section \ref{ssec:The Bott-Dirac operators on Euclidean spaces}.
\end{defn}

For each $n$, decompose $\H_n=\bigoplus_{x\in X_n} \H_{n,x}$ where $\H_{n,x} := \chi_{\{x\}} \H_n$ for $x\in X_n$ and $\H_{n,E_n}=\bigoplus_{x\in X_n} \H_{n,x} \otimes \L_{E_n}^2$. Hence $T\in \B(\H_{n,E_n})$ can be considered as an $X_n$-by-$X_n$ matrix operator $(T_{x,y})_{x,y\in X_n}$ where $T_{x,y}$ is a bounded operator from $\H_{n,y} \otimes \L_{E_n}^2$ to $\H_{n,x} \otimes \L_{E_n}^2$. It is clear that for $T \in \B(\L_{E_n}^2)$ we have:
\[
T^V_{x,y}=
\begin{cases}
~\Id_{\HH} \otimes V_{f_n(x)}TV_{f_n(x)}^*, & y=x; \\
~0, & \mbox{otherwise}.
\end{cases}
\]
Hence $T^V$ is a block-diagonal operator with respect to the above decomposition. 

Now we introduce the following twisted Roe algebras from \cite[Section 12.6]{willett2020higher}. 

\begin{defn}\label{defn:twisted Roe}
	Let $\prod_{n\in \NN}C_b([1,\infty),\K(\H_{n,E_n}))$ denote the product $C^*$-algebra of all bounded continuous functions from $[1,\infty)$ to $\K(\H_{n,E_n})$ with supremum norm. Write elements of this $C^*$-algebra as a collection $(T_{n,s})_{n\in \NN,s\in [1,\infty)}$ for $T_{n,s} \in \K(\H_{n,E_n})$, whose norm is
	\[
	\|(T_{n,s})\|=\sup_{n\in \NN,s\in [1,\infty)}\|T_{n,s}\|_{\B(\H_{n,E_n})}.
	\]
	Let $\AA(X;E)$ denote the $*$-algebra of $\prod_{n\in \NN}C_b([1,\infty),\K(\H_{n,E_n}))$ consisting of elements satisfying the following conditions:
	\begin{enumerate}
		\item $\sup\limits_{s\in [1,\infty),n\in \NN}\ppg_{X_n}(T_{n,s})<\infty$;\\[0.1cm]
		\item $\lim\limits_{s\rightarrow \infty}\sup\limits_{n\in \NN}\ppg_{E_n}(T_{n,s})=0$;\\[0.1cm]
		\item $\lim\limits_{R\rightarrow \infty}\sup\limits_{s\in [1,\infty),n\in \NN}\|\chi_{0,R}^VT_{n,s}-T_{n,s}\|=\lim\limits_{R\rightarrow \infty}\sup\limits_{s\in [1,\infty),n\in \NN}\|T_{n,s}\chi_{0,R}^V-T_{n,s}\|=0$.
	\end{enumerate}
	The \emph{twisted Roe algebra} $A(X;E)$ of $\{X_n\}_{n\in \NN}$ is defined to be the norm-closure of $\AA(X;E)$ in $\prod_{n\in \NN}C_b([1,\infty),\K(\H_{n,E_n}))$.
\end{defn}

\begin{rem}\label{rem:diff between twisted Roe}
	The above definition appears different from \cite[Definition 12.6.2]{willett2020higher}, while it coincides with the case of $r=0$ therein. To see this, note that each $X_n$ is finite hence $C^*(\H_{n,E_n})=\K(\H_{n,E_n})$ for $n\in \NN$. Then the following lemma shows that we can recover condition (4) in \cite[Definition 12.6.2]{willett2020higher}.
\end{rem}

\begin{lem}\label{lem:condition (4) recovery}
	Given $n\in \NN$ and a compact operator $T \in \K(\H_{n,E_n})$, we have
	\[
	\lim_{i\in I}\|p_i^VT-T\|=\lim_{i\in I}\|Tp_i^V-T\|=0
	\]
	where $\{p_i\}_{i\in I}$ is the net of finite rank projections on $\L_{E_n}^2$. 
\end{lem}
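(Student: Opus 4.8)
The plan is to show that the net of finite-rank projections $\{p_i\}_{i\in I}$ on $\L_{E_n}^2$ gives rise to an approximate unit for $\K(\H_{n,E_n})$ after applying the $V$-construction. First I would observe that $\H_{n,E_n}=\ell^2(X_n)\otimes\HH\otimes\L_{E_n}^2$ and that, since $X_n$ is finite, $\ell^2(X_n)$ is finite-dimensional; hence $\K(\H_{n,E_n})\cong \M_{|X_n|}(\K(\HH\otimes\L_{E_n}^2))$, and it suffices to treat each matrix entry, i.e. to prove the statement with $\H_{n,E_n}$ replaced by $\HH\otimes\L_{E_n}^2$ and $p_i^V$ replaced by $\Id_\HH\otimes V_{f_n(x)}p_iV_{f_n(x)}^*$ for a fixed $x\in X_n$. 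Since $V_{f_n(x)}$ is unitary, conjugating by $V_{f_n(x)}$ is an isometric $*$-automorphism of $\B(\L_{E_n}^2)$, so $\{V_{f_n(x)}p_iV_{f_n(x)}^*\}_{i\in I}$ is again an increasing net of finite-rank projections on $\L_{E_n}^2$ converging strongly to $\Id_{\L_{E_n}^2}$, and $\{\Id_\HH\otimes V_{f_n(x)}p_iV_{f_n(x)}^*\}$ converges strongly to $\Id_{\HH\otimes\L_{E_n}^2}$.

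Next I would invoke the standard fact that if $\{q_i\}$ is a net of projections on a Hilbert space $\H$ converging strongly to $\Id_\H$, then for every $K\in\K(\H)$ one has $\|q_iK-K\|\to 0$ and $\|Kq_i-K\|\to 0$: this follows because a compact operator maps the unit ball into a norm-compact set, strong convergence is uniform on norm-compact sets, and $\|Kq_i-K\|=\|q_iK^*-K^*\|$ handles the right-multiplication case by taking adjoints. Applying this with $\H=\HH\otimes\L_{E_n}^2$, $q_i=\Id_\HH\otimes V_{f_n(x)}p_iV_{f_n(x)}^*$, and $K$ ranging over the matrix entries of the given $T\in\K(\H_{n,E_n})$ yields the two limits for each entry, and reassembling the finite matrix (using that the norm on $\M_{|X_n|}$ is dominated by the sum of entrywise norms, up to a constant depending only on $|X_n|$) gives $\lim_i\|p_i^VT-T\|=\lim_i\|Tp_i^V-T\|=0$.

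The only mild subtlety — and the closest thing to an obstacle — is bookkeeping the identification of $p_i^V$ with the block-diagonal operator $(\Id_\HH\otimes V_{f_n(x)}p_iV_{f_n(x)}^*)_{x\in X_n}$ and checking that the entrywise estimates assemble correctly; this is exactly the matrix-entry description of $T^V$ recorded just before the lemma, so no real difficulty arises. Alternatively, one can avoid the matrix decomposition entirely: note $p_i^V$ is block-diagonal over $x\in X_n$ with unitarily-equivalent blocks, so $\{p_i^V\}$ is an increasing net of (finite-rank, since $X_n$ and $\HH$-truncation... more precisely $\ell^2(X_n)$ finite-dimensional makes each block finite rank) projections on $\H_{n,E_n}$ converging strongly to the identity, and then the same compact-operator argument applies directly on $\H_{n,E_n}$ without any reduction. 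I would present this cleaner version in the final write-up.
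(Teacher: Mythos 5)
Your argument is correct, but it follows a different track from the paper's. You prove that the block-diagonal net $p_i^V=\sum_{x\in X_n}p_x\otimes\Id_{\HH}\otimes V_{f_n(x)}p_iV_{f_n(x)}^*$ converges strongly to the identity of $\H_{n,E_n}$ (each conjugated net $V_{f_n(x)}p_iV_{f_n(x)}^*$ converges strongly to $\Id_{\L^2_{E_n}}$, and $X_n$ is finite), and then invoke the soft fact that a uniformly bounded net converging strongly to the identity converges in norm against any fixed compact operator, the adjoint trick handling multiplication on the right; the matrix-entry reduction in your first version is a harmless repackaging of this. The paper instead works with the order structure: it chooses a finite-rank projection $P\in\B(\H_{n,E_n})$ with $\|PT-T\|<\varepsilon/2$, dominates $P$ by $\sum_{x}p_x\otimes\Id_{\HH}\otimes q_x$ with each $q_x$ finite rank on $\L^2_{E_n}$, and observes that any finite-rank projection $p\geq V_{f_n(x)}^*q_xV_{f_n(x)}$ for all $x\in X_n$ satisfies $p^V\geq P$, hence $p^VP=P$ and $\|p^VT-T\|\leq 2\|PT-T\|$, the other limit following by replacing $T$ with $T^*$. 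Both proofs use compactness of $T$ and finiteness of $X_n$ essentially; yours is slightly more general and elementary (it needs only boundedness of the net and strong convergence, no domination $p^V\geq P$), while the paper's stays inside the projection lattice and yields the explicit eventual bound $2\|PT-T\|$ for all $p$ past a concrete threshold. Your ``cleaner version'' at the end is indeed the shortest correct write-up.

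One small slip to fix: $p_i^V$ is not finite rank --- its diagonal blocks are $\Id_{\HH}\otimes V_{f_n(x)}p_iV_{f_n(x)}^*$ and $\HH$ is infinite-dimensional --- so drop the parenthetical finite-rank claim. Fortunately your argument never uses it: all that is needed is that the $p_i^V$ form a net of projections (hence uniformly bounded) converging strongly to the identity.
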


\begin{proof}
	Given $\varepsilon>0$, it suffices to find a finite rank projection $p\in \B(\L_{E_n}^2)$ such that $\| p^V T-T \| < \varepsilon$. Replacing $T$ by its adjoint $T^*$, we obtain the other equality as well. 
	
	Since $T$ is compact, there exists a finite rank projection $P\in \B(\H_{n,E_n})$ such that $\|PT-T\| < \frac{\varepsilon}{2}$. Moreover, we can assume that the image of $P$ is contained in the subspace spanned by the finite set:
	\[
	\big\{ \delta_{x} \otimes \xi_i \otimes u_j: x \in X_n, \xi_i\in \HH, u_j \in \L_E^2 \mbox{~for~}i, j=1,2,\ldots,N\big\}.
	\]
	Hence for each $x \in X_n$, there exists a finite rank projection $q_x\in \B(\L_{E_n}^2)$ such that 
	\[
	P\le \sum_{x\in X_n} p_x \otimes \Id_{\HH} \otimes q_x,
	\]
	where $p_x$ is the orthogonal projection onto $\CC \delta_{x} \subseteq \ell^2(X_n)$. Take an arbitrary finite rank projection $p \in \B(\L^2_E)$ with $p\geq V_{f_n(x)}^* q_x V_{f_n(x)}$ for each $x\in X_n$. Then we have:
	\begin{align*}
	p^V &= \sum_{x\in X_n} p_x \otimes \Id_{\HH} \otimes V_{f_n(x)} p V_{f_n(x)}^* 
	\ge \sum_{x\in X_n} p_x \otimes \Id_{\HH} \otimes q_x \ge P
	\end{align*}
	This implies that $p^VP= P$. Hence we obtain
	\[
	\|p^VT - T\| \le \|p^VT - p^VPT\| + \|PT - T\| \le 2\|PT - T\| < \varepsilon,
	\]
	which concludes the proof.
\end{proof}


\begin{defn}\label{defn:twisted quasi-local}
	Let $\prod_{n\in \NN}C_b([1,\infty),\K(\H_{n,E_n}))$ denote the product $C^*$-algebra of all bounded continuous functions from $[1,\infty)$ to $\K(\H_{n,E_n})$ with supremum norm. Write elements of this $C^*$-algebra as a collection $(T_{n,s})_{n\in \NN,s\in [1,\infty)}$ for $T_{n,s} \in \K(\H_{n,E_n})$, whose norm is
	\[
	\|(T_{n,s})\|=\sup_{n\in \NN,s\in [1,\infty)}\|T_{n,s}\|_{\B(\H_{n,E_n})}.
	\]
	Let $\AAsq (X;E)$ denote the $*$-algebra of $\prod_{n\in \NN}C_b([1,\infty),\K(\H_{n,E_n}))$ consisting of elements satisfying the following conditions:
	\begin{enumerate}
		\item For any $\varepsilon>0$, there exists $\delta, R >0$ such that for any $n\in \NN, s \in [1,\infty)$ and $g_n:X_n\rightarrow \K(\HH)_1$ with $(\delta, R)$-variation we have $\|[T_{n,s}\otimes\Id_{\HH},\Lambda_{\H_{n,E_n}}(g_n)]\|<\varepsilon$, where $\HH$ is the fixed Hilbert space and $\Lambda$ is from (\ref{EQ:Lambda}). \\[0.05cm]
		\item $\lim\limits_{s\rightarrow \infty}\sup\limits_{n\in \NN}\ppg_{E_n}(T_{n,s})=0$.\\[0.05cm]
		\item For any $\varepsilon>0$, there exists $R'>0$ such that for each $n$, $C_n\subseteq X_n$ and Borel set $D_n\subseteq E_n$ with $d_{E_n}(f_n(C_n),D_n)\ge R'$ we have $\|\chi_{C_n}T_{n,s}\chi_{D_n}\|<\varepsilon$ and $\|\chi_{D_n}T_{n,s}\chi_{C_n}\|<\varepsilon$ for all $s \in [1,\infty)$.
	\end{enumerate}
	The \emph{twisted strongly quasi-local algebra} $\Asq(X;E)$ of $\{X_n\}_{n\in \NN}$ is defined to be the norm-closure of $\AAsq (X;E)$ in $\prod_{n\in \NN}C_b([1,\infty),\K(\H_{n,E_n}))$.
\end{defn}

\begin{rem}\label{rem:Asq explanation}
	We provide some explanation on condition (3) in Definition \ref{defn:twisted quasi-local}. Recall that $\H_{n,E_n}$ is both an $X_n$-module and an $E_n$-module, so we can consider the \emph{$(X_n \times E_n)$-support} of a given operator $T \in \B(\H_{n,E_n})$ defined as 
	\[
	\supp_{X_n \times E_n}(T):=\big\{(x,v) \in X_n \times E_n: \chi_{\{x\}} T\chi_U \neq 0 \mbox{~for~all~neighbourhoods~}U\mbox{~of~}v\big\}.
	\]
	We define the associated \emph{$(X_n \times E_n)$-propagation} of $T$ to be
	\[
	\ppg_{X_n,E_n}(T):=\sup\big\{\|f_n(x)-v\|_{E_n}: (x,v) \in \supp_{X_n \times E_n}(T)\big\}.
	\]
Definition \ref{defn:twisted quasi-local}(3) says that $T_{n,s}$ and $T_{n,s}^*$ are \emph{uniformly $(X_n \times E_n)$-quasi-local} in the sense that for any $\varepsilon>0$ there exists $R>0$ such that for each $n$, $C_n\subseteq X_n$ and Borel set $D_n \subseteq E_n$ with $C_n \times D_n \subseteq \big\{(x,v) \in X_n \times E_n: \|f_n(x)-v\|_E \geq R\big\}$, we have $\|\chi_{C_n}T_{n,s}\chi_{D_n}\|<\varepsilon$ and $\|\chi_{C_n}T^*_{n,s}\chi_{D_n}\|<\varepsilon$ for all $s \in [1,\infty)$. It is clear that limits of uniformly finite $(X_n \times E_n)$-propagation operators are uniformly $(X_n \times E_n)$-quasi-local. 
\end{rem}

\begin{lem}\label{lem:A and Aq}
	We have $A(X;E) \subseteq \Asq (X;E) \subseteq \prod_{n\in \NN}C_b([1,\infty),\K(\H_{n,E_n}))$.
\end{lem}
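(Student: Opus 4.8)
The plan is to verify the stronger statement $\AA(X;E) \subseteq \AAsq(X;E)$ at the level of the dense $*$-subalgebras; taking norm-closures then yields the first inclusion, while the second inclusion $\Asq(X;E) \subseteq \prod_{n\in\NN}C_b([1,\infty),\K(\H_{n,E_n}))$ is automatic, since by construction $\AAsq(X;E)$ already sits inside this (norm-closed) $C^*$-algebra. So I would fix $(T_{n,s})_{n\in\NN,s\in[1,\infty)} \in \AA(X;E)$ and check the three defining conditions of Definition \ref{defn:twisted quasi-local}. Condition (2) there is word-for-word condition (2) of Definition \ref{defn:twisted Roe}, so nothing is required.

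For condition (1), I would use that $(T_{n,s})$ has uniformly bounded $X_n$-propagation, say $\ppg_{X_n}(T_{n,s}) \le R_1$ for all $n$ and $s$ (Definition \ref{defn:twisted Roe}(1)). By the computation in Remark \ref{rem:entry for strong quasi-local}, the $(x,y)$-entry of $[T_{n,s}\otimes\Id_\HH,\Lambda_{\H_{n,E_n}}(g_n)]$ equals $(T_{n,s})_{x,y}\otimes(g_n(y)-g_n(x))$, which vanishes whenever $d_{X_n}(x,y) > R_1$; since $\{X_n\}$ has uniformly bounded geometry, the matrix-norm estimate already used in the proof of Proposition \ref{prop:relations between Roe, QL and SQL}(2) provides a constant $N = N(R_1)$, independent of $n$ and $s$, with
$$\big\|[T_{n,s}\otimes\Id_\HH,\Lambda_{\H_{n,E_n}}(g_n)]\big\| \le N\cdot\|T_{n,s}\|\cdot\sup_{d_{X_n}(x,y)\le R_1}\|g_n(y)-g_n(x)\|.$$
Given $\varepsilon>0$, it then suffices to take $R := R_1+1$ and $\delta := \varepsilon/(N\cdot\|(T_{n,s})\|+1)$, so that any $g_n$ with $(\delta,R)$-variation makes the right-hand side $<\varepsilon$ for all $n$ and $s$.

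The substantive step, and the one I expect to require the only real care, is condition (3), where conditions (1) and (3) of Definition \ref{defn:twisted Roe} must be combined with the expansion control of the uniform coarse embedding. Let $\rho_+$ be a non-decreasing function with $\|f_n(x)-f_n(y)\|_{E_n} \le \rho_+(d_{X_n}(x,y))$ for all $n$ and $x,y\in X_n$. Given $\varepsilon>0$, Definition \ref{defn:twisted Roe}(3) supplies $R>0$ with $\sup_{n,s}\|T_{n,s}\chi_{0,R}^V-T_{n,s}\|<\varepsilon$ and $\sup_{n,s}\|\chi_{0,R}^V T_{n,s}-T_{n,s}\|<\varepsilon$; I would set $R' := \rho_+(R_1)+R+1$. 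The key geometric observation is that $\chi_{0,R}^V$ is block-diagonal over $X_n$ and acts on the block of index $x$ as multiplication by $\chi_{B(f_n(x);R)}$ on the $\L^2_{E_n}$-factor. Hence, for $C_n\subseteq X_n$ and a Borel set $D_n\subseteq E_n$ with $d_{E_n}(f_n(C_n),D_n)\ge R'$, a nonzero contribution to $\chi_{C_n}T_{n,s}\chi_{0,R}^V\chi_{D_n}$ would require indices $x\in C_n$, $y\in X_n$ and a point $v\in D_n$ with $d_{X_n}(x,y)\le R_1$ (so that $(x,y)$ lies within the propagation bound of $T_{n,s}$) and $\|f_n(y)-v\|_{E_n}<R$, forcing $\|f_n(x)-v\|_{E_n} \le \rho_+(R_1)+R < R'$, which is impossible. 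Thus $\chi_{C_n}T_{n,s}\chi_{0,R}^V\chi_{D_n}=0$, whence
$$\|\chi_{C_n}T_{n,s}\chi_{D_n}\| = \|\chi_{C_n}(T_{n,s}-T_{n,s}\chi_{0,R}^V)\chi_{D_n}\| \le \|T_{n,s}-T_{n,s}\chi_{0,R}^V\| < \varepsilon$$
for all $s$, and running the same argument with $T_{n,s}^*$ (same $X_n$-propagation, and $\|T_{n,s}^*\chi_{0,R}^V - T_{n,s}^*\| = \|\chi_{0,R}^V T_{n,s} - T_{n,s}\| < \varepsilon$) yields $\|\chi_{D_n}T_{n,s}\chi_{C_n}\| < \varepsilon$. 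This establishes condition (3) with the single constant $R'$, so $\AA(X;E) \subseteq \AAsq(X;E)$ and the lemma follows. The main thing to be careful about in this last paragraph is the bookkeeping of open-versus-closed balls and of the two sides of $\chi_{0,R}^V$ under passing to adjoints; everything else is routine.
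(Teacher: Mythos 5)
Your proposal is correct and follows essentially the same route as the paper: verify the three conditions of Definition \ref{defn:twisted quasi-local} on the dense subalgebra $\AA(X;E)$, handling condition (1) by the bounded-geometry estimate for uniformly finite $X_n$-propagation (which the paper cites via Proposition \ref{prop:relations between Roe, QL and SQL} and Lemma \ref{lem:char for sequence of strong quasi-locality}) and condition (3) by showing the truncations $T_{n,s}\chi_{0,R}^V$ and their adjoints have $(X_n\times E_n)$-propagation at most $\rho_+(R_1)+R$ and then approximating via Definition \ref{defn:twisted Roe}(3), exactly the content of Remark \ref{rem:Asq explanation} in the paper's proof. Your explicit cut-down computation is just an unpacking of that remark, so there is no substantive difference.
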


\begin{proof}
	Given $T=(T_{n,s})_{n\in \NN,s\in [1,\infty)} \in \AA(X;E)$, condition (1) in Definition \ref{defn:twisted quasi-local} follows from Proposition~\ref{prop:relations between Roe, QL and SQL} and Lemma \ref{lem:char for sequence of strong quasi-locality}. We only need to check condition (3). Given $R>0$, Remark \ref{rem:Asq explanation} implies that it suffices to show that $T_{n,s}\chi_{0,R}^V$ and $(T_{n,s}\chi_{0,R}^V)^\ast= \chi_{0,R}^VT_{n,s}^\ast$ have uniformly finite $(X_n \times E_n)$-propagation for $n\in \NN$ and $s\in [1,\infty)$. Now Definition \ref{defn:twisted Roe}(1) implies that there exists an $M>0$ such that $\ppg_{X_n}(T_{n,s}) \leq M$ and $\ppg_{X_n}(T_{n,s}^\ast) \leq M$ for all $n\in \NN$ and $s\in [1,\infty)$. Since $\{f_n: X_n \to E_n\}_{n\in \NN}$ is a uniformly coarse embedding, there exists some $\rho_+: \RR^+ \to \RR^+$ such that $\|f_n(x)-f_n(y)\|_E \leq \rho_+(d_{X_n}(x,y))$ for $n\in \NN$ and $x,y\in X_n$. It follows directly from definition that $\ppg_{X_n,E_n}(T_{n,s}\chi_{0,R}^V) \leq \rho_+(M)+R$ and $\ppg_{X_n,E_n}(\chi_{0,R}^VT_{n,s}^\ast) \leq \rho_+(M)+R$ for all $n\in \NN$ and $s\in [1,\infty)$.
\end{proof}

Finally, we introduce the following operators:

\begin{defn}[{\cite[Section 12.3 and 12.6]{willett2020higher}}]\label{defn:F}
	For each $n\in \NN$, $s\in [1,\infty)$ and $x\in E_n$, Definition \ref{defn:F_{s,x}} provides a bounded operator $F_{s,x} \in \B(\L_{E_n}^2)$, also denoted by $F_{n,s,x}$. Applying Definition \ref{defn:V-construction}, we obtain an operator $F_{n,s}:=F^V_{n,s+2d_n,0}$ in $\B(\H_{n,E_n})$ where $d_n$ is the dimension of $E_n$. Let $F_s$ be the block diagonal operator in $\prod_n \B(\H_{n,E_n}) \subseteq \B(\H_{X,E})$ defined by $F_s:=(F^V_{n,s+2d_n,0})_n$. Finally, we define $F$ to be an element in $\prod_n \B(L^2([1,\infty);\H_{n,E_n})) \subseteq \B(L^2 ([1,\infty);\H_{X,E}))$ defined by $(F(u))(s):=F_su(s)$.
	
	Similarly, given $\varepsilon>0$ let $\Psi$ be a function as in Proposition~\ref{prop:Psi function} and $F^\Psi_s$ be the bounded diagonal operator on $\H_{X,E}$ defined by $F^\Psi_s:=(F^\Psi_{n,s})_n$ where $F^\Psi_{n,s}=\Psi(B_{n,s+2d_n,0})^V$. Let $F^\Psi$ be the bounded operator on $\bigoplus_n L^2 ([1,\infty), \H_{n,E_n} )$ defined by $(F^\Psi(u))(s):=F^\Psi_su(s)$.
\end{defn}

\section{The index map}\label{sec:index map}

Recall that in \cite[Secition 12.3 and 12.6]{willett2020higher}, Willett and Yu construct an index map (with notation as in Section \ref{ssec:Twisted Roe and strong quasi-local algebras}):
\[
\Ind_F: K_* \big( C^*(\H_X)\cap \prod_n\B(\H_n) \big) \rightarrow K_*(A(X;E)),
\]
where $F$ is the operator in Definition \ref{defn:F}. They use $\Ind_F$ to transfer $K$-theoretic information from Roe algebras to their twisted counterparts, which allow them to reprove the coarse Baum-Connes conjecture via local isomorphisms. More precisely, they prove the following:

\begin{prop}[{\cite[Proposition 12.6.3]{willett2020higher}}]\label{prop:index map isom. for Roe}
	With notation as in Section \ref{ssec:Twisted Roe and strong quasi-local algebras}, for each $s\in [1,\infty)$ the composition
	\[
	K_* \big( C^*(\H_X)\cap \prod_n\B(\H_n) \big) \stackrel{\Ind_F}{\longrightarrow} K_*(A(X;E)) \stackrel{\iota^s_*}{\longrightarrow} K_*\big( C^*(\H_{X,E})\cap \prod_n\B(\H_{n,E_n}) \big)
	\]
	is an isomorphism, where $\iota^s:A(X;E)\rightarrow C^*(\H_{Y,E})$ is the evaluation map at $s$.
\end{prop}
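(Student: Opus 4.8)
The plan is to follow the scheme of \cite[Proposition 12.6.3]{willett2020higher} essentially verbatim, since the statement is literally the one proved there for twisted Roe algebras. The key point is that the index map $\Ind_F$ is defined by the formula class of $[F_s]$ in the multiplier-algebra picture: for a projection $p$ (or unitary) in a matrix algebra over $C^*(\H_X)\cap\prod_n\B(\H_n)$, one forms the ``twisted'' family $(p^V_{n,s})$ and couples it with the degree-one operator $F$ built from the Bott--Dirac operators via Definition \ref{defn:F}; the resulting formal difference of projections (or the Cayley transform of $Fp+p^\perp$) lands in a matrix algebra over $A(X;E)$ by the propagation and support estimates in Proposition \ref{prop:Psi function}(1)--(5), which are exactly tailored to match conditions (1)--(3) of Definition \ref{defn:twisted Roe}. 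I would first recall this construction in enough detail to fix notation, noting that $\Psi(B_{s,x})$ has propagation $\le s^{-1}R_0$ in the $E$-direction (giving Definition \ref{defn:twisted Roe}(2)), is a compact perturbation of a near-identity (giving (3) via Proposition \ref{prop:Psi function}(3) and (9)), and its $V$-twist has bounded $X$-propagation because $F^V_{n,s,0}$ is block-diagonal in the $X$-direction (giving (1)).

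The second and main step is to identify the composite $\iota^s_*\circ\Ind_F$. The evaluation map $\iota^s$ at a fixed $s$ sends $A(X;E)$ into $C^*(\H_{X,E})\cap\prod_n\B(\H_{n,E_n})$, and under this evaluation the index class becomes the class obtained by coupling $p^V$ with the single operator $F_s=\Psi(B_{n,s+2d_n,0})^V$ (up to the $\varepsilon$-perturbation of Proposition \ref{prop:Psi function}(1), which is harmless in $K$-theory). The claim is then that $p\mapsto [\,p^V \text{ coupled with } F_s\,]$ is an isomorphism on $K$-theory for every $s$. For $s=1$ this is the content of the Bott periodicity / Dirac--dual-Dirac argument: $F_1$ on each $\L^2_{E_n}$ is the Bott--Dirac operator, coupling with it is the Bott map $K_*(C^*(\H_X)\cap\prod\B(\H_n))\to K_*(C^*(\H_{X,E})\cap\prod\B(\H_{n,E_n}))$, and this is an isomorphism by the (uniform) Bott periodicity theorem for the $E_n$'s exactly as in \cite[Section 12.4--12.5]{willett2020higher}. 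For general $s$, the family $s\mapsto F^\Psi_s$ is a (strong-$*$, and in the relevant compact-perturbation sense norm-) continuous path of operators implementing the same $K$-theory class by Proposition \ref{prop:Psi function}(6)--(8), so a homotopy/interpolation argument shows $\iota^s_*\circ\Ind_F$ is independent of $s$ and equals the $s=1$ isomorphism.

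I expect the main obstacle to be the careful verification that the constructions of \cite{willett2020higher} go through with the mild strengthening of Proposition \ref{prop:Psi function}(9)--(10) recorded in Remark \ref{rem:Psi function revised}, and in particular keeping all estimates uniform in $n$ (i.e.\ over the whole sequence $\{X_n\}$ and over the varying dimensions $d_n$) --- this is why the last sentence of Proposition \ref{prop:Psi function} insists the constants $R_0, c, R_1, R_2$ and the function $\Psi$ be dimension-independent. The actual $K$-theoretic assertion, once the index class is correctly set up, is not new: it is \cite[Proposition 12.6.3]{willett2020higher} with $r=0$, and by Remark \ref{rem:diff between twisted Roe} our $A(X;E)$ coincides with theirs in that case, so I would ultimately cite that result rather than reprove Bott periodicity. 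Thus the write-up reduces to: (i) recalling $\Ind_F$; (ii) observing $A(X;E)$ here is the $r=0$ algebra of \cite{willett2020higher}; (iii) invoking \cite[Proposition 12.6.3]{willett2020higher}.
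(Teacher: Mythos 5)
Your proposal is correct and ends up exactly where the paper does: the paper states this result as a direct citation of \cite[Proposition 12.6.3]{willett2020higher}, with the only genuinely new content being the identification of the twisted Roe algebra here with the $r=0$ algebra of Willett--Yu, which is precisely the paper's Remark \ref{rem:diff between twisted Roe} and Lemma \ref{lem:condition (4) recovery} and which your step (ii) reproduces. The extra sketch of the Bott--Dirac coupling and homotopy in $s$ is fine but not needed, since you (and the paper) ultimately invoke the cited result rather than reprove it.
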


In this section, we construct index maps in the strongly quasi-local setting and prove similar results. This allows us to prove certain isomorphisms in $K$-theory to attack Theorem \ref{thm:main result} later. We follow the procedure in \cite[Section 12.3]{willett2020higher}, while more technical analysis is required.


We follow the same notation as in Section \ref{ssec:Twisted Roe and strong quasi-local algebras}. Let $\{X_n\}_{n\in \NN}$ be a sequence of finite metric spaces with uniformly bounded geometry which admits a uniformly coarse embedding into Euclidean spaces $\{f_n:X_n\rightarrow E_n\}$ where each $E_n$ is a Euclidean space of even dimension $d_n$. 

Let us start with several lemmas to analyse relations between the operator $F$ from Definition \ref{defn:F} and the twisted strongly quasi-local algebra $\Asq(X;E)$. 

\begin{lem}\label{lem:F multiplier}
	The operator $F$ is a self-adjoint, norm one, odd operator in the multiplier algebra of $\Asq(X;E)$. 
\end{lem}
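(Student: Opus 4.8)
The plan is to dispose of the algebraic properties by functional calculus and to reduce the multiplier property to a norm approximation of $F$ by the finite-propagation operators $F^\Psi$ of Definition~\ref{defn:F}, exploiting throughout that $F$ and $F^\Psi$ are \emph{block-diagonal} with respect to the decomposition $\H_{n,E_n}=\bigoplus_{x\in X_n}(\H_{n,x}\otimes\L^2_{E_n})$ (as noted after Definition~\ref{defn:V-construction}). For the algebraic part, $F_{n,s,x}=B_{n,s,x}(1+B_{n,s,x}^2)^{-1/2}$ is the image of $B_{n,s,x}$ under the odd, real-valued, bounded function $t\mapsto t(1+t^2)^{-1/2}$; since $B_{n,s,x}$ is essentially self-adjoint, odd, and has unbounded spectrum (Section~\ref{ssec:The Bott-Dirac operators on Euclidean spaces}), $F_{n,s,x}$ is self-adjoint, odd, and of norm exactly $1$. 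The $V$-construction conjugates, block by block over $X_n$, by the translation unitaries $V_{f_n(x)}$, which commute with the grading of $\L^2_{E_n}$; hence $F_{n,s}=F^V_{n,s+2d_n,0}$, the block-diagonal operator $F_s=(F_{n,s})_n$, and finally $F$ (acting pointwise over $[1,\infty)$ by $F_s$) inherit all three properties.

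For the multiplier property, recall that $\Asq(X;E)$ is the norm closure of the $*$-algebra $\AAsq(X;E)$ and $\|F\|\le 1$; since $F$ is self-adjoint and $\AAsq(X;E)$ is $*$-closed, it suffices to prove $F\cdot T\in\Asq(X;E)$ for every $T=(T_{n,s})\in\AAsq(X;E)$, the companion statement $T\cdot F\in\Asq(X;E)$ following from $T\cdot F=(FT^*)^*$ with $T^*\in\AAsq(X;E)$, and boundedness of $F$ propagating both to the norm closure. Fix $\varepsilon>0$; apply Proposition~\ref{prop:Psi function} with parameter $\varepsilon/(\|T\|+1)$ to obtain $\Psi$, so that by part~(1) of that proposition together with its dimension-independence clause $\|F-F^\Psi\|\le\varepsilon/(\|T\|+1)$ and hence $\|FT-F^\Psi T\|<\varepsilon$. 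It remains to show $F^\Psi T=(F^\Psi_{n,s}T_{n,s})_{n,s}\in\AAsq(X;E)$; then letting $\varepsilon\to 0$ gives $FT\in\Asq(X;E)$. That $s\mapsto F^\Psi_{n,s}T_{n,s}$ is a bounded, norm-continuous, compact-operator-valued function is immediate, since $F^\Psi_{n,s}$ is a contraction that is strong-$*$ continuous in $s$ by Proposition~\ref{prop:Psi function}(6) while $s\mapsto T_{n,s}$ is norm-continuous with compact values; the substance is in conditions~(1)--(3) of Definition~\ref{defn:twisted quasi-local}.

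This is where block-diagonality does the work. Since $F^\Psi_{n,s}=\Psi(B_{n,s+2d_n,0})^V$ is block-diagonal over $X_n$ it commutes with every $\chi_{C_n}$ with $C_n\subseteq X_n$, and inside each $x$-block $F^\Psi_{n,s}\otimes\Id_\HH$ acts trivially on the $\HH$ appended in forming $\Lambda$ while $\Lambda_{\H_{n,E_n}}(g_n)$ acts trivially on everything else, so $F^\Psi_{n,s}\otimes\Id_\HH$ commutes with $\Lambda_{\H_{n,E_n}}(g_n)$ for every $g_n$. For condition~(1), the Leibniz identity gives $[(F^\Psi_{n,s}T_{n,s})\otimes\Id_\HH,\Lambda_{\H_{n,E_n}}(g_n)]=(F^\Psi_{n,s}\otimes\Id_\HH)[T_{n,s}\otimes\Id_\HH,\Lambda_{\H_{n,E_n}}(g_n)]$, whose norm is at most $\|[T_{n,s}\otimes\Id_\HH,\Lambda_{\H_{n,E_n}}(g_n)]\|$, so (1) for $F^\Psi T$ is inherited verbatim from (1) for $T$. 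For condition~(2), translation-conjugation preserves $E_n$-propagation, so Proposition~\ref{prop:Psi function}(2) gives $\ppg_{E_n}(F^\Psi_{n,s})\le s^{-1}R_0$ with $R_0$ dimension-independent, whence $\ppg_{E_n}(F^\Psi_{n,s}T_{n,s})\le s^{-1}R_0+\sup_m\ppg_{E_m}(T_{m,s})\to 0$ uniformly in $n$. For condition~(3), $\chi_{C_n}(F^\Psi_{n,s}T_{n,s})\chi_{D_n}=F^\Psi_{n,s}(\chi_{C_n}T_{n,s}\chi_{D_n})$ has norm at most $\|\chi_{C_n}T_{n,s}\chi_{D_n}\|$, while $\chi_{D_n}F^\Psi_{n,s}=\chi_{D_n}F^\Psi_{n,s}\chi_{\Nd_{R_0}(D_n)}$ for $s\ge 1$ forces $\|\chi_{D_n}(F^\Psi_{n,s}T_{n,s})\chi_{C_n}\|\le\|\chi_{\Nd_{R_0}(D_n)}T_{n,s}\chi_{C_n}\|$; both are controlled by condition~(3) for $T$ once $d_{E_n}(f_n(C_n),D_n)$ exceeds the corresponding constant for $T$ enlarged by $R_0$. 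This gives $F^\Psi T\in\AAsq(X;E)$ and finishes the argument.

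The point that demands care --- more than a real obstacle --- is uniformity: every estimate above must hold uniformly over $n\in\NN$ and $s\in[1,\infty)$, which is exactly why Proposition~\ref{prop:Psi function} must be stated with constants independent of $d_n$ and why $\ppg_{E_n}(F^\Psi_{n,s})$ must be controlled by $s^{-1}R_0$ rather than anything dimension-dependent. In contrast with the Roe-algebra setting (cf.\ \cite{willett2020higher}), the strong-quasi-locality condition~(1) costs essentially nothing, precisely because $F$ and $F^\Psi$ are block-diagonal over $X_n$ so that the relevant commutator vanishes identically; the only genuine interplay is, in condition~(3), between the $X_n$- and $E_n$-directions, handled by the small $E_n$-propagation of $F^\Psi$.
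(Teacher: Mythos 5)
Your proof is correct and takes essentially the same route as the paper: approximate $F$ in norm by $F^\Psi$ via Proposition \ref{prop:Psi function}(1) (using the dimension-independence of the constants), then verify conditions (1)--(3) of Definition \ref{defn:twisted quasi-local} for the product, exploiting that $F^\Psi_{n,s}$ is block-diagonal over $X_n$ (hence commutes with $\Lambda(g)$ and with $\chi_{C_n}$) and has uniformly controlled $E_n$-propagation. The only cosmetic differences are that you treat $F^\Psi T$ instead of $TF^\Psi$ (recovering the other side by adjoints) and argue the norm-continuity in $s$ directly rather than citing \cite[Lemma 12.3.5]{willett2020higher}.
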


\begin{proof}
	The operator $F$ is self-adjoint, norm one and odd since each $F_{n,s,x}$ is. Given $\varepsilon>0$, let $\Psi:\RR \to [-1,1]$ be a function as in Proposition \ref{prop:Psi function} for this $\varepsilon$. Then Proposition \ref{prop:Psi function}(1) implies:
	\[
	\|F - F^\Psi\| \leq \sup_{n\in \NN, s\in [1,\infty)} \|F^V_{n,s,0} - \Psi(B_{n,s,0})^V\| \leq \sup_{n\in \NN, s\in [1,\infty)} \sup_{x\in X_n} \|F_{n,s,f_n(x)} - \Psi(B_{n,s,f_n(x)})\| \leq \varepsilon.
	\]
Hence it suffices to show that $(T_{n,s}) F^\Psi=(T_{n,s}F^\Psi_{n,s})$ belongs to $\AAsq(X;E)$ for any $(T_{n,s}) \in \AAsq(X;E)$.
	
First it follows from \cite[Lemma 12.3.5]{willett2020higher} that for each $n\in \NN$, the map $s\mapsto T_{n,s}F^\Psi_{n,s}$ is norm-continuous. 
Now we verify conditions (1)-(3) in Definition \ref{defn:twisted quasi-local} for $(T_{n,s}F^\Psi_{n,s})$. Note that condition (2) follows directly from Proposition \ref{prop:Psi function}(2) and (3) holds since $\ppg_{E_n}(F^\Psi_{n,s})$ are uniformly finite for all $n\in \NN$ and $s \in [1,\infty)$. For condition (1), note that for any $n\in\NN$, $s\in [1,\infty)$ and $g:X_n\rightarrow\K(\HH)_1$, we have
\[
\big( F^\Psi_{n,s}\otimes\Id_{\HH} \big) \cdot \Lambda(g) = \Lambda(g) \cdot \big( F^\Psi_{n,s}\otimes\Id_{\HH} \big).
\]
Hence we obtain:
\[
		\big\|[(T_{n,s}F^\Psi_{n,s})\otimes\Id_{\HH}, \Lambda(g)]\big\|=\big\|\big([T_{n,s}\otimes\Id_{\HH},\Lambda(g)]\big)\cdot \big(F^\Psi_{n,s}\otimes\Id_{\HH}\big)\big\|  \le \big\|[T_{n,s}\otimes\Id_{\HH},\Lambda(g)]\big\|,
\]
which concludes the proof.
\end{proof}

\begin{lem}\label{lem:multiplier}
	Considered as represented on $L^2([1,\infty)) \otimes\H_{X,E}$ via amplification of identity, $\Csq(\H_X)\cap \prod_n\B(\H_n)$ is a subalgebra of the multiplier algebra of $\Asq(X;E)$.
\end{lem}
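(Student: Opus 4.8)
The plan is to show that for any $S = (S_n)_n \in \Csq(\H_X) \cap \prod_n \B(\H_n)$ and any $T = (T_{n,s}) \in \Asq(X;E)$, both products $ST$ and $TS$ lie in $\Asq(X;E)$, where $S$ acts on $\H_{X,E} = \bigoplus_n \H_n \otimes \L^2_{E_n}$ via $S_n \otimes \Id_{\L^2_{E_n}}$ on each summand (and then amplified by $\Id_{L^2([1,\infty))}$). Since $\Asq(X;E)$ is a norm-closed $\ast$-algebra and adjunction exchanges $ST$ with $(S^*T^*)^*$ and the two conditions of interest are $\ast$-symmetric, it suffices to treat $(S_n T_{n,s})_{n,s}$ for $S$ in the dense $\ast$-subalgebra and $T$ in $\AAsq(X;E)$, then pass to norm limits. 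First I would record that $S_n \otimes \Id$ commutes with the $E_n$-module structure, so $\ppg_{E_n}(S_n T_{n,s}) \le \ppg_{E_n}(T_{n,s})$, giving condition (2) of Definition \ref{defn:twisted quasi-local} immediately; similarly, since $S_n \otimes \Id$ acts ``diagonally'' in the $E_n$-variable, for Borel $D_n \subseteq E_n$ one has $\chi_{C_n}(S_n \otimes \Id)T_{n,s}\chi_{D_n}$, and using that $S_n$ is locally compact and bounded together with the uniform $(X_n\times E_n)$-quasi-locality of $T_{n,s}$ one deduces condition (3) — here one uses that $S_n$ has finite (indeed, by Proposition \ref{prop:relations between Roe, QL and SQL}(1), quasi-local) propagation in the $X_n$-direction to control how far the $C_n$ gets enlarged, exactly as in the proof of Lemma \ref{lem:A and Aq}. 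Continuity in $s$ is inherited since $s \mapsto T_{n,s}$ is norm-continuous and $S_n$ is a fixed bounded operator.

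The substantive point is condition (1): that $(S_n T_{n,s})$ is uniformly strongly quasi-local in the relevant sense. The key computation is the commutator identity
\[
[S_n T_{n,s} \otimes \Id_\HH, \Lambda(g_n)] = [S_n \otimes \Id_\HH, \Lambda(g_n)] \cdot (T_{n,s}\otimes \Id_\HH) + (S_n \otimes \Id_\HH) \cdot [T_{n,s}\otimes \Id_\HH, \Lambda(g_n)],
\]
valid because $\Lambda(g_n)$ is defined on the $\HH$-factor (the outer one in $\H_{n,E_n}\otimes\HH = \ell^2(X_n)\otimes\HH\otimes\L^2_{E_n}\otimes\HH$) and both $S_n\otimes\Id$ and $T_{n,s}\otimes\Id$ are matrices over $X_n$ whose entries act trivially on that factor, so the Leibniz rule applies verbatim. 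Given $\varepsilon>0$: from $S \in \Csq(\H_X) \cap \prod_n\B(\H_n)$ and Lemma \ref{lem:char for sequence of strong quasi-locality}(2), the sequence $(S_n)$ is uniformly strongly quasi-local, so there are $\delta_1,R_1>0$ with $\|[S_n\otimes\Id_\HH,\Lambda(g_n)]\| < \varepsilon/(2\|T\|+1)$ for all $n$ and all $g_n$ with $(\delta_1,R_1)$-variation; from $T \in \AAsq(X;E)$ (condition (1)) there are $\delta_2,R_2>0$ giving $\|[T_{n,s}\otimes\Id_\HH,\Lambda(g_n)]\| < \varepsilon/(2\|S\|+1)$ for all $n,s$ and $g_n$ with $(\delta_2,R_2)$-variation. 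Taking $\delta = \min\{\delta_1,\delta_2\}$ and $R = \max\{R_1,R_2\}$ and combining via the identity above yields $\|[S_nT_{n,s}\otimes\Id_\HH,\Lambda(g_n)]\| < \varepsilon$, which is condition (1).

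Finally I would note that $\Csq(\H_X)\cap\prod_n\B(\H_n)$ is closed under the operations and contains adjoints, and that the estimates above are uniform in the norm of $S$, so the map $S \mapsto (S_n \otimes \Id)$ extends to a $\ast$-homomorphism into the multiplier algebra $\mathcal{M}(\Asq(X;E))$; nondegeneracy is not required for the statement, only that left and right multiplication by $S$ preserve $\Asq(X;E)$, which is what the above establishes. The main obstacle is verifying condition (3) with the correct uniformity: one must be careful that the enlargement of the $X_n$-set $C_n$ under the finite $X_n$-propagation of $S_n$ is bounded \emph{uniformly in $n$}, which is where boundedness of $\sup_n \ppg_{X_n}$-type control (or, for quasi-local $S$, the uniform quasi-locality from Lemma \ref{lem:char for sequence of strong quasi-locality}) is essential; the strong-quasi-locality condition (1) is comparatively mechanical once the Leibniz identity is in hand.
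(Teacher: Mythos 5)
Your proposal is correct and follows essentially the same route as the paper: verify conditions (1)--(3) of Definition \ref{defn:twisted quasi-local} for $(S_nT_{n,s})$, using the Leibniz rule together with the uniform strong quasi-locality of $(S_n)$ (via Lemma \ref{lem:char for sequence of strong quasi-locality} and the norm identity (\ref{EQ:commutant lemma})) for condition (1), the zero $E_n$-propagation of $S_n\otimes\Id$ for condition (2), and the uniform quasi-locality of $(S_n)$ combined with the expansion function $\rho_+$ of the coarse embedding and condition (3) for $T$ — exactly the paper's argument, including the easy direction $\chi_{D_n}S_nT_{n,s}\chi_{C_n}=S_n\chi_{D_n}T_{n,s}\chi_{C_n}$. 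The only cosmetic remarks are that no density/limit argument in $S$ is needed (the estimates work for arbitrary $S\in \Csq(\H_X)\cap\prod_n\B(\H_n)$), and that $S_n$ has no finite $X_n$-propagation, only the uniform quasi-locality you invoke in parentheses, which is what the paper actually uses.
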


\begin{proof}
	It suffices to show that $(S_nT_{n,s}) \in \AAsq(X;E)$ for any $(T_{n,s}) \in \AAsq(X;E)$ and $(S_n)\in \Csq(\H_X)\cap \prod_n\B(\H_n)$.\footnote{To be more precise, $(S_nT_{n,s})$ stands for $\big((S_n\otimes \Id_{\L^2_{E_n}}) \cdot T_{n,s}\big)$.} It is clear that the map $s\mapsto S_nT_{n,s}$ is norm-continuous and bounded for each $n\in \NN$. 

Now we verify conditions (1)-(3) in Definition \ref{defn:twisted quasi-local} for $s\mapsto (S_nT_{n,s})$. First note that for any $n\in\NN$, $s\in [1,\infty)$ and $g:X_n\rightarrow\K(\HH)_1$ we have
\begin{equation}\label{EQ:commutant lemma}
	\big\| [S_n\otimes \Id_{\L^2_{E_n}}\otimes \Id_{\HH}, \Lambda_{\H_{n,E_n}}(g)] \big\| = \big\| [S_n\otimes \Id_{\HH}, \Lambda_{\H_n}(g)] \big\|.
\end{equation}
Hence condition (1) follows from direct calculations together with Lemma \ref{lem:char for sequence of strong quasi-locality}.
%
%

Condition (2) follows from the fact that each $S_n\otimes \Id_{\L^2_{E_n}}$ has zero $E_n$-propagation. Now we check condition (3). Given $\varepsilon>0$, it follows from $(S_n)\in \Csq(\H_X) \subseteq C^*_q(\H_X)$ that there exists $R_1>0$ such that $S_n$ has $(\frac{\varepsilon}{2\|(T_{n,s})\|},R_1)$-propagation for all $n\in\NN$. On the other hand, there exists $R_2>0$ such that for any  $n\in\NN$, $s\in [1,\infty)$, $C_n\subseteq X_n$ and Borel set $D_n\subseteq E_n$ with $d_{E_n}(f_n(C_n),D_n)\ge R_2$ we have $\|\chi_{C_n}T_{n,s}\chi_{D_n}\|<\frac{\varepsilon}{2\|(S_n)\|}$ and $\|\chi_{D_n}T_{n,s}\chi_{C_n}\|<\frac{\varepsilon}{2\|(S_n)\|}$. Now let $R=\rho_+ (R_1) + R_2$	where $\rho_+$ comes from the uniformly coarse embedding $\{ f_n : X_n \rightarrow E_n \}$. For any $n\in\NN$, $C_n'\subseteq X_n$ and Borel set $D_n'\subseteq E_n$ with $d_{E_n}(f_n(C_n'),D_n')\ge R$ we have $f_n( \Nd_{R_1}(C_n') ) \subseteq \Nd_{\rho_+(R_1)}( f_n(C_n') )$, which implies that $d_{E_n}(f_n( \Nd_{R_1}(C_n') ) , D_n')\ge R_2$. Therefore, we obtain:
	\begin{align*}
	\| \chi_{C_n'} S_nT_{n,s}\chi_{D_n'} \| 
	\le &  \|\chi_{C_n'} S_n T_{n,s}\chi_{D_n'} - \chi_{C_n'} S_n \chi_{\Nd_{R_1}(C_n')} T_{n,s}\chi_{D_n'} \| + \| \chi_{C_n'} S_n \chi_{\Nd_{R_1}(C_n')} T_{n,s}\chi_{D_n'} \| \\
	\le & \| \chi_{C_n'} S_n \chi_{(\Nd_{R_1}(C_n'))^c} \| \cdot \|T_{n,s}\| + \| S_n \| \cdot \| \chi_{\Nd_{R_1}(C_n')} T_{n,s}\chi_{D_n'} \| \\
	< & \frac{\varepsilon}{2\|(T_{n,s})\|}  \cdot \| (T_{n,s}) \| + \|(S_n)\| \cdot \frac{\varepsilon}{2\|(S_n)\|} = \varepsilon
	\end{align*}
	for all $s\in [1,\infty)$. On the other hand, we have:
	\[
	\| \chi_{D_n'} S_n T_{n,s} \chi_{C_n'} \| = \| S_n \chi_{D_n'} T_{n,s} \chi_{C_n'} \| \le \|S_n\| \cdot \| \chi_{D_n'} T_{n,s} \chi_{C_n'} \| < \varepsilon
	\]
	for all $s\in [1,\infty)$. So we finish the proof.
\end{proof}

Regarding $\Csq(\H_X)\cap \prod_n\B(\H_n)$ as a subalgebra in $\B(L^2([1,\infty)) \otimes\H_{X,E})$ as in Lemma \ref{lem:multiplier}, we have the following:

\begin{lem}\label{lem:commutator}
For any $(S_n) \in \Csq(\H_X)\cap \prod_n\B(\H_n)$, we have $[(S_n),F] \in \Asq(X;E)$.
\end{lem}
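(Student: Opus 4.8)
The plan is to reduce to the model operator $F^{\Psi}$ and then to verify the three defining conditions of $\AAsq(X;E)$ directly, the last being the genuinely delicate one. Fix $\varepsilon>0$ and let $\Psi$ be a function as in Proposition~\ref{prop:Psi function} for this $\varepsilon$. As in the proof of Lemma~\ref{lem:F multiplier}, $\|F-F^{\Psi}\|\le\varepsilon$, so $\|[(S_n),F]-[(S_n),F^{\Psi}]\|\le 2\|(S_n)\|\varepsilon$; since $\Asq(X;E)$ is norm-closed it suffices to show $[(S_n),F^{\Psi}]\in\Asq(X;E)$. Set $R_{n,s}:=[S_n\otimes\Id_{\L^2_{E_n}},F^{\Psi}_{n,s}]$. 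Since $F^{\Psi}_{n,s}$ is block-diagonal over $X_n$ with $x$-block $\Id_{\HH}\otimes\Psi(B_{n,s+2d_n,f_n(x)})$ (Definition~\ref{defn:V-construction} and $V_{v}\Psi(B_{n,t,0})V_{v}^{*}=\Psi(B_{n,t,v})$), and each $X_n$ is finite, we have the finite sum
\[
R_{n,s}=\sum_{x,y\in X_n}(S_n)_{x,y}\otimes\big(\Psi(B_{n,s+2d_n,f_n(y)})-\Psi(B_{n,s+2d_n,f_n(x)})\big),
\]
so each $R_{n,s}$ is compact by Proposition~\ref{prop:Psi function}(4), $s\mapsto R_{n,s}$ is norm-continuous by Proposition~\ref{prop:Psi function}(8), and $\sup_{n,s}\|R_{n,s}\|\le 2\|(S_n)\|$. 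Thus $(R_{n,s})\in\prod_{n}C_b([1,\infty),\K(\H_{n,E_n}))$, and it remains to check conditions (1)--(3) of Definition~\ref{defn:twisted quasi-local}.

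Condition (2) is immediate from Proposition~\ref{prop:Psi function}(2): $\ppg_{E_n}(R_{n,s})\le\ppg_{E_n}(F^{\Psi}_{n,s})\le s^{-1}R_0\to 0$ uniformly in $n$. For condition (1), observe that $F^{\Psi}_{n,s}\otimes\Id_{\HH}$ commutes with $\Lambda_{\H_{n,E_n}}(g)$ for any $g\colon X_n\to\K(\HH)_1$ (both are block-diagonal over $X_n$ and act on complementary tensor factors within each block), so the Jacobi identity gives
\[
[R_{n,s}\otimes\Id_{\HH},\Lambda_{\H_{n,E_n}}(g)]=\big[[(S_n\otimes\Id_{\L^2_{E_n}})\otimes\Id_{\HH},\Lambda_{\H_{n,E_n}}(g)],F^{\Psi}_{n,s}\otimes\Id_{\HH}\big],
\]
whence $\|[R_{n,s}\otimes\Id_{\HH},\Lambda_{\H_{n,E_n}}(g)]\|\le 2\|[S_n\otimes\Id_{\HH},\Lambda_{\H_n}(g)]\|$ by Equation~(\ref{EQ:commutant lemma}). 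Condition (1) then follows because $(S_n)\in\Csq(\H_X)\cap\prod_n\B(\H_n)$ is uniformly strongly quasi-local (Lemma~\ref{lem:char for sequence of strong quasi-locality}).

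The main obstacle is condition (3). Here $F^{\Psi}$ is not $(X_n\times E_n)$-localized (it is close to $\pm1$ far from $f_n(x)$), so $R_{n,s}$ is only $(X_n\times E_n)$-quasi-local and not of uniformly finite $(X_n\times E_n)$-propagation; since $(S_n)$ need not have finite $X_n$-propagation the argument of \cite[Lemma 12.3.9]{willett2020higher} breaks down, and strong quasi-locality is what repairs it. The starting point is the identity
\[
\chi_{C_n}R_{n,s}\chi_{D_n}=\chi_{C_n}\big[S_n\otimes\Id_{\L^2_{E_n}},\,F^{\Psi}_{n,s}\chi_{D_n}\big],
\]
valid for $C_n\subseteq X_n$ and Borel $D_n\subseteq E_n$ since $\chi_{C_n}$ commutes with the block-diagonal $F^{\Psi}_{n,s}$ and $\chi_{D_n}$ commutes with $S_n\otimes\Id_{\L^2_{E_n}}$; here $F^{\Psi}_{n,s}\chi_{D_n}$ is block-diagonal over $X_n$ with $y$-block $\Id_{\HH}\otimes\Psi(B_{n,s+2d_n,f_n(y)})\chi_{D_n}$. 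One cannot feed this directly into strong quasi-locality of $S_n$ because its blocks are neither compact nor globally slowly-varying. The fix is to replace $\chi_{D_n}$ in the $y$-block by $M_{\phi^{(y)}}\chi_{D_n}$, where $\phi^{(y)}$ is a radial cutoff vanishing on $B(f_n(y);R'/4)$ and equal to $1$ off $B(f_n(y);R'/2)$, giving a block-diagonal operator $\tilde G_{n,s}$ which, once $R'$ is chosen large relative to the constant $R_2$ of Proposition~\ref{prop:Psi function}(10) (note $s+2d_n\ge 2d_n$, and $R_2$ is independent of $d_n$) and relative to the variation parameters of $(S_n)$: agrees with $F^{\Psi}_{n,s}\chi_{D_n}$ on the blocks over $C_n$ whenever $d_{E_n}(f_n(C_n),D_n)\ge R'$ (as $D_n$ then avoids $B(f_n(y);R'/2)$ for $y\in C_n$); has a symbol of arbitrarily small variation globally (combining Proposition~\ref{prop:Psi function}(5),(10) with the Lipschitz bound on $\phi^{(y)}$); and differs from $F^{\Psi}_{n,s}\chi_{D_n}$ only on blocks over $Q_n:=\{y:d_{E_n}(f_n(y),D_n)<R'/2\}$, which lie at $X_n$-distance greater than the quasi-locality scale of $(S_n)$ from $C_n$ by uniform expansiveness of $\{f_n\}$.

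Splitting
\[
\chi_{C_n}R_{n,s}\chi_{D_n}=\chi_{C_n}\big[S_n\otimes\Id_{\L^2_{E_n}},\tilde G_{n,s}\big]+\chi_{C_n}(S_n\otimes\Id_{\L^2_{E_n}})\chi_{Q_n}\big(F^{\Psi}_{n,s}\chi_{D_n}-\tilde G_{n,s}\big),
\]
the second term has norm at most $\|\chi_{C_n}S_n\chi_{Q_n}\|$, which is small by uniform quasi-locality of $(S_n)$. The first term is controlled by uniform strong quasi-locality of $(S_n)$: approximate the (merely bounded) symbol of $\tilde G_{n,s}$ strongly by the $\K(\L^2_{E_n})_1$-valued symbols $y\mapsto q_k\Psi(B_{n,s+2d_n,f_n(y)})M_{\phi^{(y)}}\chi_{D_n}$ (finite-rank projections $q_k\uparrow\Id_{\L^2_{E_n}}$), which have no larger variation, so the associated commutators are small by uniform strong quasi-locality, and lower semicontinuity of the operator norm under strong convergence yields the bound for $\|\chi_{C_n}[S_n\otimes\Id_{\L^2_{E_n}},\tilde G_{n,s}]\|$. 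Hence $\|\chi_{C_n}R_{n,s}\chi_{D_n}\|<\varepsilon'$ for all $n,s$; applying the same argument to $R_{n,s}^{*}=-[S_n^{*}\otimes\Id_{\L^2_{E_n}},F^{\Psi}_{n,s}]$ (and $(S_n^{*})\in\Csq(\H_X)\cap\prod_n\B(\H_n)$) bounds $\|\chi_{D_n}R_{n,s}\chi_{C_n}\|$ as well. This verifies condition (3) and completes the proof that $[(S_n),F^{\Psi}]\in\Asq(X;E)$, hence $[(S_n),F]\in\Asq(X;E)$.
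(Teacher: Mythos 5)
Your proof is correct, and while your reduction to $F^{\Psi}$ and your verification of conditions (1) and (2) of Definition~\ref{defn:twisted quasi-local} coincide with the paper's argument, your treatment of condition (3) takes a genuinely different route. The paper first cuts $S_n$ down to $\chi_{C'}S_n\chi_{C'}$ with $C'=\Nd_{R'}(C)$ using quasi-locality, then applies the uniform subspace result (Proposition~\ref{prop:subspace strong quasi-locality}) to the symbol $x\mapsto\Psi(B_{n,s+2d_n,f_n(x)})\chi_D$, whose variation Proposition~\ref{prop:Psi function}(10) controls only on $C'$, and makes the values compact by subtracting the value at a basepoint via Proposition~\ref{prop:Psi function}(4). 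You instead insert the radial cutoffs $M_{\phi^{(y)}}$ into each block so that Proposition~\ref{prop:Psi function}(10) controls the variation of the modified symbol at \emph{every} point of $X_n$, push the discrepancy onto the blocks over $Q_n$ --- which is far from $C_n$ in $X_n$ by uniform expansiveness and hence absorbed by plain uniform quasi-locality of $(S_n)$ --- and you handle the non-compactness of the symbol values by finite-rank truncation plus lower semicontinuity of the norm under strong limits rather than by basepoint subtraction. What this buys: your argument bypasses Proposition~\ref{prop:subspace strong quasi-locality} (and the Higson-function extension machinery behind it) entirely, needing only whole-space uniform strong quasi-locality of $(S_n)$, at the cost of a slightly more elaborate symbol modification; the paper's route is shorter at this point because it reuses Section~3 machinery it needs anyway. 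Two minor points to tidy: compactness of $R_{n,s}$ needs local compactness of $S_n$ (so that each entry $(S_n)_{x,y}$ is compact) in addition to Proposition~\ref{prop:Psi function}(4), since a merely bounded operator tensored with a compact one need not be compact; and in the global variation estimate it is Proposition~\ref{prop:Psi function}(10) together with the Lipschitz dependence of $\phi^{(y)}$ on the centre $f_n(y)$ (slope of order $1/R'$) that does the work, so the citation of item (5) is inessential. Finally, Lemma~\ref{lem:char for sequence of strong quasi-locality} is stated for sequences in $\prod_n\K(\H_n)$, while $(S_n)$ has only bounded blocks; the direction you need follows by extending $g_n$ by zero (using $d(X_n,X\setminus X_n)\to\infty$) and treating the finitely many remaining $n$ via Lemma~\ref{lem:subspace-wise version} --- the same point is implicit in the paper's own use of that lemma, so it is not a gap specific to your argument.
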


\begin{proof}
From Proposition~\ref{prop:Psi function}(1), it suffices to show that the map
\[
s\mapsto [(S_n),F_s^\Psi] = [(S_n) , (\Psi(B_{n,s+2d_n,0})^V) ]
\]
belongs to $\AAsq(X;E)$ for any $\Psi$ as in Proposition~\ref{prop:Psi function}, \emph{i.e.}, to verify conditions (1)-(3) in Definition \ref{defn:twisted quasi-local}.

First note that for any $n\in\NN$, $s\in[1,\infty)$ and $g:X_n\rightarrow \K(\HH)_1$ we have
\begin{align*}
	&\big[ [S_n,\Psi(B_{n,s+2d_n,0})^V]\otimes\Id_{\HH}, \Lambda_{\H_{n,E_n}}(g) \big] \\
	 &=[S_n\otimes \Id_{\L^2_{E_n}}\otimes\Id_{\HH}, \Lambda_{\H_{n,E_n}}(g)]\Psi(B_{n,s+2d_n,0})^V+ \Psi(B_{n,s+2d_n,0})^V[\Lambda_{\H_{n,E_n}}(g), S_n\otimes \Id_{\L^2_{E_n}}\otimes\Id_{\HH}],
\end{align*}
which has norm at most $2\|[S_n\otimes \Id_{\HH}, \Lambda_{\H_{n}}(g)]\|$ according to (\ref{EQ:commutant lemma}). Hence we conclude condition (1) from the strong quasi-locality of $(S_n)$. Condition (2) follows from Propostion~\ref{prop:Psi function}(2) and that fact that $S_n$ has zero $E_n$-propagation.

To check condition (3), we fix an $\varepsilon>0$. It follows from Proposition~\ref{prop:subspace strong quasi-locality} that there exist $\delta', R'>0$ such that for any $n\in\NN$, $A\subseteq X_n$ and $g:A\rightarrow \K(\HH)_1$ with $(\delta', R')$-variation we have $\big\|[\chi_{A}S_n\chi_{A}\otimes\Id_{\HH},\Lambda(g)]\big\|<\frac{\varepsilon}{4}$. Moreover since $\Csq(\H_X)\subseteq C^*_q(\H_X)$, we assume that $(S_n)$ has $(\frac{\varepsilon}{4},R')$-propagation. Denote by $\rho_+$ the parameter function from the uniformly coarse embedding $\{ f_n : X_n \rightarrow E_n \}$.

By Proposition~\ref{prop:Psi function}(10), there exists $R''>0$ such that for all $n\in\NN, s\geq 2d_n$ and $ x,y\in E_n$ with $\|x-y\|_{E_n} \le\rho_+(R')$ we have $\|(\Psi(B_{n,s,x}) -\Psi(B_{n,s,y}))(1-\chi_{B(x,R'')})\|<\delta'$. Set $R=R''+\rho_+(R')$. 
For any $n\in\NN$, $s\in [1,\infty)$, $C\subseteq X_n$ and Borel set $D\subseteq E_n$ with $d_{E_n}(f_n(C),D)\ge R$, we are going to estimate the norm $\|\chi_C[S_n,F_{n,s}^\Psi]\chi_D\|$. 

Denote $C':=\Nd_{R'}(C) \subseteq X_n$. Since $(S_n)$ has $(\frac{\varepsilon}{4},R')$-propagation, we obtain:
\begin{equation}\label{EQ:commutant lem2}
	\big\|\chi_C[S_n,F_s^\Psi]\chi_D\big\| < 2\cdot \frac{\varepsilon}{4}+ \big\|\chi_C[\chi_{C'}S_n\chi_{C'},F_s^\Psi \chi_D]\big\|.
\end{equation}
Consider the function 
\[
g: X_n \to \B(\L_{E_n}^2)_1 \quad \mbox{by} \quad x \mapsto \Psi(B_{n,s+2d_n,f_n(x)}) \chi_D.
\]
Proposition~\ref{prop:Psi function}(4) implies that $g(x)-g(y) \in \K(\L_{E_n}^2)$ for any $x,y\in X_n$. Moreover, we claim that $g$ has $(\delta',R')$-variation on $C'$. In fact, for any $x,y\in C'$ with $d_{X_n}(x,y)< R'$ we have $\|f_n(x)-f_n(y)\|_{E_n}\le \rho_+(R')$. Note that $d_{E_n}(f_n(C),D)\ge R$ and $x\in C'=\Nd_{R'}(C)$, hence $D \subseteq E_n \setminus B(f_n(x),R'')$. Therefore by the choice of $R''$ above, we obtain that $g$ has $(\delta',R')$-variation on $C'$. 

Finally, note that each $\L_{E_n}^2$ is separable and infinite dimensional, hence isomorphic to the fixed Hilbert space $\HH$. Fixing an $x_0 \in X_n$, we define $\hat{g}: X_n \to \K(\L_{E_n}^2)_1$ by $\hat{g}(x):=\frac{g(x)-g(x_0)}{2}$. It follows from the above analysis that $\hat{g}$ has $(\delta',R')$-variation on $C'$. Hence by the choice of $\delta', R'$ at the beginning, we obtain that
\[
[\chi_{C'}S_n\chi_{C'},F_s^\Psi \chi_D] = \big[(\chi_{C'}S_n \chi_{C'}) \otimes \Id_{\L_{E_n}^2}, 2\Lambda_{\H_n}(\hat{g})\big]
\]
has norm at most $\frac{\varepsilon}{2}$. Combining with (\ref{EQ:commutant lem2}), we obtain:
\[
\big\|\chi_C[S_n,F_s^\Psi]\chi_D\big\| < 2\cdot \frac{\varepsilon}{4}+ \big\|\chi_C[\chi_{C'}S_n\chi_{C'},F_s^\Psi \chi_D]\big\| \leq \frac{\varepsilon}{2}+\frac{\varepsilon}{2} = \varepsilon.
\]
Similarly, we have $\|\chi_D[S_n,F_{n,s}^\Psi]\chi_C\| < \varepsilon$. Hence we conclude the proof.
\end{proof}

\begin{lem}\label{lem:projection}
	For any projection $(p_n) \in \Csq(\H_X)\cap \prod_n\B(\H_n)$, the function
	\[
	s\mapsto ( (p_n)F_s(p_n) )^2 - (p_n)
	\]
	is in $(p_n) A_{sq}(X;E) (p_n)$.
\end{lem}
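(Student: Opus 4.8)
Here is how I would prove Lemma~\ref{lem:projection}.

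The plan is to start from the purely algebraic identity
\[
\big((p_n)F_s(p_n)\big)^2 - (p_n) \;=\; (p_n)(F_s^2-1)(p_n) \;-\; \big((p_n)[(p_n),F_s]\big)\big((p_n)[(p_n),F_s]\big)^*,
\]
valid in $\prod_n\B(\H_{n,E_n})$ for each $s$, which follows from $(p_n)^2=(p_n)$, the self-adjointness of $F_s$ (Lemma~\ref{lem:F multiplier}), and the elementary identity $(p_n)F_s(1-(p_n))=(p_n)[(p_n),F_s]$. It then suffices to show that each of the two summands on the right, viewed as an $s$-function, lies in $(p_n)\Asq(X;E)(p_n)$. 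Since $(p_n)$ is a self-adjoint idempotent multiplier of $\Asq(X;E)$ by Lemma~\ref{lem:multiplier} and $\Asq(X;E)$ is a $C^*$-algebra, for each summand it is enough to check that it lies in $\Asq(X;E)$ and that it is unchanged upon compressing by $(p_n)$ on both sides.

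For the second summand this is immediate: Lemma~\ref{lem:commutator} gives $[(p_n),F]\in\Asq(X;E)$, so $(p_n)[(p_n),F]\in\Asq(X;E)$ because $(p_n)$ is a multiplier (Lemma~\ref{lem:multiplier}), and hence $\big((p_n)[(p_n),F]\big)\big((p_n)[(p_n),F]\big)^*\in\Asq(X;E)$; a one-line computation using $(p_n)^2=(p_n)$ shows this element is fixed by compression with $(p_n)$. For the first summand, fix $\varepsilon>0$, let $\Psi$ be as in Proposition~\ref{prop:Psi function}, and set $g_{n,s}:=\Psi(B_{n,s+2d_n,0})^2-1\in\K(\L^2_{E_n})$. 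Since the $V$-construction is multiplicative, $(F_s^2-1)$ restricted to the $n$-th component equals $(F_{n,s+2d_n,0}^2-1)^V$, so Proposition~\ref{prop:Psi function}(1) yields $\|(F_s^2-1)-\big((g_{n,s})^V\big)\|\le 2\varepsilon$ for all $n,s$; as $\Asq(X;E)$ is norm-closed and $\varepsilon$ is arbitrary, it suffices to show $\big((g_{n,s})^V\big)_{n,s}\in\AAsq(X;E)$, after which compressing by $(p_n)$ keeps us in $\Asq(X;E)$ and idempotency gives membership in $(p_n)\Asq(X;E)(p_n)$. To see $\big((g_{n,s})^V\big)\in\AAsq(X;E)$: each $g_{n,s}$ is compact by Proposition~\ref{prop:Psi function}(3) and $s\mapsto g_{n,s}$ is norm-continuous and uniformly bounded by Proposition~\ref{prop:Psi function}(7), so the same holds for $(g_{n,s})^V$; condition~(1) of Definition~\ref{defn:twisted quasi-local} holds \emph{trivially}, since $(g_{n,s})^V\otimes\Id_{\HH}$ and $\Lambda_{\H_{n,E_n}}(g_n)$ act, fibrewise over $X_n$, on the $\L^2_{E_n}$-leg and on the auxiliary $\HH$-leg respectively, hence commute and the commutator vanishes; condition~(2) holds because $\ppg_{E_n}\big((g_{n,s})^V\big)=\ppg(g_{n,s})\le 2(s+2d_n)^{-1}R_0\to 0$ uniformly in $n$ by Proposition~\ref{prop:Psi function}(2); and condition~(3) follows from Proposition~\ref{prop:Psi function}(9): given $\varepsilon'>0$ choose $R_1$ with $\|g_{n,s}(1-\chi_{0,R})\|<\varepsilon'$ for all $R\ge R_1$, all $n$ and all $s\ge 1$, and then for $C_n\subseteq X_n$ and Borel $D_n\subseteq E_n$ with $d_{E_n}(f_n(C_n),D_n)\ge R_1$, the block-diagonality of $(g_{n,s})^V$ over $X_n$, the translation-invariance $V_{f_n(x)}^*\chi_{D_n}V_{f_n(x)}=\chi_{D_n-f_n(x)}\le 1-\chi_{0,R_1}$, and self-adjointness of $(g_{n,s})^V$ give $\|\chi_{C_n}(g_{n,s})^V\chi_{D_n}\|<\varepsilon'$ and $\|\chi_{D_n}(g_{n,s})^V\chi_{C_n}\|<\varepsilon'$.

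I expect the first summand to be the only real obstacle, and within it condition~(3) of Definition~\ref{defn:twisted quasi-local}: one must turn the purely Euclidean spatial decay of $\Psi(B_{s,x})^2-1$ away from the centre $x$ (Proposition~\ref{prop:Psi function}(9), with its dimension-independent constant) into the required uniform $(X_n\times E_n)$-quasi-locality. Passing from $F$ to $F^\Psi$ first is essential here, because $F_s^2-1$ is a smoothing operator with infinite $E_n$-propagation and no useful tail estimate; the only genuinely new ingredient beyond the toolbox of Section~\ref{ssec:The Bott-Dirac operators on Euclidean spaces} is the (simple but crucial) observation that the Bott--Clifford data are supported on the $\L^2_{E_n}$-leg and hence commute with the amplifications $\Lambda(g_n)$, which makes condition~(1) automatic and is exactly the place where working with strong quasi-locality (rather than trying to re-run the argument for $C^*_q$) pays off.
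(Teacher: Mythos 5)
Your algebraic identity $\big((p_n)F_s(p_n)\big)^2-(p_n)=(p_n)(F_s^2-1)(p_n)-\big((p_n)[(p_n),F_s]\big)\big((p_n)[(p_n),F_s]\big)^*$ is correct, and your treatment of the commutator summand via Lemma \ref{lem:commutator} and Lemma \ref{lem:multiplier} is fine; this part runs parallel to the paper's own reduction (which invokes Lemma \ref{lem:commutator} to pass to $s\mapsto (p_n)F_s^2-(p_n)$). The gap is in your handling of the first summand. You claim that $\big((F^\Psi_s)^2-1\big)=\big((\Psi(B_{n,s+2d_n,0})^2-1)^V\big)_{n,s}$ lies in $\AAsq(X;E)$, deducing compactness of $(g_{n,s})^V$ from compactness of $g_{n,s}$. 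This is false: by Definition \ref{defn:V-construction}, $T^V$ acts on each block $\delta_x\otimes\HH\otimes\L^2_{E_n}$ as $\Id_{\HH}\otimes V_{f_n(x)}TV_{f_n(x)}^*$, i.e.\ the $V$-construction amplifies by the identity of the infinite-dimensional Hilbert space $\HH$, so $T^V$ is never compact on $\H_{n,E_n}$ unless $T=0$. Since $\Asq(X;E)$ is by definition a subalgebra of $\prod_n C_b([1,\infty),\K(\H_{n,E_n}))$, the element $\big((F^\Psi_s)^2-1\big)$ does not belong to it (and neither does the constant function $s\mapsto(p_n\otimes\Id_{\L^2_{E_n}})$), so there is nothing in the ideal to compress: the step ``compressing by the multiplier $(p_n)$ keeps us in $\Asq(X;E)$'' has no input, and the conclusion $(p_n)(F_s^2-1)(p_n)\in\Asq(X;E)$ is not established.

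What is actually needed --- and what the paper does --- is to verify conditions (1)--(3) of Definition \ref{defn:twisted quasi-local} directly for the compressed element $\big(p_n(F^\Psi_{n,s})^2-p_n\big)$, which \emph{is} compact-valued because each $p_n\in\K(\H_n)$ (as $X_n$ is finite and $(p_n)$ is locally compact) and $\Psi(B_{n,s+2d_n,0})^2-1\in\K(\L^2_{E_n})$. Once the projection is kept inside, your ``trivial commutation'' observation no longer suffices: condition (1) is obtained from the \emph{strong quasi-locality} of $(p_n)$, via $\big[(p_n(F^\Psi_{n,s})^2-p_n)\otimes\Id_{\HH},\Lambda(g)\big]=\big[p_n\otimes\Id_{\L^2_{E_n}}\otimes\Id_{\HH},\Lambda(g)\big]\big((F^\Psi_{n,s})^2\otimes\Id_{\HH}\big)-\big[p_n\otimes\Id_{\L^2_{E_n}}\otimes\Id_{\HH},\Lambda(g)\big]$ together with (\ref{EQ:commutant lemma}); and condition (3) needs the \emph{uniform quasi-locality} of $(p_n)$ in addition to Proposition \ref{prop:Psi function}(9), because in $\chi_{C_n}\,p_n\,(g_{n,s})^V\chi_{D_n}$ the diagonal blocks of $(g_{n,s})^V$ at points $x$ whose image $f_n(x)$ is close to $D_n$ are not small and must instead be controlled by the smallness of $\chi_{C_n}p_n\chi_{\{x\}}$ for such $x$ far from $C_n$. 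Your remaining checks (condition (2) from Proposition \ref{prop:Psi function}(2), continuity from (7), the tail estimate from (9), and the approximation $\|F-F^\Psi\|\le\varepsilon$ from (1)) are correct, so the proof can be repaired by re-running the verification with the projection kept inside, exactly as in the paper.
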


\begin{proof}
From Lemma~\ref{lem:commutator}, it suffices to show that the function $ s\mapsto  (p_n)F_s^2 - (p_n)$ is in $\Asq(X;E)$. Moreover, we only need to show that the function
\[
s\mapsto (p_n) (F_{n,s}^\Psi)^2 - (p_n) = (p_n (\Psi(B_{n,s+2d_n,0})^V)^2) - (p_n)
\]
is in $\AAsq(X;E)$ for any $\Psi$ as in Proposition~\ref{prop:Psi function}. For $n\in \NN$, it follows from Proposition \ref{prop:Psi function}(7) that the function $s \mapsto p_n(F_{n,s}^\Psi)^2 - p_n$ is bounded and continuous.

Now we verify conditions (1)-(3) in Definition \ref{defn:twisted quasi-local}.
First note that for any $n\in\NN$, $s\in[1,\infty)$ and $g:X_n\rightarrow \K(\HH)_1$ we have
\begin{align*}
	\big[&(p_n(\Psi(B_{n,s+2d_n,0})^V)^2-p_n)\otimes\Id_{\HH}, \Lambda_{\H_{n,E_n}}(g)\big] \\
	&= \big[p_n \otimes\Id_{\L^2_{E_n}}\otimes\Id_{\HH}, \Lambda_{\H_{n,E_n}}(g)\big]\cdot \big((\Psi(B_{n,s+2d_n,0})^V)^2\otimes\Id_{\HH}\big) - \big[p_n \otimes\Id_{\L^2_{E_n}} \otimes\Id_{\HH}, \Lambda_{\H_{n,E_n}}(g)\big],
\end{align*}
which has norm at most $2\|[p_n\otimes \Id_{\HH}, \Lambda_{\H_{n}}(g)]\|$ according to (\ref{EQ:commutant lemma}). Hence we conclude condition (1) from the strong quasi-locality of $(p_n)$. Condition (2) follows from Propostion~\ref{prop:Psi function}(2) and that fact that $p_n$ has zero $E_n$-propagation. Finally, condition (3) follows from the uniform quasi-locality of $(p_n)$ together with Proposition~\ref{prop:Psi function}(9). Hence we conclude the proof.
\end{proof}

Having obtained the above essential ingredients, we are now in the position to construct the index map. It follows from a standard construction in $K$-theories (see, \emph{e.g.}, \cite[Definitoin 2.8.5]{willett2020higher}):

\begin{defn}\label{defn:index definition}
	Let $\H=\H^{+}\oplus \H^{-}$ be a graded Hilbert space with grading operator $U$ (\emph{i.e.}, $U$ is a self-adjoint unitary operator in $\B(\H)$ such that $\H^{\pm}$ coincides with the $(\pm 1)$-eigenspace of $U$), and $A$ be a $C^*$-subalgebra of $\B(\H)$ such that $U$ is in the multiplier algebra of $A$. Let $F\in \B(\H)$ be an odd operator of the form
	\[ F = 
	\begin{pmatrix}
	0&V \\
	W&0
	\end{pmatrix}
	\]
	for some operators $V: \H^{-} \rightarrow \H^{+}$ and $W: \H^{+} \rightarrow \H^{-} $. Suppose $F$ satisfies:
	\begin{itemize}
		\item $F$ is in the multiplier algebra of $A$;
		\item $F^2-1$ is in $A$.
	\end{itemize}
	Then we define the \emph{index class} $\Ind[F] \in K_0(A)$ of $F$ to be
	\[
	\Ind[F] := 
	\begin{bmatrix}
	(1-VW)^2&V(1-WV) \\
	W(2-VW)(1-VW)&WV(2-WV)
	\end{bmatrix}
	-
	\begin{bmatrix}
	0&0 \\
	0&1
	\end{bmatrix}.
	\]
\end{defn}

\, \\[-.7cm]

Combining Lemma \ref{lem:F multiplier}$\sim$Lemma \ref{lem:projection}, we obtain that for any projection $(p_n) \in \Csq(\H_X)\cap \prod_n\B(\H_n)$ the operator $( (p_n)F_s(p_n) )$ is an odd self-adjoint operator on the graded Hilbert space $\bigoplus_n p_n(L^2 ( [1,\infty), \H_{n,E_n} ) )$ satisfying:
\begin{itemize}
	\item $((p_n)F_s(p_n)) $ is in the multiplier algebra of $(p_n) \Asq(X;E) (p_n)$;
	\item $( (p_n)F_s(p_n) )^2 - (p_n) $ is in $(p_n) \Asq(X;E) (p_n)$.
\end{itemize}
Hence Definition~\ref{defn:index definition} produces an index class in $K_0((p_n) \Asq(X;E) (p_n))$. Composing with the $K_0$-map induced by the inclusion $(p_n) \Asq(X;E) (p_n) \hookrightarrow \Asq(X;E)$, we get an element in $K_0( \Asq(X;E))$, denoted by $\Ind_{F,sq}[(p_n)]$. Analogous to \cite[Lemma~12.3.11]{willett2020higher}, we obtain the following:

\begin{prop}\label{prop:index}
	Through the process above together with a suspension argument, we get well-defined $K_\ast$-maps for $\ast=0,1:$
	\[
	\Ind_{F,sq} : K_\ast \big( \Csq(\H_X)\cap \prod_n\B(\H_n) \big) \rightarrow K_\ast( \Asq(X;E) ),
	\]
which are called the \emph{strongly quasi-local index maps}.
\end{prop}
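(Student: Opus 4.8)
The plan is to mimic the construction of the Roe-algebraic index map in \cite[Lemma 12.3.11]{willett2020higher}, replacing each appeal to finite propagation by the (strong) quasi-locality estimates already established in Lemmas \ref{lem:F multiplier}--\ref{lem:projection}. First I would handle the $\ast=0$ case. Given a projection $(p_n) \in \Csq(\H_X)\cap \prod_n\B(\H_n)$, Lemmas \ref{lem:F multiplier}, \ref{lem:multiplier}, \ref{lem:commutator} and \ref{lem:projection} together guarantee that $\big((p_n)F_s(p_n)\big)$ is an odd, self-adjoint element of the multiplier algebra of $(p_n)\Asq(X;E)(p_n)$ whose square differs from $(p_n)$ by an element of $(p_n)\Asq(X;E)(p_n)$; hence Definition \ref{defn:index definition} yields a class in $K_0\big((p_n)\Asq(X;E)(p_n)\big)$, and pushing forward along the inclusion gives $\Ind_{F,sq}[(p_n)] \in K_0(\Asq(X;E))$.

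Next I would verify that this assignment descends to a homomorphism on $K_0$. The two things to check are: (i) homotopy invariance — if $(p_n^t)$ is a norm-continuous path of projections in $\Csq(\H_X)\cap\prod_n\B(\H_n)$, then the corresponding path of index classes is constant; and (ii) additivity under orthogonal sums, $\Ind_{F,sq}[(p_n)\oplus(q_n)] = \Ind_{F,sq}[(p_n)] + \Ind_{F,sq}[(q_n)]$. For (i) one observes that $t\mapsto \big((p_n^t)F_s(p_n^t)\big)$ is a norm-continuous path of the relevant operators and that the index-class formula of Definition \ref{defn:index definition} depends norm-continuously on its inputs, so the class is locally constant in $t$; here one must be slightly careful that the ambient algebra can be taken to be $M_2(\Asq(X;E))^+$ (or a suitable matrix amplification) uniformly along the path, which is routine since all the Lemmas \ref{lem:F multiplier}--\ref{lem:projection} are stable under passing to matrix amplifications and their estimates are uniform. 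Additivity (ii) is immediate from the block-diagonal form of the index-class formula. Since $K_0$ of a $C^*$-algebra is generated by (formal differences of) projections in matrix amplifications and these two properties exhibit $[(p_n)]\mapsto \Ind_{F,sq}[(p_n)]$ as a well-defined monoid homomorphism on the Murray--von Neumann semigroup, it extends to a group homomorphism $K_0\big(\Csq(\H_X)\cap\prod_n\B(\H_n)\big)\to K_0(\Asq(X;E))$.

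For $\ast=1$ I would use the standard suspension device: apply the $\ast=0$ construction to the suspended algebras $S\big(\Csq(\H_X)\cap\prod_n\B(\H_n)\big)$ and $S\Asq(X;E)$, noting that the suspension of the operator $F$ still multiplies the suspended twisted algebra and still squares to $1$ modulo it (the estimates in Lemmas \ref{lem:F multiplier}--\ref{lem:projection} tensor trivially with $C_0(\RR)$), and then identify $K_0(S(-)) \cong K_1(-)$ on both sides. One then checks compatibility of the two constructions under the Bott periodicity isomorphisms, which is formal. Naturality of all the identifications gives the single map $\Ind_{F,sq}$ on $K_1$ as claimed.

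The main obstacle — and the reason this proposition is not entirely cosmetic relative to \cite[Lemma 12.3.11]{willett2020higher} — is confirming that the four input Lemmas (in particular Lemma \ref{lem:commutator}, whose proof used the delicate ``uniform subspace'' Proposition \ref{prop:subspace strong quasi-locality}) really are closed under the matrix amplifications and suspensions needed to run the $K$-theory machinery uniformly, with estimates that do not degenerate. Concretely, one needs: for $(S_n)\in M_k\big(\Csq(\H_X)\cap\prod_n\B(\H_n)\big)$ the commutator $[(S_n),F\otimes 1_k]$ still lies in $M_k(\Asq(X;E))$, and similarly for the projection statement of Lemma \ref{lem:projection}. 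This follows because a $k\times k$ matrix over $\Csq(\H_X)\cap\prod_n\B(\H_n)$ is itself a block-diagonal, locally compact, uniformly strongly quasi-local operator on $\H_X\otimes\CC^k$ (absorbing $\CC^k$ into the multiplicity space $\HH$), so all of Lemmas \ref{lem:F multiplier}--\ref{lem:projection} apply verbatim with the constants $\delta,R,R'$ unchanged; the one point requiring a line of care is that the $(\delta,R)$-variation constants in Proposition \ref{prop:subspace strong quasi-locality} are independent of $k$, which they are since that proposition only uses bounded geometry of $X$ and not the module. Once this stability is recorded, everything else is the routine diagram of \cite[Section 12.3]{willett2020higher}.
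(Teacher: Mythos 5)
Your proposal is correct and follows essentially the same route as the paper, which simply invokes the standard index construction of \cite[Definition 2.8.5, Lemma 12.3.11]{willett2020higher}: Lemmas \ref{lem:F multiplier}--\ref{lem:projection} supply the multiplier/ideal hypotheses, well-definedness on $K_0$ is the routine homotopy-invariance and additivity check over matrix amplifications (where, as you note, the commutator with $\Lambda(g)$ commutes with tensoring by $1_k$, so no constants degrade), and $K_1$ is handled by suspension.
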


Finally, we have the follwing result (comparing with Proposition \ref{prop:index map isom. for Roe}). The proof is almost identical to that for \cite[Proposition~12.3.13 and Proposition~12.6.3]{willett2020higher}, hence omitted.

\begin{prop}\label{prop:strong quasi-local index map isom.}
	Given $s\in [1,\infty)$, let $\iota^s:\Asq(X;E)\rightarrow \Csq(\H_{X,E})\cap \prod_n\B(\H_{n,E_n})$ be the evaluation map at $s$ . Then the composition
	\[
	K_* \big( \Csq(\H_X)\cap \prod_n\B(\H_n) \big) \stackrel{\Ind_{F,sq}}{\longrightarrow} K_*(\Asq(X;E)) \stackrel{\iota^s_*}{\longrightarrow} K_*\big( \Csq(\H_{X,E})\cap \prod_n\B(\H_{n,E_n}) \big)
	\]
	is an isomorphism.
\end{prop}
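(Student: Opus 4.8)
The plan is to mirror the structure of the proof of \cite[Proposition 12.3.13 and Proposition 12.6.3]{willett2020higher} for Roe algebras, replacing every invocation of a propagation condition by the corresponding (strong or uniform) quasi-locality condition established in Section \ref{sec:index map}. The overall strategy is an Eilenberg swindle combined with a rotation/homotopy argument: first I would observe that, by naturality of the index construction in Definition \ref{defn:index definition} and the suspension argument of Proposition \ref{prop:index}, it suffices to treat the $K_0$-case and to show that for a projection $(p_n) \in \Csq(\H_X) \cap \prod_n \B(\H_n)$ the class $\iota^s_* \circ \Ind_{F,sq}[(p_n)] \in K_0\big(\Csq(\H_{X,E}) \cap \prod_n \B(\H_{n,E_n})\big)$ equals the image of $[(p_n)]$ under the inclusion-induced ``Bott-type'' map. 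Here $\iota^s$ evaluates the function-of-$s$ at the fixed parameter $s$, so $\iota^s_* \circ \Ind_{F,sq}[(p_n)]$ is the index class of the single odd operator $(p_n) F_s (p_n)$ on $\bigoplus_n p_n(\H_{n,E_n})$, which by Proposition \ref{prop:Psi function}(1) is a small perturbation of the index class built from $\Psi(B_{n,s+2d_n,0})^V$.

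The key steps, in order, are as follows. (i) Reduce to $s=1$ via a homotopy in $s$: the family $s\mapsto (p_n)F_s(p_n)$ is (after the $\Psi$-replacement) a continuous path of operators satisfying the hypotheses of Definition \ref{defn:index definition} uniformly, using Proposition \ref{prop:Psi function}(6),(7),(8) together with Lemmas \ref{lem:F multiplier}--\ref{lem:projection}; hence all $\iota^s_*\circ \Ind_{F,sq}$ agree. (ii) Identify the map $\iota^1_* \circ \Ind_{F,sq}$ with the composite of the inclusion $\Csq(\H_X)\cap\prod_n\B(\H_n)\hookrightarrow \Csq(\H_{X,E})\cap\prod_n\B(\H_{n,E_n})$ (the $\L^2_{E_n}$ amplification) followed by multiplication by the Bott class, i.e.\ the class of the index of the unmodified Bott--Dirac operator $B = B_{1,0}$ on each $\L^2_{E_n}$; this is where the $V$-construction of Definition \ref{defn:V-construction} is unwound, exactly as in \cite[Section 12.3]{willett2020higher}, and where one uses that conjugation by $V_{f_n(x)}$ does not change $K$-theory. (iii) Invoke Bott periodicity: the index of $B_{n,1,0}$ on $\L^2_{E_n}$ is a generator, so cup product with it is an isomorphism $K_*(\K(\H_n)) \to K_*(\K(\H_n \otimes \L^2_{E_n}))$, and the uniformity over $n$ (the constants in Proposition \ref{prop:Psi function} are dimension-independent) upgrades this to an isomorphism on the level of $\prod_n$/$\bigoplus_n$ algebras; this last uniformity is precisely what Proposition \ref{prop:independent of modules} and the dimension-independence clause of Proposition \ref{prop:Psi function} are for.

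The main obstacle I expect is step (ii) together with the bookkeeping in step (iii): in the Roe-algebra case of \cite{willett2020higher} one freely uses that the operators involved have finite propagation in the $X_n$-direction and shrinking propagation in the $E_n$-direction, and one manipulates supports directly; here the $X_n$-direction control is only the (uniform) strong quasi-locality coming from Lemmas \ref{lem:multiplier}, \ref{lem:commutator} and \ref{lem:projection}, so every place where the original argument says ``this operator has propagation $\le R$, hence lies in the algebra'' must instead be justified by verifying conditions (1)--(3) of Definition \ref{defn:twisted quasi-local} — which, fortunately, is exactly what those three lemmas already do, so the argument goes through verbatim once one checks that no step secretly used \emph{finite} propagation in an essential (non-quasi-local) way. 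The homotopy-invariance in step (i) and the algebraic identities unwinding the index formula of Definition \ref{defn:index definition} are then purely formal and identical to the Roe case. Since these verifications are routine given the machinery of Section \ref{sec:index map}, and since the parallel statement and proof for Roe algebras is available in full detail, I would simply remark that ``the proof is almost identical to that for \cite[Proposition~12.3.13 and Proposition~12.6.3]{willett2020higher}, with the propagation conditions replaced by the (strong) quasi-locality conditions verified in Lemmas \ref{lem:F multiplier}--\ref{lem:projection}, hence omitted.''
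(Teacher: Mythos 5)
Your proposal is correct and takes essentially the same route as the paper, whose entire proof of this proposition is the remark that it is almost identical to Willett--Yu's Propositions 12.3.13 and 12.6.3, with the quasi-local membership and multiplier facts supplied by Lemmas \ref{lem:F multiplier}--\ref{lem:projection} --- exactly your plan. One minor caution: in step (ii) the map should not be presented as factoring through an algebra inclusion $\Csq(\H_X)\cap\prod_n\B(\H_n)\hookrightarrow \Csq(\H_{X,E})\cap\prod_n\B(\H_{n,E_n})$ by amplification with $\Id_{\L^2_{E_n}}$ (such amplified operators are not locally compact), but rather, as in Willett--Yu, identified directly with the stabilization map given by multiplying with the translated rank-one kernel projection of the Bott--Dirac operator.
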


\section{Isomorphisms between twisted algebras in $K$-theory}\label{sec:loc isom}

In this section, we study the $K$-theory of the twisted algebras $A(X;E)$ and $\Asq(X;E)$ defined in Section~\ref{ssec:Twisted Roe and strong quasi-local algebras}. The main result is the following:

\begin{prop}\label{prop:iso. of twisted algebras in $K$-theory}
	The inclusion map from $A(X;E)$ to $\Asq(X;E)$ induces an isomorphism in $K$-theory.
\end{prop}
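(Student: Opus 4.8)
The plan is to build a "twisted" analogue of the Mayer--Vietoris/cutting argument used for the untwisted algebras, reducing the comparison of $A(X;E)$ and $\Asq(X;E)$ to a localized statement where the Euclidean directions are controlled. The key point is that both $A(X;E)$ and $\Asq(X;E)$ carry a filtration by the parameter $R$ measuring $(X_n\times E_n)$-propagation (for $A$) or $(X_n\times E_n)$-quasi-locality (for $\Asq$), and by Proposition~\ref{prop:Psi function} the relevant operators have small tails off the $R$-balls $\chi^V_{0,R}$. First I would set up ideals $I_R(X;E)\subseteq A(X;E)$ and $I_R^{sq}(X;E)\subseteq \Asq(X;E)$ generated by elements $T$ with $\chi^V_{0,R}T=T\chi^V_{0,R}=T$ (up to $\varepsilon$), and show $A(X;E)=\varinjlim_R I_R$ and $\Asq(X;E)=\varinjlim_R I_R^{sq}$; since $K$-theory commutes with direct limits, it suffices to prove the inclusion $I_R\hookrightarrow I_R^{sq}$ is a $K$-theory isomorphism for each fixed $R$.

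Next I would localize over the Euclidean factor. Fixing $R$, cover each $E_n$ by a uniformly bounded-geometry net and decompose each $I_R$ (resp. $I_R^{sq}$) using a coarse Mayer--Vietoris argument over $E_n$ in the spirit of Section~\ref{sec:cMV} and \cite{HRY93}: write $E_n$ as an $\omega$-excisive union of two pieces, each itself a coarse disjoint union of uniformly bounded pieces, and get commuting six-term exact sequences relating $K_*(I_R)$, $K_*(I_R^{sq})$ to their restrictions. Iterating (a Milnor--Zorn/transfinite induction on the "scale" as in \cite[Section~12]{willett2020higher}), one reduces to the case where each $X_n$ is effectively a point mapped into a bounded region of $E_n$; there the operators $T_{n,s}$ act essentially on $\HH\otimes\L^2_{E_n,\mathrm{loc}}$ with finite Euclidean propagation, and condition (1) of Definition~\ref{defn:twisted quasi-local} (strong quasi-locality in the $X_n$-variable) becomes vacuous since $X_n$ is a point. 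Hence in the base case $I_R=I_R^{sq}$ on the nose, and the inclusion is trivially a $K$-theory isomorphism.

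Finally I would assemble these: the commuting ladders of Mayer--Vietoris sequences, combined with the base-case identity and the Five Lemma, propagate the isomorphism back up through the induction, and then the direct limit over $R$ gives $K_*(A(X;E))\cong K_*(\Asq(X;E))$ induced by the inclusion. The main obstacle I expect is the bookkeeping for the coarse Mayer--Vietoris decomposition \emph{over the Euclidean factor while simultaneously keeping track of the strong quasi-locality estimates in the $X_n$-direction}: one must check that cutting by $\chi_{D_n}$ for $D_n\subseteq E_n$ interacts well with the $\Lambda_{\H_{n,E_n}}(g_n)$-commutators (it does, because $\Lambda$ acts only on the $\HH$-tensor factor and hence commutes with $\chi_{D_n}$), and that the ideals $I_R^{sq}$ are genuinely closed two-sided ideals — this uses Lemma~\ref{lem:A and Aq}, Lemma~\ref{lem:multiplier} and the uniform $(X_n\times E_n)$-quasi-locality in Remark~\ref{rem:Asq explanation} in the same way Lemma~\ref{lem:strong M-V ideals} was proved. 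The other delicate point is ensuring the direct-limit decomposition $\Asq(X;E)=\varinjlim_R I_R^{sq}$ holds, which follows from Definition~\ref{defn:twisted quasi-local}(3) together with Lemma~\ref{lem:condition (4) recovery}.
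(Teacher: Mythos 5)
Your overall skeleton (exhaust the algebras by a scale parameter using condition (3) of Definitions \ref{defn:twisted Roe} and \ref{defn:twisted quasi-local}, decompose via a twisted Mayer--Vietoris, and reduce to a local situation where the Roe and strongly quasi-local conditions coincide) is the right flavor, and your base-case intuition is essentially the one the paper uses: over a single point of $X_n$ the two sets of conditions are equivalent, so the two twisted algebras literally agree. However, there is a genuine gap in the middle step. After fixing the scale, the region $\Nd_s(f_n(X_n))\subseteq E_n$ decomposes (via Lemma \ref{lem:decomposition}) into finitely many classes, but \emph{each class is an infinite disjoint union of uniformly separated balls} $\{\overline{B(f_n(x);s)}\}_{x\in X_{n,i}}$, one for each of the infinitely many points of $\bigsqcup_n X_{n,i}$. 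A two-piece $\omega$-excisive Mayer--Vietoris sequence, iterated any finite number of times, only ever handles finitely many pieces; ``iterating'' or a ``transfinite induction on the scale'' does not reduce an infinite separated union to its individual balls, because $K$-theory does not commute with infinite products without an additional argument. This is exactly where the paper needs the restricted-product (``cluster axiom'') statement, Lemma \ref{lem:cluster axiom}: the inclusion $\prod_j^{res}A_{sq,(F_j^{(n)})}(X;(G_j^{(n)}))\hookrightarrow A_{sq,(F_n)}(X;E)$ is a $K$-isomorphism, proved by quotienting out the ideal of paths $(T_{n,s})$ with $T_{n,s}\to 0$ as $s\to\infty$ (killed by an Eilenberg swindle) and exhibiting an explicit inverse $\gamma_Q$ given by compressing with the cutoffs $\chi_{G_j^{(n)}}$. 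Nothing in your outline supplies this step, and without it the reduction to the per-ball case fails.

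A second, related omission: Mayer--Vietoris over the Euclidean factor by itself never shrinks the $X_n$-direction, so ``each $X_n$ is effectively a point'' does not follow from the Euclidean cutting. In the paper this reduction is done \emph{after} the restricted-product step: condition (3) of Definitions \ref{defn:twisted Roe} and \ref{defn:twisted quasi-local} (uniform control of the $X_n\times E_n$-support) shows that each factor $A_{(F_j^{(n)})}(X;(G_j^{(n)}))$ is the direct limit over $m$ of its compressions to $B(x_j^{(n)};m)\subseteq X_n$, and then the inclusion $\{x_j^{(n)}\}\hookrightarrow B(x_j^{(n)};m)$ induces $*$-isomorphisms of the compressed algebras (a standard covering-isometry argument as in Proposition \ref{prop:coarse invariance}), after which the point-case identity of the Roe and strongly quasi-local conditions applies \emph{uniformly in $j$ and $n$} inside the restricted product. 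Your proposal needs both of these ingredients --- the cluster-axiom lemma with its vanishing-at-infinity quotient, and the limit-over-$m$ plus point reduction carried out uniformly across the infinitely many blocks --- to be a complete proof; as written, the passage from finitely many Mayer--Vietoris cuts to the per-ball base case is asserted rather than proved.
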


The proof follows the outline of that in \cite[Section 12.4]{willett2020higher}, and the main ingredient is to use appropriate Mayer-Vietoris arguments for twisted algebras (Proposition~\ref{prop:M_V for twisted algebra}). This allows us to chop the space into easily-handled pieces, on which we prove the required local isomorphisms (Proposition~\ref{prop:local isom. of inclusion in K-theory}).



By saying that $(F_n)_{n \in \NN}$ is a sequence of closed subsets in $(E_n)$, we mean that $F_n$ is a closed subset of $E_n$ for each $n$. Firstly we define the following subalgebras associated to $(F_n)$, which is inspired by \cite[Definition 6.3.5]{willett2020higher}.

\begin{defn}\label{defn:twisted ideal}
For a sequence of closed subsets $(F_n)$ in $(E_n)$, we define $\AA_{sq,(F_n)}(X;E)$ to be the set of elements $(T_{n,s}) \in \AAsq(X;E)$ satisfying the following: for each $n$ and $\varepsilon >0$ there exists $s_{n, \varepsilon }>0$ such that for $s\ge s_{n, \varepsilon }$ we have
	\[
	\supp _{E_n} (T_{n,s}) \subseteq \Nd _\varepsilon (F_n) \times \Nd _\varepsilon (F_n).
	\]
	Denote by $A_{sq,(F_n)}(X;E)$ the norm closure of $\AA_{sq,(F_n)}(X;E)$ in $\Asq(X;E)$. Similarly, we define $A_{(F_n)}(X;E) \subseteq A(X;E)$ in the case of twisted Roe algebra.
\end{defn}

It is easy to see that $A_{sq,(F_n)}(X;E)$ and $A_{(F_n)}(X;E)$ are closed two-side ideals in $\Asq(X;E)$ and $A(X;E)$ respectively. 
Moreover, we have the following: 

\begin{lem}
	Let $(F_n)$ and $(G_n)$ be two sequences of compact subsets in $(E_n)$. Then
	\[
	A_{sq,(F_n)}(X;E) \cap A_{sq,(G_n)}(X;E) = A_{sq,(F_n\cap G_n)}(X;E)
	\]
	and
	\[
	A_{sq,(F_n)}(X;E) + A_{sq,(G_n)}(X;E) = A_{sq,(F_n\cup G_n)}(X;E).
	\]
The same holds for twisted Roe algebras.
\end{lem}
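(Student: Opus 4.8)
The plan is to prove each identity by establishing the two inclusions separately; in each identity one inclusion is formal, and the other combines a cutting argument with a little point-set geometry. Throughout I will freely use two standard facts about $C^*$-algebras: the sum of two closed two-sided ideals is again closed (as in \cite[Section~3]{HRY93}), and $I\cap J=\overline{IJ}$ for closed two-sided ideals $I,J$ (mollify elements of $I\cap J$ by an approximate unit of $I\cap J$, which lies in both $I$ and $J$). The geometric input is: (a) $\Nd_\varepsilon(F_n\cup G_n)=\Nd_\varepsilon(F_n)\cup\Nd_\varepsilon(G_n)$ and $\Nd_\varepsilon(F_n\cap G_n)\subseteq\Nd_\varepsilon(F_n)\cap\Nd_\varepsilon(G_n)$ for all $\varepsilon>0$; and (b) for each fixed $n$ and each $\eta>0$ there is $\varepsilon>0$ with $\Nd_\varepsilon(F_n)\cap\Nd_\varepsilon(G_n)\subseteq\Nd_\eta(F_n\cap G_n)$. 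Fact (a) is immediate, and fact (b) follows by a contradiction argument: points $x_k$ witnessing its failure satisfy $d(x_k,F_n)\to0$, $d(x_k,G_n)\to0$ and $d(x_k,F_n\cap G_n)\ge\eta$, and a subsequence converges, by compactness of $F_n$, to a point of $F_n\cap G_n$. The per-$n$ compactness of $F_n$ and $G_n$ is used precisely here, and it matches the per-$n$ quantifier in Definition~\ref{defn:twisted ideal}.

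For the formal inclusions, (a) gives $\AA_{sq,(F_n\cap G_n)}(X;E)\subseteq\AA_{sq,(F_n)}(X;E)\cap\AA_{sq,(G_n)}(X;E)$ and $\AA_{sq,(F_n)}(X;E)+\AA_{sq,(G_n)}(X;E)\subseteq\AA_{sq,(F_n\cup G_n)}(X;E)$ at the level of the dense $*$-subalgebras, and taking closures yields $A_{sq,(F_n\cap G_n)}(X;E)\subseteq A_{sq,(F_n)}(X;E)\cap A_{sq,(G_n)}(X;E)$ and $A_{sq,(F_n)}(X;E)+A_{sq,(G_n)}(X;E)\subseteq A_{sq,(F_n\cup G_n)}(X;E)$ (for the latter each closed summand already sits inside the closed ideal $A_{sq,(F_n\cup G_n)}(X;E)$). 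For the reverse inclusion in the intersection formula I would show $\AA_{sq,(F_n)}(X;E)\cdot\AA_{sq,(G_n)}(X;E)\subseteq\AA_{sq,(F_n\cap G_n)}(X;E)$. The point is that if $(S_{n,s})$ lies in the first factor then, for $s$ large, $\supp_{E_n}(S_{n,s})$ is contained both in $\Nd_\varepsilon(F_n)\times\Nd_\varepsilon(F_n)$ and within distance $\varepsilon'$ of the diagonal --- the latter because Definition~\ref{defn:twisted quasi-local}(2) forces the $E_n$-propagation to be finite and to tend to $0$ --- and likewise for a factor associated to $G_n$; composing the two relations and using (b) to make $\varepsilon,\varepsilon'$ small shows $\supp_{E_n}(S_{n,s}S'_{n,s})\subseteq\Nd_\eta(F_n\cap G_n)\times\Nd_\eta(F_n\cap G_n)$ for $s$ large. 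Taking closures and invoking $I\cap J=\overline{IJ}$ then gives $A_{sq,(F_n)}(X;E)\cap A_{sq,(G_n)}(X;E)=\overline{A_{sq,(F_n)}(X;E)\,A_{sq,(G_n)}(X;E)}\subseteq A_{sq,(F_n\cap G_n)}(X;E)$.

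For the reverse inclusion in the sum formula I would prove the dense-level statement $\AA_{sq,(F_n\cup G_n)}(X;E)\subseteq\AA_{sq,(F_n)}(X;E)+\AA_{sq,(G_n)}(X;E)$ and then close up, using that the ideal sum is closed. Given $W=(W_{n,s})\in\AA_{sq,(F_n\cup G_n)}(X;E)$, choose for each $n$ a family of functions $\phi_{n,t}\colon E_n\to[0,1]$, continuous in $t$ for the sup-norm, equal to $1$ on $\Nd_{t/2}(F_n)$ and supported in $\Nd_t(F_n)$, and let $\Phi_{n,t}$ be multiplication by $\phi_{n,t}$ on $\L^2_{E_n}$. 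Put $W_1:=(\Phi_{n,t_n(s)}W_{n,s}\Phi_{n,t_n(s)})$ and $W_2:=W-W_1$, where, for each $n$, $t_n(s)\searrow0$ is chosen to decrease slowly enough that for large $s$ the operator $W_{n,s}$ has $E_n$-support in $\Nd_{t_n(s)/2}(F_n\cup G_n)^2$ and $E_n$-propagation below $t_n(s)/2$. Then: conjugating by $(\Phi_{n,t_n(s)})$ keeps one inside $\AAsq(X;E)$, since these multiplication operators on the $\L^2_{E_n}$-factor commute with every $\chi_{C_n}$ (for $C_n\subseteq X_n$) and with every $\Lambda_{\H_{n,E_n}}(g)$ and do not enlarge the $E_n$- or $(X_n\times E_n)$-support, so conditions (1)--(3) of Definition~\ref{defn:twisted quasi-local} persist with the same parameters; $\supp_{E_n}((W_1)_{n,s})\subseteq\Nd_{t_n(s)}(F_n)^2$, so $W_1\in\AA_{sq,(F_n)}(X;E)$; and for large $s$ one has $(W_2)_{n,s}=(1-\Phi_{n,t_n(s)})W_{n,s}+\Phi_{n,t_n(s)}W_{n,s}(1-\Phi_{n,t_n(s)})$, whose left $E_n$-support lies in $\Nd_{t_n(s)/2}(F_n\cup G_n)\setminus\Nd_{t_n(s)/2}(F_n)\subseteq\Nd_{t_n(s)/2}(G_n)$ and hence, by the propagation bound, whose right $E_n$-support lies in $\Nd_{t_n(s)}(G_n)$, so $W_2\in\AA_{sq,(G_n)}(X;E)$. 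The behaviour of $W_2$ for small $s$ is irrelevant since Definition~\ref{defn:twisted ideal} constrains only large $s$. The twisted Roe algebra statement is proved by the same argument, with uniformly finite $(X_n\times E_n)$-propagation replacing the $(X_n\times E_n)$-quasi-locality.

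I expect the main obstacle to be the bookkeeping in the sum formula: for each $n$ one must coordinate the decay of the cutoff scale $t_n(s)$ with the a priori $n$-dependent rates at which $\supp_{E_n}(W_{n,s})$ concentrates on $F_n\cup G_n$ and at which $\ppg_{E_n}(W_{n,s})$ shrinks, while keeping $s\mapsto(W_1)_{n,s}$ norm-continuous and ensuring $W_2$ genuinely lands in $\AA_{sq,(G_n)}(X;E)$ rather than merely close to it. By contrast the intersection formula is comparatively easy once one spots that condition (2) of Definition~\ref{defn:twisted quasi-local} --- finite, vanishing $E_n$-propagation --- is exactly what collapses the composed $E_n$-support onto a small neighbourhood of $F_n\cap G_n$.
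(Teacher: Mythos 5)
Your proposal is correct and follows essentially the same route as the paper: the intersection identity via the ideal-product fact $I\cap J=\overline{IJ}$ together with the compactness-based inclusion $\Nd_\varepsilon(F_n)\cap\Nd_\varepsilon(G_n)\subseteq\Nd_\eta(F_n\cap G_n)$ (with the $E_n$-propagation condition collapsing the composed supports), and the sum identity by cutting an element of $\AA_{sq,(F_n\cup G_n)}(X;E)$ with $s$-dependent multiplication operators concentrated near $F_n$ and closing up with the closedness of sums of ideals. The only differences are cosmetic: the paper interpolates characteristic functions $\chi_{\Nd_{1/k}(F_n)}$ linearly in $s$ where you use continuous bump functions $\phi_{n,t_n(s)}$, and it groups the three cross terms as $WT+(1-W)TW$ and $(1-W)T(1-W)$ rather than your $\Phi T\Phi$ versus $(1-\Phi)T+\Phi T(1-\Phi)$ (note that in the latter term it is the right support that is controlled directly and the left by propagation, a harmless slip).
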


\begin{proof}
We only prove the case of twisted strongly quasi-local algebras, while the Roe algebra case is similar.
The first equation
follows from a $C^*$-algebraic fact that two intersections of ideals coincides with their product together with a basic fact for metric space: For a compact metric space $K$, a closed cover $(C,D)$ of $K$ and $\varepsilon>0$, there exists $\delta>0$ such that $\Nd_\delta(C) \cap \Nd_\delta(D) \subseteq \Nd_\varepsilon(C\cap D)$.
	
For the second, 
we fix $(T_{n,s})\in \AA_{sq,(F_n\cup G_n)}(X;E)$. By definition, for each $n$ there is a strictly increasing sequence $(s_{n,k})_{k\in \NN}$ in $[1,\infty)$ tending to infinity such that for $s\ge s_{n,k}$ we have
	\[
	\supp_{E_n}(T_{n,s}) \subseteq \Nd_{\frac{1}{k+1}}(F_n\cup G_n) \times \Nd_{\frac{1}{k+1}}(F_n \cup G_n).
	\]
	For each $n$, we construct an operator $(W_{n,s})_s$ on $L^2((1,\infty]) \otimes \H_{n,E_n}$ as follows, where $W_{n,s}\in \B(\H_{n,E_n})$. We set:
	\[
	W_{n,s}=
	\begin{cases}
	~\chi_{\Nd_{1}(F_n)}, & \mbox{if~} 1\le s \le s_{n,1};  \\[0.3cm]
	~\frac{s_{n,k+1}-s}{s_{n,k+1} - s_{n,k}}\chi_{\Nd_{\frac{1}{k}}(F_n)} + \frac{s-s_{n,k}}{s_{n,k+1} - s_{n,k}}\chi_{\Nd_{\frac{1}{k+1}}(F_n)}, & \mbox{if~} s_{n,k} \leq s \leq s_{n,k+1}, k\in \NN.
	\end{cases}
	\] 
Then $(W_{n,s})$ is in the multiplier algebra of $\Asq(X;E)$. Now we consider:
	\[
	(T_{n,s}) = (W_{n,s}) (T_{n,s}) + (1-W_{n,s}) (T_{n,s}) (W_{n,s}) +(1-W_{n,s}) (T_{n,s}) (1-W_{n,s}) .
	\]
	It is clear that $(W_{n,s}) (T_{n,s})$ and $(1-W_{n,s}) (T_{n,s}) (W_{n,s})$ are in $A_{sq,(F_n)}(X;E)$. Also note that from the construction above, for each $n$ and $s\ge s_{n,k}$ we have:
	\[
	\supp_{E_n}((1-W_{n,s}) T_{n,s} (1-W_{n,s})) \subseteq \Nd_{\frac{1}{k+1}}( G_n) \times \Nd_{\frac{1}{k+1}}( G_n).
	\]
	Hence we obtain that $A_{sq,(F_n)}(X;E) + A_{sq,(G_n)}(X;E)$ is dense in $A_{sq,(F_n\cup G_n)}(X;E)$, which concludes the proof.
\end{proof}

Consequently, we obtain the following Mayer-Vietoris sequences for twisted algebras:
\begin{prop}\label{prop:M_V for twisted algebra}
	Let $(F_n)$ and $(G_n)$ be two sequences of compact subsets in $(E_n)$. Then we have the following six-term exact sequence:
	\begin{small}
		\[
		\begin{CD}
		K_0(A_{sq,(F_n\cap G_n)}(X;E)) @>>> K_0(A_{sq,(F_n)}(X;E))\oplus K_0(A_{sq,(G_n)}(X;E)) @>>> K_0(A_{sq,(F_n\cup G_n)}(X;E)) \\
		@AAA                               & &                    @VVV \\
		K_1(A_{sq,(F_n\cup G_n)}(X;E)) @<<< K_1(A_{sq,(F_n)}(X;E))\oplus K_1(A_{sq,(G_n)}(X;E)) @<<< K_1(A_{sq,(F_n\cap G_n)}(X;E)).
		\end{CD}
		\]
	\end{small}
The same holds in the case of twisted Roe algebra. Furthermore, we have the following commutative diagram:\\
	\begin{small}
		\centerline{
			\xymatrix{
				\cdots \ar[r] & K_*(A_{(F_n\cap G_n)}(X;E)) \ar[r]  \ar[d] &  K_*(A_{(F_n)}(X;E))\oplus K_*(A_{(G_n)}(X;E)) \ar[d]\ar[r] & K_*(A_{(F_n\cup G_n)}(X;E)) \ar[d]\ar[r]  & \cdots \\
				\cdots \ar[r] & K_*(A_{sq,(F_n\cap G_n)}(X;E)) \ar[r]  & K_*(A_{sq,(F_n)}(X;E))\oplus K_*(A_{sq,(G_n)}(X;E)) \ar[r] & K_*(A_{sq,(F_n\cup G_n)}(X;E)) \ar[r] & \cdots
		}}
	\end{small} 
where the vertical maps are induced by inclusions.
\end{prop}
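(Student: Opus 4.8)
The plan is to derive both six-term exact sequences from the lemma just proved together with the standard Mayer--Vietoris principle for $C^*$-algebras, and then to obtain the commutative ladder by naturality.

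First I would record the elementary fact that if $B$ is a $C^*$-algebra and $I,J$ are closed two-sided ideals of $B$ with $I+J=B$, then
$$0 \longrightarrow I\cap J \xrightarrow{\ x\mapsto (x,-x)\ } I\oplus J \xrightarrow{\ (a,b)\mapsto a+b\ } B \longrightarrow 0$$
is a short exact sequence of $C^*$-algebras: surjectivity of the second map is exactly the hypothesis $I+J=B$, its kernel is $\{(a,-a): a\in I\cap J\}$, which is the image of the (manifestly injective) first map. Now I would apply this with $B:=A_{sq,(F_n\cup G_n)}(X;E)$, $I:=A_{sq,(F_n)}(X;E)$ and $J:=A_{sq,(G_n)}(X;E)$. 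These are closed two-sided ideals of $\Asq(X;E)$, hence of the $C^*$-subalgebra $B$ as well, and the preceding lemma gives $I+J=B$ together with $I\cap J=A_{sq,(F_n\cap G_n)}(X;E)$. The six-term exact sequence in $K$-theory associated to this short exact sequence, combined with the identification $K_*(I\oplus J)\cong K_*(I)\oplus K_*(J)$, is precisely the claimed Mayer--Vietoris sequence for the twisted strongly quasi-local algebras. Repeating the argument verbatim with $A_{(\,\cdot\,)}(X;E)$ in place of $A_{sq,(\,\cdot\,)}(X;E)$ produces the sequence for twisted Roe algebras.

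For the commutative diagram, I would observe that the inclusion $A(X;E)\hookrightarrow \Asq(X;E)$ restricts to inclusions $A_{(F_n)}(X;E)\hookrightarrow A_{sq,(F_n)}(X;E)$, $A_{(G_n)}(X;E)\hookrightarrow A_{sq,(G_n)}(X;E)$, and likewise for $F_n\cap G_n$ and $F_n\cup G_n$, and that these maps intertwine the structure maps $(a,b)\mapsto a+b$ and $x\mapsto (x,-x)$ of the two short exact sequences above. Thus one gets a morphism between these short exact sequences, and functoriality of the six-term exact sequence in $K$-theory yields the commutative ladder with vertical arrows induced by the inclusions.

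The whole argument is formal given the lemma; the only point deserving a word of justification is that $A_{sq,(F_n)}(X;E)$ (and its Roe counterpart) really is an ideal in $A_{sq,(F_n\cup G_n)}(X;E)$, not merely in $\Asq(X;E)$ — but this is automatic, since $A_{sq,(F_n\cup G_n)}(X;E)$ is a $C^*$-subalgebra of $\Asq(X;E)$ containing it. I do not anticipate any genuine obstacle here.
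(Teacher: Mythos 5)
Your overall route --- feed the Lemma's identities $I\cap J=A_{sq,(F_n\cap G_n)}(X;E)$ and $I+J=A_{sq,(F_n\cup G_n)}(X;E)$ into the standard $C^*$-algebraic Mayer--Vietoris principle, then invoke naturality for the ladder --- is the same as the paper's, which simply applies the Mayer--Vietoris sequence of \cite[Section 3]{HRY93} to these ideals. However, your justification of that principle contains a genuine error: the sequence $0\to I\cap J\to I\oplus J\to I+J\to 0$ you write down is exact only as a sequence of Banach spaces. The map $(a,b)\mapsto a+b$ is linear and $*$-preserving but not multiplicative: $(a,b)(a',b')=(aa',bb')$ is sent to $aa'+bb'$, whereas the product of the images is $aa'+ab'+ba'+bb'$, and the cross terms $ab'+ba'$ lie in $I\cap J$ and do not vanish in general (here $I\cap J=A_{sq,(F_n\cap G_n)}(X;E)$ is typically nonzero). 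So this is not a short exact sequence of $C^*$-algebras, and the six-term exact sequence in $K$-theory cannot be applied to it; the same failure already occurs in the commutative prototype $C_0(U\cap V)\to C_0(U)\oplus C_0(V)\to C_0(U\cup V)$.

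The gap is repairable without changing your strategy: for a $C^*$-algebra $B$ with closed two-sided ideals $I,J$ satisfying $I+J=B$, the Mayer--Vietoris sequence is a standard but not completely formal fact. One proof splices the six-term sequences of the genuine extensions $0\to I\to B\to B/I\to 0$ and $0\to I\cap J\to J\to J/(I\cap J)\cong B/I\to 0$ (the isomorphism using $I+J=B$); alternatively one quotes \cite[Section 3]{HRY93}, as the paper does. That construction is natural with respect to maps preserving the pair of ideals, so the inclusions $A_{(F_n)}(X;E)\hookrightarrow A_{sq,(F_n)}(X;E)$, $A_{(G_n)}(X;E)\hookrightarrow A_{sq,(G_n)}(X;E)$, etc.\ (which, as you correctly note, are compatible with intersections and sums by the two versions of the Lemma) induce the commutative ladder. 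With the principle quoted or proved correctly, the rest of your argument --- including the remark that $I$ and $J$ are ideals in $B=A_{sq,(F_n\cup G_n)}(X;E)$ and the verbatim repetition for the twisted Roe algebras --- is fine and coincides with the paper's proof.
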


Proposition \ref{prop:M_V for twisted algebra} allows us to chop the space into small pieces, on which we have the following ``local isomorphism'' result. Recall that a family $\{Y_i\}_{i\in I}$ of subspaces in a metric space $Y$ is \emph{mutually $R$-separated} for some $R>0$ if $d(Y_i,Y_j) >R$ for $i \neq j$.

\begin{prop}\label{prop:local isom. of inclusion in K-theory}
Let $(F_n)$ be a sequence of closed subsets in $(E_n)$ such that $F_n = \bigsqcup_{j=1}^{\infty}F_j^{(n)}$ for a mutually $3$-separated family $\{F_j^{(n)}\}_j$ and there exist $R>0$ and $x^{(n)}_j\in X_n$ such that $F^{(n)}_j \subseteq B(f(x^{(n)}_j);R)$. Then the inclusion map from $A_{(F_n)}(X;E)$ to $A_{sq,(F_n)}(X;E)$ induces an isomorphism in $K$-theory.		
\end{prop}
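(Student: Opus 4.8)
The plan is to exploit the geometric structure of $(F_n)$: since each $F_n$ decomposes into the mutually $3$-separated pieces $\{F_j^{(n)}\}_j$, each of uniformly bounded diameter, the corresponding ideals $A_{(F_n)}(X;E)$ and $A_{sq,(F_n)}(X;E)$ should both split as (restricted) products/direct sums of ``small'' building blocks indexed by $(n,j)$, and on each such block the Roe algebra and the strongly quasi-local algebra coincide because the underlying space is essentially a point (bounded diameter). Concretely, first I would show that, because of the $3$-separation and the $E_n$-propagation control built into Definition~\ref{defn:twisted ideal} (for $s$ large the $E_n$-support sits in $\Nd_\varepsilon(F_n)$ with $\varepsilon<1$), any element of $\AA_{sq,(F_n)}(X;E)$ is, up to an arbitrarily small error and a term vanishing in $K$-theory, block-diagonal with respect to the decomposition $\L^2_{E_n}\cong\bigoplus_j \chi_{\Nd_1(F_j^{(n)})}\L^2_{E_n}$ (tensored with $\H_n$). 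This is the analogue of Lemma~\ref{lem:cdu to separated case I}/\ref{lem:cdu to separated case II} and of \cite[Lemma~6.3.6 or similar]{willett2020higher} in the twisted setting; the uniform $(X_n\times E_n)$-quasi-locality in Definition~\ref{defn:twisted quasi-local}(3) is exactly what lets us discard the off-diagonal corners.

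Next I would identify the block-diagonal parts. For fixed $j$, the operators supported near $F_j^{(n)}\subseteq B(f(x_j^{(n)});R)$, with the $X_n$-coordinate forced (via condition (3)) to lie in a uniformly bounded neighbourhood of $x_j^{(n)}$, live in a $C^*$-algebra Morita equivalent to $C_b([1,\infty),\K)$ glued along a sequence; because the relevant $X_n$-piece has diameter $\le$ some fixed constant, \emph{every} bounded operator on it is automatically of finite ($X_n$-)propagation, so strong quasi-locality imposes no extra condition and the twisted Roe and twisted strongly quasi-local building blocks literally agree. Thus the inclusion $\AA_{(F_n)}(X;E)\hookrightarrow\AA_{sq,(F_n)}(X;E)$ becomes, at the level of these block algebras, the identity; passing to norm closures and $K$-theory gives the isomorphism. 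I would phrase the decomposition using a (restricted) product decomposition of the ideal, e.g.
\[
A_{sq,(F_n)}(X;E)\;\cong\;A_{(F_n)}(X;E)\;\cong\;\prod^{c}_{n,j}\Big(C_b([1,\infty),\K(\H_{n,j}))\Big),
\]
where $\H_{n,j}$ is the corresponding compressed module and $\prod^c$ denotes the appropriate sub-$C^*$-algebra cut out by the conditions in Definition~\ref{defn:twisted quasi-local}; both sides receive the same description, so the inclusion induces an isomorphism on $K_\ast$.

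The main obstacle, as in the untwisted Mayer--Vietoris lemmas, is the \emph{uniformity} of the block-diagonal approximation: I must produce, for a single $\varepsilon$, a \emph{single} cutoff working simultaneously for all $n$ and all $s$ in $[1,\infty)$, using only the ``for each $n$'' statement in Definition~\ref{defn:twisted ideal} together with the uniform quasi-locality hypotheses. This is where the delicate analysis of Section~\ref{ssec:Strong quasi-locality on subspaces}—in particular the uniform restriction Proposition~\ref{prop:subspace strong quasi-locality}—will be needed, to guarantee that compressing a uniformly strongly quasi-local family to a subspace stays uniformly strongly quasi-local with parameters independent of $n$; without it one only gets a blockwise statement and the closures need not match. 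Once uniformity is in hand, the rest is the standard diagram-chase / five-lemma bookkeeping and I would simply remark that the $K$-theory of a restricted product of copies of $C_b([1,\infty),\K)$ is computed in the usual way and is identical on the two sides.
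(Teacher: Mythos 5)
Your overall skeleton (cut the twisted ideal into blocks indexed by the pieces $F_j^{(n)}$, observe that on uniformly bounded blocks the Roe-type and strongly quasi-local conditions coincide, then reassemble) is the same as the paper's, but the step you yourself identify as the crux is handled incorrectly, and this is a genuine gap. You propose to obtain a block-diagonal approximation with a \emph{single} cutoff valid for all $n$ and all $s\in[1,\infty)$, and to get the required uniformity from Proposition \ref{prop:subspace strong quasi-locality}. That cannot work: the defining condition of $A_{sq,(F_n)}(X;E)$ (Definition \ref{defn:twisted ideal}) is only asymptotic in $s$ \emph{for each fixed $n$} -- for $s$ below the threshold $s_{n,\varepsilon}$ the operator $T_{n,s}$ need not be supported anywhere near $F_n$, and no uniformity in $n$ is built into the definition. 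So no single cutoff makes elements of the ideal approximately block-diagonal uniformly in $(n,s)$, and Proposition \ref{prop:subspace strong quasi-locality} is irrelevant here: it controls restriction of strong quasi-locality in the $X$-variable, not the $E_n$-support/$s\to\infty$ behaviour. The paper's mechanism is different and is exactly what removes the need for uniformity: one passes to the quotient by the ideal $A_{sq,(F_n),0}(X;E)$ of paths with $\lim_{s\to\infty}T_{n,s}=0$ for each $n$, which has trivial $K$-theory by an Eilenberg swindle, and shows (Lemma \ref{lem:cluster axiom}) that in this quotient the compression $(T_{n,s})\mapsto \prod_j(\chi_{G_j^{(n)}}T_{n,s}\chi_{G_j^{(n)}})$, $G_j^{(n)}=\Nd_1(F_j^{(n)})$, is a two-sided inverse to the inclusion of the restricted product; the $3$-separation together with conditions (2) and the support condition of Definition \ref{defn:twisted ideal} make the off-diagonal error vanish as $s\to\infty$ for each $n$ separately, which is all that is needed. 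Your phrase ``up to \ldots a term vanishing in $K$-theory'' gestures at this, but the actual device is absent from your argument, and the substitute you offer would fail.

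A second, related problem is your claimed chain of literal $C^*$-isomorphisms $A_{sq,(F_n)}(X;E)\cong A_{(F_n)}(X;E)\cong\prod^{c}_{n,j}C_b([1,\infty),\K(\H_{n,j}))$. This is stronger than the proposition and is not justified: condition (3) of Definitions \ref{defn:twisted Roe} and \ref{defn:twisted quasi-local} only forces the $X_n$-support to lie near the points $x_j^{(n)}$ \emph{approximately}, so the identification with block algebras supported on balls $B(x_j^{(n)};m)$ is only available after expressing the restricted products as direct limits over $m$, and the comparison with the point algebras $\{x_j^{(n)}\}$ uses a covering-isometry ($\Ad_V$) argument; only at the level of the single point do the twisted Roe and twisted strongly quasi-local conditions become literally equivalent (conditions (1) and (3) coincide there), and only the induced $K$-theory maps are controlled along the way. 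Asserting the algebra-level isomorphism up front would, if true, make the whole $K$-theoretic machinery unnecessary, and nothing in your sketch supports it. With the quotient/Eilenberg-swindle lemma and the direct-limit-plus-compression reductions supplied, your outline does become the paper's proof.
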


Before we prove Proposition \ref{prop:local isom. of inclusion in K-theory}, let us use it to finish the proof of Proposition \ref{prop:iso. of twisted algebras in $K$-theory}. To achieve that, we need an extra lemma from \cite[Lemma 12.4.5]{willett2020higher}:
\begin{lem}\label{lem:decomposition}
	For any $s>0$, there exist $M \in \NN$ and decompositions
	\[
	X_n=X_{n,1}\sqcup X_{n,2}\sqcup \cdots \sqcup X_{n,M}, \quad \mbox{for all } n\in \NN,
	\]
such that  the family $\big\{\overline{B(f_n(x);s)}\big\}_{x\in X_{n,i}}$ is mutually $3$-separated for each $n\in\NN$ and $i=1,2,\ldots,M$,.
\end{lem}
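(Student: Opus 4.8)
The plan is to reduce this to a covering-multiplicity (or chromatic-number) argument using the uniformly bounded geometry of $\{X_n\}$ together with the fact that $\{f_n\}$ is a uniformly coarse embedding. First I would fix $s>0$ and set $r:=\rho_-^{-1}(2s+3)$, that is, I would choose $r>0$ large enough that $\|f_n(x)-f_n(y)\|_{E_n}\le 2s+3$ forces $d_{X_n}(x,y)\le r$; this is possible because $\rho_-$ is proper and non-decreasing and $\rho_-(d_{X_n}(x,y))\le\|f_n(x)-f_n(y)\|_{E_n}$ for all $n$ and all $x,y\in X_n$. The point of this choice is that if $d_{X_n}(x,y)>r$ then the closed balls $\overline{B(f_n(x);s)}$ and $\overline{B(f_n(y);s)}$ are more than $3$-separated in $E_n$. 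So it suffices to partition each $X_n$ into $M$ pieces $X_{n,i}$, with $M$ independent of $n$, so that each piece is $r$-separated (any two distinct points in a piece are at $X_n$-distance $>r$).

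Next I would produce such a partition by a greedy colouring of the graph $\mathcal{G}_n$ on vertex set $X_n$ with an edge between $x\neq y$ whenever $d_{X_n}(x,y)\le r$. By uniformly bounded geometry of $\{X_n\}$ (equivalently, bounded geometry of the coarse disjoint union) there is $N=N(r)\in\NN$ with $|B(x;r)|\le N$ for all $n$ and all $x\in X_n$; hence every vertex of $\mathcal{G}_n$ has degree at most $N-1$, so $\mathcal{G}_n$ is $N$-colourable by the usual greedy argument. Taking $M:=N$ and letting $X_{n,i}$ be the $i$-th colour class gives, for every $n$ and every $i$, a subset in which any two distinct points are non-adjacent in $\mathcal{G}_n$, i.e.\ at distance $>r$. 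By the choice of $r$ in the first paragraph, $\big\{\overline{B(f_n(x);s)}\big\}_{x\in X_{n,i}}$ is then mutually $3$-separated, and $M$ does not depend on $n$. This is exactly the assertion of the lemma.

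I do not expect any serious obstacle here: the only inputs are the definition of a uniformly coarse embedding (Definition \ref{defn:basic notions uniformly coarse embedding}), which supplies the lower control function $\rho_-$ uniformly in $n$, and uniformly bounded geometry (Definition \ref{defn:basic notions coarse disjoint union}), which supplies the uniform bound $N(r)$ on ball cardinalities; the colouring step is elementary. The one point to be careful about is to invoke the \emph{uniform} versions of both hypotheses, so that the resulting $M$ is genuinely independent of $n$ — if one only had coarse embeddings $f_n$ with $n$-dependent control, the argument would break down. Since the lemma is quoted verbatim from \cite[Lemma 12.4.5]{willett2020higher}, one may alternatively just cite that reference, but the short proof above is self-contained given the definitions already in the paper.
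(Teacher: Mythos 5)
Your argument is correct. The paper itself gives no proof of this lemma---it simply quotes \cite[Lemma 12.4.5]{willett2020higher}---and your self-contained argument is essentially the standard one behind that reference: the uniform lower control $\rho_-$ of the uniformly coarse embedding converts $3$-separation of the closed $s$-balls in $E_n$ into an $r$-separation requirement in $X_n$ (if $d_{X_n}(x,y)>r$ then $\|f_n(x)-f_n(y)\|_{E_n}>2s+3$, hence the two balls are more than $3$ apart), and uniformly bounded geometry gives a degree bound, uniform in $n$, for the graph on $X_n$ with edges where $d_{X_n}(x,y)\le r$, so a greedy colouring with $M$ colours (independent of $n$, since both $\rho_-$ and the ball-cardinality bound are uniform in $n$) yields the desired decomposition; note each $X_n$ is finite here, so the greedy step needs no care about infinite graphs. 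The only cosmetic point is that the paper's bounded geometry condition bounds the cardinality of \emph{open} balls $B(x;r)$, whereas your adjacency relation uses $d_{X_n}(x,y)\le r$; invoke the bound for $B(x;r+1)$ (or define adjacency by strict inequality) to bound the degree---either fix is immediate and does not affect the argument.
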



\begin{proof}[Proof of Proposition \ref{prop:iso. of twisted algebras in $K$-theory}]
Given $s>0$, let $M\in \NN$ and $\{X_{n,i}\}_{n\in\NN,1\le i\le M}$ be obtained by Lemma~\ref{lem:decomposition}. Setting $W^s_n:=\Nd_s(f_n(X_n))$ and $W_{n,i}^s:=\bigsqcup_{x\in X_{n,i}}\overline{B(f_n(x);s)}$, we have $W^s_n=\bigcup_{i=1}^M W_{n,i}^s$. For each $i$ applying Proposition~\ref{prop:local isom. of inclusion in K-theory} to the sequence of subsets $(W_{n,i}^s)_n$, we obtain that the inclusion map
	\[
	A_{(W_{n,i}^s)}(X;E) \rightarrow A_{sq,(W_{n,i}^s)}(X;E)
	\]
	induces an isomorphism in $K$-theory. Applying the Mayer-Vietoris sequence from Proposition~\ref{prop:M_V for twisted algebra} $(M-1)$-times (and Proposition~\ref{prop:local isom. of inclusion in K-theory} again to deal with the intersection) together with the Five Lemma, we obtain that the inclusion map
	\[
	A_{(W^s_n)}(X;E) \rightarrow A_{sq,(W^s_n)}(X;E)
	\]
	induces an isomorphism in $K$-theory. Finally, note that condition $(3)$ in Definition~\ref{defn:twisted Roe} and condition $(3)$ in Definition~\ref{defn:twisted quasi-local} imply that
	\[
	A(X;E)=\lim_{s\rightarrow \infty}A_{(W^s_n)}(X;E) \quad \mbox{and} \quad A_{sq}(X;E)=\lim_{s\rightarrow \infty}A_{sq,(W^s_n)}(X;E).
	\]
Hence we conclude the proof.
\end{proof}

The rest of this section is devote to the proof of Proposition \ref{prop:local isom. of inclusion in K-theory}. First let us introduce some more notation:

Let $(F_n)$ and $(G_n)$ be sequences of closed subsets in $(E_n)$. We define:
	\[
	\Asq( X;(G_n) ):=(1_{\H_n} \otimes \chi_{G_n})_n \cdot  \Asq(X;E) \cdot (1_{\H_n} \otimes \chi_{G_n})_n
	\]
	and 
	\[
	A_{ sq,(F_n) }( X;(G_n) ):= \Asq( X;(G_n) ) \cap A_{ sq,(F_n) }( X;E ).
	\]
Also define $A( X;(G_n) )$ and $A_{ (F_n) }( X;(G_n) )$ in a similar way. Moreover, given a sequence of subspaces $Z_n \subseteq X_n~ (n\in \NN)$ we define:
	\[
	\Asq( (Z_n) ; (G_n) ):=\big(\chi_{Z_n} \otimes \Id_{\L^2_{E_n}}\big)_n \cdot \Asq( X;(G_n) ) \cdot \big(\chi_{Z_n} \otimes \Id_{\L^2_{E_n}}\big)_n
	\]
	and
	\[
	A_{sq,(F_n) }( (Z_n) ; (G_n) ):=\big(\chi_{Z_n} \otimes \Id_{\L^2_{E_n}}\big)_n \cdot  A_{sq,(F_n) }( X;(G_n) ) \cdot \big(\chi_{Z_n} \otimes \Id_{\L^2_{E_n}}\big)_n.
	\]
Also define $A( (Z_n);(G_n) )$ and $A_{ (F_n) }( (Z_n);(G_n) )$ in a similar way.

Now we move back to the setting of Proposition \ref{prop:local isom. of inclusion in K-theory}. Let $(F_n)$ be a sequence of closed subsets in $(E_n)$ such that $F_n = \bigsqcup_{j=1}^{\infty}F_j^{(n)}$ for a mutually $3$-separated family $\{F_j^{(n)}\}_j$. Taking $G_j^{(n)}=\Nd_{1}(F_j^{(n)})$ for each $j$ and $n$, we define the ``restricted product'':
	\[
	\prod_{j}^{res}A_{sq,(F_j^{(n)}) }( X;(G_j^{(n)}) ) := \big(\prod_{j}A_{sq,(F_j^{(n)}) }( X;(G_j^{(n)}) ) \big) \cap A_{sq,(F_n)}(X;E).
	\]
Similarly, we define $\prod_{j}^{res}A_{ (F_j^{(n)}) }( X;(G_j^{(n)}) )$ in the case of twisted Roe algebra. 

The following lemma is a key step in the proof of Proposition \ref{prop:local isom. of inclusion in K-theory}:

\begin{lem}\label{lem:cluster axiom}
Using the same notation as above, the inclusion
\[
i: \prod_{j}^{res}A_{sq,(F_j^{(n)}) }( X;(G_j^{(n)}) ) \hookrightarrow A_{sq,(F_n)}(X;E)
\]
induces an isomorphism in $K$-theory. The same holds for the twisted Roe algebra case.
\end{lem}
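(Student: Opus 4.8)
The plan is to exhibit the inclusion $i$ as an inductive-limit of partial inclusions and to reduce to a single ``cluster'' at a time. Concretely, for each finite $J \subseteq \NN$ let $F_J^{(n)} := \bigsqcup_{j \in J} F_j^{(n)}$, so that $(F_J^{(n)})_n$ is again a sequence of closed subsets with each $F_J^{(n)}$ a finite union of mutually $3$-separated pieces sitting inside balls. Since the pieces $\{F_j^{(n)}\}_j$ are mutually $3$-separated, the ideals $A_{sq,(F_j^{(n)})}(X;(G_j^{(n)}))$ (with $G_j^{(n)} = \Nd_1(F_j^{(n)})$, so the $G_j^{(n)}$ are still mutually $1$-separated and in particular pairwise disjoint) mutually commute and have pairwise zero products inside $A_{sq,(F_n)}(X;E)$; hence the finite sum $\sum_{j\in J} A_{sq,(F_j^{(n)})}(X;(G_j^{(n)}))$ is a direct sum of $C^*$-algebras and coincides with $A_{sq,(F_J^{(n)})}(X;E)$ — this is essentially the previous lemma applied $|J|-1$ times, using that $F_i^{(n)} \cap F_j^{(n)} = \varnothing$ and that the relevant neighbourhoods separate. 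Taking the limit over finite $J$ (using continuity of $K$-theory under inductive limits and the fact that both $\prod_j^{res} A_{sq,(F_j^{(n)})}(X;(G_j^{(n)}))$ and $A_{sq,(F_n)}(X;E)$ are the closures of the increasing union of these finite pieces — the former by the definition of the restricted product, the latter because any element of $\AAsq(X;E)$ supported near $F_n$ can be cut down to finitely many clusters up to small error using a bounded-propagation-type argument in the $E_n$-direction) reduces the claim to showing that $i$ restricted to each finite stage is an isomorphism in $K$-theory. But at each finite stage both sides are literally equal, so there is nothing to prove once the decomposition into a direct sum is established.

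First I would make precise the ``restricted product equals the limit of finite sums'' statement on both sides. For $\prod_j^{res}$ this is built into Definition~\ref{defn:twisted ideal} and the $E_n$-support condition: an element $(T_{n,s})$ of the restricted product has, for each $n$, its $E_n$-support eventually inside $\Nd_\varepsilon(F_n)$, and since the clusters are $3$-separated the contributions from the various clusters are genuinely orthogonal block pieces $\chi_{\Nd_1(F_j^{(n)})} T_{n,s} \chi_{\Nd_1(F_j^{(n)})}$; the ``restricted'' condition forces that only finitely many of these pieces have norm exceeding any given $\varepsilon$, so $(T_{n,s})$ is norm-approximated by elements supported on finitely many clusters. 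For $A_{sq,(F_n)}(X;E)$ I would argue that for $(T_{n,s}) \in \AAsq(X;E)$ with $E_n$-support eventually near $F_n$, the operators $\chi_{\bigcup_{j\in J}\Nd_1(F_j^{(n)})}\, T_{n,s}\, \chi_{\bigcup_{j\in J}\Nd_1(F_j^{(n)})}$ converge in norm to $T_{n,s}$ as $J \uparrow \NN$, uniformly in $n$ and $s$ — this uses condition (3) of Definition~\ref{defn:twisted quasi-local} (uniform $(X_n\times E_n)$-quasi-locality) together with the finiteness of the number of clusters meeting any bounded region, which follows from the hypothesis $F_j^{(n)} \subseteq B(f(x_j^{(n)});R)$ and uniformly bounded geometry. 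Once these two approximation statements are in hand, both algebras are inductive limits over finite $J$ of the same algebras $A_{sq,(F_J^{(n)})}(X;E) \cong \bigoplus_{j\in J} A_{sq,(F_j^{(n)})}(X;(G_j^{(n)}))$, and $i$ is the limit of identity maps; continuity of $K_*$ finishes it.

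The main obstacle I expect is the second approximation statement, i.e. verifying that cutting an element of $A_{sq,(F_n)}(X;E)$ down to finitely many clusters is a norm-small perturbation \emph{uniformly in $s$ and $n$}. The subtlety is that $A_{sq,(F_n)}(X;E)$ is only a norm-closure, so a general element need not have its $E_n$-support literally inside $\Nd_\varepsilon(F_n)$ for $s$ large; one must first approximate by an element of the dense $*$-subalgebra $\AA_{sq,(F_n)}(X;E)$, and then the family of cutoffs $\{\chi_{\bigcup_{j\in J}\Nd_1(F_j^{(n)})}\}$ as $J$ grows must be shown to act as an approximate unit \emph{on that element}, with the convergence rate controlled independently of $n$. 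The uniform $(X_n\times E_n)$-quasi-locality in Definition~\ref{defn:twisted quasi-local}(3) is exactly tailored to give this: given $\varepsilon$ it produces an $R'$ such that matrix entries of $T_{n,s}$ between $X_n$-points and $E_n$-points at $f_n$-distance $\ge R'$ are uniformly $\varepsilon$-small, and since only finitely many clusters $F_j^{(n)}$ lie within $f_n$-distance $R'$ of any fixed cluster (bounded geometry plus the ball hypothesis), taking $J$ to contain those clusters yields the uniform estimate. I would also need to check that the cutoff projections $(\chi_{\bigcup_{j\in J}\Nd_1(F_j^{(n)})})_n$ are multipliers of $A_{sq,(F_n)}(X;E)$, which is immediate since they have zero $E_n$-propagation. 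With these pieces assembled the rest is bookkeeping: commuting ideals, direct-sum decompositions, and continuity of $K$-theory.
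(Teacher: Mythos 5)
Your reduction rests on two claims that both fail. First, the finite\--stage identification is not correct: $\bigoplus_{j\in J}A_{sq,(F_j^{(n)})}(X;(G_j^{(n)}))$ consists of operators that are, for \emph{every} $s$, literally supported in $\bigcup_{j\in J}G_j^{(n)}$ in the $E_n$-direction (this is built into the definition $\Asq(X;(G_n))=(1\otimes\chi_{G_n})\Asq(X;E)(1\otimes\chi_{G_n})$), whereas an element of $A_{sq,(F_J^{(n)})}(X;E)$ is only required to be supported near $F_J^{(n)}$ \emph{asymptotically} as $s\to\infty$: for example an element which is nonzero only for $s\in[1,2]$ and supported near $f_n(X_n)$ but away from $\bigcup_{j\in J}G_j^{(n)}$ lies in the latter and has distance equal to its norm from the former. (The unlabelled lemma preceding Proposition~\ref{prop:M_V for twisted algebra} decomposes $A_{sq,(F_J^{(n)})}(X;E)$ as a sum of the ideals $A_{sq,(F_j^{(n)})}(X;E)$, which are \emph{not} the $G_j$-localized algebras and whose pairwise intersections are nonzero, so it does not give your direct sum.) Hence ``at each finite stage both sides are literally equal'' is false, and closing that gap already requires the same kind of argument as the full lemma. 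Second, and fatally, neither algebra is the inductive limit over finite $J$ of the finite-cluster subalgebras. The index $j$ does not measure distance to $f_n(X_n)$; in the situation the lemma is applied to the number of nonempty clusters per $n$ is unbounded, so one can choose nonempty clusters $F_{j_n}^{(n)}$ with $j_n\to\infty$ and set $T_{n,s}:=p_{x_n}\otimes q\otimes r_{n,s}$ on $\H_{n,E_n}=\ell^2(X_n)\otimes\HH\otimes\L^2_{E_n}$, where $p_{x_n},q$ are rank-one projections, $x_n=x^{(n)}_{j_n}$, and $r_{n,s}$ is the rank-one projection onto a unit vector of $\L^2_{E_n}$ supported in the $1/s$-ball about a fixed point of $F_{j_n}^{(n)}$ (chosen norm-continuously in $s$). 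This family has vanishing commutator with every $\Lambda(g)$, $E_n$-propagation at most $2/s$, support within $R+1$ of $f_n(X_n)$, and $E_n$-support in $\Nd_{1/s}(F_{j_n}^{(n)})$, so it defines a norm-one element of $\prod_{j}^{res}A_{sq,(F_j^{(n)})}(X;(G_j^{(n)}))$ and of $A_{sq,(F_n)}(X;E)$. Truncating to any finite set $J$ of cluster indices deletes the blocks of all but finitely many $n$ and changes the element by norm $1$ in the sup-norm over $n$; Definition~\ref{defn:twisted quasi-local}(3) cannot rescue this, because it controls distance to $f_n(X_n)$ inside $E_n$, not the cluster label. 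So the inductive-limit structure your proof needs simply does not exist, and the claimed uniform approximation is exactly where the argument breaks.

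The discrepancy you are trying to eliminate by truncation --- exact block-diagonality over the clusters versus merely asymptotic (in $s$) block-diagonality --- is the whole content of the lemma, and the paper handles it by a different device. One passes to the quotients by the ideals $A_{sq,(F_n),0}(X;E)$ and $\prod_{j}^{res}A_{sq,(F_j^{(n)}),0}(X;(G_j^{(n)}))$ of elements with $\lim_{s\to\infty}T_{n,s}=0$ for each $n$; these have trivial $K$-theory by an Eilenberg swindle, so the quotient maps are $K$-isomorphisms. On the quotients the cut-down map $\gamma\colon (T_{n,s})\mapsto\prod_j(\chi_{G_j^{(n)}}T_{n,s}\chi_{G_j^{(n)}})$ --- cutting by \emph{all} clusters simultaneously, not finitely many --- is a two-sided inverse of the inclusion, because the support condition of Definition~\ref{defn:twisted ideal} together with the decay of $E_n$-propagation forces $T_{n,s}-\sum_j\chi_{G_j^{(n)}}T_{n,s}\chi_{G_j^{(n)}}\to 0$ as $s\to\infty$ for each $n$. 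Any correct proof must exploit the $s\to\infty$ asymptotics in some such way (an inverse modulo an ideal with vanishing $K$-theory); finite-cluster truncation plus continuity of $K$-theory cannot substitute for it.
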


\begin{proof}
We only prove the case of twisted strongly quasi-local algebras, and the Roe case is similar. The proof follows the outline of \cite[Theorem 6.4.20]{willett2020higher}.
	
Consider the following quotient algebras:
	\[
	A_{sq,(F_n),Q}(X;E) := \frac{A_{sq,(F_n)}(X;E)}{A_{sq,(F_n),0}(X;E)} \quad \mbox{and} \quad \prod_{j}^{res,Q}A_{sq,(F_j^{(n)})}( X;(G_j^{(n)}) ):= \frac{\prod_{j}^{res}A_{sq,(F_j^{(n)})}( X;(G_j^{(n)}) )}{\prod_{j}^{res}A_{sq,(F_j^{(n)}),0 }( X;(G_j^{(n)}) )},
	\]
where $A_{sq,(F_n),0}(X;E)$ consists of elements $(T_{n,s})\in A_{sq,(F_n)}(X;E)$ such that $\lim\limits_{s\rightarrow \infty}T_{n,s}=0$ for each $n$, and $A_{sq,(F_j^{(n)}),0 }( X;(G_j^{(n)}) )$ is defined in a simialr way. From a standard Eilenberg Swindle argument (see for example \cite[Lemma 6.4.11]{willett2020higher}), $A_{sq,(F_n),0}(X;E)$ and $\prod_{j}^{res}A_{sq,(F_j^{(n)}),0 }( X;(G_j^{(n)}) )$ both have trivial $K$-theories. Hence the quotient maps 
$$A_{sq,(F_n)}(X;E) \rightarrow A_{sq,(F_n),Q}(X;E) \quad \mbox{and} \quad \prod_{j}^{res}A_{sq,(F_j^{(n)}) }( X;(G_j^{(n)}) )  \rightarrow \prod_{j}^{res,Q}A_{sq,(F_j^{(n)})}( X;(G_j^{(n)}) )$$
induce isomophisms in $K$-theory.

It is clear that the inclusion $i$ induces a $*$-homomorphism:
\[
i_Q:\prod_{j}^{res,Q}A_{sq,(F_j^{(n)})}( X;(G_j^{(n)}) ) \rightarrow A_{sq,(F_n),Q}(X;E).
\]
We also define a map 
	\[
	\gamma :A_{sq,(F_n)}(X;E) \rightarrow \prod_{j}^{res}A_{sq,(F_j^{(n)}) }( X;(G_j^{(n)}) ) \quad \mbox{by} \quad (T_{n,s})\mapsto \prod_{j}(\chi_{G_j^{(n)}}T_{n,s}\chi_{G_j^{(n)}}),
	\]
	which induces a $*$-homomorphism
	\[
	\gamma_Q: A_{sq,(F_n),Q}(X;E) \rightarrow \prod_{j}^{res,Q}A_{sq,(F_j^{(n)})}( X;(G_j^{(n)}) ) .
	\]
We can check that the compositions $i_Q\circ \gamma_Q$ and $\gamma_Q\circ i_Q$ are both the identity maps. Hence $i_Q$ is an isomorphism in $K$-theory, which implies that the inclusion $i$ induces an isomorphism in $K$-theory.
\end{proof}



\begin{proof}[Proof of Proposition \ref{prop:local isom. of inclusion in K-theory}]
We use the same notation as above and define $G_j^{(n)}=\Nd_{1}(F_j^{(n)})$ for each $n\in \NN$ and $j$. Then there is a commutative diagram
	\[
	\begin{CD}
	A_{(F_n)}(X;E) @>>> A_{sq,(F_n)}(X;E) \\
	@AAA       @AAA \\
	\prod_{j}^{res}A_{ (F_j^{(n)}) }( X;(G_j^{(n)}) ) @>>> \prod_{j}^{res}A_{sq,(F_j^{(n)}) }( X;(G_j^{(n)}) )
	\end{CD}
	\]
	where all maps involved are inclusion maps. It follows from Lemma \ref{lem:cluster axiom} that vertical maps induce isomorphisms in $K$-theory. Hence it suffices to show that the bottom horizontal map induces an isomorphism in $K$-theory.
	
Note that conditions (3) in Definition~\ref{defn:twisted Roe} and \ref{defn:twisted quasi-local} imply that
	\[
	\prod_{j}^{res}A_{ (F_j^{(n)}) }( X;(G_j^{(n)}) )=\lim_{m\rightarrow \infty}\prod_{j}^{res}A_{ (F_j^{(n)}) } \big( (B(x^{(n)}_j;m));(G_j^{(n)}) \big)
	\]
and
	\[
	\prod_{j}^{res}A_{ sq,(F_j^{(n)}) }( X;(G_j^{(n)}) )=\lim_{m\rightarrow \infty}\prod_{j}^{res}A_{ sq,(F_j^{(n)}) } \big( (B(x^{(n)}_j;m));(G_j^{(n)}) \big).
	\]
Hence it suffices to show that for each fixed $m$, the inclusion
	\[
	\prod_{j}^{res}A_{ (F_j^{(n)}) } \big( (B(x^{(n)}_j;m));(G_j^{(n)}) \big) \hookrightarrow \prod_{j}^{res}A_{ sq,(F_j^{(n)}) } \big( (B(x^{(n)}_j;m));(G_j^{(n)}) \big)
	\]
	induces an isomorphism in $K$-theory.
	
	Note that the inclusion $\{x^{(n)}_j\} \hookrightarrow B(x^{(n)}_j;m)$ induces a commutative diagram
	\[
	\begin{CD}
	\prod_{j}^{res}A_{ (F_j^{(n)}) } \big( (B(x^{(n)}_j;m));(G_j^{(n)}) \big) @>>> \prod_{j}^{res}A_{ sq,(F_j^{(n)}) } \big( (B(x^{(n)}_j;m));(G_j^{(n)}) \big) \\
	@AAA       @AAA \\
	\prod_{j}^{res}A_{ (F_j^{(n)}) } \big( (\{x^{(n)}_j\});(G_j^{(n)}) \big) @>>> \prod_{j}^{res}A_{ sq,(F_j^{(n)}) } \big( (\{x^{(n)}_j\});(G_j^{(n)}) \big) 
	\end{CD}
	\]
	where the vertical maps are $*$-isomorphisms by standard arguments (see for example Proposition \ref{prop:coarse invariance}). Also note that the bottom horizontal inclusion map $\prod_{j}^{res}A_{ (F_j^{(n)}) }( (\{x^{(n)}_j\});(G_j^{(n)}) ) \hookrightarrow \prod_{j}^{res}A_{ sq,(F_j^{(n)}) }( (\{x^{(n)}_j\});(G_j^{(n)}) )$ is a $*$-isomorphism as well, since 
	conditions $(1)$ and $(3)$ in Definition~\ref{defn:twisted Roe} and~\ref{defn:twisted quasi-local} are equivalent in this case. Hence we conclude the proof.
\end{proof}

\section{Proof of Theorem~\ref{thm:main result}}\label{sec:proof of main thm}

In this final section, we finish the proof of the main result.

\begin{proof}[Proof of Theorem~\ref{thm:main result}]
Consider the following commutative diagram
	\[
	\begin{CD}
	K_*(C^*(\H_X) \cap \prod_n \B(\H_n)) @>>> K_*(A(X;E)) @>>> K_*(C^*(\H_{X,E}) \cap \prod_n \B(\H_{n,E_n})) \\
	@VVV   @VVV   @VVV   \\
	K_*(\Csq(\H_X) \cap \prod_n \B(\H_n)) @>>> K_*(\Asq(X;E)) @>>> K_*(\Csq(\H_{X,E}) \cap \prod_n \B(\H_{n,E_n})),
	\end{CD}
	\]
	where the horizontal maps come from Proposition~\ref{prop:index map isom. for Roe} and Proposition~\ref{prop:strong quasi-local index map isom.} and all vertical maps are induced by inclusions. From Proposition~\ref{prop:index map isom. for Roe} and Proposition~\ref{prop:strong quasi-local index map isom.} again, we know that the compositions of horizontal maps are isomorphisms. The middle vertical map is an isomorphism by Proposition~\ref{prop:iso. of twisted algebras in $K$-theory}, and the left vertical map identifies with the right one due to Proposition~\ref{prop:independent of modules}. Hence the inclusion map
	$$C^*(\H_X) \cap \prod_n \B(\H_n) \hookrightarrow \Csq(\H_X) \cap \prod_n \B(\H_n)$$
	induces an isomorphism in $K$-theory from diagram chasing. Finally combining with Lemma~\ref{lem:reduce to block-diagonal}, we finish the proof.
\end{proof}

\appendix

\section{Proof of Proposition \ref{prop:Psi function}}\label{app:proof2}

This appendix is devoted to the proof of Proposition \ref{prop:Psi function}. We follow the outline of that for \cite[Proposition 12.1.10]{willett2020higher} and use the same notation as in Section \ref{ssec:The Bott-Dirac operators on Euclidean spaces}. 

Define a function $f: \RR \to [-1,1]$ by $f(x)=\frac{x}{\sqrt{1+x^2}}$, $x\in \RR$. Also fix a smooth even function $g:\RR\rightarrow [0,\infty)$ of integral one and having compactly supported Fourier transform. It follows from the proof of \cite[Proposition 12.1.10]{willett2020higher} that given $\varepsilon>0$ there exists $\delta>0$ such that the convolution $\Psi:= f\ast g_\delta$ satisfies condition (1)-(8) in Proposition \ref{prop:Psi function}, where $g_\delta(x):=\frac{1}{\delta}g(\frac{x}{\delta})$ for $x\in \RR$. In the following, we will prove condition (9) and (10) therein. 

Let us recall the following two lemmas, which follow from \cite[Lemma 12.1.6 and 12.1.8]{willett2020higher}.

\begin{lem}\label{lem:appB1}
	For all $s\in[1,\infty)$, $x\in E$ and $t\in \RR$, we have that
	$$f(B_{s,x}-t)=\frac{2}{\pi} \int_{0}^{\infty}(B_{s,x}-t)(1+\lambda^2+(B_{s,x}-t)^2)^{-1} \d\lambda,$$
	where the integral on the right converges in the strong-$\ast$ operator topology.
	
	Moreover for any $s\in[1,\infty)$, $x,y\in E$ and $t\in \RR$, we have that
	\begin{align*}
		f(B_{s,x}-t) - f(B_{s,y}-t)= &c_{x-y}(1+(B_{s,x}-t)^2)^{-\frac{1}{2}} +\frac{2}{\pi} \int_{0}^{\infty} (B_{s,y}-t)(1+\lambda^2+(B_{s,y}-t)^2)^{-1} \\
		&\cdot \big( (B_{s,y}-t) c_{y-x} + c_{y-x} (B_{s,x}-t) \big) (1+\lambda^2+(B_{s,x}-t)^2)^{-1} \d\lambda,
	\end{align*}
	where the integral on the right again converges in the strong-$\ast$ topology.
\end{lem}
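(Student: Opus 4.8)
The plan is to deduce both displayed identities directly from \cite[Lemma 12.1.6 and Lemma 12.1.8]{willett2020higher}, which are precisely these formulas in the special case $t=0$. The only point to verify is that passing from $B_{s,x}$ to $B_{s,x}-t$ changes nothing of substance: since $t\cdot\Id$ is a bounded self-adjoint operator, $B_{s,x}-t$ is again essentially self-adjoint with core $\S_E$ and maps $\S_E$ to itself (by \cite[Corollary 12.1.4]{willett2020higher} together with the Kato--Rellich theorem), so the bounded Borel functional calculus applies to its closure, and the difference of the two shifted operators is unchanged, $(B_{s,x}-t)-(B_{s,y}-t)=B_{s,x}-B_{s,y}=c_{y-x}$. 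Hence the arguments of \cite[Lemma 12.1.6, 12.1.8]{willett2020higher} go through verbatim with $B_{s,x}$ replaced by $B_{s,x}-t$ throughout (the Clifford terms $c_{x-y}$, $c_{y-x}$, being translation invariant, are untouched), and this is what I would write down.

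For the reader's convenience I would also recall the skeleton of that argument. First one records the elementary scalar identity $\tfrac{2}{\pi}\int_0^\infty (1+\lambda^2+a^2)^{-1}\,\d\lambda=(1+a^2)^{-1/2}$ (substitute $\lambda=\sqrt{1+a^2}\tan\theta$), multiplies by $a$, and applies the bounded Borel functional calculus to $A:=B_{s,x}-t$ to obtain the first formula. The convergence is only strong-$*$ and not in norm, because $\sup_{a\in\RR}|a|(1+\lambda^2+a^2)^{-1}=\tfrac{1}{2}(1+\lambda^2)^{-1/2}$ fails to be integrable in $\lambda$; strong-$*$ convergence of the truncated integrals $\tfrac{2}{\pi}\int_0^N$ follows from the dominated convergence theorem applied in the spectral measure of $A$, using the pointwise bound $\int_0^\infty |a|(1+\lambda^2+a^2)^{-1}\,\d\lambda=\tfrac{\pi}{2}|a|(1+a^2)^{-1/2}\le\tfrac{\pi}{2}$ and the self-adjointness of $A$. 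For the difference formula I would subtract the integral representations for $A=B_{s,x}-t$ and $A'=B_{s,y}-t$, write $AR_\lambda(A)-A'R_\lambda(A')=(A-A')R_\lambda(A)+A'\big(R_\lambda(A)-R_\lambda(A')\big)$ with $R_\lambda(A):=(1+\lambda^2+A^2)^{-1}$, and expand the second summand via the resolvent identity $R_\lambda(A)-R_\lambda(A')=R_\lambda(A')(A'^2-A^2)R_\lambda(A)$ together with $A'^2-A^2=A'(A'-A)+(A'-A)A$ and $A'-A=c_{x-y}$. The $\lambda$-independent factor $A-A'$ then comes out of the first integral and, via the scalar identity, produces the term $c_{x-y}(1+(B_{s,x}-t)^2)^{-1/2}$, while the remaining pieces assemble into the displayed integral.

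The hard part will not be conceptual but will lie in the bookkeeping: one must keep the signs and the positions of $A$ versus $A'$ in the resolvent factors exactly straight, justify that the unbounded operators occurring inside the integrands (the sandwiched factor $(B_{s,y}-t)c_{y-x}+c_{y-x}(B_{s,x}-t)$, and $A'R_\lambda(A')$) are handled on the appropriate dense domains — here using that $c_v$ is bounded and graph-continuous for both $B_{s,x}$ and $B_{s,y}$ (since the relevant commutators extend boundedly), so it preserves their common domain — and confirm that each truncated integral converges in the strong-$*$ topology rather than in norm. Since all of this is already carried out for $t=0$ in \cite[Section 12.1]{willett2020higher} and is insensitive to the translation by $t$, the cleanest write-up is simply to invoke those two lemmas after the one-line observation above.
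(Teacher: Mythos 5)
Your proposal is correct and takes essentially the same route as the paper: the paper likewise gets the first identity by applying functional calculus to the shifted scalar identity $\frac{x-t}{\sqrt{1+(x-t)^2}}=\frac{2}{\pi}\int_0^\infty\frac{x-t}{1+\lambda^2+(x-t)^2}\,\d\lambda$, and disposes of the second by observing that the computation of \cite[Lemma 12.1.6]{willett2020higher} goes through verbatim with $B_{s,x}$ replaced by $B_{s,x}-t$, which is exactly the resolvent-identity bookkeeping you sketch. One caveat on that bookkeeping: having correctly recorded $(B_{s,x}-t)-(B_{s,y}-t)=c_{y-x}$, your decomposition produces the first summand $c_{y-x}\,(1+(B_{s,x}-t)^2)^{-1/2}$ and the sandwiched factor $(B_{s,y}-t)c_{x-y}+c_{x-y}(B_{s,x}-t)$ --- i.e.\ the displayed identity with the subscripts $x-y$ and $y-x$ interchanged --- so your final claim that the term $c_{x-y}(1+(B_{s,x}-t)^2)^{-1/2}$ comes out is inconsistent with your own (correct) computation of the difference; this sign should be reconciled with the stated display, though it is immaterial for the later estimates, which use only $\|c_{x-y}\|=\|x-y\|_E$.
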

\begin{proof}
	The first formula follows from that for any $t\in\RR$, we have the formula
	$$\frac{x-t}{\sqrt{1+(x-t)^2}}= \frac{2}{\pi} \int_0^\infty \frac{x-t}{1+\lambda^2 +(x-t)^2} \d\lambda $$
	and functional calculus. And the second formula follows by easy computations as in the proof of \cite[Lemma 12.1.6]{willett2020higher}.
\end{proof}

\begin{lem}\label{lem:appB2}
	For any $R\ge 0$, $\lambda\in [0,\infty)$, $x\in E$ and $s\ge 2d$, we have that
	$$\|(1+\lambda^2+B_{s,x}^2)^{-\frac{1}{2}}(1-\chi_{x,R})\| \le \big(\frac{1}{2}+\lambda^2+R^2 \big)^{-\frac{1}{4}}.$$
\end{lem}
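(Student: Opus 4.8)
The plan is to deduce the estimate from a lower bound on the squared operator $B_{s,x}^2$. Set $\hat{C}_x := C - c_x$, so that $B_{s,x} = s^{-1}D + \hat{C}_x$, and observe that on the invariant core $\S_E$ one has
\[
B_{s,x}^2 = s^{-2}D^2 + \hat{C}_x^2 + s^{-1}\big(D\hat{C}_x + \hat{C}_x D\big).
\]
Since left Clifford multiplication by a vector $v\in E$ squares to $\|v\|_E^2$, the operator $\hat{C}_x^2$ is just multiplication by the function $y\mapsto \|y-x\|_E^2$; denote this multiplication operator by $M_x$. The cross term is the crucial input: expanding $D\hat{C}_x + \hat{C}_x D$ and using $\partial_i(y_j) = \delta_{ij}$ together with the graded anticommutation $\hat{e_i} c_{e_j} = -c_{e_j}\hat{e_i}$ (immediate from the definitions of $\hat{e_i}$ and of left Clifford multiplication), the first-order terms $\sum_{i,j}\hat{e_i}(y_j-x_j)c_{e_j}\partial_i$ coming from the two pieces cancel and one is left with
\[
D\hat{C}_x + \hat{C}_x D = \sum_{i=1}^d \hat{e_i} c_{e_i},
\]
a \emph{bounded} self-adjoint operator of norm at most $d$ (a sum of $d$ operators of norm one), independent of $x$ and $s$. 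Hence for $s\ge 2d$ we have $s^{-1}\|D\hat{C}_x + \hat{C}_x D\| \le \tfrac12$, and since $s^{-2}D^2\ge 0$,
\[
B_{s,x}^2 \ge M_x - \tfrac12, \qquad\text{so}\qquad 1 + \lambda^2 + B_{s,x}^2 \ge \tfrac12 + \lambda^2 + M_x > 0
\]
as an inequality of quadratic forms on $\S_E$ (hence, after closing up, on the form domain of $B_{s,x}$).

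Next I would pass to resolvents. Operator monotonicity of $t\mapsto t^{-1}$ on $(0,\infty)$ turns the last display into $(1+\lambda^2+B_{s,x}^2)^{-1} \le (\tfrac12+\lambda^2+M_x)^{-1}$, and conjugating by the projection $1-\chi_{x,R}$ (which commutes with $M_x$) gives
\[
(1-\chi_{x,R})(1+\lambda^2+B_{s,x}^2)^{-1}(1-\chi_{x,R}) \le (1-\chi_{x,R})\big(\tfrac12+\lambda^2+M_x\big)^{-1}(1-\chi_{x,R}).
\]
The right-hand side is multiplication by the function $y\mapsto \chi_{\{\|y-x\|_E\ge R\}}(y)\,\big(\tfrac12+\lambda^2+\|y-x\|_E^2\big)^{-1}$, whose supremum is $\big(\tfrac12+\lambda^2+R^2\big)^{-1}$ (the function $r\mapsto(\tfrac12+\lambda^2+r^2)^{-1}$ being decreasing). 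Taking norms, this shows $\|(1+\lambda^2+B_{s,x}^2)^{-1/2}(1-\chi_{x,R})\| \le \big(\tfrac12+\lambda^2+R^2\big)^{-1/2}$. Finally, combining this with the trivial bound $\|(1+\lambda^2+B_{s,x}^2)^{-1/2}(1-\chi_{x,R})\| \le \|(1+\lambda^2+B_{s,x}^2)^{-1/2}\|\le 1$ and squaring the norm yields
\[
\big\|(1+\lambda^2+B_{s,x}^2)^{-1/2}(1-\chi_{x,R})\big\|^2 \le 1\cdot\big(\tfrac12+\lambda^2+R^2\big)^{-1/2},
\]
which is exactly the asserted inequality. (Alternatively, one may simply invoke \cite[Lemma 12.1.8]{willett2020higher}, from which the statement follows at once.)

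The main obstacle is the Clifford-module bookkeeping in the anticommutator $D\hat{C}_x + \hat{C}_x D$: one must keep careful track of the grading signs so that the operator-valued coefficients produced by $D\hat{C}_x$ and by $\hat{C}_x D$ cancel, leaving the clean bounded operator $\sum_{i=1}^d\hat{e_i} c_{e_i}$ with norm $\le d$; it is precisely this bound, combined with the hypothesis $s\ge 2d$, that absorbs the cross term into the constant $\tfrac12$. The remaining steps are routine, the only point deserving a word being that the identity for $B_{s,x}^2$, valid on the core $\S_E$ recalled in Section~\ref{ssec:The Bott-Dirac operators on Euclidean spaces}, genuinely extends to the stated form inequality and hence legitimises the resolvent comparison above.
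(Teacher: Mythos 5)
Your argument is correct, and it is essentially the argument behind the result the paper relies on: the paper gives no proof of this lemma at all, simply citing \cite[Lemma 12.1.8]{willett2020higher}, and your computation reconstructs that proof. The key identities check out: on $\S_E$ one has $B_{s,x}^2=s^{-2}D^2+M_x+s^{-1}\sum_{i=1}^d\hat{e_i}c_{e_i}$, since $Dc_x+c_xD=0$ and the first-order terms in $DC+CD$ cancel by the anticommutation $\hat{e_i}c_{e_j}=-c_{e_j}\hat{e_i}$; the anticommutator has norm at most $d$, so $s\ge 2d$ gives $B_{s,x}^2\ge M_x-\tfrac12$, and the resolvent comparison with the multiplication operator $\tfrac12+\lambda^2+M_x$ yields the bound. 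Two small remarks. First, your intermediate estimate $\|(1+\lambda^2+B_{s,x}^2)^{-1/2}(1-\chi_{x,R})\|\le(\tfrac12+\lambda^2+R^2)^{-1/2}$ is actually \emph{stronger} than the stated $(\cdot)^{-1/4}$ only when the base exceeds $1$, so the final step combining it with the trivial bound $\le 1$ (giving $\|T\|^2\le 1\cdot(\tfrac12+\lambda^2+R^2)^{-1/2}$) is genuinely needed to cover small $\lambda,R$; as written it does recover exactly the asserted inequality, and all constants are independent of $d$, as required for the ``Moreover'' clause of Proposition \ref{prop:Psi function}. Second, the one step deserving justification is the passage from the identity on the core $\S_E$ to the form inequality $1+\lambda^2+\overline{B_{s,x}}^{\,2}\ge\tfrac12+\lambda^2+M_x$ on the form domain $\mathrm{dom}(\overline{B_{s,x}})$, and then the operator monotonicity of the inverse for unbounded positive operators; you flag this, and it is standard (an operator core is a form core, and a Fatou argument shows $\mathrm{dom}(\overline{B_{s,x}})$ lies in the form domain of $M_x$ with the inequality preserved), so the proof is complete once that routine verification is spelled out.
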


\begin{proof}[Proof of Proposition \ref{prop:Psi function}(9).]
Given $\varepsilon_1>0$, there exists a compact subset $K\subseteq \RR$ and a function $h: \RR \to [0,\infty)$ of integral one and support in $K$ such that $\|g_\delta - h\|_1<\frac{\varepsilon_1}{4}$.
Setting $\Phi:= f \ast h$, we have:
	$$\|\Psi - \Phi\|_\infty = \|f\ast g_\delta - f\ast h\|_\infty =  \|f\ast (g_\delta - h)\|_\infty \le \|f\|_\infty \cdot \|g_\delta - h\|_1 <\frac{\varepsilon_1}{4},$$
which implies $\|\Phi(B_{s,x}) - \Psi(B_{s,x})\|<\frac{\varepsilon_1}{4}$. Hence it suffices to show that there exists $R_1>0$ such that for all $s\ge 2d$ and $x\in E$, we have
	$$\|(\Phi(B_{s,x})^2-1)(1-\chi_{x,R_1}) \| < \frac{\varepsilon_1}{4}.$$
	
	Now we set $\omega: \RR \to \RR$ by $\omega(x):=\frac{1}{1+x^2}$. For any $R\ge 0$, we have:
	\begin{align*}
		\| (\Phi(B_{s,x})^2-1)(1-\chi_{x,R}) \| 
		&= \big \| \big( (f\ast h)^2 -1 \big) (B_{s,x}) \cdot (1-\chi_{x,R}) \big\| \\
		&= \big\| \big( \frac{(f\ast h)^2 -1}{\omega} \big)(B_{s,x}) \cdot \omega(B_{s,x})(1-\chi_{x,R}) \big\| \\
		&\le \big\| \big( \frac{(f\ast h)^2 -1}{\omega} \big)(B_{s,x}) \big\| \cdot \big\| (1+B_{s,x}^2)^{-\frac{1}{2}} \big\| \cdot \big\| (1+B_{s,x}^2)^{-\frac{1}{2}}(1-\chi_{x,R})\big\| \\
		&\le \big\| \big( \frac{(f\ast h)^2 -1}{\omega} \big)(B_{s,x}) \big\| \cdot \big\| (1+B_{s,x}^2)^{-\frac{1}{2}} \big\| \cdot \big(\frac{1}{2}+R^2 \big)^{-\frac{1}{4}},
	\end{align*}
	where the last inequality comes from Lemma~\ref{lem:appB2} for $\lambda=0$. We claim that the function $\frac{(f\ast h)^2 -1}{\omega}$ is bounded on $\RR$. Indeed, since $h$ has support on $K$ and integral one we have:
	\begin{align*}
		\big( \frac{(f\ast h)^2 -1}{\omega} \big)(x) &= \big(\int_{\RR} f(x-t)h(t) \d t +1 \big) \big(\int_{\RR} f(x-t)h(t) \d t -1 \big) (1+x^2) \\
		&= \big(\int_{K} \big(f(x-t)+1 \big) h(t)\d t \big) \big(\int_{K} \big(f(x-t)-1 \big)h(t)\d t \big) (1+x^2).
	\end{align*} 
Direct calculation shows that
	\begin{align*}
		\big(f(x-t)-1\big)(1+x^2) 
		&= -\frac{x}{\sqrt{1+(x-t)^2}} \cdot \frac{x}{(x-t)+\sqrt{1+(x-t)^2}} \cdot \frac{1+x^2}{x^2},
	\end{align*}
	which is uniformly bounded on $[0,+\infty)$ for $t\in K$. Similarly, $\big(f(x-t)+1\big)(1+x^2)$ is uniformly bounded on $(-\infty,0]$ for $t\in K$. Hence $\frac{(f\ast h)^2 -1}{\omega}$ is bounded on $\RR$.
	
	On the other hand, note that $\| (1+B_{s,x}^2)^{-\frac{1}{2}}\| \le 1$ from functional calculus. Hence we obtain that $\| (\Phi(B_{s,x})^2-1)(1-\chi_{x,R}) \|$ tends to zero as $R$ tends to infinity, which conclude the proof.
\end{proof}

\begin{proof}[Proof of Proposition \ref{prop:Psi function}(10).]
Given $\varepsilon_2>0$, there exists a compact subset $K\subseteq \RR$ and a function $h: \RR \to [0,\infty)$ of integral one and support in $K$ such that $\|g_\delta - h\|_1< \frac{\varepsilon_2}{3}$. Setting $\Phi:= f \ast h$, we have $\|\Phi(B_{s,x}) - \Psi(B_{s,x})\|<\frac{\varepsilon_2}{3}$. Hence it suffices to show that for any $r>0$ there exists $R_2>0$ such that for any $s\ge 2d$ and $x,y\in E$ with $d_E(x,y)\le r$, we have
	$$\|(\Phi(B_{s,x})-\Phi(B_{s,y}))(1-\chi_{x,R_2})\|<\frac{\varepsilon_2}{3}.$$
	
For any $R>0$, we have:
	\begin{align*}
		(\Phi&(B_{s,x})-\Phi(B_{s,y}))(1-\chi_{x,R}) = \big((f\ast h)(B_{s,x})-(f\ast h)(B_{s,y}) \big) (1-\chi_{x,R}) \\
		&= \int_{\RR}\big(f(B_{s,x}-t)-f(B_{s,y}-t)\big)h(t)\d t \cdot (1-\chi_{x,R}).
	\end{align*}
	Combining with Lemma~\ref{lem:appB1}, we have
	\begin{align*}
		&(\Phi(B_{s,x})-\Phi(B_{s,y}))(1-\chi_{x,R}) \\
		&=\int_{\RR} \Bigg( c_{x-y}(1+(B_{s,x}-t)^2)^{-\frac{1}{2}} + \frac{2}{\pi} \int_{0}^{\infty} (B_{s,y}-t)(1+\lambda^2+(B_{s,y}-t)^2)^{-1} \\
		& \quad \cdot \big( (B_{s,y}-t) c_{y-x} + c_{y-x} (B_{s,x}-t) \big) (1+\lambda^2+(B_{s,x}-t)^2)^{-1} \d\lambda \Bigg) h(t)\d t \cdot (1-\chi_{x,R}) \\
		&=\int_{K}c_{x-y}(1+(B_{s,x}-t)^2)^{-\frac{1}{2}}(1-\chi_{x,R})h(t)\d t \\
		&\quad + \frac{2}{\pi} \int_{K}\int_{0}^{\infty} (B_{s,y}-t)(1+\lambda^2+(B_{s,y}-t)^2)^{-1} (B_{s,y}-t) c_{y-x}  (1+\lambda^2+(B_{s,x}-t)^2)^{-1}(1-\chi_{x,R}) \d\lambda h(t)\d t  \\
		&\quad + \frac{2}{\pi} \int_{K}\int_{0}^{\infty} (B_{s,y}-t)(1+\lambda^2+(B_{s,y}-t)^2)^{-1} c_{y-x}(B_{s,x}-t)  (1+\lambda^2+(B_{s,x}-t)^2)^{-1}(1-\chi_{x,R}) \d\lambda h(t)\d t  .
	\end{align*}
	Then it is suffices to show that each of the three terms on the right side tends to zero as $R$ tends to infinity.
	
	For the first term, note that 
the following constant
	\[
	N_1:= \sup_{t\in K, x\in \RR} \frac{\sqrt{1+x^2}}{\sqrt{1+(x-t)^2}}
	\]
is finite since $K$ is compact.  
Hence using Lemma~\ref{lem:appB2} for $\lambda=0$, we obtain
	\begin{align*}
		\Big\| \int_{K}& c_{x-y}(1+(B_{s,x}-t)^2)^{-\frac{1}{2}}(1-\chi_{x,R})h(t)\d t \Big\| \\
		&\le \int_{K} \|c_{x-y}\| \cdot \|(1+(B_{s,x}-t)^2)^{-\frac{1}{2}}(1+B_{s,x}^2)^{\frac{1}{2}}\| \cdot \|(1+B_{s,x}^2)^{-\frac{1}{2}}(1-\chi_{x,R})\|h(t)\d t \\
		&\le r\cdot N_1 \cdot \big( \frac{1}{2}+R^2 \big)^{-\frac{1}{4}},
	\end{align*}
which tends to zero as $R$ tends to infinity.
	
	For the second term, note that 
	\begin{align*}
		&\frac{2}{\pi} \int_{K}\int_{0}^{\infty} (B_{s,y}-t)(1+\lambda^2+(B_{s,y}-t)^2)^{-1} (B_{s,y}-t) c_{y-x}  (1+\lambda^2+(B_{s,x}-t)^2)^{-1}(1-\chi_{x,R}) \d\lambda h(t)\d t \\
		&= \frac{2}{\pi} \int_{K}\int_{0}^{\infty} (B_{s,y}-t)(1+\lambda^2+(B_{s,y}-t)^2)^{-1}(B_{s,y}-t) \cdot c_{y-x} \cdot (1+\lambda^2+(B_{s,x}-t)^2)^{-\frac{1}{2}} \\
		&\quad \cdot (1+\lambda^2+(B_{s,x}-t)^2)^{-\frac{1}{2}} (1+\lambda^2+B_{s,x}^2)^{\frac{1}{2}} \cdot (1+\lambda^2+B_{s,x}^2)^{-\frac{1}{2}} (1-\chi_{x,R}) \d\lambda h(t)\d t
	\end{align*}
	From functional calculus, for any $t\in K$ and $\lambda\in[0,\infty)$ we have
	$$\|(B_{s,y}-t)(1+\lambda^2+(B_{s,y}-t)^2)^{-1}(B_{s,y}-t)\|\le 1$$
	and
	$$\|(1+\lambda^2+(B_{s,x}-t)^2)^{-\frac{1}{2}}\| \le (1+\lambda^2)^{-\frac{1}{2}}.$$
	Also note that the constant
	\[
	N_2:= \sup_{t\in K, x\in \RR, \lambda \in [0,\infty)} \frac{\sqrt{1+\lambda^2+x^2}}{\sqrt{1+\lambda^2 + (x-t)^2}}
	\]
	is finite since $K$ is compact.  
    Hence using Lemma~\ref{lem:appB2}, we obtain
	\begin{align*}
	&\Big\| \frac{2}{\pi} \int_{K}\int_{0}^{\infty} (B_{s,y}-t)(1+\lambda^2+(B_{s,y}-t)^2)^{-1} (B_{s,y}-t) c_{y-x}  (1+\lambda^2+(B_{s,x}-t)^2)^{-1}(1-\chi_{x,R}) \d\lambda h(t)\d t \Big\| \\
	&\le \frac{2}{\pi} \cdot 1 \cdot r \cdot N_2 \cdot \int_{0}^{\infty}(1+\lambda^2)^{-\frac{1}{2}}\big( \frac{1}{2}+\lambda^2 + R^2 \big)^{-\frac{1}{4}}\d\lambda,
	\end{align*}
	which tends to zero as $R$ tends to infinity.
	
	Finally, let us look at the last term. Note that 
	\begin{align*}
		&\frac{2}{\pi} \int_{K}\int_{0}^{\infty} (B_{s,y}-t)(1+\lambda^2+(B_{s,y}-t)^2)^{-1} c_{y-x}(B_{s,x}-t)  (1+\lambda^2+(B_{s,x}-t)^2)^{-1}(1-\chi_{x,R}) \d\lambda h(t)\d t \\
		&=\frac{2}{\pi} \int_{K}\int_{0}^{\infty} (B_{s,y}-t)(1+\lambda^2+(B_{s,y}-t)^2)^{-1} \cdot c_{y-x} \cdot (B_{s,x}-t) (1+\lambda^2+(B_{s,x}-t)^2)^{-\frac{1}{2}} \\
		&\quad \cdot (1+\lambda^2+(B_{s,x}-t)^2)^{-\frac{1}{2}} (1+\lambda^2+B_{s,x}^2)^{\frac{1}{2}} \cdot (1+\lambda^2+B_{s,x}^2)^{-\frac{1}{2}}(1-\chi_{x,R}) \d\lambda h(t)\d t.
	\end{align*}
	It is easy to see that 
	\[
	\sup_{x\in \RR} \big| \frac{x}{1+\lambda^2 + x^2}\big| \leq \frac{1}{2}(1+\lambda^2)^{-\frac{1}{2}}.
	\]
	Hence functional calculus gives that for any $t\in K$,
	$$\|(B_{s,y}-t)(1+\lambda^2+(B_{s,y}-t)^2)^{-1}\| \le \frac{1}{2}(1+\lambda^2)^{-\frac{1}{2}}.$$
	Note also that functional calculus give that for any $t\in K$ and $\lambda\in[0,\infty)$,
	$$\| (B_{s,x}-t) (1+\lambda^2+(B_{s,x}-t)^2)^{-\frac{1}{2}} \| \le 1.$$
	Then using Lemma~\ref{lem:appB2}, we have
	\begin{align*}
	&\Big\| \frac{2}{\pi} \int_{K}\int_{0}^{\infty} (B_{s,y}-t)(1+\lambda^2+(B_{s,y}-t)^2)^{-1} c_{y-x}(B_{s,x}-t)  (1+\lambda^2+(B_{s,x}-t)^2)^{-1}(1-\chi_{x,R}) \d\lambda g(t)\d t \Big\| \\
	&\le \frac{2}{\pi} \cdot r \cdot 1 \cdot N_2 \cdot \int_{0}^{\infty}\frac{1}{2}(1+\lambda^2)^{-\frac{1}{2}}\big( \frac{1}{2}+\lambda^2 + R^2 \big)^{-\frac{1}{4}}\d\lambda,
	\end{align*}
	which tends to zero as $R$ tends to infinity. Hence we conclude the proof.
\end{proof}

\bibliographystyle{amsplain}
\bibliography{bibfileQLCE}

\providecommand{\bysame}{\leavevmode\hbox to3em{\hrulefill}\thinspace}
\providecommand{\MR}{\relax\ifhmode\unskip\space\fi MR }
\providecommand{\MRhref}[2]{%
  \href{http://www.ams.org/mathscinet-getitem?mr=#1}{#2}
}
\providecommand{\href}[2]{#2}
\begin{thebibliography}{10}

\bibitem{engel2015rough}
Alexander Engel, \emph{Rough index theory on spaces of polynomial growth and
  contractibility}, arXiv preprint arXiv:1505.03988 (2015).

\bibitem{guent-tess-yu2012:Borel-stuff}
Erik Guentner, Romain Tessera, and Guoliang Yu, \emph{A notion of geometric
  complexity and its application to topological rigidity}, Invent. Math.
  \textbf{189} (2012), no.~2, 315--357. \MR{2947546}

\bibitem{HRY93}
Nigel Higson, John Roe, and Guoliang Yu, \emph{A coarse {M}ayer-{V}ietoris
  principle}, Math. Proc. Cambridge Philos. Soc. \textbf{114} (1993), no.~1,
  85--97.

\bibitem{structure}
Ana Khukhro, Kang Li, Federico Vigolo, and Jiawen Zhang, \emph{On the structure
  of asymptotic expanders}, arXiv:1910.13320v2 (2019).

\bibitem{Kir34}
Mojzesz Kirszbraun, \emph{{\"U}ber die zusammenziehende und lipschitzsche
  transformationen}, Fundamenta Mathematicae \textbf{22} (1934), no.~1,
  77--108.

\bibitem{lange1985noethericity}
B.~V. Lange and V.~S. Rabinovich, \emph{Noethericity of multidimensional
  discrete convolution operators}, Mat. Zametki \textbf{37} (1985), no.~3,
  407--421, 462. \MR{790430}

\bibitem{intro}
Kang Li, Piotr Nowak, J{\'a}n \v{S}pakula, and Jiawen Zhang, \emph{{Quasi-local
  algebras and asymptotic expanders}}, arXiv:1908.07814, to appear in Groups,
  Geometry and Dynamics.

\bibitem{LWZ2018}
Kang Li, Zhijie Wang, and Jiawen Zhang, \emph{A quasi-local characterisation of
  {$L^p$}-{R}oe algebras}, Journal of Mathematical Analysis and Applications
  \textbf{474} (2019), no.~2, 1213--1237.

\bibitem{roe1988:index-thm-on-open-mfds}
John Roe, \emph{An index theorem on open manifolds. {I}, {II}}, J. Differential
  Geom. \textbf{27} (1988), no.~1, 87--113, 115--136. \MR{918459}

\bibitem{roe1993coarse}
\bysame, \emph{Coarse cohomology and index theory on complete riemannian
  manifolds}, vol. 497, American Mathematical Soc., 1993.

\bibitem{Roe96}
\bysame, \emph{Index theory, coarse geometry, and topology of manifolds}, CBMS
  Regional Conference Series in Mathematics, vol.~90, Published for the
  Conference Board of the Mathematical Sciences, Washington, DC; by the
  American Mathematical Society, Providence, RI, 1996. \MR{1399087}

\bibitem{schick2014:ICM}
Thomas Schick, \emph{The topology of positive scalar curvature}, Proceedings of
  the {I}nternational {C}ongress of {M}athematicians---{S}eoul 2014. {V}ol.
  {II}, Kyung Moon Sa, Seoul, 2014, pp.~1285--1307. \MR{3728662}

\bibitem{ST19}
J\'{a}n {\v S}pakula and Aaron Tikuisis, \emph{Relative {C}ommutant {P}ictures
  of {R}oe {A}lgebras}, Comm. Math. Phys. \textbf{365} (2019), no.~3,
  1019--1048.

\bibitem{SZ20}
J{\'a}n {\v S}pakula and Jiawen Zhang, \emph{Quasi-locality and property {A}},
  Journal of Functional Analysis \textbf{278} (2020), no.~1, 108299.

\bibitem{Wil21}
R.~Willett, \emph{personal communications}, 2021.

\bibitem{Wil09}
Rufus Willett, \emph{Band-dominated operators and the stable {H}igson corona},
  ProQuest LLC, Ann Arbor, MI, 2009, Thesis (Ph.D.)--The Pennsylvania State
  University.

\bibitem{willett2020higher}
Rufus Willett and Guoliang Yu, \emph{Higher index theory}, vol. 189, Cambridge
  University Press, 2020.

\bibitem{Yu95}
Guoliang Yu, \emph{Coarse {B}aum-{C}onnes conjecture}, $K$-Theory \textbf{9}
  (1995), no.~3, 199--221. \MR{1344138}

\bibitem{yu1997:0-in-the-spec-PSC}
\bysame, \emph{Zero-in-the-spectrum conjecture, positive scalar curvature and
  asymptotic dimension}, Invent. Math. \textbf{127} (1997), no.~1, 99--126.
  \MR{1423027}

\bibitem{yu1998:Novikov-for-FAD}
\bysame, \emph{The {N}ovikov conjecture for groups with finite asymptotic
  dimension}, Ann. of Math. (2) \textbf{147} (1998), no.~2, 325--355.
  \MR{1626745}

\bibitem{Yu00}
\bysame, \emph{The coarse {B}aum-{C}onnes conjecture for spaces which admit a
  uniform embedding into {H}ilbert space}, Invent. Math. \textbf{139} (2000),
  no.~1, 201--240.

\end{thebibliography}

\end{document}